\documentclass{amsart}

\usepackage{amssymb}
\usepackage{amsmath}
\usepackage{amsfonts}
\usepackage{geometry}
\usepackage{bbm}
\usepackage{hyperref}
\usepackage{tikz}
\usepackage{mathrsfs}
\usepackage{multirow}
\usetikzlibrary{matrix,arrows,decorations.pathmorphing}
\usepackage{verbatim }
\usepackage{mathtools}
\usepackage[author={Sebastian}]{pdfcomment}

\setcounter{MaxMatrixCols}{10}
\newcommand{\myquad}[1][1]{\hspace*{#1em}\ignorespaces}

\newcounter{cprop}[section]

\newtheorem{theorem}[cprop]{Theorem}

\theoremstyle{plain}

\newtheorem{corollary}[cprop]{Corollary}

\newtheorem{lemma}[cprop]{Lemma}
\newtheorem{proposition}[cprop]{Proposition}

\newtheorem{assumption}[cprop]{Assumption}

\numberwithin{equation}{section}

\theoremstyle{definition}
\newtheorem{definition}[cprop]{Definition}
\newtheorem{example}[cprop]{Example}

\theoremstyle{remark}
\newtheorem{remark}[cprop]{Remark}


\renewcommand{\P}{\mathbb{P}}

\newcommand{\R}{\mathbb{R}}
\newcommand{\N}{\mathbb{N}}
\newcommand{\Z}{\mathbb{Z}}

\renewcommand{\d}{\mathrm{d}}

\newcommand{\vertiii}[1]{{\left\vert\kern-0.25ex\left\vert\kern-0.25ex\left\vert #1 
		\right\vert\kern-0.25ex\right\vert\kern-0.25ex\right\vert}}

\begin{document}
	\title[Integrable bound for rough for rough SPDEs]{An integrable bound for rough stochastic partial differential equations with applications to invariant manifolds and stability}
	\author{M. Ghani Varzaneh}
	\address{Mazyar Ghani Varzaneh\\
		Fakult\"at f\"ur  Mathematik und Informatik, FernUniversit\"at in Hagen, Hagen, Germany}
	\email{mazyar.ghanivarzaneh@fernuni-hagen.de}

	\author{S. Riedel}
	\address{Sebastian Riedel \\
		Fakult\"at f\"ur  Mathematik und Informatik, FernUniversit\"at in Hagen, Hagen, Germany}
	\email{sebastian.riedel@fernuni-hagen.de }
	
	\keywords{Rough stochastic partial differential equations, Gaussian processes, random dynamical systems, invariant manifolds}
	
	\subjclass{Primary 60L50; Secondary 60G15.}
	
	\begin{abstract}
		We study semilinear rough stochastic partial differential equations as introduced in [Gerasimovi{\v{c}}s, Hairer; EJP 2019]. We provide $\mathcal{L}^p(\Omega)$-integrable a priori bounds for the solution and its linearization in case the equation is driven by a suitable Gaussian process. Using the Multiplicative Ergodic Theorem for Banach spaces, we can deduce the existence of a Lyapunov spectrum for the linearized equation around stationary points. The existence of local stable, unstable, and center manifolds around stationary points is also provided. In the case where all Lyapunov exponents are negative, local exponential stability can be deduced. We illustrate our findings with several examples. 
	\end{abstract}
	
	\maketitle
	
	\maketitle
	\section{Introduction}
	In \cite{GH19}, Gerasimovi{\v{c}}s and Hairer introduced a new solution concept to study parabolic semilinear stochastic partial differential equations (SPDEs) driven by a finite-dimensional noise. One important property of this theory is that it is completely \emph{pathwise} since no stochastic integration theory is used to define the solution to the equation. Instead, the paper uses key ideas of Lyons' rough paths theory \cite{Lyo98}, meaning that the stochastic integral is replaced by a pathwise defined \emph{rough} integral. A nice feature of this concept of a \emph{rough partial differential equation} (RPDE) is that it is fully compatible with classical rough path theory. In particular, one can easily study RPDEs driven by Gaussian noises, e.g. a multidimensional fractional Brownian motion, as introduced in \cite{CQ02,FV10-2,FGGR16}. Let us mention that this framework of RPDEs was later generalized to study non-autonomous SPDEs, too \cite{GHT21}. \smallskip
	
	Avoiding stochastic integration when defining solutions to SPDEs has many advantages, but also leads to new challenges since probabilistic properties of the solution are often not so easy to obtain. This is particularly true when the driving signal is not a Brownian motion, but a general Gaussian process. One of these problems concerns the $\mathcal{L}^p(\Omega)$-integrability of the solution to a random RPDE. The first important result of our work is an \emph{integrable a priori bound} for the solution and the linearization of an RPDE. Although an priori for an RPDE was already given in \cite{HN22}, one can check that it fails to be integrable in case the equation is driven by a Gaussian process. The same problem already occured in the context of (finite-dimensional) rough differential equations (RDEs) driven by Gaussian processes. In fact, this issue constituted a major obstacle when generalizing H\"ormander theory to rough differential equations driven by general Gaussian processes where obtaining the $\mathcal{L}^p(\Omega)$-integrability of the Jacobian of an RDE is a crucial step \cite{CF10, CHLT15}. The subtle problem was that the known a priori bounds for solutions to deterministic RDEs, formulated in terms of the rough path norm, were optimal \cite{FV10}, but not integrable if the noise was replaced by a Gaussian process. This problem was solved in the seminal paper \cite{CLL13} where the known a priori bounds were modified in such a way that probabilistic properties of the Gaussian rough paths could be applied. Later, the results in \cite{CLL13} were slightly simplified and extended to study a larger class of rough differential equations, too \cite{FR12b}. For RPDEs in the sense of \cite{GH19}, the problem of finding an integrable a priori bound remained unsolved up to now. In fact, it is stated in \cite[page 51]{GH19} that \textit{The [integrable] moment bounds for the rough path norms of solution and Jacobian (...) [for the RPDE] might not be easy to obtain in general and require a closer look as a separate problem on its own. We decide to postpone the study of such moments but refer the reader to \cite{FR12b} where this question was answered for the rough SDE case}. In the present paper, we provide exactly these bounds, cf. Theorem \ref{thm:integrability_RPDE} and Theorem \ref{thm:integrabiliy_linearization}, which are our main results in this regard. We believe that these bounds and the techniques to obtain them will be useful when extending non-Markovian H\"ormander theory to RPDEs as initiated in \cite{GH19}. \smallskip
	
	The pathwise solution concept of RPDEs becomes very useful when studying their long-time behaviour with L.~Arnold's concept of \emph{random dynamical systems} (RDS) \cite{Arn98}. In fact, to apply RDS, pathwise solutions are a necessary prerequisite. For stochastic ordinary differential equations (SODEs), pathwise solutions can often be deduced by applying the Kolmogorov-Chentsov continuity theorem. If the solution to the equation takes values in an infinite-dimensional space, which is the case for an SPDE, this theorem can not be applied. A common strategy to circumvent this problem is to transform the SPDE into a random PDE that can be solved pathwise without stochastic integration theory. However, this trick only works under rather restrictive structural assumptions on the equation, e.g. for additive noise or when the diffusion parameter takes a very specific form. For RPDEs in the sense of \cite{GH19}, random dynamical systems were already successfully applied to study center manifolds \cite{KN23}, unstable manifolds \cite{MG22} and random attractors \cite{YLZ23}. \smallskip
	
	In our work, we deduce the existence of local stable, unstable and center manifolds for RPDEs driven by certain Gaussian processes including a fractional Brownian motion with Hurst parameter $H \in (\frac{1}{3},\frac{1}{2}]$, cf. Theorem \ref{stable_manifold}, Theorem \ref{unstable} and Theorem \ref{thm:center_manifolds}. The techniques we use differ from those in \cite{KN23, MG22} in many regards. To wit, the most important tool for us is the Multiplicative Ergodic Theorem (MET) for Banach spaces that we use to deduce the existence of a spectrum of Lyapunov exponents, cf. Theorem \ref{MET}. To apply the MET, it is crucial that the linearized equation satisfies a certain integrability condition which, in fact, can be deduced from the integrable a priori bounds we derived in Section \ref{sec:integrable_bound}. With the MET at hand, the existence of invariant manifolds can be deduced by carefully performed fixed point arguments. Compared to the Lyapunov-Perron method used in \cite{KN23, MG22}, our approach has several advantages. For instance, we can deduce the existence of invariant manifolds around general, even random stationary points (cf. Remark \ref{remark:random_stationary_point} for an example of a random stationary point). With our terminology, the only stationary point that was considered in \cite{KN23} and \cite{MG22} is $0$. This general approach leads to less restrictive assumptions on the equation. For instance, in \cite{KN23} and \cite{MG22}, it is assumed that the drift term $F$ satisfies $F(0) = DF(0) = 0$ and for the diffusion term $G$, it is assumed that $G(0) = DG(0) = D^2G(0) = 0$. We emphasize that we do not have to impose such strict conditions. 
	However, the probably most important point is that our method allows us to deduce the existence of stable manifolds: to our knowledge, Theorem \ref{stable_manifold} is the first stable manifold theorem for RPDEs. The stable manifold theorem describes directions in which the solution of the RPDE decays exponentially fast towards the stationary point. If all Lyapunov exponents are negative, the stable manifold theorem can be used to deduce pathwise local exponential stability of the solution, cf. Section \ref{sec:stability}. We want to emphasize that proving pathwise exponential stability for stochastic differential equations driven by non-Brownian paths is a challenging task even in finite-dimensional spaces due to the lack of the Markov property. Some partial results for rough differential equations driven by a multidimensional fractional Brownian motion were obtained in \cite{GANS18} and \cite{DHC19} for an Hurst parameter $H > \frac{1}{2}$ and in \cite{GAS18} for an Hurst parameter $H \in (\frac{1}{3}, \frac{1}{2})$. A stability result for rough evolution equations driven by a fractional Brownian motion with Hurst parameter $H > \frac{1}{2}$ was obtained in \cite{DGANS18}. The stability problem for RPDEs driven by a fractional Brownian motion with Hurst parameter $H \in (\frac{1}{3},\frac{1}{2})$ was investigated first in \cite{Hes22}. The author can prove local exponential stability around zero provided $F(0) = DF(0) = 0$ and $G(0) = DG(0) = 0$. Compared to our stability results, cf. Theorem \ref{thm:exp_stability}, these assumptions are more restrictive since we do not have to assume that the first derivates have a fixed point at zero. Furthermore, the method we are using allows to easily generalize the local stability results to RPDEs around random stationary points.

	\subsection*{Notation and basic definitions}
	
	The symbol $\circ$ usually denotes an inner product. In estimates, $a \lesssim b$ means that there is a constant $C$ that might depend on some irrelevant parameters such that $a \leq C b$. In this article, we will often consider indexed families of Banach spaces $\{(\mathcal{B}_{\alpha}, |\cdot|_{\mathcal{B}_{\alpha}})\}_{\alpha}$. Mostly, the norm $|\cdot|_{\mathcal{B}_{\alpha}}$ will simply be denoted by $|\cdot|_{\alpha}$. For a Banach space $(\mathcal{B}, |\cdot|)$, $x_0 \in  \mathcal{B}$ and $\epsilon > 0$, we set
	\begin{align*}
		B_{\mathcal{B}}(x_0,\epsilon) &\coloneqq B(x_0,\epsilon) \coloneqq \lbrace x\in \mathcal{B} \,:\, | x-x_0 | < \epsilon\rbrace \quad \text{and} \\
		\overline{B_{\mathcal{B}} (x_0,\epsilon)} &\coloneqq \overline{B(x_0,\epsilon)} \coloneqq \lbrace x\in \mathcal{B} \,:\, | x-x_0 | \leq \epsilon\rbrace.
	\end{align*}
	If $\mathcal{B}$ and $\tilde{\mathcal{B}}$ are Banach spaces, the space $\mathcal{L}(\mathcal{B}, \tilde{\mathcal{B}})$ consists of all bounded linear functions from $\mathcal{B}$ to $\tilde{\mathcal{B}}$ and is equipped with the usual operator norm. We write $\mathcal{L}(\mathcal{B}) \coloneqq \mathcal{L}(\mathcal{B},\mathcal{B})$. Let $I$ be an interval. For a function $Y \colon I \to \mathcal{B}$ and $s,t \in I$, we set $\delta X_{s,t} \coloneqq X_t - X_s$. The space $C(I; \mathcal{B})$ consists of all continuous functions $X \colon I \to \mathcal{B}$. Similarly, $C_2(I; \mathcal{B})$ denotes the space of continuous functions $Z \colon I \times I \to \mathcal{B}$. Both spaces are equipped with the sup-norm. For $\gamma > 0$, $X \in C^{\gamma}(I; \mathcal{B})$ if and only if
	\begin{align*}
		\|Y\|_{C^{\gamma}} \coloneqq \sup_{t \in I} |Y_t| +   \|Y\|_{\gamma} < \infty
	\end{align*}
	where
	\begin{align*}
		\|Y\|_{\gamma} \coloneqq \|Y\|_{\gamma,I} \coloneqq \sup_{\substack{s,t \in I \\ s \neq t}}  \frac{|\delta Y_{s,t}|}{|t-s|^{\gamma}}.
	\end{align*}
	Similarly, $Z \in C^{\gamma}_2(I; \mathcal{B})$ if and only if
	\begin{align*}
		\|Z\|_{C_2^{\gamma}} \coloneqq \sup_{s,t \in I} |Z_{s,t}| + \|Z\|_{\gamma} < \infty
	\end{align*}
	where
	\begin{align*}
		\|Z\|_{\gamma} \coloneqq \|Z\|_{\gamma,I} \coloneqq \sup_{\substack{s,t \in I \\ s \neq t}}  \frac{|Z_{s,t}|}{|t-s|^{\gamma}}.
	\end{align*}
	By the derivative of a Banach space valued function we mean the derivative in Fr\'echet-sense.

	
	
	

	\subsection*{Review on Rough stochastic partial differential equations}
	
	We assume that the reader is familiar with the basic notions of rough path theory as it is presented, for instance, in \cite{FH20}. We will mostly consider $\gamma$-H\"older rough paths $\mathbf{X}$ for $\gamma \in (\frac{1}{3},\frac{1}{2}]$, i.e. $\mathbf{X}$ has two components, $\mathbf{X} = (X,\mathbb{X})$. The space space of all $\gamma$-H\"older rough paths defined on $[0,T]$ is denoted by $\mathscr{C}^{\gamma}([0,T],\R^n)$. We write $\mathbf{X} \in \mathscr{C}^{\gamma}([0,\infty),\R^n)$ if and only if $\mathbf{X}|_{[0,T]} \in \mathscr{C}^{\gamma}([0,T],\R^n)$ for every $T > 0$. For $[s,t]\subseteq [0,T]$, we set 
	\begin{align*}
		\varrho_{\gamma}(\mathbf{X},[s,t]):=1+\Vert X\Vert_{\gamma,[s,t]}+\Vert\mathbb{X}\Vert_{2\gamma,[s,t]}.
	\end{align*} \smallskip
	
	We are interested in the solution of a rough SPDE of the form
	\begin{align}\label{SPDE_EQU}
		\mathrm{d}Z_t = AZ_t \, \mathrm{d}t+F(Z_t) \, \mathrm{d}t+G(Z_t)\circ\mathrm{d}\mathbf{X}_t, \ \ \ \  Z_{0}=z_0\in \mathcal{B}_{\alpha}
	\end{align}
	where $\mathbf X$ is a rough path. This family of SPDEs is studied in \cite{GH19} and \cite{GHT21}. We quickly review some basic definitions and notations. For more details, the reader is referred to \cite{GHT21}. The following definition is taken from  \cite[Definition 2.1]{GHT21}.
	\begin{definition}
		We call a family of indexed separable Banach spaces $\lbrace (\mathcal{B}_{\beta},\vert\ \cdot \vert_{\beta})\rbrace_{\beta\in\mathbb{R}}$  a \emph{monotone
			family of interpolation spaces} if
		\begin{itemize}
			\item [(i)] For every $\alpha\leq \beta$:  $\mathcal{B}_{\beta}$ is a dense subset of $\mathcal{B}_{\alpha}$ and the  identity map $\operatorname{id} \colon \mathcal{B}_{\beta}\rightarrow \mathcal{B}_{\alpha}$  is continuous.
			\item [(ii)] For every $\alpha\leq\beta\leq\theta$ and $x\in\mathcal{B}_{\alpha}\cap\mathcal{B}_{\theta}: \  \vert x\vert_{\beta}\lesssim \vert x\vert_{\alpha}^{\frac{\theta-\beta}{\theta-\alpha}}\vert x\vert_{\gamma}^{\frac{\beta-\alpha}{\gamma-\alpha}}$.
		\end{itemize}
	\end{definition}
	We will assume the following:
	\begin{assumption}\label{existence}
		Let $\frac{1}{3}<\gamma\leq\frac{1}{2}$ and $\mathbf{X}=(X,\mathbb{X})\in\mathscr{C}^{\gamma}([0,\infty),\mathbb{R}^n)$ be a $\gamma$-H\"older rough path. Furthermore, let $0\leq\sigma <1$, $0\leq \eta <\gamma $ and $0\leq\theta\leq 2\gamma$. Assume that
		\begin{itemize}
			\item $F \colon \mathcal{B}_{\alpha}\rightarrow\mathcal{B}_{\alpha-\sigma}$ is a locally Lipschitz continuous with linear growth, i.e. there are some constants $p_1,p_2$ such that $\vert F(x)\vert_{\alpha-\sigma}\leq p_{1}+p_{2}\vert x\vert_{\alpha}$ for all $x \in \mathcal{B}_{\alpha}$.
			\item $G \colon \mathcal{B}_{\alpha-\theta}\rightarrow\mathcal{B}_{\alpha-\theta-\eta}^{n}$ is a bounded Fr\'echet differentiable function up to three times with bounded derivatives or a bounded  linear function.
			\item $A$ generates a continuous semigroup $(S_{t})_{t\geq 0}$ such that for every $\beta \in [\min\lbrace \alpha-2\gamma-\eta, \alpha-\sigma\rbrace,\alpha]$, $S_{t}\in\mathcal{L}(\mathcal{B}_{\beta})$. Also for every  ${\sigma}_1\in [0,1)$ with $\beta+{\sigma}_1\leq \alpha$,
			\begin{align}\label{SEM_I}
				\begin{split}
					\vert S_{t}x\vert_{\beta+{\sigma}_1} &\lesssim t^{-{\sigma}_1}\vert x\vert_{{\beta}}, \\  
					\vert(I-S_{t})x\vert_{{\beta}} &\lesssim t^{{\sigma}_1}\vert x\vert_{\beta+{\sigma}_1}.
				\end{split}
			\end{align}	
		\end{itemize}  
	\end{assumption}
	\begin{remark}
		Note that as a direct result of \eqref{SEM_I} for $\beta \in [\min\lbrace \alpha-2\gamma-\eta, \alpha-\sigma\rbrace,\alpha]$ and  $\sigma_1,\sigma_2\in [0,1)$ such that  $\beta-\sigma_1+\sigma_2\in  [\min\lbrace \alpha-2\gamma-\eta, \alpha-\sigma\rbrace,\alpha]$, we have
		\begin{align}\label{SEMI_II}
			\vert S_{t-u}(I-S_{u-v})x\vert_{\beta}\lesssim (t-u)^{-\sigma_1}(u-v)^{\sigma_2}\vert x\vert_{\beta-\sigma_1+\sigma_2}.\ \  
		\end{align}
	\end{remark}
	Let us quickly review the required framework to solve \eqref{SPDE_EQU} and also some preliminary definitions. Most of this is taken from \cite{GHT21}. 
	\begin{definition}
		For an interval $I\subset\mathbb{R}$, set
		\begin{align*}
			\mathcal{E}^{0,\gamma}_{\alpha-\gamma,I} \coloneqq C(I;\mathcal{B}_{\alpha-\gamma})\cap C^{\gamma}(I;\mathcal{B}_{\alpha-2\gamma}) \quad \text{and} \quad \mathcal{E}^{\gamma,2\gamma}_{\alpha;I} \coloneqq C^{\gamma}_{2}(I;\mathcal{B}_{\alpha-\gamma})\cap C^{2\gamma}_{2}(I;\mathcal{B}_{\alpha-2\gamma}).
		\end{align*}
		We write $Z\in \mathcal{D}_{\mathbf{X},\alpha}^{\gamma}(I)$ if there exists a $Z' \in (\mathcal{E}^{0,\gamma}_{\alpha-\gamma,I})^n$ such that for $Z^{\#}_{s,t} \coloneqq Z_{s,t} - Z^{\prime}_{s}\circ(\delta X)_{s,t}$, $s,t\in I$, we have
		\begin{align*}
			\Vert (Z,Z^{\prime})\Vert_{\mathcal{D}_{\mathbf{X},\alpha}^{\gamma}(I)} \coloneqq \Vert Z\Vert_{C(I;\mathcal{B}_{\alpha})}+\Vert Z^{\prime}\Vert_{(\mathcal{E}^{0,\gamma}_{\alpha-\gamma,I})^n}+\Vert Z^{\#}\Vert_{\mathcal{E}^{\gamma,2\gamma}_{\alpha;I}}<\infty
		\end{align*}
		where 
		\begin{align*}
			\Vert Z\Vert_{C(I;\mathcal{B}_{\alpha})} &\coloneqq \sup_{\tau\in I}\vert Z_{\tau}\vert_{\alpha},\ \ \Vert Z^{\prime}\Vert_{(\mathcal{E}^{0,\gamma}_{\alpha-\gamma,I})^n}:= \max \left\{ \sup_{\tau\in I}\vert Z^{\prime}_{\tau}\vert_{\alpha-\gamma}^{(n)},\sup_{\substack{\tau,\nu\in I,\\ \tau<\nu}}\frac{\vert (\delta Z^{\prime})_{\tau,\nu}\vert_{\alpha-2\gamma}^{(n)}}{(\nu-\tau)^{\gamma}} \right\} \ \text{and}\\ \vert Z^{\#}\vert_{\mathcal{E}^{\gamma,2\gamma}_{\alpha;I}} &\coloneqq \max \left\{ \sup_{\substack{\tau,\nu\in I,\\ \tau<\nu}}\frac{\vert Z^{\#}_{\tau,\nu}\vert_{\alpha-\gamma}}{(\nu-\tau)^{\gamma}},\sup_{\substack{\tau,\nu\in I,\\ \tau<\nu}}\frac{\vert Z^{\#}_{\tau,\nu}\vert_{\alpha-2\gamma}}{(\nu-\tau)^{2\gamma}} \right\}.
		\end{align*}
		Above, for $Z'=(Z'_{i})_{1\leq i\leq n}$, we use the notation $|Z'|_{\beta}^{(n)} \coloneqq \sup_{1\leq i\leq n}|Z'_i|_{\beta}$.
	\end{definition} 
	It follows directly from the definition that
	\begin{align}\label{NMMMMMLY}
		\sup_{\substack{\tau,\nu\in I,\\ \tau<\nu}}\frac{\vert (\delta Z)_{\tau,\nu}\vert_{\alpha-\gamma}}{(\nu-\tau)^{\gamma}}
		\leq (1+\Vert X\Vert_{\gamma,I})\Vert (Z,Z^{\prime})\Vert_{\mathcal{D}_{\mathbf{X},\alpha}^{\gamma}(I)}.
	\end{align}
	We recall that for $(Z,Z^\prime)=(Z^i,(Z^{i})^{\prime})_{1\leq i\leq n}\in(\mathcal{D}_{\mathbf{X},\alpha}^{\gamma}(I))^{n}$, the following limit exists: 
	\begin{align}\label{SEW}
		\int_{s}^{t}S_{t-\tau}Z_{\tau}\circ\mathrm{d}\mathbf{X}_{\tau}:=\lim_{\substack{|\pi|\rightarrow 0,\\ \pi=\lbrace s = \tau_0 < \tau_{1} < \ldots < \tau_{m}=t \rbrace}}\sum_{0\leq j<m}\big{[}S_{t-\tau_j}Z_{\tau_j}\circ (\delta X)_{\tau_j,\tau_{j+1}}+S_{t-\tau_j}Z^{\prime}_{\tau_j}\circ\mathbb{X}_{\tau_j,\tau_{j+1}}\big{]}
	\end{align}
	where for $(Z^{i}_{s})^{\prime}=((Z^{i,j}_{s})^{\prime})_{1\leq j\leq n}$, 
	\begin{align*}
		S_{t-s}Z^{\prime}_{s}\circ\mathbb{X}_{s,t} \coloneqq \sum_{1\leq i,j\leq n}S_{t-s}(Z^{i,j}_{s})^{\prime}\circ\mathbb{X}^{i,j}_{s,t}.
	\end{align*}
	Here, $\pi=\lbrace s = \tau_0 < \tau_{1} < \ldots < \tau_{m}=t \rbrace$ denotes a finite partition of $[s,t]$ and
	\begin{align*}
		|\pi| = \max_{i = 0,\ldots, m-1}|\tau_{i+1} - \tau_i|.
	\end{align*}
	
	Let $ Z\in\mathcal{D}_{\mathbf{X},\alpha}^{\gamma}(I)$. Then, it can easily be shown that $G(Z) \in\mathcal{D}_{\mathbf{X},\alpha-\eta}^{\gamma}(I)$. Also from \cite[Theorem 4.5.]{GHT21}, for $0\leq\eta<\gamma$ and $I=[s,t]$, the linear map
	\begin{align*}
		&(\mathcal{D}_{\mathbf{X},{\alpha}-\eta}^{\gamma}(I))^{n}\longrightarrow \mathcal{D}_{\mathbf{X},{\alpha}}^{\gamma}(I),
		\\ & \quad (Z,Z^{\prime})\longrightarrow \left( \int_{s}^{.}S_{.-\tau}Z_{\tau}\circ\mathrm{d}\mathbf{X}_{\tau},Z \right)
	\end{align*}
	is well defined. In addition, if $t-s\leq 1$ then for $i\in\lbrace 0,1,2\rbrace$,
	\begin{align}\label{BBBBBB1}
		\begin{split}
			&\left|\int_{s}^{t}S_{t-\tau}Z_{\tau}\circ\mathrm{d}\mathbf{X}_{\tau}-S_{t-s}Z_{s}\circ (\delta X)_{s,u}-S_{t-s}Z_{s}^{\prime}\circ \mathbb{X}_{s,u}\right|_{\alpha-i\gamma}\\&\quad\leq  C_{\gamma,\eta}(t-s)^{\gamma i+\gamma-\eta}(1+\Vert X\Vert_{\gamma,I}+\Vert\mathbb{X}\Vert_{2\gamma,I})\Vert (Z,Z^{\prime})\Vert_{(\mathcal{D}_{\mathbf{X},{\alpha-\eta}}^{\gamma}(I))^{n}}.\\
		\end{split}
	\end{align}
	Also,
	\begin{align}\label{BBBBBB}
		\begin{split}
			&	\left\| \big( \int_{s}^{.}S_{.-\tau}Z_{\tau}\circ\mathrm{d}\mathbf{X}_{\tau},Z\big) \right\| _{\mathcal{D}_{\mathbf{X},{\alpha}}^{\gamma}(I)}\\&\quad \leq C_{\gamma,\eta}\bigg( \vert Z_{s}\vert_{\alpha-\eta}^{(n)}+\vert Z^{\prime}_s\vert_{\alpha-\eta-\gamma}^{(n\times n)} 	\varrho_{\gamma}(\mathbf{X},[s,t])+ (t-s)^{\gamma-\eta}	\varrho_{\gamma}(\mathbf{X},[s,t])\Vert (Z,Z^{\prime})\Vert_{(\mathcal{D}_{\mathbf{X},{\alpha-\eta}}^{\gamma}(I))^{n}}\bigg).
		\end{split}
	\end{align}
	\begin{remark}
		We will prove\eqref{BBBBBB}, in a more general case in Lemma \eqref{parmeter}. In some references, this inequality is stated in the following form
		\begin{align*}
			&	\left\| \big( \int_{s}^{.}S_{.-\tau}Z_{\tau}\circ\mathrm{d}\mathbf{X}_{\tau},Z\big) \right\| _{\mathcal{D}_{\mathbf{X},{\alpha}}^{\gamma}(I)}\\&\quad \leq \bigg( \vert Z_{s}\vert_{\alpha-\eta}^{(n)}+\vert Z^{\prime}_s\vert_{\alpha-\eta-\gamma}^{(n\times n)} + C_{\gamma,\eta}(t-s)^{\gamma-\eta}	\varrho_{\gamma}(\mathbf{X},[s,t])\Vert (Z,Z^{\prime})\Vert_{(\mathcal{D}_{\mathbf{X},{\alpha-\eta}}^{\gamma}(I))^{n}}\bigg).
		\end{align*}
		This, as we will see in Lemma \eqref{parmeter}, is not true and needs a minor correction. 
	\end{remark}
	Finally, we can define the mild solution to \eqref{SPDE_EQU}:
	\begin{definition}
		We say that $Z\in \mathcal{D}_{\mathbf{X},{\alpha}}^{\gamma}(I)$ solves equation \eqref{SPDE_EQU} if and only if $Z$ satisfies the identity 
		\begin{align}\label{MILD}
			Z_t=S_{t-s}Z_{s}+\int_{s}^{t}S_{t-\tau}F(Z_{\tau}) \, \mathrm{d}\tau+\int_{s}^{t}S_{t-\tau}G(Z_{\tau})\circ\mathrm{d}\mathbf{X}_{\tau}, \ \ Z_{0}=z_0, \ s,t\in \mathbb{R},
		\end{align}
		where the second integral is understood as \eqref{SEW}.
	\end{definition}
	
	\begin{remark}
		Existence and uniqueness of the solution for this type of equation are discussed in several articles. For example, in \cite{HN22}, the authors prove that the mild solutions of the equation under Assumption \ref{existence} exists, is unique and globally defined. 
	\end{remark}

	\section{An integrable a priori bound}\label{sec:integrable_bound}
	We aim to prove that the solution to \eqref{SPDE_EQU} has an integrable bound. As we stated earlier, the well-posedness and global existence of the solution for this family of equations is well understood. However, the a priori bounds that are provided are not optimal in the sense that they fail to be integrable for Gaussian noises\footnote{For instance, in \cite{HN22}, a careful inspection of the proofs reveals a bound of the form $\exp((\varrho_{\gamma}(\mathbf{X},[s,t]))^{\frac{1}{\gamma-\eta}})$ that is clearly not integrable.}. The main obstacle we face here is the presence of the semigroup in the rough integral: if we just apply Gr\"onwall's lemma naively, we will only get an exponential bound in terms of the noise which is not integrable. To overcome this problem, we employ a modified version of the greedy points technique introduced in \cite{CLL13} and modify the Sewing lemma. \smallskip
	
	In the following section, $\mathbf{X} = (X,\mathbb{X})$ always denotes a $\gamma$-H\"older rough path where $\gamma \in (\frac{1}{3},\frac{1}{2}]$. \smallskip

	Let us first start with the following lemma where we introduce a new control function that will play a crucial role.
	\begin{lemma}\label{cnt}
		For $0\leq \eta_1<\gamma$, set
		\begin{align}
			\begin{split}
				&	W_{\mathbf{X},\gamma,\eta_1} \colon \Delta_{T} = \{(s,t) \in [0,T]^2 \,:\, s \leq t \} \rightarrow \mathbb{R}, \\
				&	W_{\mathbf{X},\gamma,\eta_1}(s,t) \coloneqq \sup_{\substack{\pi,\\ \pi=\lbrace s = \kappa_0 < \kappa_{1} < \ldots < \kappa_{m} = t  \rbrace}} \left\{ \sum_{j}(\kappa_{j+1}-\kappa_j)^{\frac{-\eta_1}{\gamma-\eta_1}}\big{[}\Vert(\delta X)_{\kappa_{j},\kappa_{j+1}}\Vert^{\frac{1}{\gamma-\eta_1}}+\Vert\mathbb{X}_{\kappa_{j},\kappa_{j+1}}\Vert^{\frac{1}{2(\gamma-\eta_1)}} \big{]} \right\}.
			\end{split}
		\end{align} 
		where the supremum ranges over all finite partitions of the interval $[s,t]$. Then  $W_{\mathbf{X},\gamma,\eta_1}$ is a \emph{control function}, i.e. it is continuous and satisfies 
		\begin{align}\label{DFDF}
			W_{\mathbf{X},\gamma,\eta_1}(s,u)+W_{\mathbf{X},\gamma,\eta_1}(u,t)\leq W_{\mathbf{X},\gamma,\eta_1}(s,t),\ \ \  s\leq u\leq t.
		\end{align}
	\end{lemma}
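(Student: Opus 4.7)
The plan is to verify the two defining properties of a control function separately: super-additivity and continuity. The super-additivity \eqref{DFDF} is the easier half and follows directly from the definition. Given any $s \leq u \leq t$ and any partitions $\pi_1 = \{s = \kappa_0 < \ldots < \kappa_m = u\}$ of $[s,u]$ and $\pi_2 = \{u = \mu_0 < \ldots < \mu_\ell = t\}$ of $[u,t]$, the concatenation $\pi_1 \cup \pi_2$ is a finite partition of $[s,t]$, and the summand in the definition of $W_{\mathbf{X},\gamma,\eta_1}$ is additive across the concatenation point $u$. Thus $\sum_{\pi_1} + \sum_{\pi_2} = \sum_{\pi_1 \cup \pi_2} \leq W_{\mathbf{X},\gamma,\eta_1}(s,t)$; taking the sup over $\pi_1$ and $\pi_2$ independently yields \eqref{DFDF}.

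The key observation for everything else is a linear-in-time upper bound. Using the H\"older regularity of $X$ and $\mathbb{X}$ we have
\begin{align*}
	(\kappa_{j+1}-\kappa_j)^{\frac{-\eta_1}{\gamma-\eta_1}}\|\delta X_{\kappa_j,\kappa_{j+1}}\|^{\frac{1}{\gamma-\eta_1}} &\leq \|X\|_{\gamma}^{\frac{1}{\gamma-\eta_1}}(\kappa_{j+1}-\kappa_j)^{\frac{\gamma-\eta_1}{\gamma-\eta_1}} = \|X\|_{\gamma}^{\frac{1}{\gamma-\eta_1}}(\kappa_{j+1}-\kappa_j),
\end{align*}
and analogously $(\kappa_{j+1}-\kappa_j)^{\frac{-\eta_1}{\gamma-\eta_1}}\|\mathbb{X}_{\kappa_j,\kappa_{j+1}}\|^{\frac{1}{2(\gamma-\eta_1)}} \leq \|\mathbb{X}\|_{2\gamma}^{\frac{1}{2(\gamma-\eta_1)}}(\kappa_{j+1}-\kappa_j)$. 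The critical point is that the singular factor $(\kappa_{j+1}-\kappa_j)^{-\eta_1/(\gamma-\eta_1)}$ exactly cancels against the surplus H\"older exponent, and this cancellation uses the hypothesis $\eta_1 < \gamma$. Summing over any partition and taking the supremum gives the Lipschitz estimate
\begin{align*}
	W_{\mathbf{X},\gamma,\eta_1}(s,t) \leq C\bigl(\|X\|_{\gamma,[s,t]}^{\frac{1}{\gamma-\eta_1}}+\|\mathbb{X}\|_{2\gamma,[s,t]}^{\frac{1}{2(\gamma-\eta_1)}}\bigr)(t-s),
\end{align*}
which in particular implies $W_{\mathbf{X},\gamma,\eta_1}(s,s) = 0$ and gives continuity along the diagonal.

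For continuity at a general point $(s_0,t_0)$ with $s_0 < t_0$, I would combine this Lipschitz bound with super-additivity. Take $(s_n,t_n) \to (s_0,t_0)$; by monotonicity in each variable (super-additivity combined with $W \geq 0$), it suffices to handle the cases $s_n \downarrow s_0$, $s_n \uparrow s_0$, and similarly for $t_n$. For the upper bound, super-additivity gives e.g.\ $W_{\mathbf{X},\gamma,\eta_1}(s_0,t_0) \geq W_{\mathbf{X},\gamma,\eta_1}(s_0,s_n) + W_{\mathbf{X},\gamma,\eta_1}(s_n,t_n) + W_{\mathbf{X},\gamma,\eta_1}(t_n,t_0)$ when $s_0 \leq s_n \leq t_n \leq t_0$, and the outer two terms vanish by the Lipschitz bound. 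The matching lower bound is obtained by a standard approximation: fix $\epsilon>0$, choose a near-optimal partition $\pi$ of $[s_0,t_0]$ with sum exceeding $W_{\mathbf{X},\gamma,\eta_1}(s_0,t_0) - \epsilon$, and modify its endpoints to $s_n,t_n$. The modified partition is admissible for $[s_n,t_n]$, and by continuity of $X$ and $\mathbb{X}$ its sum converges (as $n \to \infty$) to the sum over $\pi$, whose endpoint contributions to the integrand are controlled because the interior increments $\kappa_1 - s_n$ and $t_n - \kappa_{m-1}$ are bounded away from zero for large $n$.

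The main obstacle is really only in setting up the continuity argument cleanly, since the sup-over-partitions construction is not obviously jointly continuous; all other steps are essentially bookkeeping once the exponent identity $\tfrac{\gamma}{\gamma-\eta_1}-\tfrac{\eta_1}{\gamma-\eta_1}=1$ is observed.
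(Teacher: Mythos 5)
The paper itself gives no real proof here (its ``proof'' is the single sentence \emph{Follows from our assumption on $\mathbf{X}$}), so your proposal is doing genuinely more work than the source. Your treatment of super-additivity (concatenation of partitions) and of the key exponent identity $\tfrac{\gamma}{\gamma-\eta_1}-\tfrac{\eta_1}{\gamma-\eta_1}=1$, yielding $W_{\mathbf{X},\gamma,\eta_1}(s,t)\lesssim (t-s)$ and hence vanishing on the diagonal, is correct.

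There is, however, a gap in the continuity argument: you only establish continuity under \emph{inner} approximation. When $[s_0,t_0]\subseteq[s_n,t_n]$, super-additivity and monotonicity give the easy inequality $W_{\mathbf{X},\gamma,\eta_1}(s_n,t_n)\ge W_{\mathbf{X},\gamma,\eta_1}(s_0,t_0)$, but the required \emph{upper} bound $\limsup_n W_{\mathbf{X},\gamma,\eta_1}(s_n,t_n)\le W_{\mathbf{X},\gamma,\eta_1}(s_0,t_0)$ does not follow from super-additivity (which points the wrong way), from monotonicity (a monotone function can jump), or from your endpoint-perturbation argument (which produces lower bounds for the sup over the \emph{smaller} interval, not upper bounds for the sup over the larger one). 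This outer one-sided continuity is exactly what is used later in Proposition \ref{NAZ} to conclude $W_{\mathbf{X}(\omega),\gamma,\eta_1}(\tau_n,\tau_{n+1})=\chi$ at the greedy points, so it cannot be waved away. To close it one must take a near-optimal partition of $[s_0,t_0+h]$, cut it at $t_0$, and show that the single straddling interval $[\kappa_j,\kappa_{j+1}]\ni t_0$ contributes at most its $[\kappa_j,t_0]$-analogue plus an error that is uniformly $o(1)$ as $h\downarrow 0$; this requires a case distinction on the size of $t_0-\kappa_j$ (because the singular prefactor $(\kappa_{j+1}-\kappa_j)^{-\eta_1/(\gamma-\eta_1)}$ can be large precisely when the naive increment bound $(a+b)^p\le a^p+Cb$ is not enough) together with Chen's relation to decompose $\mathbb{X}_{\kappa_j,\kappa_{j+1}}$. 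This is the one genuinely delicate step of the lemma, and your proposal as written does not contain it.
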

	\begin{proof}
		Follows from our assumption on $\mathbf{X}$.
	\end{proof}

	The next lemma is basic.
	\begin{lemma}
		Assume $Z\in \mathcal{D}_{\mathbf{X},{\alpha}}^{\gamma}(I)$ is a mild solution to \eqref{SPDE_EQU}. Then  
		\begin{align*}
			&Z^{\prime}_{s}=G(Z_s) \quad \text{and} \quad Z^{\#}_{s,t}=(\delta Z)_{s,t}-G(Z_s)\circ (\delta X)_{s,t}.
		\end{align*}
		Moreover, $[G(Z)]^{\prime}_{s}=D_{Z_{s}}G [G(Z_s)]$ and 
		\begin{align}\label{Remaider}
			\begin{split}
				[G(Z)]^{\#}_{s,t} &= G(Z_t)-G(Z_s)-D_{Z_{s}}G [G(Z_s)\circ (\delta X)_{s,t}] \\
				&= \int_{0}^{1}\int_{0}^{1}\sigma D^{2}_{Z_s+\sigma u(\delta Z)_{s,t}}G [G(Z_s)\circ (\delta X)_{s,t},G(Z_s)\circ (\delta X)_{s,t}+Z_{s,t}^{\#}] \, \mathrm{d}u \, \mathrm{d}\sigma  \\
				&\quad +\int_{0}^{1}D_{Z_s+\sigma (\delta Z)_{s,t}}G [Z^{\#}_{s,t}] \, \mathrm{d}\sigma .
			\end{split}
		\end{align}
		Let $s\leq u\leq v\leq w\leq t$ and 
		\begin{align*}
			\tilde{\Xi}^{u,v}_{s,t} \coloneqq S_{t-u}G(Z_{u})\circ (\delta X)_{u,v}+S_{t-u}D_{Z_{u}}G [G(Z_u)]\circ\mathbb{X}_{u,v}.
		\end{align*}
		Then
		\begin{align}\label{Increment}
			\begin{split}
				&\tilde{\Xi}^{u,v}_{s,t}+\tilde{\Xi}^{v,w}_{s,t}-\tilde{\Xi}^{u,w}_{s,t} \\
				&\ =S_{t-u}([G(Z)]^{\#}_{u,v})\circ (\delta X)_{v,w}-S_{t-v}(S_{v-u}-I)G(Z_v)\circ (\delta X)_{v,w} - S_{t-v}(S_{v-u}-I)D_{Z_{v}}G [G(Z_v)]\circ\mathbb{X}_{v,w} \\
				&\quad + S_{t-u}\big(\int_{0}^{1}D^{2}_{Z_{u}+\sigma (\delta z)_{u,v}}G\big[G(Z_u)\circ (\delta X)_{u,v}+Z^{\#}_{u,v},G(Z_u)\big] \, \mathrm{d}\sigma\big)\circ \mathbb{X}_{v,w}\\
				&\quad + S_{t-u}\big(D_{Z_v}G\big[\int_{0}^{1}D_{Z_{u}+\sigma(\delta Z)_{u,v}}G[G(Z_u)\circ (\delta X)_{u,v}+Z^{\#}_{u,v}] \, \mathrm{d}\sigma\big]\big)\circ \mathbb{X}_{v,w}.
			\end{split}
		\end{align}
	\end{lemma}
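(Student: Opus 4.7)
The plan is to verify the three assertions by direct algebraic manipulation, using (a) the mild solution equation \eqref{MILD} together with the expansion \eqref{BBBBBB1} of the rough integral, (b) Taylor's theorem with integral remainder for $G$, and (c) Chen's relation $\mathbb{X}_{u,w} = \mathbb{X}_{u,v} + \mathbb{X}_{v,w} + \delta X_{u,v}\otimes\delta X_{v,w}$ together with the semigroup law $S_{t-u} = S_{t-v}S_{v-u}$. No new analytic input is required.

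For the identification of $Z'$ and $Z^{\#}$, I would start from \eqref{MILD} and subtract $Z_s$. Applying \eqref{BBBBBB1} with $i=1$ shows that the rough integral equals $S_{t-s}G(Z_s)\circ\delta X_{s,t}$ modulo a remainder of order $(t-s)^{2\gamma-\eta}$ in $\mathcal{B}_{\alpha-\gamma}$; by \eqref{SEM_I} and Assumption \ref{existence}, the drift integral and $(S_{t-s}-I)Z_s$ are also higher-order in the same norm. Matching this against the controlled-path decomposition $Z_{s,t} = Z'_s\circ\delta X_{s,t} + Z^{\#}_{s,t}$ forces $Z'_s = G(Z_s)$ and the stated formula for $Z^{\#}_{s,t}$. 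For $[G(Z)]'$ and $[G(Z)]^{\#}$, the chain rule for controlled rough paths gives $[G(Z)]'_s = D_{Z_s}G[G(Z_s)]$. To derive \eqref{Remaider}, I write $G(Z_t)-G(Z_s) = \int_0^1 D_{Z_s+\sigma\delta Z_{s,t}}G[\delta Z_{s,t}]\,\mathrm{d}\sigma$, substitute $\delta Z_{s,t} = G(Z_s)\circ\delta X_{s,t} + Z^{\#}_{s,t}$, and expand $D_{Z_s+\sigma\delta Z_{s,t}}G - D_{Z_s}G = \int_0^\sigma D^2_{Z_s+u\delta Z_{s,t}}G[\delta Z_{s,t}]\,\mathrm{d}u$. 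After the change of variables $u = \sigma u'$ and using symmetry of $D^2G$ in its two vector arguments, the formula \eqref{Remaider} emerges.

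The third assertion is the main computation. I would substitute Chen's relation and the semigroup law into $\tilde{\Xi}^{u,v}_{s,t} + \tilde{\Xi}^{v,w}_{s,t} - \tilde{\Xi}^{u,w}_{s,t}$; the $\delta X_{u,v}$- and $\mathbb{X}_{u,v}$-terms cancel outright, leaving a linear combination of expressions carrying $\delta X_{v,w}$, $\mathbb{X}_{v,w}$, and the cross term $\delta X_{u,v}\otimes\delta X_{v,w}$. The two telescoping identities
\begin{align*}
G(Z_v) - S_{v-u}G(Z_u) &= -(S_{v-u}-I)G(Z_v) + S_{v-u}(G(Z_v)-G(Z_u)), \\
D_{Z_v}G[G(Z_v)] - S_{v-u}D_{Z_u}G[G(Z_u)] &= -(S_{v-u}-I)D_{Z_v}G[G(Z_v)] + S_{v-u}\bigl(D_{Z_v}G[G(Z_v)] - D_{Z_u}G[G(Z_u)]\bigr)
\end{align*}
then isolate the first three terms of \eqref{Increment}. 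The Chen cross term cancels with $S_{t-u}D_{Z_u}G[G(Z_u)\circ\delta X_{u,v}]\circ\delta X_{v,w}$ produced by the Taylor expansion $G(Z_v)-G(Z_u) = D_{Z_u}G[G(Z_u)\circ\delta X_{u,v}] + [G(Z)]^{\#}_{u,v}$ from the preceding step, leaving precisely the $[G(Z)]^{\#}_{u,v}$ contribution. Finally, the remaining difference $D_{Z_v}G[G(Z_v)]-D_{Z_u}G[G(Z_u)]$ is split as $D_{Z_v}G[G(Z_v)-G(Z_u)] + (D_{Z_v}G - D_{Z_u}G)[G(Z_u)]$, and each piece is expanded by the fundamental theorem of calculus together with $\delta Z_{u,v} = G(Z_u)\circ\delta X_{u,v} + Z^{\#}_{u,v}$; this produces exactly the last two integrals in \eqref{Increment}.

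The main obstacle is organisational rather than conceptual: every cancellation relies on carefully tracking signs, the base points at which the derivatives of $G$ are evaluated, and the base points of the semigroup shift. In particular, it is essential to expand $G(Z_v)-S_{v-u}G(Z_u)$ around $Z_v$ (rather than $Z_u$) so as to reproduce exactly the term $-S_{t-v}(S_{v-u}-I)G(Z_v)\circ\delta X_{v,w}$ stated in \eqref{Increment}, and the analogous convention must be adopted for the derivative term.
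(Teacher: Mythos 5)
Your computation is correct and is exactly the elaboration of the paper's one-line proof ("follows from the definition of the mild solution and our assumptions"): the identification of $Z'$ and $Z^{\#}$ from the fixed-point construction, the Taylor expansion with integral remainder giving \eqref{Remaider} (after the change of variables $u=\sigma u'$ and the symmetry of $D^2G$), and the Chen-relation/semigroup bookkeeping that produces \eqref{Increment}, including the cancellation of the cross term $S_{t-u}D_{Z_u}G[G(Z_u)\circ(\delta X)_{u,v}]\circ(\delta X)_{v,w}$ against the first-order part of $[G(Z)]^{\#}_{u,v}$. Your remark that the telescoping must be anchored at $Z_v$ to reproduce the $-S_{t-v}(S_{v-u}-I)G(Z_v)$ terms is precisely the right convention.
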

	\begin{proof}
		Follows from  definition of the mild solutions and our assumptions.
	\end{proof}
	
	\begin{lemma}\label{SSA}
		Let us fix $s<t$ and set $\tau^{n}_{m} \coloneqq s + \frac{n}{2^m}(t-s)$ where $0\leq n<2^m-1$.  We also define
		\begin{align}
			\Xi^{n,m}_{s,t} \coloneqq S_{t-\tau^{n}_{m}}G(Z_{\tau^{n}_{m}})\circ (\delta X)_{\tau^{n}_{m},\tau^{n+1}_{m}}+S_{t-\tau^{n}_{m}}D_{Z_{\tau^{n}_{m}}}G[G(Z_{\tau^{n}_{m}})]\circ \mathbb{X}_{\tau^{n}_{m},\tau^{n+1}_{m}}.
		\end{align}
		Then for $A^{n}_{m}:=G(Z_{\tau^{2n}_{m+1}})\circ (\delta X)_{{\tau^{2n}_{m+1}},{\tau^{2n+1}_{m+1}}}$ we have
		\begin{align}\label{increment}
			\begin{split}
				&\Xi^{2n,m+1}_{s,t}+\Xi^{2n+1,m+1}_{s,t}-\Xi^{n,m}_{s,t} \\
				&\ = S_{t-\tau^{2n}_{m+1}} \left( \int_{0}^{1}\int_{0}^{1}\sigma D^{2}_{Z_{\tau^{2n}_{m+1}}+\sigma u (\delta Z)_{\tau^{2n}_{m+1},\tau^{2n+1}_{m+1}}}G[A^{n}_{m};A^{n}_{m}+Z^{\#}_{{\tau^{2n}_{m+1}},{\tau^{2n+1}_{m+1}}} ] \, \mathrm{d}u\, \mathrm{d}\sigma \right) \circ (\delta X)_{\tau^{2n+1}_{m+1},\tau^{2n+2}_{m+1}}\\
				&\quad + S_{t-\tau^{2n}_{m+1}} \left( \int_{0}^{1}D_{Z_{\tau^{2n}_{m+1}}+\sigma (\delta Z)_{\tau^{2n}_{m+1},\tau^{2n+1}_{m+1}}}G[Z^{\#}_{{\tau^{2n}_{m+1}},\tau^{2n+1}_{m+1}}] \, \mathrm{d}\sigma\right) \circ (\delta X)_{\tau^{2n+1}_{m+1},\tau^{2n+2}_{m+1}}\\
				&\quad + S_{t-\tau^{2n}_{m+1}} \left( \int_{0}^{1}D^{2}_{Z_{\tau^{2n}_{m+1}} + \sigma (\delta Z)_{\tau^{2n}_{m+1},\tau^{2n+1}_{m+1}}}G[A^{n}_{m}+Z^{\#}_{\tau^{2n}_{m+1},\tau^{2n+1}_{m+1}};G(Z_{\tau^{2n}_{m+1}})]\,\mathrm{d}\sigma \right) \circ\mathbb{X}_{\tau^{2n+1}_{m+1},\tau^{2n+2}_{m+1}}\\
				&\quad + S_{t-\tau^{2n}_{m+1}} \left( D_{Z_{\tau^{2n+1}_{m+1}}}G \left[ \int_{0}^{1}D_{Z_{\tau^{2n}_{m+1}}+\sigma (\delta Z)_{\tau^{2n}_{m+1},\tau^{2n+1}_{m+1}}}G [A^{n}_{m}+Z^{\#}_{\tau^{2n}_{m+1},\tau^{2n+1}_{m+1}}]\, \mathrm{d}\sigma \right] \right) \circ \mathbb{X}_{\tau^{2n+1}_{m+1},\tau^{2n+2}_{m+1}}\\
				&\quad - S_{t-\tau^{2n+1}_{m+1}}\big(S_{\tau^{2n+1}_{m+1}-\tau^{2n}_{m+1}}-I\big) \left( G(Z_{\tau^{2n+1}_{m+1}})\circ (\delta X)_{\tau^{2n+1}_{m+1},\tau^{2n+2}_{m+1}}+D_{Z_{\tau^{2n+1}_{m+1}}}G[G(Z_{\tau^{2n+1}_{m+1}})]\circ \mathbb{X}_{\tau^{2n+1}_{m+1},\tau^{2n+2}_{m+1}} \right).
			\end{split}
		\end{align}
	\end{lemma}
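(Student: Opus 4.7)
The plan is to deduce this identity as an essentially immediate corollary of formula \eqref{Increment} in the preceding lemma, specialised to three equally spaced dyadic points. More precisely, I would set
\begin{align*}
u = \tau^{2n}_{m+1}, \qquad v = \tau^{2n+1}_{m+1}, \qquad w = \tau^{2n+2}_{m+1},
\end{align*}
and observe that because the dyadic refinement satisfies $\tau^{2n}_{m+1} = \tau^{n}_{m}$ and $\tau^{2n+2}_{m+1} = \tau^{n+1}_{m}$, the quantities $\tilde{\Xi}^{u,v}_{s,t}$, $\tilde{\Xi}^{v,w}_{s,t}$, and $\tilde{\Xi}^{u,w}_{s,t}$ from \eqref{Increment} coincide respectively with $\Xi^{2n,m+1}_{s,t}$, $\Xi^{2n+1,m+1}_{s,t}$, and $\Xi^{n,m}_{s,t}$. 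Hence the left-hand side of \eqref{increment} is exactly $\tilde{\Xi}^{u,v}_{s,t} + \tilde{\Xi}^{v,w}_{s,t} - \tilde{\Xi}^{u,w}_{s,t}$, and formula \eqref{Increment} applies verbatim.

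The remaining task is to rewrite the five terms on the right-hand side of \eqref{Increment} in the form displayed in \eqref{increment}. For the first term, $S_{t-u}\bigl([G(Z)]^{\#}_{u,v}\bigr)\circ(\delta X)_{v,w}$, I would invoke the explicit expression \eqref{Remaider} for $[G(Z)]^{\#}_{u,v}$ and substitute the notation $A^{n}_{m} = G(Z_{\tau^{2n}_{m+1}})\circ (\delta X)_{\tau^{2n}_{m+1},\tau^{2n+1}_{m+1}} = G(Z_u)\circ (\delta X)_{u,v}$; the two integral summands appearing in \eqref{Remaider} then become precisely the first and second terms on the right-hand side of \eqref{increment}. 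The fourth and fifth terms of \eqref{Increment}, carrying the $\mathbb{X}_{v,w}$ increment, translate directly into the third and fourth terms of \eqref{increment} after the same substitution $A^{n}_{m} = G(Z_u)\circ(\delta X)_{u,v}$. Finally, the second and third terms of \eqref{Increment}, both involving $S_{t-v}(S_{v-u}-I)$, combine to yield the last semigroup-correction term of \eqref{increment}.

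The proof is therefore purely a matter of bookkeeping and no new analytic estimate is needed beyond the previous lemma. The one point that requires minor care is notational: the integration variable $u$ inside the double integral of \eqref{Remaider} clashes with the outer point $u = \tau^{2n}_{m+1}$, so the inner variable should be renamed (for instance to $u'$) before substitution, and the arguments of $D^{2}G$ and $DG$ have to be lined up carefully to confirm that the $G(Z_u)$ appearing as second slot of $D^2G$ in \eqref{Increment} matches the entry $G(Z_{\tau^{2n}_{m+1}})$ in the third term of \eqref{increment}. There is no genuine obstacle; the lemma is a \emph{computational restatement} of the increment identity \eqref{Increment} at dyadic scales, packaged in a form suited to the forthcoming Sewing-type argument in Section \ref{sec:integrable_bound}.
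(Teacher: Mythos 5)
Your proof is correct and takes essentially the same approach as the paper's own proof, which simply states that the lemma "Follows from \eqref{Remaider} and \eqref{Increment}." You have correctly identified the substitution $u=\tau^{2n}_{m+1}$, $v=\tau^{2n+1}_{m+1}$, $w=\tau^{2n+2}_{m+1}$, verified the dyadic identities $\tau^{2n}_{m+1}=\tau^n_m$ and $\tau^{2n+2}_{m+1}=\tau^{n+1}_m$, and carried out the term-by-term matching (including the necessary renaming of the clashing inner integration variable), so your argument is a faithful expansion of the paper's terse justification.
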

	\begin{proof}
		Follows from \eqref{Remaider} and \eqref{Increment}.
	\end{proof}
	In the next proposition, we obtain an upper bound over the latter formula in terms of our control function defined in Lemma \ref{cnt}. 
	\begin{proposition}\label{increment_estimate}
		For $i\in \lbrace 0,1,2\rbrace$ and $\epsilon>0$ chosen such that $\eta+\epsilon<\gamma$, there exists a constant $M_{\epsilon}$ depending on $\epsilon$ and $G$ such that
		\begin{align}\label{cn-es}
			\begin{split}
				&\sum_{m\geq 0}\sum_{0\leq n<2^{m}} \left| S_{t-\tau^{2n}_{m+1}} \left(\int_{0}^{1}\int_{0}^{1}\sigma D^{2}_{Z_{\tau^{2n}_{m+1}} + \sigma u (\delta Z)_{\tau^{2n}_{m+1},\tau^{2n+1}_{m+1}}} G[A^{n}_{m};Z^{\#}_{{\tau^{2n}_{m+1}},{\tau^{2n+1}_{m+1}}} ]\, \mathrm{d}u\, \mathrm{d}\sigma \right) \circ (\delta X)_{\tau^{2n+1}_{m+1},\tau^{2n+2}_{m+1}} \right|_{\alpha-i\gamma}\\
				&\quad + \left|  S_{t-\tau^{2n}_{m+1}} \left( \int_{0}^{1}D_{Z_{\tau^{2n}_{m+1}} + \sigma (\delta Z)_{\tau^{2n}_{m+1},\tau^{2n+1}_{m+1}}}G[Z^{\#}_{{\tau^{2n}_{m+1}},\tau^{2n+1}_{m+1}}]\, \mathrm{d}\sigma \right) \circ (\delta X)_{\tau^{2n+1}_{m+1},\tau^{2n+2}_{m+1}} \right|_{\alpha-i\gamma}\\
				&\quad + \left| S_{t-\tau^{2n}_{m+1}} \left( \int_{0}^{1}D^{2}_{Z_{\tau^{2n}_{m+1}}+\sigma (\delta Z)_{\tau^{2n}_{m+1},\tau^{2n+1}_{m+1}}}G[Z^{\#}_{\tau^{2n}_{m+1},\tau^{2n+1}_{m+1}};G(Z_{\tau^{2n}_{m+1}})] \right) \circ\mathbb{X}_{\tau^{2n+1}_{m+1},\tau^{2n+2}_{m+1}} \right|_{\alpha-i\gamma}\\
				&\quad + \left| S_{t-\tau^{2n}_{m+1}} \left( D_{Z_{\tau^{2n+1}_{m+1}}}G[\int_{0}^{1}D_{Z_{\tau^{2n}_{m+1}}+\sigma (\delta Z)_{\tau^{2n}_{m+1},\tau^{2n+1}_{m+1}}}G [Z^{\#}_{\tau^{2n}_{m+1},\tau^{2n+1}_{m+1}}]\, \mathrm{d}\sigma] \right) \circ \mathbb{X}_{\tau^{2n+1}_{m+1},\tau^{2n+2}_{m+1}}\right|_{\alpha-i\gamma} \\
				&\leq M_{\epsilon}(t-s)^{i\gamma}\max\big\lbrace(t-s)^{\epsilon}\big(W_{\mathbf{X},\gamma,\eta+\epsilon}(s,t)\big)^{\gamma-\eta-\epsilon},(t-s)^{2\epsilon}\big(W_{\mathbf{X},\gamma,\eta+\epsilon}(s,t)\big)^{2(\gamma-\eta-\epsilon)}\big\rbrace\vert Z^{\#}\vert_{\mathcal{E}^{\gamma,2\gamma}_{{\alpha};I}}.
			\end{split}
		\end{align}
	\end{proposition}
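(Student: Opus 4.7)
The plan is to estimate each of the four sums $I_1, I_2, I_3, I_4$ on the left of \eqref{cn-es} by a pointwise bound on the $(n,m)$-th summand followed by successive summations over $n$ and $m$. Set $\Delta_m := (t-s)/2^{m+1}$, $r_n := t-\tau^{2n}_{m+1}$, and abbreviate $W := W_{\mathbf{X},\gamma,\eta+\epsilon}$. The pointwise ingredients are: the semigroup smoothing \eqref{SEM_I}, producing a factor $r_n^{-\sigma_1}$ when moving from the natural space of the integrand into $|\cdot|_{\alpha-i\gamma}$; the boundedness of $G$, $DG$ and $D^2G$ from Assumption \ref{existence}; the interpolation inequality for the monotone family of Banach spaces, yielding $|Z^\#_{u,v}|_{\alpha-\gamma-\epsilon'} \lesssim (v-u)^{\gamma+\epsilon'}\|Z^\#\|_{\mathcal{E}^{\gamma,2\gamma}_{\alpha;I}}$ for a free parameter $\epsilon'\in[0,\gamma]$; and the bounds $|\delta X_{u,v}|\le W(u,v)^{\gamma-\eta-\epsilon}(v-u)^{\eta+\epsilon}$ and $|\mathbb{X}_{u,v}|\le W(u,v)^{2(\gamma-\eta-\epsilon)}(v-u)^{2(\eta+\epsilon)}$, both immediate from the definition of $W$ in Lemma \ref{cnt}.

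Combining these ingredients on each summand produces a bound of the form $C\, r_n^{-\sigma_1} \Delta_m^a\, W(\tau^{2n+1}_{m+1},\tau^{2n+2}_{m+1})^p\, \|Z^\#\|_{\mathcal{E}^{\gamma,2\gamma}_{\alpha;I}}$, where $(a,p) = (\gamma+\epsilon'+\eta+\epsilon,\,\gamma-\eta-\epsilon)$ for $I_2$ (whose integrand carries a single outer $\delta X$) and $(a,p) = (\gamma+\epsilon'+2(\eta+\epsilon),\,2(\gamma-\eta-\epsilon))$ for $I_1, I_3, I_4$ (which each effectively carry two $X$-factors, either two $\delta X$'s via $A^n_m$ and the outer increment, or one $\mathbb{X}$). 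For the sum over $n$ I apply H\"older's inequality with conjugate exponents $(a^*,b^*)$ chosen so that $pb^*=1$; super-additivity of $W$ (Lemma \ref{cnt}) then gives
\[
\Big(\sum_n W(\tau^{2n+1}_{m+1},\tau^{2n+2}_{m+1})^{pb^*}\Big)^{1/b^*} \le W(s,t)^p,
\]
while the residual sum $\sum_n r_n^{-\sigma_1 a^*}$ is evaluated through the explicit parametrization $r_n = (t-s)(2^m-n)/2^m$, which reduces it to the partial sum $\sum_{k=1}^{2^m} k^{-\sigma_1 a^*}$. Substituting $\Delta_m = (t-s)/2^{m+1}$ and collecting powers produces a slice estimate of the form $C(t-s)^{i\gamma+\kappa}2^{-m\delta} W(s,t)^p \|Z^\#\|_{\mathcal{E}^{\gamma,2\gamma}_{\alpha;I}}$, with $\kappa = \epsilon$ for $I_2$ and $\kappa = 2\epsilon$ for $I_1, I_3, I_4$, and some $\delta>0$. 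Summing over $m$ is then a convergent geometric series, and the two values of $\kappa$ reproduce precisely the two alternatives inside the $\max$ on the right-hand side.

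The main difficulty is the exponent accounting needed to choose $\epsilon' \in [0,\gamma]$ such that, simultaneously, the semigroup smoothing parameter $\sigma_1$ remains in $[0,1)$ and the exponent of $2^m$ in the slice estimate is strictly negative. For $I_2$ this forces $\epsilon'$ into the window $\max(0,1-2\gamma) < \epsilon' < 1-\gamma-\eta$, which is non-empty precisely because of the standing hypotheses $\gamma>\tfrac13$ and $\eta<\gamma\le\tfrac12$ (so that $\gamma+\eta<1$); for $I_1, I_3, I_4$ the doubled power of $W$ makes $\epsilon'=0$ admissible as soon as $3\gamma>1$, again a consequence of $\gamma>\tfrac13$. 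The positive slack $\delta$ driving the geometric $m$-sum is supplied by the parameter $\epsilon>0$ built into $W_{\mathbf{X},\gamma,\eta+\epsilon}$; at $\epsilon=0$ one would obtain only a borderline, non-summable estimate, which is exactly why the refined control function introduced in Lemma \ref{cnt} is needed.
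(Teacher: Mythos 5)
Your proposal follows essentially the same route as the paper's proof: a pointwise bound on each $(n,m)$-summand built from the semigroup smoothing in \eqref{SEM_I}, boundedness of $G$, $DG$, $D^2G$, interpolation on $Z^{\#}$, and the extraction of a factor $W(\tau^{2n+1}_{m+1},\tau^{2n+2}_{m+1})^{\gamma-\eta-\epsilon}$ from the definition of the control in Lemma \ref{cnt}; then H\"older over $n$ combined with super-additivity of $W$, a Riemann-sum argument for the residual $n$-sum, and a geometric sum over $m$ with the slack $\delta>0$ coming from the parameter $\epsilon>0$. Your exponent accounting $\sigma_1 - a = -i\gamma-\epsilon$ for the $\delta X$-term and $-i\gamma-2\epsilon$ for the $\mathbb{X}$-terms, and the observation that at $\epsilon=0$ the case $i=0$ is only borderline, match the paper precisely.

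One small point of divergence: you restrict the interpolation parameter $\epsilon'$ to a window ensuring the semigroup smoothing order $\sigma_1$ stays in $[0,1)$ (so that \eqref{SEM_I} applies verbatim), getting for the $I_2$-term the constraint $\max(0,1-2\gamma)<\epsilon'<1-\gamma-\eta$. The paper instead takes $\epsilon'=\gamma$ for $I_2$ and $\epsilon'=\gamma-\eta$ for $I_4$, which for $2\gamma+\eta\ge 1$ produce $\sigma_1\ge 1$; this is still legitimate because the estimate $|S_t x|_{\beta+\sigma_1}\lesssim t^{-\sigma_1}|x|_\beta$ extends to $\sigma_1\ge 1$ by iterating \eqref{SEM_I}, but you avoid that by the more conservative choice. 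Both choices work, so this is a matter of presentation rather than a gap. Your version makes the constraint and its dependence on $\gamma>\tfrac13$, $\eta<\gamma\le\tfrac12$ more explicit, which is helpful; the paper's version is slightly more flexible at the cost of an implicit iteration of the semigroup bound.
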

	\begin{proof}
		We will concentrate on the case when $G$ is bounded. If $G$ is bounded linear, some terms in \eqref{cn-es} are zero due to $D^{2}G=0$, therefore the computations become even more straightforward. We have to show that each term on the left hand side of \eqref{cn-es} can be bounded up to a constant depending on $\epsilon$ and $G$ by $(t-s)^{i\gamma+\epsilon}W_{\mathbf{X},\gamma,\eta+\epsilon}(s,t)^{\gamma-\eta-\epsilon}$. We will show this bound for the second and fourth term whose proofs have some distinctions. For the remaining terms, our claim can be confirmed by a similar technique. Remember that $\Vert Z^{\#}\Vert_{\mathcal{E}^{\gamma,2\gamma}_{{\alpha};I}}<\infty$ and also that \eqref{SEM_I} holds true. Then
		\begin{align*}
			&\sum_{m\geq 0}\sum_{0\leq n<2^{m}} \left| S_{t-\tau^{2n}_{m+1}} \left( \int_{0}^{1}D_{\sigma Z_{\tau^{2n}_{m+1},\tau^{2n+1}_{m+1}} + Z_{\tau^{2n}_{m+1}}}G[Z^{\#}_{{\tau^{2n}_{m+1}},Z{\tau^{2n+1}_{m+1}}}]\mathrm{d}\sigma \right) \circ (\delta X)_{\tau^{2n+1}_{m+1},\tau^{2n+2}_{m+1}} \right|_{\alpha-i\gamma}\\
			\lesssim \ &\vert Z^{\#}\vert_{\mathcal{E}^{\gamma,2\gamma}_{{\alpha};I}}\sum_{m\geq 1}\sum_{0\leq n<2^{m}} ( t-\tau^{2n}_{m+1})^{\gamma(i-2)-\eta}(t-s)^{2\gamma} \left( \frac{1}{2^{m+1}} \right)^{2\gamma}\vert {X}_{\tau^{2n+1}_{m+1},\tau^{2n+2}_{m+1}}\vert\\
			\leq\ &(t-s)^{\gamma i+\epsilon}\vert Z^{\#}\vert_{\mathcal{E}^{\gamma,2\gamma}_{{\alpha};I}}\sum_{m\geq 1}\sum_{0\leq n<2^{m}} \left(1-\frac{2n}{2^{m+1}} \right)^{\gamma(i-2)-\eta} \left( \frac{1}{2^{m+1}} \right)^{2\gamma+\eta+\epsilon} W_{\mathbf{X},\gamma,\eta+\epsilon}(\tau^{2n+1}_{m+1},\tau^{2n+2}_{m+1})^{\gamma-\eta-\epsilon}.
		\end{align*}
		From the H\"older inequality and \eqref{DFDF},
		\begin{align*}
			&\sum_{0\leq n<2^{m}} \left( 1-\frac{2n}{2^{m+1}} \right)^{\gamma(i-2)-\eta} \left( \frac{1}{2^{m+1}} \right)^{2\gamma+\eta+\epsilon} W_{\mathbf{X},\gamma,\eta+\epsilon}(\tau^{2n+1}_{m+1},\tau^{2n+2}_{m+1})^{\gamma-\eta-\epsilon} \\
			\leq\  &\left( \sum_{0\leq n<2^{m}} (1-\frac{2n}{2^{m+1}})^{\frac{\gamma(i-2)-\eta}{1-\gamma+\eta+\epsilon}}(\frac{1}{2^{m+1}})^{\frac{2\gamma+\eta+\epsilon)}{1-\gamma+\eta+\epsilon}} \right)^{1-\gamma+\eta+\epsilon} \left( \sum_{0\leq n<2^{m}}W_{\mathbf{X},\gamma,\eta+\epsilon}(\tau^{2n+1}_{m+1},\tau^{2n+2}_{m+1}) \right)^{\gamma-\eta-\epsilon} \\
			\leq\ &W_{\mathbf{X},\gamma,\eta+\epsilon}(s,t)^{\gamma-\eta-\epsilon} \left( \sum_{0\leq n<2^{m}} (1-\frac{2n}{2^{m+1}})^{\frac{\gamma(i-2)-\eta}{1-\gamma+\eta+\epsilon}}(\frac{1}{2^{m+1}})^{\frac{2\gamma+\eta+\epsilon)}{1-\gamma+\eta+\epsilon}} \right)^{1-\gamma+\eta+\epsilon}.
		\end{align*}
		Note that $\frac{1}{2^{m+1}}\leq 1-\frac{2n}{2^{m+1}}$. Therefore,  
		\begin{align*}
			\sum_{0\leq n<2^{m}} \left( 1-\frac{2n}{2^{m+1}} \right)^{\frac{\gamma(i-2)-\eta}{1-\gamma+\eta+\epsilon}} \left( \frac{1}{2^{m+1}} \right)^{\frac{2\gamma+\eta+\epsilon}{1-\gamma+\eta+\epsilon}} \leq \left( \frac{1}{2^m} \right)^{\frac{\frac{\epsilon}{2}}{1-\gamma+\eta+\epsilon}}\sum_{0\leq n<2^{m}}\left( 1-\frac{2n}{2^{m+1}} \right)^{\frac{\gamma(i+1)-1-\eta-\frac{\epsilon}{2}}{1-\gamma+\eta+\epsilon}}\frac{1}{2^{m+1}}.
		\end{align*}
		Since 
		\begin{align*}
			\lim_{m\rightarrow\infty} \sum_{0\leq n<2^{m}} \left( 1-\frac{2n}{2^{m+1}} \right)^{\frac{\gamma(i+1)-1-\eta-\frac{\epsilon}{2}}{1-\gamma+\eta+\epsilon}}\frac{1}{2^{m+1}} = \frac{1}{2}\int_{0}^{1}(1-x)^{\frac{\gamma(i+1)-\eta-1-\frac{\epsilon}{2}}{1-\gamma+\eta+\epsilon}}\, \mathrm{d} x,
		\end{align*} 
		we can conclude that for some $\tilde{M}_{\epsilon}<\infty$,
		\begin{align*}
			&\sum_{m\geq 1}\sum_{0\leq n<2^{m}} \left( 1-\frac{2n}{2^{m+1}} \right)^{\gamma(i-2)-\eta} \left( \frac{1}{2^{m+1}} \right))^{(2\gamma+\eta+\epsilon)} W_{\mathbf{X},\gamma,\eta+\epsilon}(\tau^{2n+1}_{m+1},\tau^{2n+2}_{m+1})^{\gamma-\eta-\epsilon} \\
			\leq\ &\tilde{M}_{\epsilon} W_{\mathbf{X},\gamma,\eta+\epsilon}(s,t)^{\gamma-\eta-\epsilon}.
		\end{align*}
		This proves our claim for the second term. For estimating the fourth term, we use the same idea with some modifications. First note that in \eqref{cn-es} from the interpolation property,
		$\vert Z^{\#}_{s,t}\vert_{\alpha-2\gamma+\eta}\lesssim (t-s)^{2\gamma-\eta}\vert Z^{\#}\vert_{\mathcal{E}^{\gamma,2\gamma}_{{\alpha};I}}$. For 
		\begin{align*}
			{B}^{i,\epsilon}_{\mathbf{X}}(s,t) \coloneqq (t-s)^{\gamma i+2\epsilon}W_{\mathbf{X},\gamma,\eta+\epsilon}(s,t)^{2(\gamma-\eta-\epsilon)},
		\end{align*} 
		we have
		\begin{align*}
			&\sum_{m\geq 0}\sum_{0\leq n<2^{m}} \left| S_{t-\tau^{2n}_{m+1}} \left( D_{Z_{\tau^{2n+1}_{m+1}}} G \left[ \int_{0}^{1}D_{Z_{\tau^{2n}_{m+1}}+\sigma Z_{\tau^{2n}_{m+1},\tau^{2n+1}_{m+1}}}G [Z^{\#}_{\tau^{2n}_{m+1},\tau^{2n+1}_{m+1}} \right] \, \mathrm{d}\sigma] \right) \circ \mathbb{X}_{\tau^{2n+1}_{m+1},\tau^{2n+2}_{m+1}} \right|_{\alpha-i\gamma} \\
			\lesssim\ &\vert Z^{\#}\vert_{\mathcal{E}^{\gamma,2\gamma}_{{\alpha};I}} \sum_{m\geq 0}\sum_{0\leq n<2^{m}}(t-\tau_{m+1}^{2n})^{\gamma(i-2)-\eta}(t-s)^{2\gamma-\eta} \left(\frac{1}{2^{m+1}} \right)^{2\gamma-\eta}\Vert\mathbb{X}_{\tau^{2n+1}_{m+1},\tau^{2n+2}_{m+1}}\Vert\ \\
			\leq\ &\vert Z^{\#}\vert_{\mathcal{E}^{\gamma,2\gamma}_{{\alpha};I}}\sum_{m\geq 0}\sum_{0\leq n<2^{m}}(t-\tau^{2n}_{m+1})^{\gamma(i-2)-\eta}(t-s)^{2\gamma+\eta+2\epsilon} \left(\frac{1}{2^{m+1}} \right)^{2\gamma+\eta+2\epsilon} W_{\mathbf{X},\gamma,\eta+\epsilon}(\tau^{2n+1}_{m+1},\tau^{2n+2}_{m+1})^{2(\gamma-\eta-\epsilon)} \\
			=\ &(t-s)^{\gamma i+2\epsilon}\vert Z^{\#}\vert_{\mathcal{E}^{\gamma,2\gamma}_{{\alpha};I}}\sum_{m\geq 0}\sum_{0\leq n<2^{m}} \left(1-\frac{2n}{2^{m+1}}\right)^{\gamma(i-2)-\eta} \left(\frac{1}{2^{m+1}}\right)^{2\gamma+\eta+2\epsilon} W_{\mathbf{X},\gamma,\eta+\epsilon}(\tau^{2n+1}_{m+1},\tau^{2n+2}_{m+1})^{2(\gamma-\eta-\epsilon)} \\
			\leq\ &{B}^{i,\epsilon}_{\mathbf{X}}(s,t)\vert Z^{\#}\vert_{\mathcal{E}^{\gamma,2\gamma}_{{\alpha};I}} \sum_{m\geq 0} \left( \sum_{0\leq n<2^{m}} \left(1-\frac{2n}{2^{m+1}}\right)^{\frac{\gamma(i-2)-\eta}{1-2\gamma+2\eta+2\epsilon}} \left(\frac{1}{2^{m+1}}\right)^{\frac{2\gamma+\eta+2\epsilon}{1-2\gamma+2\eta+2\epsilon}}\right)^{1-2\gamma+2\eta+2\epsilon} \\
			\leq\ &{B}^{i,\epsilon}_{\mathbf{X}}(s,t)\vert Z^{\#}\vert_{\mathcal{E}^{\gamma,2\gamma}_{{\alpha};I}}\sum_{m\geq 0} \left(\frac{1}{2^{m+1}} \right)^{\epsilon} \left( \sum_{0\leq n<2^{m}} \left( 1-\frac{2n}{2^{m+1}} \right)^{\frac{\gamma(i+2)-2\eta-\epsilon-1}{1-2\gamma+2\eta+2\epsilon}}\frac{1}{2^{m+1}}\right)^{1-2\gamma+2\eta+2\epsilon} \\
			\leq\ &\tilde{\tilde{M}}_{\epsilon}(t-s)^{\gamma i+2\epsilon}W_{\mathbf{X},\gamma,\eta+\epsilon}(s,t)^{2(\gamma-v-\epsilon)}\vert Z^{\#}\vert_{\mathcal{E}^{\gamma,2\gamma}_{{\alpha};I}},
		\end{align*}
		where $\tilde{{\tilde{M}}}_{\epsilon}<\infty$ only depends on $\epsilon$.
	\end{proof}
	
	Now we can prove the following lemma:
	\begin{lemma}\label{LKLL}
		Assume $\eta+\epsilon<\gamma$ and that $Z\in \mathcal{E}^{\gamma,2\gamma}_{{\alpha};I}$ solves equation \eqref{SPDE_EQU}. Then for $M_{\epsilon}<\infty$ and $i\in \lbrace 0,1,2\rbrace$,
		\begin{align}\label{ESTIMAT_LEVEL}
			\begin{split}
				&\left| \int_{s}^{t}S_{t-\tau}G(Z_{\tau})\circ\mathrm{d}\mathbf{X}_{\tau}-S_{t-s}G(Z_s)\circ (\delta X)_{s,t}-S_{t-s}D_{Z_s}G[G(Z_s)]\circ \mathbb{X}_{s,t} \right|_{\alpha-i\gamma}\\ 
				\leq\ & M_{\epsilon}(t-s)^{i\gamma}\max \left\{ (t-s)^{\epsilon} W_{\mathbf{X},\gamma,\eta+\epsilon}(s,t)^{\gamma-\eta-\epsilon}, (t-s)^{2\epsilon} W_{\mathbf{X},\gamma,\eta+\epsilon}(s,t) ^{2(\gamma-\eta-\epsilon)} \right\} \vert Z^{\#}\vert_{\mathcal{E}^{\gamma,2\gamma}_{{\alpha};I}} \\
				&\qquad + (t-s)^{i\gamma+3(\gamma-\eta)}\Vert X\Vert_{\gamma,[s,t]}[\Vert X\Vert_{\gamma,[s,t]}^{2}+\Vert \mathbb{X}\Vert_{2\gamma,[s,t]}]+(t-s)^{i\gamma+(\gamma-\eta)}\Vert {X}\Vert_{2\gamma,[s,t]}\\
				&\qquad +(t-s)^{i\gamma+2(\gamma-\eta)}\Vert \mathbb{X}\Vert_{2\gamma,[s,t]}.
			\end{split}
		\end{align}
	\end{lemma}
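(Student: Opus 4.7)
The strategy is to realize the rough integral as a dyadic limit and then control the discrepancy through a telescoping argument in the same spirit as the Sewing lemma. Since the limit in \eqref{SEW} is taken over arbitrary partitions, we may specialize to dyadics and obtain
\begin{align*}
\int_{s}^{t}S_{t-\tau}G(Z_\tau)\circ\mathrm{d}\mathbf{X}_\tau
= \lim_{m\to\infty}\sum_{0\leq n<2^{m}}\Xi^{n,m}_{s,t}
= \Xi^{0,0}_{s,t} + \sum_{m\geq 0}\sum_{0\leq n<2^{m}}\bigl(\Xi^{2n,m+1}_{s,t}+\Xi^{2n+1,m+1}_{s,t}-\Xi^{n,m}_{s,t}\bigr).
\end{align*}
Since $\Xi^{0,0}_{s,t} = S_{t-s}G(Z_s)\circ(\delta X)_{s,t} + S_{t-s}D_{Z_s}G[G(Z_s)]\circ\mathbb{X}_{s,t}$, the left-hand side of \eqref{ESTIMAT_LEVEL} is exactly this double sum, and the triangle inequality reduces the proof to bounding, in $|\cdot|_{\alpha-i\gamma}$, each of the five summand types produced by Lemma \ref{SSA}.

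For the first four summands in \eqref{increment}, I split each multilinear expression along the linearity of $DG$ and $D^{2}G$ into a part containing $Z^{\#}$ (possibly tensored with $A^{n}_{m}$, $G(Z)$, or another $Z^{\#}$) and a part depending purely on $A^{n}_{m}$ and $G(Z_{\cdot})$. The $Z^{\#}$-pieces coincide exactly with the four quantities bounded in Proposition \ref{increment_estimate}, and their sum contributes the first term on the right-hand side of \eqref{ESTIMAT_LEVEL}. The purely-$A^{n}_{m}$ pieces are bounded using the uniform bounds on $G, DG, D^{2}G$ together with $|A^{n}_{m}|_{\alpha-\eta}\lesssim \Vert X\Vert_{\gamma,[s,t]}\bigl((t-s)/2^{m+1}\bigr)^{\gamma}$ and the semigroup estimate \eqref{SEM_I}. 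A dyadic H\"older argument identical to the one used in Proposition \ref{increment_estimate} (Riemann-sum comparison plus geometric summation in $m$, whose convergence requires $3\gamma-1>0$, i.e.\ our standing assumption $\gamma>\tfrac{1}{3}$) then produces precisely the cubic remainder $(t-s)^{i\gamma+3(\gamma-\eta)}\Vert X\Vert_{\gamma,[s,t]}\bigl(\Vert X\Vert_{\gamma,[s,t]}^{2}+\Vert\mathbb{X}\Vert_{2\gamma,[s,t]}\bigr)$.

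For the fifth, semigroup-defect summand in \eqref{increment}, I apply \eqref{SEMI_II} to the operator $S_{t-\tau^{2n+1}_{m+1}}(S_{\tau^{2n+1}_{m+1}-\tau^{2n}_{m+1}}-I)$ with $\sigma_{2}\in[0,1)$ chosen so that the regularity budget closes against the target space $\mathcal{B}_{\alpha-i\gamma}$ and $\sigma_{2}>1-\gamma$ so that the resulting outer geometric series in $m$ converges. Pairing this with the increment $G(Z)\circ (\delta X)$, respectively $DG[G(Z)]\circ\mathbb{X}$, of sizes $\Vert X\Vert_{\gamma,[s,t]}((t-s)/2^{m+1})^{\gamma}$ and $\Vert\mathbb{X}\Vert_{2\gamma,[s,t]}((t-s)/2^{m+1})^{2\gamma}$, and summing over $n$ by the same H\"older-plus-Riemann-sum device as above, yields exactly the two remaining terms $(t-s)^{i\gamma+(\gamma-\eta)}\Vert X\Vert_{\gamma,[s,t]}$ and $(t-s)^{i\gamma+2(\gamma-\eta)}\Vert\mathbb{X}\Vert_{2\gamma,[s,t]}$ appearing in \eqref{ESTIMAT_LEVEL}.

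The main obstacle is bookkeeping: in each of the several scalar dyadic sums one must simultaneously verify that the interpolation exponent lies in $[0,1)$, that the combined singularity $(t-\tau^{\cdot}_{\cdot})^{-\kappa}$ is integrable in the Riemann-sum sense ($\kappa<1$), and that the residual geometric factor $2^{m(1-\gamma-\sigma)}$ is summable ($\sigma>1-\gamma$). Once this calibration is carried out for one representative term, the other cases follow from the same template.
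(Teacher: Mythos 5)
Your proposal is correct and follows essentially the same route as the paper: both realize the rough integral as a dyadic telescoping sum (the Sewing argument), apply the increment identity from Lemma~\ref{SSA}, use Proposition~\ref{increment_estimate} for the pieces carrying $Z^{\#}$, and handle the $Z^{\#}$-free pieces (pure $A^{n}_{m}$ terms and the semigroup-defect term) by direct dyadic summation combined with \eqref{SEM_I} and \eqref{SEMI_II}. The paper only carries out the last (semigroup-defect) case explicitly and states the rest are similar, so your outline matches both the decomposition and the bookkeeping strategy.
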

	\begin{proof}
		We use the same notation as in Lemma \ref{SSA}. Set $\Gamma_{s,t}^{m} \coloneqq \sum_{0\leq n\leq 2^m }\Xi^{n,m}_{s,t}$. By the Sewing lemma,  
		\begin{align*}
			&\left| \int_{s}^{t}S_{t-\tau}G(Z_{\tau})\circ\mathrm{d}\mathbf{X}_{\tau}-S_{t-s}G(Z_s)\circ (\delta X)_{s,t}-S_{t-s}D_{Z_s}G[G(Z_s)]\circ \mathbb{X}_{s,t} \right|_{\alpha-i\gamma} \\
			\leq\ &\sum_{m\geq 0}\vert\Gamma_{s,t}^{m+1}-\Gamma_{s,t}^{m}\vert_{\alpha-i\gamma}\leq \sum_{m\geq 0}\sum_{0\leq n<2^{m}}\vert\Xi^{2n,m+1}_{s,t}+\Xi^{2n+1,m+1}_{s,t}-\Xi^{n,m}_{s,t}\vert_{\alpha-i\gamma}.
		\end{align*}
		For the terms in the last sum, we use the identity provided in Lemma \ref{SSA}. To estimate the respective terms that involve $Z^{\#}$, we use the estimates from Proposition \ref{increment_estimate}. The terms on the right hand side of \eqref{ESTIMAT_LEVEL} that include $\Vert X\Vert_{\gamma,[s,t]}, \Vert \mathbb{X}\Vert_{2\gamma,[s,t]}$ appear when we want to find bounds for the remaining terms in \eqref{increment} where $Z^{\#}$ don't emerge. These estimates are even simpler to obtain, that is why we will only show the main ideas here. We will estimate the last term, the arguments for the rest are similar. Recall $D_{Z}G:\mathcal{B}_{\alpha-\eta}\rightarrow\mathcal{B}_{\alpha-2\eta}$ is a linear bounded operator. For $i=1,2$ we choose $\sigma_2=i\gamma,\sigma_{1}=2\eta$ and for $i=0$ we select $1-2\gamma<\sigma_2<1$ and $1-2\gamma+2\eta<\sigma_1<1$, such that $\sigma_{1}-\sigma_2=2\eta$ and apply on\eqref{SEMI_II}. Then
		\begin{align*}
			&\sum_{m\geq 0}\sum_{0\leq n<2^{m}} \left|S_{t-\tau^{2n+1}_{m+1}}\big(S_{\tau^{2n+1}_{m+1}-\tau^{2n}_{m+1}}-I\big)\big(D_{Z_{\tau^{2n+1}_{m+1}}}G[G(Z_{\tau^{2n+1}_{m+1}})]\circ \mathbb{X}_{\tau^{2n+1}_{m+1},\tau^{2n+2}_{m+1}} \big)\right|_{\alpha-i\gamma}\\&\quad\lesssim \Vert \mathbb{X}\Vert_{2\gamma,[s,t]}\sum_{m\geq 0}\sum_{0\leq n<2^{m}}(t-\tau^{2n+1}_{m+1})^{-\sigma_1}(t-s)^{i\gamma+2(\gamma-\eta)+\sigma_1}(\frac{1}{2^{m+1}})^{i\gamma+2(\gamma-\eta)+\sigma_1}\\&\qquad =(t-s)^{i\gamma+2(\gamma-\eta)}\Vert\mathbb{X}\Vert_{2\gamma,[s,t]}\sum_{m\geq 0}\sum_{0\leq n<2^{m}}(1-\frac{2n+1}{2^{m+1}})^{-\sigma_1}(\frac{1}{2^{m+1}})^{i\gamma+2(\gamma-\eta)+\sigma_1-1}\frac{1}{2^{m+1}}\\
			&\myquad[3]\lesssim (t-s)^{i\gamma+2(\gamma-\eta)}\Vert\mathbb{X}\Vert_{2\gamma,[s,t]}\sum_{m\geq 0}(\frac{1}{2^{m+1}})^{\epsilon}\sum_{0\leq n<2^{m}}(1-\frac{2n+1}{2^{m+1}})^{-\sigma_1}(\frac{1}{2^{m+1}})^{i\gamma+2(\gamma-\eta)+\sigma_1-1-\epsilon}\frac{1}{2^{m+1}}\\
			&\myquad[4]\lesssim (t-s)^{i\gamma+2(\gamma-\eta)}\Vert\mathbb{X}\Vert_{2\gamma,[s,t]}
		\end{align*}
		where in the last step, we choose $0<\epsilon<i\gamma+2(\gamma-\eta)+\sigma_1-1$ and again use the estimate
		\begin{align*}
			\sum_{0\leq n<2^{m}}(1-\frac{2n+1}{2^{m+1}})^{-\sigma_1}(\frac{1}{2^{m+1}})^{i\gamma+2(\gamma-\eta)+\sigma_1-1-\epsilon}\frac{1}{2^{m+1}}\lesssim \int_{0}^{1} (1-x)^{i\gamma+2(\gamma-\eta)-1-\epsilon}\mathrm{d}x<\infty.
		\end{align*}
	\end{proof}

	The following lemma is a straightforward application of Young's theory of integration.
	\begin{lemma}
		Assume $\mathbf{X}=(X,\mathbb{X})\in\mathscr{C}^{\gamma}([s,t],\mathbb{R}^n)$ is a $\gamma$-rough path with $\frac{1}{3} < \gamma \leq \frac{1}{2}$. Let $\gamma^{\prime} > 0$ with $\gamma+\gamma^{\prime}>1$. Assume that for a given path $h \colon [s,t]\rightarrow \mathbb{R}^{n}$,
		\begin{align}\label{variation}
			\sup_{\substack{\pi,\\ \pi=\lbrace s = \kappa_0 < \kappa_{1} < \ldots <\kappa_{m} = t \rbrace}}\big{\lbrace} \sum_{j}\big{[}| (\delta h)_{\kappa_{j},\kappa_{j+1}}|^{\frac{1}{{\gamma}^{\prime}}} \big{]}\big{\rbrace}<\infty.,
		\end{align} 
		Then this path can be enhanced to a rough path $\mathbf{h}=(h,\int h\otimes\mathrm{d}h)$ where the integrals are defined as Young integrals. In addition, $\int X\otimes\mathrm{d}h$ and $\int h\otimes\mathrm{d}X$ can also be defined as Young integrals.
	\end{lemma}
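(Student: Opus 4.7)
The plan is to recognize this as essentially a direct application of Young's integration theorem. The assumption \eqref{variation} is exactly the statement that $h$ has finite $\frac{1}{\gamma'}$-variation on $[s,t]$; denote the associated control by $\omega_h(u,v)$. Since $\mathbf{X}$ is $\gamma$-Hölder, $X$ has finite $\frac{1}{\gamma}$-variation with control dominated by $\|X\|_\gamma^{1/\gamma}(v-u)$. The key arithmetic observation is that the hypothesis $\gamma\le \frac{1}{2}$ combined with $\gamma+\gamma'>1$ forces $\gamma'>\frac{1}{2}$, hence in particular $2\gamma'>1$.

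With these regularity exponents in hand, I would proceed in three short steps. First, for the mixed integrals $\int X\otimes\mathrm{d}h$ and $\int h\otimes\mathrm{d}X$, apply Young's theorem directly: the condition $\frac{1}{1/\gamma}+\frac{1}{1/\gamma'}=\gamma+\gamma'>1$ is precisely the Young condition, and the standard maximal inequality gives both existence and the estimate on each subinterval. Second, for $\int h\otimes\mathrm{d}h$, apply Young's theorem again, now with both integrands of finite $\frac{1}{\gamma'}$-variation; existence follows because $2\gamma'>1$. Set
\begin{align*}
\mathbb{h}_{u,v}\coloneqq \int_{u}^{v}(\delta h)_{u,\tau}\otimes\mathrm{d}h_{\tau},
\end{align*}
where the integral is the Young integral just constructed. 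Third, verify the rough path axioms: Chen's relation $\mathbb{h}_{u,v}-\mathbb{h}_{u,w}-\mathbb{h}_{w,v}=(\delta h)_{u,w}\otimes(\delta h)_{w,v}$ follows from the additivity of Young integration together with the telescoping identity $(\delta h)_{u,\tau}=(\delta h)_{u,w}+(\delta h)_{w,\tau}$ for $\tau\ge w$, while the standard Young maximal inequality yields $|\mathbb{h}_{u,v}|\lesssim \omega_h(u,v)^{2\gamma'}$, so that $\mathbb{h}$ has finite $\frac{1}{2\gamma'}$-variation as required of the second level.

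There is no real obstacle in the argument; the only point demanding care is that the Young integral $\int_u^v (\delta h)_{u,\tau}\otimes\mathrm{d}h_\tau$ is indeed well-defined as a function of the two-parameter increment and satisfies Chen's relation exactly. This is standard and follows from the usual Sewing/Young construction, so the lemma reduces to bookkeeping of exponents. Since the statement of the lemma asserts only existence (and not any quantitative bound that would need to be tracked through subsequent computations), the short derivation above suffices.
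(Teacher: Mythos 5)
Your proposal is correct and fills in exactly the argument the paper alludes to: the paper gives no explicit proof, merely announcing the lemma as ``a straightforward application of Young's theory of integration,'' and your exponent bookkeeping ($\gamma'>\tfrac{1}{2}$ from $\gamma\le\tfrac{1}{2}$ and $\gamma+\gamma'>1$, the Young condition $\gamma+\gamma'>1$ for the mixed integrals and $2\gamma'>1$ for $\int h\otimes\mathrm{d}h$, plus the Chen relation and the $\omega_h^{2\gamma'}$ bound on the second level) is precisely what that remark leaves implicit.
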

	
	The following result is an extension of \cite[Lemma 11.4]{FH20}.
	\begin{lemma}\label{traa}
		Assume that $I =[s,t]$ is a closed interval and for $\frac{1}{3}<\gamma<\frac{1}{2}$,  $\mathbf{X}=(X,\mathbb{X})$ is $\gamma$-rough path such that for $\eta_1<\gamma$, $W_{\mathbf{X},\gamma,\eta_1}(s,t)<\infty$. In addition, for $\gamma^{\prime}>0$ with $\gamma+\gamma^{\prime}>1$ and for $h: I\rightarrow \mathbb{R}^{d}$ being a continuous path satisfying \eqref{variation}, we assume that $W_{\mathbf{h},\gamma^{\prime},\eta_1}(s,t)<\infty$. If $\gamma+\gamma^{\prime}-2\eta_1>1$, then
		\begin{align}\label{Tra}
			&W_{T_{h}(\mathbf{X}),\gamma,\eta_1}(s,t)\leq C_{\eta_1}\big{[}W_{\mathbf{X},\gamma,\eta_1}(s,t)+ W_{\mathbf{h},\gamma^{\prime},\eta_1}(s,t)^{\frac{\gamma^{\prime}-\delta_1}{\gamma-\eta_1}}\big{]},
		\end{align}
		where $T_{h}(\mathbf{X})=(h+X,\int h\otimes\mathrm{d}h+\int h\otimes\mathrm{d}X+\int X\otimes\mathrm{d}h+\mathbb{X})$.
	\end{lemma}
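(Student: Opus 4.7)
The plan is to bound, for an arbitrary finite partition $\pi=\{s=\kappa_0<\dots<\kappa_m=t\}$, the defining sum of $W_{T_{h}(\mathbf{X}),\gamma,\eta_1}(s,t)$. Using subadditivity of $r\mapsto r^{1/(\gamma-\eta_1)}$ and $r\mapsto r^{1/(2(\gamma-\eta_1))}$, the contributions of $\delta X$, $\delta h$, $\mathbb{X}$, and the three Young integrals $\int h\otimes dh$, $\int h\otimes dX$, $\int X\otimes dh$ can be bounded separately. The $\delta X$ and $\mathbb{X}$ pieces are directly bounded by $W_{\mathbf{X},\gamma,\eta_1}(s,t)$ from its definition.

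For the $\delta h$ piece at level one, the key observation is that the trivial partition $\{\kappa_j,\kappa_{j+1}\}$ inside $W_{\mathbf{h},\gamma',\eta_1}(\kappa_j,\kappa_{j+1})$ yields $y_j:=(\kappa_{j+1}-\kappa_j)^{-\eta_1}|(\delta h)_{\kappa_j,\kappa_{j+1}}|\leq W_{\mathbf{h},\gamma',\eta_1}(s,t)^{\gamma'-\eta_1}$. Since $\gamma'\geq\gamma$ (from $\gamma+\gamma'>1$ and $\gamma\leq\tfrac12$), one has $1/(\gamma-\eta_1)\geq 1/(\gamma'-\eta_1)$, and one can factor $y_j^{1/(\gamma-\eta_1)}=y_j^{1/(\gamma'-\eta_1)}\cdot y_j^{1/(\gamma-\eta_1)-1/(\gamma'-\eta_1)}$, bounding the second factor uniformly by $W_{\mathbf{h},\gamma',\eta_1}(s,t)^{(\gamma'-\gamma)/(\gamma-\eta_1)}$ while the sum of the first factor across $\pi$ is at most $W_{\mathbf{h},\gamma',\eta_1}(s,t)$. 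Combining these exponents gives exactly $W_{\mathbf{h},\gamma',\eta_1}(s,t)^{(\gamma'-\eta_1)/(\gamma-\eta_1)}$, which matches the right-hand side of \eqref{Tra}.

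For the three Young integrals at level two, I would invoke the classical Young--Loeve inequality with the controls
\[
\omega_{\mathbf{X}}(s,t):=(t-s)^{\eta_1/(\gamma-\eta_1)}W_{\mathbf{X},\gamma,\eta_1}(s,t),\qquad \omega_{\mathbf{h}}(s,t):=(t-s)^{\eta_1/(\gamma'-\eta_1)}W_{\mathbf{h},\gamma',\eta_1}(s,t).
\]
These are genuine (super-additive) controls: the monotonicity $(u-s)^{\eta_1/(\gamma-\eta_1)},(t-u)^{\eta_1/(\gamma-\eta_1)}\leq (t-s)^{\eta_1/(\gamma-\eta_1)}$ combined with super-additivity of $W_{\mathbf{X},\gamma,\eta_1}$ gives $\omega_{\mathbf{X}}(s,u)+\omega_{\mathbf{X}}(u,t)\leq\omega_{\mathbf{X}}(s,t)$, and similarly for $\omega_{\mathbf{h}}$. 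One then has $|X_{s,t}|\leq\omega_{\mathbf{X}}(s,t)^{\gamma-\eta_1}$ and $|h_{s,t}|\leq\omega_{\mathbf{h}}(s,t)^{\gamma'-\eta_1}$, and the Young--Loeve exponents sum to $\gamma+\gamma'-2\eta_1>1$, yielding
\[
\Bigl|\int_s^t X_{s,u}\otimes dh_u\Bigr| \lesssim (t-s)^{2\eta_1}W_{\mathbf{X},\gamma,\eta_1}(s,t)^{\gamma-\eta_1}W_{\mathbf{h},\gamma',\eta_1}(s,t)^{\gamma'-\eta_1},
\]
with analogous bounds for $\int h\otimes dX$ and $\int h\otimes dh$ (the latter needs $2\gamma'-2\eta_1>1$, which is implied by $\gamma'\geq\gamma$).

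Raising this estimate to the power $1/(2(\gamma-\eta_1))$ exactly cancels the weight $(\kappa_{j+1}-\kappa_j)^{-\eta_1/(\gamma-\eta_1)}$ against the $(t-s)^{2\eta_1}$ factor; summing then reduces to estimating $\sum_j W_{\mathbf{X}}(\kappa_j,\kappa_{j+1})^{1/2}W_{\mathbf{h}}(\kappa_j,\kappa_{j+1})^{(\gamma'-\eta_1)/(2(\gamma-\eta_1))}$. Cauchy--Schwarz, super-additivity of $W_{\mathbf{X}}$, and the elementary bound $\sum a_j^r\leq(\sum a_j)^r$ for $r=(\gamma'-\eta_1)/(\gamma-\eta_1)\geq 1$ produce $W_{\mathbf{X}}(s,t)^{1/2}W_{\mathbf{h}}(s,t)^{(\gamma'-\eta_1)/(2(\gamma-\eta_1))}$, and an AM--GM step turns this into the claimed right-hand side. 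The principal technical hurdle is verifying Young--Loeve with the non-standard weighted controls $\omega_{\mathbf{X}},\omega_{\mathbf{h}}$; once their super-additivity is established as above, the rest of the argument is bookkeeping and standard manipulation of control functions.
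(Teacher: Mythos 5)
Your proof is correct, and it is essentially the same estimate as the paper's, packaged differently. Both proofs reduce to a Young--Loeve-type bound for the cross-integrals. The paper re-derives this bound ``by hand'' via the classical greedy argument: it rewrites $|(\delta h)_{\kappa_{j-1},\kappa_j}|\,|(\delta X)_{\kappa_j,\kappa_{j+1}}|$ in terms of $W_{\mathbf{h}}$ and $W_{\mathbf{X}}$, removes partition points one at a time, and sums a $\sum_k k^{-(\gamma+\gamma'-2\eta_1)}$ series. You instead observe that $\omega_{\mathbf{X}}(s,t):=(t-s)^{\eta_1/(\gamma-\eta_1)}W_{\mathbf{X},\gamma,\eta_1}(s,t)$ and $\omega_{\mathbf{h}}(s,t):=(t-s)^{\eta_1/(\gamma'-\eta_1)}W_{\mathbf{h},\gamma',\eta_1}(s,t)$ are themselves super-additive controls with $|X_{s,t}|\leq\omega_{\mathbf{X}}(s,t)^{\gamma-\eta_1}$ and $|h_{s,t}|\leq\omega_{\mathbf{h}}(s,t)^{\gamma'-\eta_1}$, and invoke Young--Loeve as a black box (the needed exponent condition $\gamma+\gamma'-2\eta_1>1$ is exactly the hypothesis, and $2\gamma'-2\eta_1>1$ for $\int h\otimes\mathrm{d}h$ follows from $\gamma'>\gamma$). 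The resulting pointwise bound $|\int X\otimes\mathrm{d}h|\lesssim(t-s)^{2\eta_1}W_{\mathbf{X}}^{\gamma-\eta_1}W_{\mathbf{h}}^{\gamma'-\eta_1}$ is identical to the paper's. Where your write-up is actually \emph{more} complete: the paper's proof ends with the single-interval bound for the second-level terms and simply declares that the analogous estimates hold; you additionally carry out the first-level $\delta h$ estimate (the exponent-splitting $y_j^{1/(\gamma-\eta_1)}=y_j^{1/(\gamma'-\eta_1)}y_j^{1/(\gamma-\eta_1)-1/(\gamma'-\eta_1)}$, giving exactly the power $(\gamma'-\eta_1)/(\gamma-\eta_1)$ and using $\gamma'>\gamma$, which is forced by $\gamma+\gamma'>1$ and $\gamma<\tfrac12$) and spell out the summation over a general partition via Cauchy--Schwarz, super-additivity, the inequality $\sum a_j^r\leq(\sum a_j)^r$ for $r\geq 1$, and AM--GM. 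Two minor remarks: ``subadditivity of $r\mapsto r^{1/(\gamma-\eta_1)}$'' is not literally true since the exponent exceeds $1$; what you use is the elementary quasi-triangle inequality $|a+b|^p\leq 2^{p-1}(|a|^p+|b|^p)$, which is where part of the constant $C_{\eta_1}$ comes from. And your computation confirms that the paper's $\delta_1$ in the exponent of $\eqref{Tra}$ is a typo for $\eta_1$.
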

	\begin{proof}
		Remember that by Young's integration theory,
		\begin{align}\label{SEW_VAR}
			\int_{s}^t (\delta h)_{s,\tau}\otimes\mathrm{d}X_{\tau} = \lim_{|\pi|\rightarrow 0}\int_{\pi}(\delta h)_{s,\tau}\circ\mathrm{d}X_{\tau} = \lim_{|\pi|\rightarrow 0}\sum_{0\leq i<m}(\delta h)_{s,\kappa_i}\otimes (\delta X)_{\kappa_i,\kappa_{i+1}},
		\end{align}
		where $\pi=\lbrace s =  \kappa_0 < \kappa_{1} < \ldots < \kappa_{m}=t \rbrace$ is a partition for $[s,t]$ and 
		\begin{align*}
			\int_{\pi}(\delta h)_{s,\tau}\circ\mathrm{d}X_{\tau} \coloneqq \sum_{0\leq i<m}(\delta h)_{s,\kappa_i}\otimes (\delta X)_{\kappa_i,\kappa_{i+1}}.
		\end{align*} 
		Clearly,
		\begin{align}\label{SEW_APX}
			\begin{split}
				&\left| \int_{\pi} (\delta h)_{s,\tau}\otimes\mathrm{d}X_{\tau} - \int_{\pi\setminus\lbrace \kappa_j\rbrace} (\delta h)_{s,\tau}\otimes\mathrm{d}X_{\tau} \right| \leq |(\delta h)_{\kappa_{j-1},\kappa_{j}} |  |(\delta X)_{\kappa_{j},\kappa_{j+1}}| \\
				\leq\ &\left[W_{\mathbf{h},\gamma^\prime,\eta_1}^{\frac{\gamma^\prime-\eta_1}{\gamma+{\gamma}^{\prime}-2\eta_1}}(\kappa_{j-1},\kappa_{j}) W_{\mathbf{X},\gamma,\eta_1}^{\frac{\gamma-\eta_1}{\gamma+{\gamma}^{\prime}-2\eta_1}}(\kappa_{j},\kappa_{j+1}) \right]^{\gamma+{\gamma}^{\prime}-2\eta_1}(\kappa_{j}-\kappa_{j-1})^{\eta_1}(\kappa_{j+1}-\kappa_{j})^{\eta_1}.
			\end{split}
		\end{align}
		Note that since $W_{\mathbf{X},\gamma,\eta_1}$ and $W_{\mathbf{h},\gamma^\prime,\eta_1}$ are control functions (c.f. Lemma \ref{cnt}), we can find  $1\leq j<m$ such that $$W_{\mathbf{h},\gamma^\prime,\tilde{\sigma}}^{\frac{\gamma^\prime-\eta_1}{\gamma+{\gamma}^{\prime}-2\eta_1}}(\kappa_{j-1},\kappa_{j})W_{\mathbf{X},\gamma,\eta_1}^{\frac{\gamma-\eta_1}{\gamma+{\gamma}^{\prime}-2\eta_1}}(\kappa_{j},\kappa_{j+1})<\frac{2}{m-1} W_{\mathbf{h},\gamma^\prime,\eta_1}^{\frac{\gamma^\prime-\eta_1}{\gamma+{\gamma}^{\prime}-2\eta_1}}(s,t)W_{\mathbf{X},\gamma,\eta_1}^{\frac{\gamma-\eta_1}{\gamma+{\gamma}^{\prime}-2\eta_1}}(s,t).$$ We can repeat this argument for the new partition $\pi\setminus\lbrace \kappa_j\rbrace$. From \eqref{SEW_VAR} and \eqref{SEW_APX}, we can eventually conclude that
		\begin{align*}
			&\left| \int_{s}^t (\delta h)_{s,\tau}\otimes\mathrm{d} X_{\tau} \right| \leq \sum_{k\geq 1}\frac{2^{\gamma+\gamma^{\prime}-2\eta_1}}{k^{\gamma+\gamma^{\prime}-2\eta_1}} (t-s)^{2\eta_1}W_{\mathbf{h},\gamma^\prime,\eta_1}^{\gamma^\prime-\eta_1}(s,t)W_{\mathbf{X},\gamma,\eta_1}^{\gamma-\eta_1}(s,t),
		\end{align*}
		therefore
		\begin{align*}
			\frac{\left| \int_{s}^t (\delta h)_{s,\tau}\otimes\mathrm{d}X_{\tau}\right|^{\frac{1}{2(\gamma-\eta_1)}}}{(t-s)^{\frac{\eta_1}{\gamma-\eta_1}}} \leq \left[ \sum_{k\geq 1}\frac{2^{\gamma+\gamma^{\prime}-2\eta_1}}{k^{\gamma+\gamma^{\prime}-2\eta_1}} \right]^{\frac{1}{2(\gamma-\eta_1)}}W_{\mathbf{h},\gamma^\prime,\eta_1}^{\frac{\gamma^\prime-\eta1}{2(\gamma-\eta_1)}}(s,t)W_{\mathbf{X},\gamma,\eta_1}^{\frac{1}{2}}(s,t).
		\end{align*}
		A analogous argument can be run to obtain similar bounds for $\int X\otimes\mathrm{d}h$ and $\int h\otimes\mathrm{d}h$. This finishes the proof.
	\end{proof}
	We will assume up to the end of this part:
	\begin{assumption}\label{Cameron-Martin}
		Let $(\mathcal{W},\mathcal{H},\mu)$ be an abstract Wiener space and assume that $X$ is a Gaussian process defined on it such that it can be enhanced to a geometric $\gamma$-H\"older rough path $\mathbf{X}=(X,\mathbb{X})$, $\frac{1}{3} < \gamma \leq \frac{1}{2}$. For every $h\in \mathcal{H}$, let Condition \eqref{variation} be fulfilled. In this case, by \cite[Lemma 5.4]{CLL13}, on a measurable subset $E\subset \mathcal{W}$ with full measure, 
		\begin{align}\label{ALM}
			T_{h}\mathbf{X}(\omega)\equiv \mathbf{X}(\omega +h) \quad \text{for all }\omega\in \mathcal{W} \text{ and }  h\in\mathcal{H}.
		\end{align} 
		We assume that for every $h\in \mathcal{H}$,
		\begin{align}\label{EBS}
			W_{\mathbf{h},\gamma^{\prime},\eta_1}(0,1)\lesssim\vert h\vert_{\mathcal{H}}^{\frac{1}{\gamma^{\prime}-\eta_1}}.
		\end{align}
	\end{assumption}
	In the following, we will show that the rough paths lift of a fractional Brownian in the sense of \cite{FV10} satisfies Assumption \ref{Cameron-Martin}.
	
	\begin{proposition}
		Assume $H\in (\frac{1}{4},\frac{1}{2})$ and let $B^{H}$ be a fractional Brownian motion with Hurst parameter $H$. Let $\mathcal{H}^{H}$ denote the  associated Cameron–Martin space for this process. Then Assumption \ref{Cameron-Martin} holds for every $\frac{1}{2}<\gamma^{\prime}<H+\frac{1}{2}$ and $\eta_1<\gamma^{\prime}-\frac{1}{2}$.
	\end{proposition}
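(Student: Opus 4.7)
The plan is to verify the four constituents of Assumption \ref{Cameron-Martin} for $X = B^H$: (i) enhancement of $X$ to a geometric $\gamma$-H\"older rough path $\mathbf{X}$; (ii) condition \eqref{variation} for every $h \in \mathcal{H}^H$; (iii) the translation identity \eqref{ALM}; and (iv) the quantitative bound \eqref{EBS}. Items (i)--(iii) are essentially known: (i) follows from the Friz--Victoir construction \cite{FV10}, which yields a $\gamma$-H\"older geometric lift for any $\gamma < H$; (iii) is a direct application of \cite[Lemma~5.4]{CLL13} to this lift, which is continuous under Cameron--Martin translations; and (ii) rests on the embedding $\mathcal{H}^H \hookrightarrow C^{p\text{-var}}([0,1], \R^n)$ with $p = 1/(H + \frac{1}{2})$ and the quantitative bound $\|h\|_{p\text{-var}} \lesssim |h|_{\mathcal{H}^H}$. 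Since $\gamma' < H + \frac{1}{2} = 1/p$, one has $1/\gamma' > p$, so that \eqref{variation} holds; Young's theory then supplies the iterated integral $\int h \otimes \mathrm{d}h$ together with the estimate $|\mathbb{h}_{s,t}| \lesssim \omega_h(s,t)^{2/p}$, where $\omega_h$ is a continuous control of the $p$-variation of $h$ satisfying $\omega_h(0,1) \lesssim |h|_{\mathcal{H}^H}^p$.

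The bulk of the work is (iv). Set $\zeta = \gamma' - \eta_1$; the hypothesis $\eta_1 < \gamma' - \frac{1}{2}$ gives $\zeta > \frac{1}{2}$, which will be essential. Using the isometry $\mathcal{H}^H \simeq L^2$ induced by the Volterra kernel $K_H$ and Cauchy--Schwarz, one obtains the bound $|(\delta h)_{s,t}| \leq c_H (t-s)^H |h|_{\mathcal{H}^H}$ together with a localized $L^2$-refinement that retains the mass of the preimage $g \in L^2$ on $[s,t]$; an analogous bound carries over to $\mathbb{h}_{s,t}$ via Young's theory. Both terms in $W_{\mathbf{h},\gamma',\eta_1}(0,1)$ then reduce to weighted partition sums typified by
\begin{align*}
\sum_j (\kappa_{j+1} - \kappa_j)^{-\eta_1/\zeta} \, \omega_h(\kappa_j, \kappa_{j+1})^{1/(p\zeta)}.
\end{align*}
Splitting via H\"older with the conjugate exponents $(2\zeta, 2\zeta/(2\zeta-1))$---legitimate precisely because $\zeta > \frac{1}{2}$---isolates $\|g\|_{L^2}^{1/\zeta} = |h|_{\mathcal{H}^H}^{1/\zeta}$ against a factor whose aggregate power of $(\kappa_{j+1}-\kappa_j)$ has exponent at least $1$; the elementary bound $\sum_j x_j^r \leq (\sum_j x_j)^r$ for $r \geq 1$ then collapses the partition sum into a $(t-s)$-factor, yielding \eqref{EBS} at $t-s=1$.

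The main obstacle is the singular weight $(\kappa_{j+1}-\kappa_j)^{-\eta_1/(\gamma'-\eta_1)}$: taken naively it blows up under refinement, and since $h \in \mathcal{H}^H$ is typically only $H$-H\"older with $H < \gamma'$, no path regularity of $h$ alone can compensate. The resolution is to extract the compensating $(t-s)^{1/2}$-factor from Cauchy--Schwarz applied to the $L^2$-preimage of $h$ under $K_H$ rather than from H\"older regularity, in the spirit of \cite{CLL13}. For $H < \frac{1}{2}$ the kernel $K_H$ is non-local in its first argument, so the clean identity $|h_{s,t}|^2 \leq (t-s) \int_s^t |g|^2$ available for Brownian motion must be replaced by a more delicate estimate built from the variance-of-increment identity for $B^H$, and this is where the technical heart of the argument lies.
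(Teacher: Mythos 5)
There is a genuine gap in the proposal, and you have in fact named it yourself: your argument defers the crucial quantitative step to ``a more delicate estimate built from the variance-of-increment identity for $B^H$'' which is never produced. The bulk of what needs proving is precisely \eqref{EBS}, and your sketch does not close it. Two issues in particular.

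First, the $p$-variation embedding $\mathcal{H}^H \hookrightarrow C^{p\text{-var}}$ with $p = (H+\frac12)^{-1}$ cannot, on its own, absorb the singular time-weight $(\kappa_{j+1}-\kappa_j)^{-\eta_1/(\gamma'-\eta_1)}$. Writing $\omega_h$ for the $p$-variation control, your reduction leads to a sum $\sum_j (\kappa_{j+1}-\kappa_j)^{-\eta_1/(\gamma'-\eta_1)}\,\omega_h(\kappa_j,\kappa_{j+1})^{1/(p(\gamma'-\eta_1))}$. Since $\gamma'-\eta_1 < \gamma' < H+\frac12 = 1/p$, we have $p(\gamma'-\eta_1) < 1$, and no Hölder or concavity trick turns a superadditive $\omega_h$ and $\sum_j(\kappa_{j+1}-\kappa_j)=1$ into a bound: the weight still blows up under refinement, because a $p$-variation control carries no information relating $\omega_h(\kappa_j,\kappa_{j+1})$ to $(\kappa_{j+1}-\kappa_j)$. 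What is needed is a \emph{superadditive} functional $\nu$ together with a pointwise localization of the form $|\delta h_{s,t}|^2 \lesssim (t-s)^{2\gamma'-1}\nu(s,t)$; variation regularity alone does not give this.

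Second, the Volterra-kernel route $\mathcal{H}^H\simeq L^2$ via $K_H$ is exactly where you stop. For $H<\frac12$ the kernel is nonlocal, so Cauchy--Schwarz yields only $|\delta h_{s,t}|\le (t-s)^H|h|_{\mathcal{H}^H}$ with the full $L^2$-mass of $g$ on each increment; the ``localized $L^2$-refinement'' you invoke is precisely the technical content that is missing. The paper sidesteps all of this: by \cite[Theorem 3]{FV06b} one has the embedding $\mathcal{H}^H\hookrightarrow W^{\gamma',2}$ for $\frac12<\gamma'<H+\frac12$, and the superadditive control is simply $\nu(s,t)=\iint_{[s,t]^2}|h(u)-h(v)|^2\,|u-v|^{-1-2\gamma'}\,\mathrm{d}u\,\mathrm{d}v$, with the Besov--H\"older inequality (\cite[Corollary A.2]{FV10}) giving $|\delta h_{s,t}|^2\lesssim (t-s)^{2\gamma'-1}\nu(s,t)$ at no extra cost. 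Plugging this into the partition sum produces a term $(\kappa_{j+1}-\kappa_j)^{a}\,\nu(\kappa_j,\kappa_{j+1})^{b}$ with $a=\frac{2\gamma'-2\eta_1-1}{2(\gamma'-\eta_1)}$, $b=\frac{1}{2(\gamma'-\eta_1)}$, $a+b=1$, $a>0$ exactly because $\eta_1<\gamma'-\frac12$; a single H\"older with exponents $1/a$ and $1/b$ then collapses the sum against $\sum_j(\kappa_{j+1}-\kappa_j)=1$ and $\sum_j\nu(\kappa_j,\kappa_{j+1})\le\nu(0,1)\lesssim|h|_{\mathcal{H}^H}^2$. That is the entire argument, and it is what your sketch is circling without reaching. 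To complete your version you would essentially have to reprove the fractional-Sobolev embedding for $\mathcal{H}^H$ from fractional calculus, which is exactly the content of the cited result.
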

	\begin{proof}
		We only have to check that \eqref{EBS} holds. Assume $n = 1$ first. For $\theta\in (0,1)$ and $q\in (1,\infty)$, for a measurable path $g \colon [0,1] \rightarrow \mathbb{R}$, define
		\begin{align*}
			\vert g\vert_{W^{\theta,q}} \coloneqq \left( \int_{[0,1]^2}\frac{|g(u)-g(v)|^q}{|u-v|^{1+\theta q}} \, \mathrm{d}u\, \mathrm{d}v \right)^{\frac{1}{q}}.
		\end{align*}$$$$
		Then $W^{\theta,q}$ is defined as the set of all measurable paths $g$ such that 
		\begin{align*}
			\vert g\vert_{W^{\theta,q}} + \left( \int_{0}^{1}|g(u)|^q \, \mathrm{d} u \right)^{\frac{1}{q}}<\infty
		\end{align*} 
		holds. We set
		\begin{align*}
			W^{\theta,q}_0 \coloneqq \lbrace g\in W^{\theta,q}:g(0)=0 \rbrace.
		\end{align*}$$$$
		By \cite[Theorem 3]{FV06b}, for $q=2$ and $\frac{1}{2}<\gamma^\prime<H+\frac{1}{2}$, $\mathcal{H}^{H}$ is compactly embedded in $W^{\gamma^\prime,2}_0$. Therefore, for a constant $C(\gamma^\prime)<\infty$,
		\begin{align}\label{CM_S}
			\left( \iint_{[0,1]^2}\frac{|h(u)-h(v)|^2}{|u-v|^{1+2\gamma^\prime}} \, \mathrm{d}u \, \mathrm{d}v \right)^{\frac{1}{2}}\leq C_1(\gamma^\prime)\vert h\vert_{\mathcal{H}^H}
		\end{align}
		for every $h \in \mathcal{H}^H$ where by $| \cdot |_{\mathcal{H}^H}$, we mean the corresponding  Hilbert norm in $\mathcal{H}^H$. Also, by the Besov–H\"older embedding theorem (cf. \cite[Corollary A.2]{FV10}), for all $0\leq s<t\leq 1$ and a constant $C_2(\gamma^\prime)<\infty$,
		\begin{align}\label{BH}
			\begin{split}
				|h(t)-h(s)|^{2} \leq C_{2}(\gamma^\prime)(t-s)^{2\gamma^\prime-1} |h|_{W^{\gamma^\prime,2}}^{2} = C_{2}(\gamma^{\prime})(t-s)^{2\gamma^\prime-1} \iint_{[s,t]^2}\frac{|h(u)-h(v)|^2}{|u-v|^{1+2\gamma^\prime}} \, \mathrm{d}u \, \mathrm{d}v.
			\end{split}
		\end{align}
		Now assume $\pi=\lbrace 0 = \kappa_0 < \kappa_{1} < \ldots < \kappa_{m}=1 \rbrace$. From \eqref{BH},
		\begin{align*}
			&\sum_{0\leq i<m}\frac{|h(\kappa_{i+1})-h(\kappa_{i})|^{\frac{1}{\gamma^{\prime}-\eta_1}}}{(\kappa_{i+1}-\kappa_{i})^{\frac{\eta_1}{\gamma^{\prime}-\eta_1}}}\\
			\leq\ &C_{2}(\gamma')^{\frac{1}{2(\gamma^\prime-\eta_1)}}\sum_{0\leq i<m}(\kappa_{i+1}-\kappa_{i})^{\frac{2\gamma^{\prime}-2\eta_1-1}{2(\gamma^{\prime}-\eta_1)}} \left( \iint_{[\kappa_{i},\kappa_{i+1}]^2}\frac{ |h(u)-h(v)|^2}{\vert u-v\vert^{1+2\gamma^{\prime}}}\, \mathrm{d}u\, \mathrm{d}v\right)^{\frac{1}{2(\gamma^{\prime}-\eta_1)}}.
		\end{align*}
		Now for $0<\eta_1<\gamma^{\prime}-\frac{1}{2}$, applying the H\"older inequality and \eqref{CM_S} yields
		\begin{align*}
			&\sum_{0\leq i<m}\frac{|h(\kappa_{i+1})-h(\kappa_{i})|^{\frac{1}{\gamma^{\prime}-\eta_1}}}{(\kappa_{i+1}-\kappa_{i})^{\frac{\eta_1}{\gamma^{\prime}-\eta_1}}} \\
			\leq\ &C_{2}(\gamma^{\prime})^{\frac{1}{2(\gamma^\prime-\eta_1)}} \left(	\sum_{0\leq i<m} \iint_{[\kappa_{i},\kappa_{i+1}]^2}\frac{|h(u)-h(v)|^2}{\vert u-v\vert^{1+2\gamma^{\prime}}}\, \mathrm{d}u\, \mathrm{d}v \right)^{\frac{1}{2(\gamma^{\prime}-\eta_1)}} \\
			\leq\ &C_{2}(\gamma^{\prime})^{\frac{1}{2(\gamma^\prime-\eta_1)}} \left( \iint_{[0,1]^2}\frac{|h(u)-h(v)|^2}{\vert u-v\vert^{1+2\gamma^{\prime}}} \, \mathrm{d}u\, \mathrm{d}v \right)^{\frac{1}{2(\gamma^{\prime}-\eta_1)}} \\
			\leq\ &C_{2}(\gamma^{\prime})^{\frac{1}{2(\gamma^\prime-\eta_1)}}C_1(\gamma^\prime)^{\frac{1}{\gamma^\prime-\eta_1}}\vert h\vert_{\mathcal{H}^H}^{\frac{1}{\gamma^\prime-\eta_1}}.
		\end{align*}
		Deriving the same bound for $h=(h_{1},...,h_{n})\in\mathbb{R}^n$ follows directly from the later inequality. For obtaining the corresponding bound for the iterated integral $\int h\otimes\mathrm{d}h$, we can proceed as before, using \eqref{SEW_APX} and the bound obtained for the increments of $h$.
	\end{proof} 
	
	\begin{remark}
		Similar to \cite[Theorem 1.1]{FGGR16}, we expect that a more general condition involving the mixed $(1,\rho)$-variation on the covariance function of a general Gaussian process can be formulated that implies Assumption \ref{Cameron-Martin}. 
	\end{remark}
	
	\begin{definition}\label{GRRED}
		For $I=[a,b]$, the \emph{sequence of greedy points} denoted by $\lbrace\tau^{I}_{n,\eta_1,\omega}(\chi)\rbrace_{n\geq 0}$ is defined by setting  $\tau^{I}_{0,\eta_{1},\omega}(\chi)=a$ and recursively
		\begin{align}\label{s}
			\begin{split}
				\tau_{n+1,\eta_1,\omega}^{I}(\chi) \coloneqq \sup\big\lbrace\tau:\  \tau^{I}_{n,\eta_1,\omega}(\chi)\leq \tau \leq b \ \ \text{and}\ \ W_{\mathbf{X}(\omega),\gamma,\eta_1}^{{\gamma-\eta_1}}(\tau_{n,\eta_1,\omega}^{I}(\chi),\tau)\leq \chi\big\rbrace.
			\end{split}
		\end{align}
		For $0<\eta_1< \gamma$ and $\chi>0$, we set 
		\begin{align*}
			N(I,\eta_1,\chi,\mathbf{X}(\omega)) \coloneqq \inf \lbrace n>0: \ \tau_{n,\eta_1,\omega}^{I}(\chi)=b\rbrace.
		\end{align*}
	\end{definition}
	
	The following proposition is analogous to \cite[Proposition 6.2]{CLL13}.
	\begin{proposition}\label{NAZ}
		Assume $\gamma+\gamma^\prime-2\eta>1$, choose $0<\epsilon<\frac{\gamma+\gamma^\prime-2\eta-1}{2}$ and set $\eta_1=\eta+\epsilon$. Then for a constant $T(\eta_1)<\infty$, we have
		\begin{align}\label{MMMN}
			\big[N\big{(}I,\eta_1,2^{\frac{1}{\gamma-\eta_1}}C_{\eta_1}W_{\mathbf{X}(\omega-h),\gamma,\eta_1}(a,b),\mathbf{X}(\omega)\big{)} -1\big] W_{\mathbf{X}(\omega-h),\gamma,\eta_1}^{{\frac{\gamma-\eta_1}{\gamma^{\prime}-\eta_1}}}(a,b)\leq T(\eta_1)\vert h\vert_{\mathcal{H}}^{\frac{1}{\gamma^{\prime}-\eta_1}},
		\end{align}
		where $C_{\eta_1}$ is the constant in \eqref{Tra}. Also for $\chi>0$, there exist $M_1(\eta_1,\chi), M_{2}(\eta_1,\chi)<\infty$ such that
		\begin{align}\label{tail}
			\mu\big{\lbrace}\omega: N(I,\eta_1,\chi,\mathbf{X}(\omega))> n\big{\rbrace}\leq M_1(\eta_1,\chi)\exp(-M_2(\eta_1,\chi)n^{2(\gamma^\prime-\eta_1)})
		\end{align}
		for every $n \geq 1$.
	\end{proposition}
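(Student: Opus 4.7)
The plan is to follow the general strategy of \cite[Proposition 6.2]{CLL13}, replacing its deterministic ingredient by Lemma \ref{traa} and exploiting Assumption \ref{Cameron-Martin}. For \eqref{MMMN}, I would first use the Cameron-Martin quasi-invariance \eqref{ALM} to identify $\mathbf{X}(\omega)$ with the shift $T_h\mathbf{X}(\omega-h)$ on a set of full measure. Applying Lemma \ref{traa} on every subinterval $[s,t]\subset[a,b]$ then yields
\begin{align*}
W_{\mathbf{X}(\omega),\gamma,\eta_1}(s,t) \leq C_{\eta_1}\bigl[W_{\mathbf{X}(\omega-h),\gamma,\eta_1}(s,t) + W_{\mathbf{h},\gamma',\eta_1}(s,t)^{(\gamma'-\eta_1)/(\gamma-\eta_1)}\bigr].
\end{align*}

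Writing $V := W_{\mathbf{X}(\omega-h),\gamma,\eta_1}(a,b)$, the specific choice $\chi = 2^{1/(\gamma-\eta_1)}C_{\eta_1}V$ is made so that on any complete greedy interval $[\tau_j,\tau_{j+1}]$ with $j<N-1$, the identity $W_{\mathbf{X}(\omega),\gamma,\eta_1}^{\gamma-\eta_1}(\tau_j,\tau_{j+1})=\chi$ combined with $W_{\mathbf{X}(\omega-h),\gamma,\eta_1}(\tau_j,\tau_{j+1})\leq V$ forces the shift term to satisfy $W_{\mathbf{h},\gamma',\eta_1}(\tau_j,\tau_{j+1}) \gtrsim V^{(\gamma-\eta_1)/(\gamma'-\eta_1)}$. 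Summing over $j=0,\ldots,N-2$ and invoking the superadditivity \eqref{DFDF} of $W_{\mathbf{h},\gamma',\eta_1}$ produces $(N-1)V^{(\gamma-\eta_1)/(\gamma'-\eta_1)}\lesssim W_{\mathbf{h},\gamma',\eta_1}(a,b)$. The bound \eqref{EBS}, together with a standard rescaling argument to pass from $[0,1]$ to $[a,b]$, controls the right-hand side by $|h|_{\mathcal{H}}^{1/(\gamma'-\eta_1)}$, yielding \eqref{MMMN}.

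For the tail bound \eqref{tail}, I would combine the monotonicity of $N$ in its threshold argument with Borell's Gaussian isoperimetric inequality. Set $\delta_{\max}:=\chi/(2^{1/(\gamma-\eta_1)}C_{\eta_1})$ and, using the fact that $V(\cdot)=W_{\mathbf{X}(\cdot),\gamma,\eta_1}(a,b)$ is $\mu$-a.s.\ finite and strictly positive, fix $\delta_0\in(0,\delta_{\max})$ so that the Borel set $B:=\{\omega':\delta_0\leq V(\omega')\leq\delta_{\max}\}$ satisfies $\mu(B)>0$. Any $\omega=\omega'+h$ with $\omega'\in B$ and $|h|_{\mathcal{H}}\leq r$ produces a threshold in \eqref{MMMN} that is $\leq\chi$, so by monotonicity and \eqref{MMMN} one obtains
\begin{align*}
N(I,\eta_1,\chi,\mathbf{X}(\omega)) \leq 1 + T(\eta_1)\,r^{1/(\gamma'-\eta_1)}\,\delta_0^{-(\gamma-\eta_1)/(\gamma'-\eta_1)}.
\end{align*}
Hence $\{N>n\}\subset(B+B_{\mathcal{H}}(0,c(\chi)(n-1)^{\gamma'-\eta_1}))^{c}$ for a suitable $c(\chi)>0$. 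Borell's inequality $\mu(B+B_{\mathcal{H}}(0,r))\geq\Phi(\Phi^{-1}(\mu(B))+r)$ together with $1-\Phi(x)\leq e^{-x^2/2}$ then delivers \eqref{tail} with the advertised exponent $2(\gamma'-\eta_1)$.

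The main obstacle is the balancing act in the tail bound: the threshold appearing in \eqref{MMMN} is proportional to $V(\omega-h)$, so to make it no larger than the fixed $\chi$ one must upper-bound $V(\omega-h)$, while to extract a nontrivial bound on $N$ one must simultaneously lower-bound $V(\omega-h)$. Showing that the ``sandwich'' set $B$ has positive $\mu$-measure and tracking how the constants $M_1(\eta_1,\chi),M_2(\eta_1,\chi)$ depend on $\chi$ are the most delicate points. The deterministic estimate \eqref{MMMN} itself hinges on the careful exponent bookkeeping in Lemma \ref{traa}, which is where the ratios $(\gamma-\eta_1)/(\gamma'-\eta_1)$ and $1/(\gamma'-\eta_1)$ originate.
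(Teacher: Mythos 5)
Your proposal follows essentially the same route as the paper: Cameron--Martin quasi-invariance \eqref{ALM} plus the translation estimate \eqref{Tra} on each greedy interval, the observation that $W_{\mathbf{X}(\omega),\gamma,\eta_1}^{\gamma-\eta_1}(\tau_j,\tau_{j+1})=\chi$ on complete intervals, superadditivity to sum over $j$, the bound \eqref{EBS}, and then Borell's inequality applied to a ``sandwich'' set plus an $\mathcal{H}$-ball for the tail. The exponent bookkeeping and the monotonicity-in-$\chi$ argument for the tail are exactly as in the paper.

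One point in your tail argument is under-justified. You claim that $\mu(B)>0$ for $B=\{\omega':\delta_0\leq V(\omega')\leq\delta_{\max}\}$ follows from $V$ being $\mu$-a.s.\ finite and strictly positive. That only gives $\mu\{V\leq M\}>0$ for \emph{large} $M$; here $\delta_{\max}=\chi/(2^{\frac{1}{\gamma-\eta_1}}C_{\eta_1})$ is a fixed, possibly small number determined by $\chi$, and a.s.\ positivity and finiteness of $V$ alone do not rule out, say, $V\geq 1$ a.s.\ while $\delta_{\max}<1$. What you actually need is a small-ball statement, $\mu\{V\leq\delta_{\max}\}>0$, which is supplied by the Support Theorem for Gaussian rough paths (this is exactly what the paper invokes for its set $A$); once that is in hand, exhausting $\{V\leq\delta_{\max}\}$ by the sets $\{1/k\leq V\leq\delta_{\max}\}$ yields a valid $\delta_0$. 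With that citation inserted, your argument is complete and matches the paper's.
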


	\begin{proof}
		From \eqref{ALM} and \eqref{Tra},
		\begin{align}\label{ZZSS}
			\begin{split}
				&W_{\mathbf{X}(\omega),\gamma,\eta}^{\gamma-\eta_1}(\tau_{n,\omega}^{I}(\chi),\tau_{n+1,\omega}^{I}(\chi)) = W_{T_{h}(\mathbf{X}(\omega-h)),\gamma,\eta_1}^{\gamma-\eta_1}(\tau_{n,\omega}^I(\chi),\tau_{n+1,\omega}^I(\chi))\\
				\leq\  &C_{\eta_1}^{\gamma-\eta_1}\big{[}W_{\mathbf{X}(\omega-h),\gamma,\eta_1}^{\gamma-\eta_1}(\tau_{n,\omega}^I(\chi),\tau_{n+1,\omega}^I(\chi))+ W_{\mathbf{h},\gamma^{\prime},\eta_1}^{\gamma^{\prime}-\eta_1}(\tau^{I}_{n,\eta_1,\omega}(\chi),\tau^{I}_{n+1,\eta_1,\omega}(\chi))\big{]}.
			\end{split}
		\end{align}
		Note that if $\tau_{n+1,\omega}^{I}(\chi)<b$, then the continuity of $W_{\mathbf{X}(\omega)}$ yields $$W_{\mathbf{X}(\omega),\gamma,\eta_1}(\tau_{n,\omega}^{I}(\chi),\tau_{n+1,\omega}^{I}(\chi))=\chi .$$
		Set $\chi \coloneqq 2^{\frac{1}{\gamma-\eta_1}}C_{\eta_1}W_{\mathbf{X}(\omega-h),\gamma,\eta_1}(a,b)$. From \eqref{ZZSS},
		\begin{align*}
			W_{\mathbf{X}(\omega-h),\gamma,\eta_1}^{{\frac{\gamma-\eta_1}{\gamma^{\prime}-\eta_1}}}(a,b)\leq W_{\mathbf{h},\gamma^{\prime},\eta_1}(\tau^{I}_{n,\eta_1,\omega}(\chi),\tau^{I}_{n+1,\eta_1,\omega}(\chi)) \quad \text{if}\ \tau_{n+1}^{I}(\chi)<b .
		\end{align*}
		Summing up yields
		\begin{align*}
			\big[N\big{(}I,\eta_1,2^{\frac{1}{\gamma-\eta_1}}C_{\eta_1}W_{\mathbf{X}(\omega-h),\gamma,\eta_1}(a,b),\mathbf{X}(\omega)\big{)} -1\big] W_{\mathbf{X}(\omega-h),\gamma,\eta_1}^{{\frac{\gamma-\eta_1}{\gamma^{\prime}-\eta_1}}}(a,b)\leq W_{\mathbf{h},\gamma^{\prime},\eta_1}(a,b).
		\end{align*}
		Now it is enough to use \eqref{EBS} to prove \eqref{MMMN}. For the second claim, let $\chi>0$. From \eqref{MMMN},
		\begin{align}\label{DSDD}
			\begin{split}
				&\left\{\omega \in \mathcal{W}\, :\, N(I,\eta_1,\chi,\mathbf{X}(\omega))> n \right\} \cap E \\
				\subset\ &\mathcal{W} \setminus \left\{ \omega \in \mathcal{W}\, :\, \frac{\chi}{2^{\frac{1}{\gamma-\eta_1}+1}C_{\eta_1}}\leq W_{\mathbf{X}(\omega),\gamma,\eta_1}(a,b) \leq\frac{\chi}{2^{\frac{1}{\gamma-\eta_1}}C_{\eta_1}} \right\} + r_{n}\mathcal{K},
			\end{split}
		\end{align}
		where $r_n:=\frac{(n-1)^{\gamma^{\prime}-\eta_1}\chi^{\gamma-\eta_1}}{{2^{\gamma-\eta_1+1}T(\eta_1)^{\gamma^{\prime}-\eta_1}C_{\eta_1}^{\gamma-\eta_1}}}$, $\mathcal{K}$ is the unit ball in $\mathcal{H}$ and $E$ is the set defined in Assumption \ref{Cameron-Martin}. Indeed: Obviously, if $\chi_1<\chi_2$, one has $N(I,\eta_1,\chi_2,\mathbf{X}(\omega))\leq N(I,\eta_1,\chi_1,\mathbf{X}(\omega))$. Let
		\begin{align*}
			\omega_1+h \in \underbrace{\left\{ \omega \in \mathcal{W}\, :\, \frac{\chi}{2^{\frac{1}{\gamma-\eta_1}+1}C_{\eta_1}}\leq W_{\mathbf{X}(\omega),\gamma,\eta_1}(a,b) \leq \frac{\chi}{2^{\frac{1}{\gamma-\eta_1}} C_{\eta_1}} \right\}}_{\eqqcolon A} + r_{n}\mathcal{K}.
		\end{align*} 
		Note that $A$ has positive $\mu$-measure due to the Support Theorem for Gaussian rough paths \cite{FV10}. Then from \eqref{MMMN}, setting $\omega=\omega_1+h$,
		\begin{align*}
			\big[N(I,\eta_1,\chi,\mathbf{X}(\omega_1+h))-1\big] \left( \frac{\chi}{2^{\frac{1}{\gamma-\eta_1}+1}C_{\eta_1}} \right)^{{\frac{\gamma-\eta_1}{\gamma^{\prime}-\eta_1}}}\leq  (n-1) \left(\frac{\chi}{2^{\frac{1}{\gamma-\eta_1}+1}C_{\eta_1}}\right)^{\frac{\gamma-\eta_1}{\gamma^{\prime}-\eta_1}}.
		\end{align*}
		Therefore \eqref{tail}, follows from \eqref{DSDD} and Borell's inequality (cf.\cite[Theorem D.4]{FV10})
	\end{proof}
	
	We come back now to the initial question of this section. Remember that the solution $Z$ to \eqref{SPDE_EQU} satisfies
	\begin{align}\label{composition}
		\begin{split}
			(\delta Z)_{s,t} &= (S_{t-s}-I)Z_{s}+\int_{s}^{t}S_{t-\tau}F(Z_{\tau}) \, \mathrm{d}\tau + S_{t-s}G(Z_s)\circ (\delta X(\omega))_{s,t} + S_{t-s}D_{Z_s}G[G(Z_s)]\circ \mathbb{X}_{s,t}(\omega)\\
			&\quad + \int_{s}^{t}S_{t-\tau}G(Z_{\tau})\circ\mathrm{d}\mathbf{X}_{\tau}(\omega) - S_{t-s}G(Z_s)\circ (\delta X)_{s,t}(\omega)-S_{t-s}D_{Z_s}G[G(Z_s)]\circ \mathbb{X}_{s,t}(\omega).
		\end{split}
	\end{align}
	Note that from Assumption \ref{existence} and \eqref{SEM_I}, for $i=0,1,2$ and every $0<u<v$ with $v-u<1$,
	\begin{align}\label{Drift_Estimate}
		\begin{split}
			\left|\int_{u}^{v}S_{v-\tau}F(Z_{\tau}) \, \mathrm{d}\tau \right|_{\alpha-i\gamma} &\leq \int_{u}^{v}\left| S_{v-\tau}F(Z_{\tau})\right|_{\alpha-i\gamma}\, \mathrm{d}\tau\\
			&\lesssim (1+\sup_{\tau\in [u,v]}\vert Z_\tau\vert_{\alpha})\int_{u}^{v}\max\lbrace(v-\tau)^{-\sigma+i\gamma},1\rbrace \, \mathrm{d}\tau\\
			&\lesssim (v-u)^{\min\lbrace 1,1-\sigma+i\gamma\rbrace}(1+\sup_{\tau\in [u,v]}\vert Z_\tau\vert_{\alpha}).
		\end{split}
	\end{align}
	Also, since $Z^\prime_{s}=G(Z_s)$, 
	\begin{align}\label{TTT}
		\begin{split}
			|(\delta G(Z))_{u,v} \vert_{\alpha-2\gamma} &\lesssim \vert (\delta Z)_{u,v}\vert_{\alpha-2\gamma+\eta}\lesssim   \vert (\delta Z)_{u,v}\vert_{\alpha-2\gamma}^{\frac{\gamma-\eta}{\gamma}}\vert(\delta Z)_{u,v}\vert_{\alpha-\gamma}^{\frac{\eta}{\gamma}} \quad \text{and} \\ 
			Z^{\#}_{u,v} &= (\delta Z)_{u,v}-G(Z_{u})\circ(\delta X(\omega))_{u,v}.
		\end{split}
	\end{align}
	Set $L(x) \coloneqq \max \lbrace x,x^2\rbrace$, $\bar{\sigma}:=\max\lbrace \sigma,2\gamma\rbrace$ and assume for $I=[s,t]$ that  $t-s\leq 1$. From Lemma \ref{LKLL} and Assumption \ref{existence}, also \eqref{Drift_Estimate} and \eqref{TTT}, we can conclude that for an $M_\epsilon>1$ which is independent from $\mathbf{X}$ it holds that
	\begin{align}\label{Grid}
		\begin{split}
			\Vert (Z,G(Z))\Vert_{\mathcal{D}_{\mathbf{X},{\alpha}}^{\gamma}([u,v])} \leq &M_\epsilon\Big{[}\vert Z_u\vert_{\alpha}+(v-u)^{1-\bar{\sigma}}\Vert (Z,G(Z))\Vert_{\mathcal{D}_{\mathbf{X}(\omega),{\alpha}}^{\gamma}([u,v])} + 1\\
			&\qquad + \Vert X\Vert_{\gamma,I}\big{(}\Vert X(\omega)\Vert^{2}_{\gamma,I}+\Vert\mathbb{X}(\omega)\Vert_{2\gamma,I}\big{)}+\Vert X(\omega)\Vert_{\gamma,I}+\Vert\mathbb{X}(\omega)\Vert_{2\gamma,I}\\
			&\qquad + L\big(W_{\mathbf{X}(\omega),\gamma,\eta+\epsilon}(u,v)^{\gamma-\eta-\epsilon}\big)\Vert (Z,G(Z))\Vert_{\mathcal{D}_{\mathbf{X}(\omega),{\alpha}}^{\gamma}([u,v])}\Big{]},
		\end{split}
	\end{align}
	where $[u,v]\subset [s,t]$.	Choose $0<\chi<1$ such that $M_\epsilon\chi^{\gamma-\eta-\epsilon}\leq\frac{1}{4}$. We will assume further that $M_\epsilon(t-s)^{1-\bar{\sigma}}\leq \frac{1}{4}$. Let $\lbrace\tau^{I}_{n,\eta_1,\omega}(\chi)\rbrace_{n\geq 0}$ be the greedy points that are defined in Definition \ref{GRRED} with $\eta_{1}=\eta+\epsilon$ such that $0<\epsilon<$  .
From \eqref{Grid},
\begin{align}\label{DDCCXX}
	\Vert (Z,G(Z))\Vert_{\mathcal{D}_{\mathbf{X}(\omega),{\alpha}}^{\gamma}([\tau_{n},\tau_{n+1}])}\leq 2M_\epsilon|Z_{\tau_{n}}|_{\alpha}+2M_\epsilon P(\Vert X(\omega)\Vert_{\gamma,I},\Vert\mathbb{X}(\omega)\Vert_{2\gamma,I}),
\end{align}
where
\begin{align*}
	P(\Vert X(\omega)\Vert_{\gamma,I},\Vert\mathbb{X}(\omega)\Vert_{\gamma,I}) = 1+\Vert X(\omega)\Vert_{\gamma,I}+\Vert \mathbb{X}(\omega)\Vert_{\gamma,I}+\Vert X(\omega)\Vert_{\gamma,I}(\Vert X(\omega)\Vert_{\gamma,I}^{2}+\Vert\mathbb{X}(\omega)\Vert_{2\gamma,I}).
\end{align*}
Therefore, for $\tilde{M}_\epsilon \coloneqq \log(2M_\epsilon)$,
\begin{align}\label{BBY}
	\begin{split}
		\sup_{\tau\in [s,t]}\vert Z_{\tau}\vert_{\alpha} &\leq  \exp\big{(}N(I,\eta_1,\chi,\mathbf{X}(\omega))\tilde{M}_\epsilon\big{)}\vert Z_s\vert_{\alpha}\\
		&\quad + \frac{\exp\big{(}N(I,\eta_1,\chi,\mathbf{X}(\omega))\tilde{M}_\epsilon+\tilde{M}_\epsilon\big{)}-1}{2M_\epsilon-1}P(\Vert X(\omega)\Vert_{\gamma,I},\Vert\mathbb{X}(\omega)\Vert_{\gamma,I}).
	\end{split}
\end{align}
We can now summarize our main result in the following theorem:
\begin{theorem}\label{thm:integrability_RPDE}
	Suppose that $F \colon \mathcal{B}_{\alpha}\rightarrow\mathcal{B}_{\alpha-\sigma}$ is a locally Lipschitz continuous function with linear growth and for $\theta\in \lbrace 0,\gamma,2\gamma\rbrace$, $G \colon \mathcal{B}_{\alpha-\theta}\rightarrow\mathcal{B}_{\alpha-\theta-\eta}$ is a bounded Fr\'echet differentiable function with 3 bounded derivatives or a bounded linear function. Consider the equation
	\begin{align}\label{BMMM}
		\mathrm{d}Z_t = AZ_t \, \mathrm{d}t + F(Z_t)\, \mathrm{d}t + G(Z_t)\circ\mathrm{d}\mathbf{X}_t, \ \ \ \  Z_{0}=\xi\in \mathcal{B}_{\alpha}.
	\end{align}
	Then the following holds:
	\begin{itemize}
		\item[(i)] \eqref{BMMM} admits a unique and global solution $Z$ such that for $\eta_{1}=\eta+\epsilon$ and $\bar{\sigma}:=\max\lbrace \sigma,2\gamma\rbrace$, one has
		\begin{align}\label{integrable bound}
			\begin{split}
				\sup_{\tau\in [s,t]}|Z_{\tau}|_{\alpha} &\leq \sup_{0\leq n\leq N(I,\eta_1,\chi,\mathbf{X}(\omega))-1}\Vert (Z,G(Z))\Vert_{\mathcal{D}_{\mathbf{X}(\omega),{\alpha}}^{\gamma}([\tau_{n},\tau_{n+1}])} \leq\exp\big{(}N([s,t],\eta_1,\chi,\mathbf{X})\tilde{M}_\epsilon\big{)}|Z_s|_{\alpha} \\
				&\quad + \frac{\exp\big{(}N([s,t],\eta_1,\chi,\mathbf{X})\tilde{M}_\epsilon+\tilde{M}_\epsilon\big{)}-1}{2M_\epsilon-1}P(\Vert X\Vert_{\gamma,[s,t]},\Vert\mathbb{X}\Vert_{\gamma,[s,t]}),
			\end{split}
		\end{align}
		where $M_{\epsilon}>1$, $ M_\epsilon\chi^{\gamma-\eta-\epsilon}\leq\frac{1}{4}, \ M_\epsilon(t-s)^{1-\bar{\sigma}}\leq\frac{1}{4}$ and  $\tilde{M}_\epsilon:=\log(2M_\epsilon)$. In addition, $P(x,y)=1+y+ x+x(x^2+y)$.
		
		\item [(ii)] For a constant $\tilde{M}>0$, we have the following bound
		\begin{align}\label{integrable bound_2}
			\begin{split}
				&	\Vert (Z,G(Z))\Vert_{\mathcal{D}_{\mathbf{X},{\alpha}}^{\gamma}([s,t])}\leq  \tilde{M}N([s,t],\eta_1,\chi,\mathbf{X})(1+\Vert X\Vert_{\gamma,[s,t]})\bigg[\exp\big{(}N([s,t],\eta_1,\chi,\mathbf{X})\tilde{M}_\epsilon\big{)} |Z_s|_{\alpha} \\
				&\quad + \frac{\exp\big{(}N([s,t],\eta_1,\chi,\mathbf{X})\tilde{M}_\epsilon+\tilde{M}_\epsilon\big{)}-1}{2M_\epsilon-1}P(\Vert X\Vert_{\gamma,[s,t]},\Vert\mathbb{X}\Vert_{\gamma,[s,t]})\bigg] .
			\end{split}
		\end{align}
		
		\item[(iii)] Let $(\mathcal{W},\mathcal{H},\mu)$ be an abstract Wiener space and assume that $X$ is a Gaussian process defined on it that can be enhanced to a weakly $\gamma$-geometric rough path $\mathbf{X}(\omega)=(X(\omega),\mathbb{X}(\omega))$, $\frac{1}{3} < \gamma \leq \frac{1}{2}$. In addition, let $\gamma^{\prime}>0$ satisfy $\gamma+\gamma^{\prime}-2\eta>1$, and for every $h\in \mathcal{H}$, for $\eta_1=\eta+\epsilon$ such that $0<\epsilon<\frac{\gamma+\gamma^\prime-2\eta-1}{2}$, we assume that Condition \eqref{variation} and Assumption \ref{Cameron-Martin} hold. 
		Then
		\begin{align}\label{integrability_2}
			\Vert (Z,G(Z))\Vert_{\mathcal{D}_{\mathbf{X},{\alpha}}^{\gamma}([s,t])}\in\cap_{p\geq 1}\mathcal{L}^{p}(\mathcal{W}).
		\end{align}
	\end{itemize} 
\end{theorem}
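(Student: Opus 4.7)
Existence and uniqueness of a global solution are already available from \cite{HN22}, so the real task is to establish the a priori estimates (i)--(iii).

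The plan for (i) is to iterate \eqref{DDCCXX} across the greedy partition of Definition \ref{GRRED}. Fix $\epsilon>0$ with $\eta+\epsilon<\gamma$ and choose $\chi\in(0,1)$ so that $M_\epsilon\chi^{\gamma-\eta-\epsilon}\leq\tfrac14$. By the definition of the greedy points, on every subinterval $[\tau_n,\tau_{n+1}]$ one has $W_{\mathbf{X}(\omega),\gamma,\eta+\epsilon}(\tau_n,\tau_{n+1})^{\gamma-\eta-\epsilon}\leq\chi$; combined with the additional smallness assumption $M_\epsilon(t-s)^{1-\bar\sigma}\leq\tfrac14$ this lets us absorb both the $(v-u)^{1-\bar\sigma}$ and $L(W^{\gamma-\eta-\epsilon})$ terms on the right-hand side of \eqref{Grid} into the left, yielding exactly \eqref{DDCCXX}. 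Applying this recursively, using $\sup_{\tau\in[\tau_n,\tau_{n+1}]}|Z_\tau|_\alpha\leq\Vert (Z,G(Z))\Vert_{\mathcal{D}_{\mathbf{X},\alpha}^\gamma([\tau_n,\tau_{n+1}])}$, and taking a geometric sum in $n$ with common ratio $2M_\epsilon$, produces the bound \eqref{integrable bound} with $N=N([s,t],\eta_1,\chi,\mathbf{X})$ intervals.

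For (ii), once the pointwise control of $\sup_\tau|Z_\tau|_\alpha$ from (i) is in hand, I would plug it back into \eqref{DDCCXX} to obtain a uniform bound on $\Vert (Z,G(Z))\Vert_{\mathcal{D}_{\mathbf{X},\alpha}^\gamma([\tau_n,\tau_{n+1}])}$ for every $n$. The nontrivial step is to turn these subinterval norms into a bound on the full interval $[s,t]$: for the components involving the sup of $Z$, $Z'$, $Z^\#$ this is immediate by taking the maximum, but the H\"older and $C^\gamma_2,C^{2\gamma}_2$ seminorms must be controlled across the partition points. I would argue as in the standard concatenation lemma for controlled paths, splitting an increment $(s',t')$ into the boundary intervals and the intermediate full intervals and using the identity $Z^\#_{s',t'}=Z^\#_{s',\tau_n}+Z^\#_{\tau_n,t'}+Z'_{s',\tau_n}\circ(\delta X)_{\tau_n,t'}$ and the analogous identity at the Gubinelli-derivative level. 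This introduces factors of $N$ for the telescoping and of $(1+\Vert X\Vert_{\gamma,[s,t]})$ from \eqref{NMMMMMLY}, exactly matching the prefactor in \eqref{integrable bound_2}.

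For (iii), the strategy is to combine \eqref{integrable bound_2} with probabilistic integrability of its ingredients. The polynomial factor $P(\Vert X\Vert_{\gamma,[s,t]},\Vert\mathbb X\Vert_{2\gamma,[s,t]})$ and the prefactor $(1+\Vert X\Vert_\gamma)$ are in $\cap_{p\geq 1}\mathcal{L}^p(\mathcal W)$ by Fernique's theorem for Gaussian rough paths \cite{FV10}, while Proposition \ref{NAZ} gives the stretched-exponential tail \eqref{tail} for $N([s,t],\eta_1,\chi,\mathbf{X}(\omega))$. Consequently $\exp(c\,N)\in\cap_{p\geq 1}\mathcal L^p$ for every constant $c$, in particular for $c=\tilde M_\epsilon+1$. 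A single application of H\"older's inequality to the right-hand side of \eqref{integrable bound_2} then delivers \eqref{integrability_2}.

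The main obstacle I anticipate is the concatenation step in (ii): the $\mathcal{D}_{\mathbf X,\alpha}^\gamma$--norm is not subadditive in the interval, and the Gubinelli remainder $Z^\#$ across a boundary point $\tau_n$ has to be rewritten through $Z'\circ\delta X$, which is what forces the extra factor $N\cdot(1+\Vert X\Vert_\gamma)$ to appear. Everything else is either a direct iteration (part (i)) or an off-the-shelf Gaussian integrability argument (part (iii)).
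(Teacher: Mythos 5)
Your proposal follows the paper's own argument step for step: part (i) iterates the single-interval estimate over the greedy partition to obtain a geometric sum with ratio $2M_\epsilon$; part (ii) concatenates the controlled-path norms across partition points via the identity $Z^\#_{s,t}=Z^\#_{s,u}+Z^\#_{u,t}+Z'_{s,u}\circ(\delta X)_{u,t}$, which is precisely what produces the prefactor $N\cdot(1+\Vert X\Vert_{\gamma})$; and part (iii) combines the stretched-exponential tail of $N$ from Proposition \ref{NAZ} (using $2(\gamma'-\eta_1)>1$), Fernique-type integrability of the polynomial terms, and H\"older's inequality. This is correct and matches the paper's proof in all essentials.
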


\begin{proof}
	The first item is proved in \eqref{BBY}.
	The second item follows from this fact that for every $Z\in \mathcal{D}_{\mathbf{X},{\alpha}}^{\gamma}([0,T])$, and $u,v,w\in [s,t]$ with $u<v<w$, $Z^{\#}_{s,t}=Z^{\#}_{s,u}+Z^{\#}_{u,t}+Z^{\prime}_{s,u}(\delta X)_{u,t}$, therefore for a constant $\tilde{M}>1$,
	\begin{align}\label{total_norm}
		\begin{split}
			\Vert(Z,G(Z))\Vert_{\mathcal{D}_{\mathbf{X},{\alpha}}^{\gamma}([s,t])} \leq\ \tilde{M}(1+\Vert X\Vert_{\gamma,[0,T]})\sum_{0\leq n\leq N(I,\eta_1,\chi,\mathbf{X}(\omega))-1}\Vert(Z,G(Z))\Vert_{\mathcal{D}_{\mathbf{X},{\alpha}}^{\gamma}([\tau_{n},\tau_{n+1}])}.
		\end{split}
	\end{align}
	Then, the claim follows from (i). For the integrability claim (iii), by \eqref{integrable bound_2}, it is enough to prove that
	\begin{align}\label{integrability}
		\begin{split}
			&N([s,t],\eta_1,\chi,\mathbf{X}(\omega))\exp\big{(}N([s,t],\eta_1,\chi,\mathbf{X}(\omega))\tilde{M}_\epsilon\big{)}(1+\Vert X(\omega)\Vert_{\gamma,[s,t]})\\
			&\quad \times P(\Vert X(\omega)\Vert_{\gamma,[s,t]},\Vert\mathbb{X}(\omega)\Vert_{\gamma,[s,t]})\in\mathcal{L}^p(\mathcal{W})
		\end{split}
	\end{align}
	for every $p \geq 1$. Remember that from \eqref{tail}, we know that
	\begin{align*}
		\mu\big{\lbrace}\omega \in \mathcal{W}\, :\, N(I,\eta_1,\chi,\mathbf{X}(\omega))> n\big{\rbrace}\leq M_1(\eta_1,\chi)\exp(-M_2(\eta_1,\chi)n^{2(\gamma^\prime-\eta_1)}).
	\end{align*}
	Since by assumption $\gamma+\gamma^\prime-2\eta_1>1$, we can easily conclude $2(\gamma^\prime-\eta_1)>1$. This proves integrability of $\exp\big{(}N([s,t],\eta_1,\chi,\mathbf{X})\tilde{M}_\epsilon\big{)}$. Since $P$ is a polynomial term and $\mathbf{X}$ is a Gaussian process, the integrability for every moment is clear. Finally, the integrability of the product of these terms is a straightforward consequence of H\"older's inequality.
\end{proof}

\begin{remark}
	In Assumption \ref{existence}, we assumed that $G$ is a bounded and Fr\'echet-differentiable function with bounded derivatives or a bounded linear function. In fact, we can even weaken a bit our assumptions on $G$ and replace it by the following:
	\begin{itemize}
		\item  	For $\theta\in\lbrace 0,\gamma,2\gamma\rbrace$ , $G \colon \mathcal{B}_{\alpha-\theta}\rightarrow\mathcal{B}_{\alpha-\theta-\eta}^{n}$ is  Fr\'echet differentiable function up to three times with bounded derivatives and $D_{.}G[G(.)]:\mathcal{B}_{\alpha-\theta}\rightarrow\mathcal{B}_{\alpha-\theta-\eta}$ assumed to be bounded.
	\end{itemize}
	For (finite-dimensional) rough differential equations, the corresponding condition was used to prove that global solutions exist \cite{Lej12}. Note that this assumption covers Assumption \ref{existence}. We do not want to reformulate our main results to keep the calculations as simple as possible. However, to sketch the main idea, the point is to use another representation of the remainder term $[G(Z)]^{\#}$. In fact, one has to use the identity
	\begin{align}\label{Remaider_1}
		\begin{split}
			[G(Z)]^{\#}_{s,t} &= G(Z_t)-G(Z_s)-D_{Z_{s}}G [G(Z_s)\circ (\delta X)_{s,t}] \\
			&= \int_{0}^{1}\left(D_{Z_s+\sigma(\delta Z)_{s,t}}G[G(Z_s)\circ(\delta X)_{s,t}]-D_{Z_s}G[G(Z_s)\circ (\delta X)_{s,t}]\right) \, \mathrm{d}\sigma \\
			&\quad +\int_{0}^{1}D_{Z_s+\sigma(\delta Z)_{s,t}}G [Z^{\#}_{s,t}]\, \mathrm{d}\sigma\\
			&= \int_{0}^{1}\left(D_{Z_s+\sigma(\delta Z)_{s,t}}G[G(Z_s+\sigma(\delta Z)_{s,t})\circ(\delta X)_{s,t}]-D_{Z_s}G[G(Z_s)\circ (\delta X)_{s,t}]\right)\, \mathrm{d}\sigma\\
			&\quad - \int_{0}^{1}D_{Z_s+\sigma(\delta Z)_{s,t}}G\big[\int_{0}^{1}\sigma D_{Z_s+\sigma u(Z)_{s,t}}G[G(Z_s)\circ(\delta X)_{s,t}]\mathrm{d}u\circ(\delta X_{s,t})\big]\, \mathrm{d}\sigma\\
			&\quad - \int_{0}^{1}D_{Z_s+\sigma(\delta Z)_{s,t}}G\big[\int_{0}^{1}uD_{Z_s+\sigma u(Z)_{s,t}}G[Z^{\#}_{s,t}]\, \mathrm{d}u\circ(\delta X_{s,t})\big]\, \mathrm{d}\sigma \\
			&\quad +\int_{0}^{1}D_{Z_s+\sigma(\delta Z)_{s,t}}G [Z^{\#}_{s,t}]\, \mathrm{d}\sigma
		\end{split}
	\end{align}
	in \eqref{Increment} and to reformulate the other lemmas accordingly.
\end{remark}

\section{Linearization}
The crucial step to prove the existence of invariant manifolds is to differentiate the flow and to control the growth of it. In this section, first we  address the regularity (in Fr\'echet's sense) of the solution map induced by equation \eqref{SPDE_EQU} with respect to the initial value and then derive some inequalities for our future goals.\smallskip

Remember that we are dealing with the equation
\begin{align}\label{SPDE_EQU1}
	\mathrm{d}Z_t=AZ_t \, \mathrm{d}t + F(Z_t)\, \mathrm{d}t + G(Z_t)\circ\mathrm{d}\mathbf{X}_t, \ \ \ \  Z_{0}=\xi\in \mathcal{B}_{\alpha}.
\end{align}
The proof of existence and uniqueness of the solution is based on a standard fixed-point argument for the map
\begin{align*}
	&\mathcal{F} \colon \mathcal{D}_{\mathbf{X},{\alpha},0}^{\gamma}([0,T])\cap \overline{ B_{\mathcal{D}_{\mathbf{X},{\alpha}}^{\gamma}([0,T])}(0,\epsilon)}\times \mathcal{B}_\alpha\cap \overline{B_{\mathcal{B}_\alpha}(0,M)}\longrightarrow \mathcal{D}_{\mathbf{X},{\alpha},0}^{\gamma}([0,T])\cap \overline{ B_{\mathcal{D}_{\mathbf{X},{\alpha}}^{\gamma}([0,T])}(0,\epsilon)},\\
	&\quad \mathcal{F}(W,\xi)(t) \coloneqq S_{t}\xi-\xi + \int_{0}^{t}S_{t-\tau} F(W_{\tau}+\xi)\, \mathrm{d}\tau + \int_{0}^{t}S_{t-\tau}G(W_{\tau}+\xi)\circ\mathrm{d}\mathbf{X}_\tau,
\end{align*} 
where $M>0$,
\begin{align*}
	\mathcal{D}_{\mathbf{X},{\alpha},0}^{\gamma}([0,T]) \coloneqq \lbrace W\in\mathcal{D}_{\mathbf{X},{\alpha}}^{\gamma}([0,T])\, :\,  W_0=0 \rbrace,
\end{align*} 
and $\epsilon=\epsilon(M),\ T=T(M)$ might depend on $M$. We will further assume that $F \colon \mathcal{B}_\alpha\rightarrow \mathcal{B}_{\alpha-\sigma}$ is a $C^{1}$-function in Fr\'echet's sense. In this case, since also $G\in C^3$ (cf. Assumption \ref{existence}), we conclude that $\mathcal{F}$ is a $C^1$-map. Note that for every $M>0$, the parameters $T$ and $\epsilon$ can be chosen in such a way that $\mathcal{F}$ is contraction, i.e. for $\mu<1$,
\begin{align*}
	\Vert\mathcal{F}(W,\xi)-\mathcal{F}(W^{\prime},\xi)\Vert_{\mathcal{D}_{\mathbf{X},{\alpha}}^{\gamma}([0,T])}\leq \mu\Vert W-W^\prime\Vert_{\mathcal{D}_{\mathbf{X},{\alpha}}^{\gamma}([0,T])}\quad \text{for all } (W,\xi),(W^\prime,\xi)\in \text{dom}(\mathcal{F}).
\end{align*}
This yields for $\mathcal{F}^{\prime}(W,\xi):=\mathcal{F}(W,\xi)-W$ that
\begin{align*}
	\frac{\partial\mathcal{F}^{\prime}(W,\xi)}{\partial W}
\end{align*}
is an isomorphism. By the implicit function theorem for Banach spaces, cf.\cite[Theorem 2.5.7]{AMR88}, for every $\xi\in \mathcal{B}_\alpha\cap \overline{B_{\mathcal{B}_\alpha}(0,M)}$, there exists a unique $W(\xi)\in\mathcal{D}_{\mathbf{X},{\alpha},0}$, such that $\mathcal{F}^{\prime}(W(\xi),\xi)=0$. Since $\mathcal{F}$ is $C^1$, then $\xi \mapsto W(\xi)$ is also $C^1$. Therefore, $W(\xi)+\xi$, which is the solution to \eqref{SPDE_EQU1}, is a differentiable function. Note that our solution is not exploding in a finite time due to \eqref{integrable bound}. Therefore, for every $T_0>0$, by repeating this argument and gluing together the solutions, the differentiability of the solution at every $T_0$ can be verified. \smallskip

Let us agree to use $\varphi^{t}_{\mathbf{X}}(\xi)$ to denote the solution to \eqref{SPDE_EQU1} at time $t$ with initial value $Z_0=\xi$. 
We already showed that $\varphi^{t}_{\mathbf{X}}(\xi)$ is differentiable. We will use $D_{\xi}\varphi^{t}_{\mathbf{X}}[\zeta]$ to denote the derivative of $\varphi^{t}_{\mathbf{X}}(\xi)$ at $\xi$ in direction $\zeta$. We claim that $D_{\xi}\varphi^{t}_{\mathbf{X}}[\zeta]$ satisfies the equation
\begin{align}\label{LLLL}
	\mathrm{d}D_{\xi}\varphi^{t}_{\mathbf{X}}[\zeta] = AD_{\xi}\varphi^{t}_{\mathbf{X}}[\zeta]\, \mathrm{d}t + D_{\varphi^{t}_{\mathbf{X}}(\xi)}F[D_{\xi}\varphi^{t}_{\mathbf{X}}[\zeta]]\,\mathrm{d}t + D_{\varphi^{t}_{\mathbf{X}}(\xi)}G[D_{\xi}\varphi^{t}_{\mathbf{X}}[\zeta]]\circ\mathrm{d}\mathbf{X}_t,\ \ \ \  D_{\xi}\varphi^{0}_{\mathbf{X}}[\zeta]=\zeta,
\end{align}
or equivalently
\begin{align}\label{LLL}
	\begin{split}
		D_{\xi}\varphi^{t}_{\mathbf{X}}[\zeta]
		= S_{t-s}D_{\xi}\varphi^{s}_{\mathbf{X}}[\zeta] + \int_{s}^{t}S_{t-\tau}D_{\varphi^{\tau}_{\mathbf{X}}(\xi)}F[D_{\xi}\varphi^{\tau}_{\mathbf{X}}[\zeta]]\,\mathrm{d}\tau+\int_{s}^{t}S_{t-\tau}D_{\varphi^{\tau}_{\mathbf{X}}(\xi)}G[D_{\xi}\varphi^{\tau}_{\mathbf{X}}[\zeta]]\circ\mathrm{d}\mathbf{X}_{\tau}.
	\end{split}
\end{align}
In fact, the proof of formula \eqref{LLL} is relatively straightforward. We already showed, using the implicit function theorem, that
\begin{align}\label{limitt}
	\lim_{\epsilon\rightarrow 0} \left\| \frac{\varphi^{.}_{\mathbf{X}}(\xi+\epsilon\zeta)-\varphi^{.}_{\mathbf{X}}(\xi)}{\epsilon} - D_{\xi}\varphi^{.}_{\mathbf{X}}[\zeta] \right\|_{\mathcal{D}_{\mathbf{X},{\alpha}}^{\gamma}([0,T])} = 0
\end{align}
holds true. By definition,
\begin{align*}
	&\frac{\delta(\varphi^{.}_{\mathbf{X}}(\xi+\epsilon\zeta))_{s,t}-\delta(\varphi^{.}_{\mathbf{X}}(\xi))_{s,t}}{\epsilon} - (S_{t-s}-I)D_{\xi}\varphi^{s}_{\mathbf{X}}[\zeta]-\int_{s}^{t}S_{t-\tau}D_{\varphi^{\tau}_{\mathbf{X}}(\xi)} F[D_{\xi}\varphi^{\tau}_{\mathbf{X}}[\zeta]]\, \mathrm{d}\tau \\
	&\quad - \int_{s}^{t}S_{t-\tau}D_{\varphi^{\tau}_{\mathbf{X}}(\xi)}G[D_{\xi}\varphi^{\tau}_{\mathbf{X}}[\zeta]]\circ\mathrm{d}\mathbf{X}_{\tau} \\
	=\ &(S_{t-s}-I)(\frac{\varphi^{s}_{\mathbf{X}}(\xi+\epsilon\zeta)-\varphi^{s}_{\mathbf{X}}(\xi)}{\epsilon}-D_{\xi}\varphi^{s}_{\mathbf{X}}[\zeta]) \\
	&\quad - \int_{s}^{t} S_{t-\tau}(\frac{F(\varphi^{\tau}_{\mathbf{X}}(\xi+\epsilon\zeta)-F(\varphi^{\tau}_{\mathbf{X}}(\xi)}{\epsilon}-D_{\varphi^{\tau}_{\mathbf{X}}(\xi)}F[D_{\xi}\varphi^{\tau}_{\mathbf{X}}[\zeta]])\, \mathrm{d}\tau \\
	&\quad - \int_{s}^{t} S_{t-\tau}(\frac{G(\varphi^{\tau}_{\mathbf{X}}(\xi+\epsilon\zeta)-G(\varphi^{\tau}_{\mathbf{X}}(\xi)}{\epsilon}-D_{\varphi^{\tau}_{\mathbf{X}}(\xi)}G[D_{\xi}\varphi^{\tau}_{\mathbf{X}}[\zeta]])\circ\mathrm{d}\mathbf{X}_\tau .
\end{align*}
From \eqref{limitt} and our assumptions on $F$ and $G$, it can be shown that
\begin{align*}
	\lim_{\epsilon\rightarrow 0}\sup_{\tau\in [0,T]} \left| \frac{F(\varphi^{\tau}_{\mathbf{X}}(\xi+\epsilon\zeta))-F(\varphi^{\tau}_{\mathbf{X}}(\xi))}{\epsilon}-D_{\varphi^{\tau}_{\mathbf{X}}(\xi)}F[D_{\xi}\varphi^{\tau}_{\mathbf{X}}[\zeta]] \right|_{\alpha-\eta} &= 0 \quad \text{and} \\
	\lim_{\epsilon\rightarrow 0} \left\| \frac{G(\varphi^{.}_{\mathbf{X}}(\xi+\epsilon\zeta))-G(\varphi^{.}_{\mathbf{X}}(\xi))}{\epsilon}-D_{\varphi^{.}_{\mathbf{X}}(\xi)}G[D_{\xi}\varphi^{.}_{\mathbf{X}}[\zeta]]\right\|_{\mathcal{D}_{\mathbf{X},{\alpha}}^{\gamma}([0,T])} &= 0
\end{align*}
which yields the identity \eqref{LLL}. \smallskip

In the next proposition, we obtain an a priori bound for the solution to equation \eqref{LLL}.
\begin{proposition}\label{linear boundd}
	Let $\xi,\zeta \in \mathcal{B}_\alpha$ and $I=[0,T]$. Assume that $DF:\mathcal{B}_{\alpha}\rightarrow\mathcal{L}(\mathcal{B}_\alpha,\mathcal{B}_{\alpha-\sigma}) $ is locally Lipschitz and there exists a polynomial $P_1$ such that for every $\tilde{\xi}\in\mathcal{B}_{\alpha}$,
	\begin{align}\label{poly}
		\Vert D_{\tilde{\xi}}F\Vert_{\mathcal{L}(\mathcal{B}_\alpha,\mathcal{B}_{\alpha-\sigma})}\leq P_1(\vert\tilde{\xi} \vert_{\alpha}).
	\end{align}
	Set $\bar{\sigma} \coloneqq \max\lbrace \sigma,2\gamma\rbrace$. Then there exists a constant $M > 0$ such that 
	\begin{align}\label{bounde_derivative}
		\Vert (D_{\xi}\varphi^{.}_{\mathbf{X}}[\zeta],D_{\varphi^{.}_{\mathbf{X}}}G[ D_{\xi}\varphi^{.}_{\mathbf{X}}[\zeta]])\Vert_{\mathcal{D}_{\mathbf{X},{\alpha}}^{\gamma}([0,T])}\leq M \Vert \zeta\Vert_{\alpha}\exp\big(M S(0,T,\xi,\mathbf{X})\big),
	\end{align}
	where
	\begin{align}
		\begin{split}\label{eqn:def_S}
			S(0,T,\xi,\mathbf{X}) &\coloneqq \log(\varrho_{\gamma}(\mathbf{X},[0,T])) \\
			&\quad \times \left(\left[\sup_{\tau\in [0,T]}P_1(\vert \varphi^{\tau}_{\mathbf{X}}(\xi) \vert_{\alpha})+\varrho_{\gamma}^3(\mathbf{X},[0,T])\big(1+\Vert(\varphi^{.}_{\mathbf{X}}(\xi),G(\varphi^{.}_{\mathbf{X}}(\xi)))\Vert_{\mathcal{D}_{\mathbf{X},{\alpha}}^{\gamma}([0,T])}^2\big)\right]^{\frac{1}{\min\lbrace 1-\bar{\sigma},\gamma-\eta\rbrace}}+1\right)
		\end{split}
	\end{align}

\end{proposition}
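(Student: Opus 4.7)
The proof mirrors that of Theorem \ref{thm:integrability_RPDE}, now exploiting that equation \eqref{LLL} is \emph{linear} in the unknown $Y^\tau := D_\xi\varphi^\tau_\mathbf{X}[\zeta]$, so the resulting bound will be homogeneous of degree one in $\|\zeta\|_\alpha$, with the noise appearing only in an exponential prefactor. The starting point is a local estimate on $[u,v]\subset[0,T]$ with $v-u\leq 1$. Combining the rough integral bound \eqref{BBBBBB} and the refined sewing estimate from Lemma \ref{LKLL} applied to \eqref{LLL} with the drift bound \eqref{Drift_Estimate} and the pointwise Lipschitz hypothesis \eqref{poly}, one obtains
\begin{equation*}
	\bigl\|(Y, D_{\varphi^{\cdot}_\mathbf{X}(\xi)} G[Y])\bigr\|_{\mathcal{D}^\gamma_{\mathbf{X},\alpha}([u,v])} \leq C_1\,|Y_u|_\alpha + C_2(u,v)\,\bigl\|(Y, D_{\varphi^{\cdot}_\mathbf{X}(\xi)} G[Y])\bigr\|_{\mathcal{D}^\gamma_{\mathbf{X},\alpha}([u,v])},
\end{equation*}
where $C_2(u,v)$ is bounded by a polynomial in $(v-u)^{1-\bar\sigma}\sup_\tau P_1(|\varphi^\tau_\mathbf{X}(\xi)|_\alpha)$, $\varrho_\gamma(\mathbf{X},[u,v])$, $\|(\varphi^\cdot_\mathbf{X}(\xi),G(\varphi^\cdot_\mathbf{X}(\xi)))\|_{\mathcal{D}^\gamma_{\mathbf{X},\alpha}([u,v])}$ and $W_{\mathbf{X},\gamma,\eta+\epsilon}(u,v)^{\gamma-\eta-\epsilon}$.

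I then invoke the greedy-points construction of Definition \ref{GRRED} with $\eta_1=\eta+\epsilon$ and threshold $\chi$ chosen small enough that $C_2(\tau_n,\tau_{n+1})\leq \tfrac{1}{2}$ on every sub-interval. Because the drift contribution scales like $(v-u)^{1-\bar\sigma}$ while the rough-part contribution scales like $W^{\gamma-\eta-\epsilon}$, forcing both simultaneously small requires
\begin{equation*}
	\chi^{-1} \;\lesssim\; \Bigl[\,\sup_{\tau\in[0,T]} P_1(|\varphi^\tau_\mathbf{X}(\xi)|_\alpha) + \varrho_\gamma(\mathbf{X},[0,T])^3\bigl(1+\|(\varphi^\cdot_\mathbf{X}(\xi),G(\varphi^\cdot_\mathbf{X}(\xi)))\|_{\mathcal{D}^\gamma_{\mathbf{X},\alpha}([0,T])}^2\bigr)\Bigr]^{\frac{1}{\min\{1-\bar\sigma,\,\gamma-\eta\}}},
\end{equation*}
which is precisely the bracketed quantity appearing in \eqref{eqn:def_S}. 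Absorbing the self-referential term and iterating the resulting inequality $|Y_{\tau_{n+1}}|_\alpha \leq 2C_1|Y_{\tau_n}|_\alpha$ yields $|Y_{\tau_n}|_\alpha \leq (2C_1)^n\|\zeta\|_\alpha$ for every $n < N := N([0,T],\eta+\epsilon,\chi,\mathbf{X})$.

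Gluing the local bounds as in \eqref{total_norm} produces $\|(Y,D_\varphi G[Y])\|_{\mathcal{D}^\gamma_{\mathbf{X},\alpha}([0,T])} \lesssim (1+\|X\|_{\gamma,[0,T]})\,N\,(2C_1)^N\,\|\zeta\|_\alpha$. Since $C_1$ is polynomially bounded in $\varrho_\gamma(\mathbf{X},[0,T])$, one has $\log(2C_1)\lesssim \log\varrho_\gamma(\mathbf{X},[0,T])$, and by construction of the greedy points $N \lesssim \chi^{-1}$ up to a factor also controlled by $\varrho_\gamma$. Substituting into $(1+\|X\|_\gamma)\,N\,(2C_1)^N = \exp\bigl(N\log(2C_1) + O(\log N + \log\varrho_\gamma)\bigr)$ reproduces the exponent $S(0,T,\xi,\mathbf{X})$ of \eqref{eqn:def_S} up to a constant absorbed into $M$. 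The main obstacle is the careful bookkeeping required to pin down the precise powers in $C_2$: in particular, the $\varrho_\gamma^3$ factor arises from combining the $\varrho_\gamma$-contribution of the sewing remainder in Lemma \ref{LKLL} with the further $\varrho_\gamma^2$-type factors produced by the $D^2 G$-terms in \eqref{Increment} once these are applied to the linearised coefficient $D_{\varphi^\tau}G[Y]$, whose Gubinelli derivative involves both $D^2 G[G(\varphi)]\,Y$ and $DG[DG[Y]]$ and therefore carries the $\|(\varphi,G(\varphi))\|_{\mathcal{D}^\gamma}^2$-dependence appearing in $S$.
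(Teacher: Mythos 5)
Your general scaffold -- local self-improving estimate, iterate, glue via \eqref{total_norm_1} -- matches the structure of the paper's proof, and your final formula agrees with \eqref{eqn:def_S}. But the route you take to the local estimate and the choice of partition are not what the paper does, and the departure creates a genuine gap.

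The paper's proof of Proposition \ref{linear boundd} does \emph{not} invoke greedy points at all. It uses the cruder rough-integral estimate \eqref{BBBBBB} (not the refined Lemma \ref{LKLL}), which produces the local bound \eqref{LINEAR} whose self-referential coefficient is
$(t-s)^{1-\bar\sigma}\sup P_1 + (t-s)^{\gamma-\eta}\varrho_\gamma^3(\cdot)(1+\Vert\varphi\Vert^2)$. Both summands are made small simply by taking $t-s$ below a \emph{deterministic, uniform} threshold $\tilde\tau$ given in \eqref{BNVRT}, whence $N=\lfloor T/\tilde\tau\rfloor+1$ and the exponent $S$ emerges from $N\log\varrho_\gamma$. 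The integrability one ultimately wants is of $\log^+$ of the resulting bound, and that is handled later in Theorem \ref{thm:integrabiliy_linearization} via Theorem \ref{thm:integrability_RPDE}.

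You instead claim the local coefficient involves $W_{\mathbf{X},\gamma,\eta+\epsilon}(u,v)^{\gamma-\eta-\epsilon}$ and then invoke the greedy points of Definition \ref{GRRED}. Two problems. First, the greedy-point construction controls only the accumulation of the control function $W_{\mathbf{X},\gamma,\eta_1}$: it gives no upper bound on $\tau_{n+1}-\tau_n$. If the noise is smooth on some portion of $[0,T]$, a greedy interval there can be arbitrarily long, so the drift contribution $(v-u)^{1-\bar\sigma}\sup P_1$ is \emph{not} made small by shrinking $\chi$; your claimed inequality ``$\chi^{-1}\lesssim[\cdot]^{1/\min\{1-\bar\sigma,\gamma-\eta\}}$'' does not control this term. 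To make greedy points work here one would have to modify the definition to cap the mesh, at which point you have essentially rederived the paper's uniform $\tilde\tau$. Second, obtaining a $W$-controlled local coefficient requires a linearized analogue of Lemma \ref{LKLL} for the derivative equation \eqref{LLL}, whose Gubinelli-derivative structure (combining $D^2G[G(\varphi),Y]$ and $DG[DG[Y]]$, cf. \eqref{DDEER}--\eqref{reminder_2222}) is substantially more involved than that of \eqref{SPDE_EQU}. You assert this estimate rather than proving it; the paper sidesteps it by accepting the weaker $(t-s)^{\gamma-\eta}\varrho_\gamma^3$ factor from \eqref{BBBBBB}. Until these two points are repaired, the argument does not close.
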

\begin{proof}
	From \eqref{LLL}, for $u<v$,
	\begin{align}\label{CCBB}
		D_{\xi}\varphi^{v}_{\mathbf{X}}[\zeta] = S_{v-u}D_{\xi}\varphi^{u}_{\mathbf{X}}[\zeta] + \int_{u}^{v}S_{v-\tau}D_{\varphi^{\tau}_{\mathbf{X}}(\xi)} F[D_{\xi}\varphi^{\tau}_{\mathbf{X}}[\zeta]] \, \mathrm{d}\tau + \int_{u}^{v}S_{v-\tau}D_{\varphi^{\tau}_{\mathbf{X}}(\xi)}G[D_{\xi}\varphi^{\tau}_{\mathbf{X}}[\zeta]]\circ\mathrm{d}\mathbf{X}_{\tau}.
	\end{align}
	Note that from \eqref{LLLL}, also $(D_{\xi}\varphi^{.}_{\mathbf{X}}[\zeta])^{\prime}(\tau)=D_{\varphi^{\tau}_{\mathbf{X}}(\xi)}G[D_{\xi}\varphi^{\tau}_{\mathbf{X}}[\zeta]]$ and
	\begin{align*}
		&(\delta D_{\varphi^{.}_{\mathbf{X}}(\xi)}G[D_{\xi}\varphi^{.}_{\mathbf{X}}[\zeta]])_{\tau,\nu} \\
		=\ &D_{\varphi^{\tau}_{\mathbf{X}}(\xi)}G[D_{\varphi^{\tau}_{\mathbf{X}}(\xi)}G[D_{\xi}\varphi^{\tau}_{\mathbf{X}}[\zeta]]\circ (\delta X)_{\tau,\nu}]+D^{2}_{\varphi^{\tau}_{\mathbf{X}}(\xi)}G[G(\varphi^{\tau}_{\mathbf{X}}(\xi))\circ (\delta X)_{\tau,\nu},D_{\xi}\varphi^{\tau}_{\mathbf{X}}[\zeta]] \\
		&\quad +  D_{\varphi^{\tau}_{\mathbf{X}}(\xi)}G\big[[D_{\xi}\varphi_{\mathbf{X}}^{.}[\zeta]]^{\#}_{\tau,\nu}\big]+D^{2}_{\varphi^{\tau}_{\mathbf{X}}(\xi)}G\big[[\varphi^{.}_{\mathbf{X}}(\xi)]^{\#}_{\tau,\nu},D_{\xi}\varphi^{\tau}_{\mathbf{X}}[\zeta]\big]\\
		&\quad +\int_{0}^{1}(1-\theta)D^{3}_{\theta \varphi^{\nu}_{\mathbf{X}}(\xi)+(1-\theta)\varphi^{\tau}_{\mathbf{X}}(\xi)}G[(\delta\varphi^{.}_{\mathbf{X}}(\xi))_{\tau,\nu},(\delta\varphi^{.}_{\mathbf{X}}(\xi))_{\tau,\nu},D_{\xi}\varphi^{\tau}_{\mathbf{X}}[\zeta]]\, \mathrm{d}\theta\\
		&\quad +\int_{0}^{1}D^{2}_{\theta \varphi^{\nu}_{\mathbf{X}}(\xi)+(1-\theta)\varphi^{\tau}_{\mathbf{X}}(\xi)} G[(\delta\varphi^{.}_{\mathbf{X}}(\xi))_{\tau,\nu},(\delta D_{\xi}\varphi^{.}_{\mathbf{X}}[\zeta])_{\tau,\nu}]\, \mathrm{d}\theta .
	\end{align*}
	Therefore,
	\begin{align}\label{DDEER}
		\begin{split}
			&(D_{\varphi^{\tau}_{\mathbf{X}}(\xi)}G[D_{\xi}\varphi^{\tau}_{\mathbf{X}}[\zeta]])^{\prime}\circ (\delta X)_{\tau,\nu} \\
			=\ &D_{\varphi^{\tau}_{\mathbf{X}}(\xi)}G[D_{\varphi^{\tau}_{\mathbf{X}}(\xi)}G[D_{\xi}\varphi^{\tau}_{\mathbf{X}}[\zeta]]\circ (\delta X)_{\tau,\nu}]+D^{2}_{\varphi^{\tau}_{\mathbf{X}}(\xi)}G[G(\varphi^{\tau}_{\mathbf{X}}(\xi))\circ (\delta X)_{\tau,\nu},D_{\xi}\varphi^{\tau}_{\mathbf{X}}[\zeta]]
		\end{split}
	\end{align}
	and
	\begin{align}\label{reminder_2222}
		\begin{split}
			&\big[D_{\varphi^{.}_{\mathbf{X}}(\xi)}G[D_{\xi}\varphi^{.}_{\mathbf{X}}[\zeta]]\big]^{\#}_{\tau,\nu} \\
			=\ &D_{\varphi^{\tau}_{\mathbf{X}}(\xi)}G\big[[D_{\xi}^{.}\varphi_{\mathbf{X}}[\zeta]]^{\#}_{\tau,\nu}\big] + D^{2}_{\varphi^{\tau}_{\mathbf{X}}(\xi)}G\big[[\varphi^{.}_{\mathbf{X}}(\xi)]^{\#}_{\tau,\nu},D_{\xi}\varphi^{\tau}_{\mathbf{X}}[\zeta]\big]\\
			&\quad +\int_{0}^{1}(1-\theta)D^{3}_{\theta \varphi^{\nu}_{\mathbf{X}}(\xi)+(1-\theta)\varphi^{\tau}_{\mathbf{X}}(\xi)}G[(\delta\varphi^{.}_{\mathbf{X}}(\xi))_{\tau,\nu},(\delta\varphi^{.}_{\mathbf{X}}(\xi))_{\tau,\nu},D_{\xi}\varphi^{\tau}_{\mathbf{X}}[\zeta]]\,\mathrm{d}\theta\\
			&\quad +\int_{0}^{1}D^{2}_{\theta \varphi^{\nu}_{\mathbf{X}}(\xi)+(1-\theta)\varphi^{\tau}_{\mathbf{X}}(\xi)} G[(\delta\varphi^{.}_{\mathbf{X}}(\xi))_{\tau,\nu},(\delta D_{\xi}\varphi^{.}_{\mathbf{X}}[\zeta])_{\tau,\nu}]\,\mathrm{d}\theta .
		\end{split}
	\end{align}
	From \eqref{DDEER},
	\begin{align*}
		&\sup_{\tau\in [s,t]}\vert (D_{\varphi^{\tau}_{\mathbf{X}}(\xi)}G[D_{\xi}\varphi^{\tau}_{\mathbf{X}}[\zeta]])^{\prime}\vert_{\alpha-\eta-\gamma}^{(n)}\lesssim \sup_{\tau\in[s,t]}\vert D_{\xi}\varphi^{\tau}_{\mathbf{X}}[\zeta]\vert_{\alpha-\gamma+\eta} \\
		&\lesssim\Vert(D_{\xi}\varphi^{.}_{\mathbf{X}}[\zeta],D_{\varphi^{\tau}_{\mathbf{X}}(\xi)}G[D_{\xi}\varphi^{.}_{\mathbf{X}}[\zeta]])\Vert_{\mathcal{D}_{\mathbf{X},{\alpha}}^{\gamma}([s,t])} .
	\end{align*}
	Also by \eqref{NMMMMMLY},
	\begin{align*}
		&\sup_{\substack{\tau,\nu\in [s,t],\\ \tau<\nu}}\frac{\vert (\delta D_{\varphi^{.}_{\mathbf{X}}(\xi)} G[D_{\xi}\varphi^{.}_{\mathbf{X}}[\zeta]])_{\tau,\nu}^{\prime}\vert_{\alpha-\eta-2\gamma}^{(n)}}{(\nu-\tau)^\gamma}\\
		\lesssim\ &(1+\Vert X\Vert_{\gamma,[s,t]}) \Vert(\varphi^{.}_{\mathbf{X}}(\xi),G(\varphi^{.}_{\mathbf{X}}(\xi)))\Vert_{\mathcal{D}_{\mathbf{X},{\alpha}}^{\gamma}([s,t])} \Vert (D_{\xi}\varphi^{.}_{\mathbf{X}}[\zeta],D_{\varphi^{\tau}_{\mathbf{X}}(\xi)}G[D_{\xi}\varphi^{.}_{\mathbf{X}}[\zeta]])\Vert_{\mathcal{D}_{\mathbf{X},{\alpha}}^{\gamma}([s,t])}.
	\end{align*}
	In addition from \eqref{NMMMMMLY} and \eqref{reminder_2222}  
	\begin{align*}
		&\Vert \big[D_{\varphi^{.}_{\mathbf{X}}(\xi)}G[D_{\xi}\varphi^{.}_{\mathbf{X}}[\zeta]]\big]^{\#}\Vert_{\mathcal{E}^{\gamma,2\gamma}_{{\alpha}-\eta;[s,t]}}\\& \quad\lesssim(1+\Vert X\Vert_{\gamma,[s,t]})^2 \big(1+\Vert(\varphi^{.}_{\mathbf{X}}(\xi),G(\varphi^{.}_{\mathbf{X}}(\xi)))\Vert_{\mathcal{D}_{\mathbf{X},{\alpha}}^{\gamma}([s,t])}^2\big) \Vert(D_{\xi}\varphi^{.}_{\mathbf{X}}[\zeta],D_{\varphi^{.}_{\mathbf{X}}(\xi)}G[D_{\xi}\varphi^{.}_{\mathbf{X}}[\zeta]])\Vert_{\mathcal{D}_{\mathbf{X},{\alpha}}^{\gamma}([s,t])}.
	\end{align*}
	Now from \eqref{BBBBBB} and \eqref{DDEER}, for $t-s<1$,
	\begin{align*}
		&\left\| \left( \int_{s}^{.}S_{.-\tau}D_{\varphi^{\tau}_{\mathbf{X}}(\xi)}G[D_{\xi}\varphi^{\tau}_{\mathbf{X}}[\zeta]]\circ\mathrm{d}\mathbf{X}_{\tau},D_{\varphi^{.}_{\mathbf{X}}(\xi)}G[D_{\xi}\varphi^{.}_{\mathbf{X}}[\zeta]] \right) \right\|_{\mathcal{D}_{\mathbf{X},{\alpha}}^{\gamma}([s,t])} \\
		\lesssim\ & \varrho_{\gamma}(\mathbf{X},[s,t])\vert D_{\xi}\varphi^{s}_{\mathbf{X}}[\zeta]\vert_{\alpha} + (t-s)^{\gamma-\eta}\varrho_{\gamma}^3(\mathbf{X},[s,t])\big(1+\Vert(\varphi^{.}_{\mathbf{X}}(\xi),G(\varphi^{.}_{\mathbf{X}}(\xi)))\Vert_{\mathcal{D}_{\mathbf{X},{\alpha}}^{\gamma}([s,t])}^2\big) \\
		&\qquad \times \Vert (D_{\xi}\varphi^{.}_{\mathbf{X}}[\zeta],D_{\varphi^{\tau}_{\mathbf{X}}(\xi)}G[D_{\xi}\varphi^{.}_{\mathbf{X}}[\zeta]])\Vert_{\mathcal{D}_{\mathbf{X},{\alpha}}^{\gamma}([s,t])}.
	\end{align*}
	Similar to \eqref{Drift_Estimate}, from \eqref{poly},
	\begin{align}\label{Drift_Estimate11}
		\begin{split}
			&\left| \int_{\tau}^{\nu}S_{\nu-x} D_{\varphi^{x}_{\mathbf{X}}(\xi)} F[D_{\xi}\varphi^{x}_{\mathbf{X}}[\zeta]] \, \mathrm{d}x \right|_{\alpha-i\gamma} \leq \int_{\tau}^{\nu} \vert S_{\nu-x}D_{\varphi^{x}_{\mathbf{X}}(\xi)} F[D_{\xi}\varphi^{x}_{\mathbf{X}}[\zeta]]\vert_{\alpha-i\gamma} \, \mathrm{d}x\\ &\quad\lesssim \sup_{x\in [\tau,\nu]}P_{1}(\vert \varphi^{x}_{\mathbf{X}}(\xi)\vert_{\alpha})\sup_{x\in [\tau,\nu]}\vert D_{\xi}\varphi^{x}_{\mathbf{X}}[\zeta]\vert_{\alpha}\int_{\tau}^{\nu}\max\lbrace(\nu-x)^{-\sigma+i\gamma},1\rbrace \, \mathrm{d}x\\ &\qquad\lesssim (\nu-\tau)^{\min\lbrace 1,1-\sigma+i\gamma\rbrace}\sup_{x\in [\tau,\nu]}P_{1}(\vert \varphi^{x}_{\mathbf{X}}(\xi)\vert_{\alpha})\sup_{x\in [\tau,\nu]}\vert D_{\xi}\varphi^{x}_{\mathbf{X}}[\zeta]\vert_{\alpha}.
		\end{split}
	\end{align}
	Consequently, from \eqref{CCBB}, \eqref{Drift_Estimate11}, for $t-s\leq1$ and a constant $M>1$,
	\begin{align}\label{LINEAR}
		\begin{split}
			&\Vert(D_{\xi}\varphi^{.}_{\mathbf{X}}[\zeta],D_{\varphi^{.}_{\mathbf{X}}(\xi)}G[D_{\xi}\varphi^{.}_{\mathbf{X}}[\zeta]])\Vert_{\mathcal{D}_{\mathbf{X},{\alpha}}^{\gamma}([s,t])}\leq M\bigg[ \varrho_{\gamma}(\mathbf{X},[s,t])\vert D_{\xi}\varphi^{s}_{\mathbf{X}}[\zeta]\vert_{\alpha}+\big[(t-s)^{1-\bar{\sigma}}\sup_{\tau\in [s,t]}P_{1}(\vert \varphi^{\tau}_{\mathbf{X}}(\xi)\vert_{\alpha}) \\&\quad +(t-s)^{\gamma-\eta}\varrho_{\gamma}^3(\mathbf{X},[s,t])\big(1+\Vert(\varphi^{.}_{\mathbf{X}}(\xi),G(\varphi^{.}_{\mathbf{X}}(\xi)))\Vert_{\mathcal{D}_{\mathbf{X},{\alpha}}^{\gamma}([s,t])}^2\big)\big]\Vert(D_{\xi}\varphi^{.}_{\mathbf{X}}[\zeta],D_{\varphi^{.}_{\mathbf{X}}(\xi)}G[D_{\xi}\varphi^{.}_{\mathbf{X}}[\zeta]])\Vert_{\mathcal{D}_{\mathbf{X},{\alpha}}^{\gamma}([s,t])}\bigg].
		\end{split}
	\end{align}
	Now we extend this estimate to larger intervals. Let us fix  $0<\epsilon<1$ and set $\tau_{0}=0$ and $\nu \coloneqq \min\lbrace 1-\bar{\sigma},\gamma-\eta\rbrace$. We also set
	\begin{align}\label{BNVRT}
		\tilde{\tau}^{\nu} \coloneqq \frac{1-\epsilon}{M\big(\sup_{\tau\in [0,T]}P_1(\vert \varphi^{\tau}_{\mathbf{X}}(\xi) \vert_{\alpha})+\varrho_{\gamma}^3(\mathbf{X},[0,T])\big(1+\Vert(\varphi^{.}_{\mathbf{X}}(\xi),G(\varphi^{.}_{\mathbf{X}}(\xi)))\Vert_{\mathcal{D}_{\mathbf{X},{\alpha}}^{\gamma}([0,T])}^2\big)\big)}
	\end{align}
	and $\tau_{n+1}:=\tau_{n}+\tilde{\tau}$. Then from \eqref{LINEAR} for $I_n=[\tau_n,\tau_{n+1}]\subset [0,T]$,
	\begin{align*}
		\Vert(D_{\xi}\varphi^{.}_{\mathbf{X}}[\zeta],D_{\varphi^{.}_{\mathbf{X}}(\xi)}G[D_{\xi}\varphi^{.}_{\mathbf{X}}[\zeta]])\Vert_{\mathcal{D}_{\mathbf{X},{\alpha}}^{\gamma}(I_{n})}\leq \frac{M\varrho_{\gamma}(\mathbf{X},[0,T])}{1-\epsilon}\vert D_{\xi}\varphi^{\tau_n}_{\mathbf{X}}[\zeta]\vert_{\alpha}.
	\end{align*}
	For $N \coloneqq \lfloor \frac{T}{\tilde{\tau}}\rfloor +1$, 
	\begin{align}\label{YHNMNMMAZ}
		\begin{split}
			\sup_{\tau\in [0,T]}\Vert D_{\xi}\varphi^{\tau}_{\mathbf{X}}\Vert_{\mathcal{L}(B_{\alpha},B_{\alpha})} &\leq \big(\frac{M\varrho_{\gamma}(\mathbf{X},[0,T])}{1-\epsilon}\big)^{N} \quad \text{and} \\
			\\ \sup_{0\leq n<N}\Vert(D_{\xi}\varphi^{.}_{\mathbf{X}}[\zeta],D_{\varphi^{.}_{\mathbf{X}}(\xi)}G[D_{\xi}\varphi^{.}_{\mathbf{X}}[\zeta]])\Vert_{\mathcal{D}_{\mathbf{X},{\alpha}}^{\gamma}(I_{n})} &\leq \big(\frac{M\varrho_{\gamma}(\mathbf{X},[0,T])}{1-\epsilon}\big)^{N}\vert\zeta\vert_{\alpha} .
		\end{split}
	\end{align}
	Note that for every $Z\in \mathcal{D}_{\mathbf{X},{\alpha}}^{\gamma}([0,T])$ and $s,u,t\in [0,T]$ with $s<u<t$, 
	\begin{align*}
		Z^{\#}_{s,t} = Z^{\#}_{s,u} + Z^{\#}_{u,t} + Z^{\prime}_{s,u}(\delta X)_{u,t}.
	\end{align*}
	Therefore for a constant $\tilde{M}>1$,
	\begin{align}\label{total_norm_1}
		\begin{split}
			&\Vert(D_{\xi}\varphi^{.}_{\mathbf{X}}[\zeta],D_{\varphi^{.}_{\mathbf{X}}(\xi)}G[D_{\xi}\varphi^{.}_{\mathbf{X}}[\zeta]])\Vert_{\mathcal{D}_{\mathbf{X},{\alpha}}^{\gamma}([0,T])}\\&\quad \leq\tilde{M}(1+\Vert X\Vert_{\gamma,[0,T]})\sum_{0\leq n\leq N}\Vert(D_{\xi}\varphi^{.}_{\mathbf{X}}[\zeta],D_{\varphi^{.}_{\mathbf{X}}(\xi)}G[D_{\xi}\varphi^{.}_{\mathbf{X}}[\zeta]])\Vert_{\mathcal{D}_{\mathbf{X},{\alpha}}^{\gamma}(I_n)}.
		\end{split}
	\end{align}
	Recall $N \coloneqq \lfloor \frac{T}{\tilde{\tau}}\rfloor +1$. Consequently from \eqref{BNVRT} and\eqref{YHNMNMMAZ}
	\begin{align*}
		\Vert( D_{\xi}\varphi^{.}_{\mathbf{X}}[\zeta],D_{\varphi^{.}_{\mathbf{X}}(\xi)}G[D_{\xi}\varphi^{.}_{\mathbf{X}}[\zeta]])\Vert_{\mathcal{D}_{\mathbf{X},{\alpha}}^{\gamma}([0,T])}\leq M_{\epsilon}\exp\big(M_{\epsilon} S(s,t,\xi,\mathbf{X})\big) \vert\zeta\vert_{\alpha}
	\end{align*}
	where $M_{\epsilon}>0$ and
	\begin{align*}
		&S(0,T,\xi,\mathbf{X}) = \log(\varrho_{\gamma}(\mathbf{X},[0,T]))\\
		&\quad \times \left(\left[\sup_{\tau\in [0,T]}P_1(\vert \varphi^{\tau}_{\mathbf{X}}(\xi) \vert_{\alpha})+\varrho_{\gamma}^3(\mathbf{X},[0,T])\big(1+\Vert(\varphi^{.}_{\mathbf{X}}(\xi),G(\varphi^{.}_{\mathbf{X}}(\xi)))\Vert_{\mathcal{D}_{\mathbf{X},{\alpha}}^{\gamma}([0,T])}^2\big)\right]^{\frac{1}{\min\lbrace 1-\bar{\sigma},\gamma-\eta\rbrace}}+1\right).
	\end{align*}
\end{proof}

\begin{corollary}\label{CC_DDE}
	Assume the same setting and notation as in Proposition \ref{linear boundd}. Let $\xi,\tilde{\xi}\in\mathcal{B}_{\alpha}$. Then there exists an $M > 0$ such that 
	\begin{align}\label{bounde_derivative_2}
		\big\Vert \big(\varphi^{.}_{\mathbf{X}}(\xi)-\varphi^{.}_{\mathbf{X}}(\tilde\xi),G(\varphi^{.}_{\mathbf{X}}(\xi))-G(\varphi^{.}_{\mathbf{X}}(\tilde\xi))\big)\big\Vert_{\mathcal{D}_{\mathbf{X},{\alpha}}^{\gamma}([0,T])}\leq M \vert \xi-\tilde{\xi}\vert_{\alpha}\exp\big(M \tilde{S}(0,T,\xi,\tilde\xi,\mathbf{X})\big)
	\end{align}
	where for $\rho(\xi,\tilde{\xi}) \coloneqq \rho\xi+(1-\rho)\tilde{\xi}$,
	\begin{align}\label{SSssSS}
		\begin{split}
			&\tilde{S}(0,T,\xi,\tilde\xi,\mathbf{X})=\log(\varrho_{\gamma}(\mathbf{X},[0,T]))\times\\& \sup_{\rho\in [0,1]}\left(1+\left[\sup_{\tau\in [0,T]}P_1(\vert \varphi^{\tau}_{\mathbf{X}}(\rho(\xi,\tilde{\xi})) \vert_{\alpha})+\varrho_{\gamma}^3(\mathbf{X},[0,T])\big(1+\Vert(\varphi^{.}_{\mathbf{X}}(\rho(\xi,\tilde{\xi})),G(\varphi^{.}_{\mathbf{X}}(\rho(\xi,\tilde{\xi}))))\Vert_{\mathcal{D}_{\mathbf{X},{\alpha}}^{\gamma}([0,T])}^2\big)\right]^{\frac{1}{\min\lbrace 1-\bar{\sigma},\gamma-\eta\rbrace}}\right)
		\end{split}
	\end{align}
\end{corollary}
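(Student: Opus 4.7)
The idea is to reduce the corollary to Proposition \ref{linear boundd} via the fundamental theorem of calculus. Since we have already established that $\xi \mapsto \varphi^{t}_{\mathbf{X}}(\xi)$ is $C^{1}$ in the Fr\'echet sense, we may write
\begin{align*}
\varphi^{t}_{\mathbf{X}}(\xi)-\varphi^{t}_{\mathbf{X}}(\tilde{\xi})=\int_{0}^{1}D_{\rho(\xi,\tilde{\xi})}\varphi^{t}_{\mathbf{X}}[\xi-\tilde{\xi}]\, \mathrm{d}\rho,
\end{align*}
and, using the chain rule together with the $C^{3}$-regularity of $G$,
\begin{align*}
G(\varphi^{t}_{\mathbf{X}}(\xi))-G(\varphi^{t}_{\mathbf{X}}(\tilde{\xi}))=\int_{0}^{1}D_{\varphi^{t}_{\mathbf{X}}(\rho(\xi,\tilde{\xi}))}G\big[D_{\rho(\xi,\tilde{\xi})}\varphi^{t}_{\mathbf{X}}[\xi-\tilde{\xi}]\big]\, \mathrm{d}\rho.
\end{align*}

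These two formulas show that, as a controlled rough path, the pair $\big(\varphi^{.}_{\mathbf{X}}(\xi)-\varphi^{.}_{\mathbf{X}}(\tilde{\xi}),\,G(\varphi^{.}_{\mathbf{X}}(\xi))-G(\varphi^{.}_{\mathbf{X}}(\tilde{\xi}))\big)$ is the $\rho$-average of the pairs $\big(D_{\rho(\xi,\tilde{\xi})}\varphi^{.}_{\mathbf{X}}[\xi-\tilde{\xi}],\,D_{\varphi^{.}_{\mathbf{X}}(\rho(\xi,\tilde{\xi}))}G[D_{\rho(\xi,\tilde{\xi})}\varphi^{.}_{\mathbf{X}}[\xi-\tilde{\xi}]]\big)$. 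The next step is a Minkowski-type observation: each of the constituents of the norm $\Vert\cdot\Vert_{\mathcal{D}^{\gamma}_{\mathbf{X},\alpha}([0,T])}$ (the sup-norm of the path, the sup-norm of the Gubinelli derivative, the H\"older seminorm of the Gubinelli derivative, and the two H\"older seminorms of the remainder $Z^{\#}$) is a norm or seminorm that commutes with Bochner integration in the sense that $\Vert\int_{0}^{1}Y_{\rho}\, \mathrm{d}\rho\Vert\le\int_{0}^{1}\Vert Y_{\rho}\Vert\, \mathrm{d}\rho$. Using that the $Z^{\#}$ of the integrated family is itself the $\rho$-integral of the individual remainders (which follows directly from the two displayed formulas above together with the definition $Z^{\#}_{s,t}=(\delta Z)_{s,t}-Z'_{s}\circ(\delta X)_{s,t}$), we obtain
\begin{align*}
\Vert(\varphi^{.}_{\mathbf{X}}(\xi)-\varphi^{.}_{\mathbf{X}}(\tilde{\xi}),G(\varphi^{.}_{\mathbf{X}}(\xi))-G(\varphi^{.}_{\mathbf{X}}(\tilde{\xi})))\Vert_{\mathcal{D}^{\gamma}_{\mathbf{X},\alpha}([0,T])}\le\int_{0}^{1}\Vert(D_{\rho(\xi,\tilde{\xi})}\varphi^{.}_{\mathbf{X}}[\xi-\tilde{\xi}],D_{\varphi^{.}_{\mathbf{X}}(\rho(\xi,\tilde{\xi}))}G[\cdots])\Vert_{\mathcal{D}^{\gamma}_{\mathbf{X},\alpha}([0,T])}\, \mathrm{d}\rho.
\end{align*}

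Finally, for each fixed $\rho\in[0,1]$ we apply Proposition \ref{linear boundd} with initial value $\rho(\xi,\tilde{\xi})$ and direction $\xi-\tilde{\xi}$, giving
\begin{align*}
\Vert(D_{\rho(\xi,\tilde{\xi})}\varphi^{.}_{\mathbf{X}}[\xi-\tilde{\xi}],D_{\varphi^{.}_{\mathbf{X}}(\rho(\xi,\tilde{\xi}))}G[\cdots])\Vert_{\mathcal{D}^{\gamma}_{\mathbf{X},\alpha}([0,T])}\le M\,\vert\xi-\tilde{\xi}\vert_{\alpha}\exp\bigl(M\,S(0,T,\rho(\xi,\tilde{\xi}),\mathbf{X})\bigr).
\end{align*}
Bounding the right-hand side by its supremum over $\rho\in[0,1]$ yields exactly the quantity $\tilde{S}(0,T,\xi,\tilde{\xi},\mathbf{X})$ defined in \eqref{SSssSS} (after absorbing the harmless additive constant into the ``$+1$'' in that definition), and integrating in $\rho$ gives the claimed estimate \eqref{bounde_derivative_2}.

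The only genuinely non-routine step is the Minkowski-type interchange in the middle paragraph: one has to verify that averaging a one-parameter family of controlled rough paths over $\rho$ really does produce a controlled rough path whose Gubinelli derivative and remainder are the corresponding $\rho$-averages. This is where the $C^{3}$ assumption on $G$ (ensuring that $\rho\mapsto D_{\varphi(\rho(\xi,\tilde{\xi}))}G[\cdot]$ depends continuously on $\rho$ in the relevant operator norms) is used, so that all the integrals are bona fide Bochner integrals in the respective Banach spaces.
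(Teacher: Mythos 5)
Your proposal is correct and takes essentially the same approach as the paper, which also reduces the corollary to Proposition \ref{linear boundd} via the fundamental theorem of calculus identity $\varphi^{.}_{\mathbf{X}}(\xi)-\varphi^{.}_{\mathbf{X}}(\tilde\xi)=\int_{0}^{1} D_{\theta\xi+(1-\theta)\tilde{\xi}}\varphi^{.}_{\mathbf{X}}[\xi-\tilde{\xi}]\, \mathrm{d}\theta$. You spell out the Minkowski-type interchange of the controlled-rough-path seminorms with the Bochner integral in $\rho$ (which the paper leaves implicit), but the underlying idea and the way the $\tilde{S}$ term arises via $\sup_{\rho}$ are identical.
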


\begin{proof}
	Follows from the fact that $\varphi^{.}_{\mathbf{X}}(\xi)-\varphi^{.}_{\mathbf{X}}(\tilde\xi)=\int_{0}^{1} D_{\theta\xi+(1-\theta)\tilde{\xi}}\varphi^{.}_{\mathbf{X}}[\xi-\tilde{\xi}]\, \mathrm{d}\theta$.
\end{proof}

\begin{remark}\label{COMPACTEM_12}
	Note that in equation \eqref{LLL},
	\begin{align*}
		D_{\varphi^{.}_{\mathbf{X}}(\xi)}F[D_{\xi}\varphi^{.}_{\mathbf{X}}[\zeta]]\in \mathcal{B}_{\alpha-\sigma} \quad \text{and} \quad D_{\varphi^{.}_{\mathbf{X}}(\xi)}G[D_{\xi}\varphi^{.}_{\mathbf{X}}[\zeta]]\in \mathcal{B}_{\alpha-\eta}^{n}.
	\end{align*} 
	However, from \eqref{SEM_I} and \cite[Theorem 4.5]{GH19} for $0< \epsilon<\min\lbrace 1-\sigma ,\gamma-\eta\rbrace$,
	\begin{align}
		\begin{split}
			\int_{s}^{t}S_{t-\tau}D_{\varphi^{\tau}_{\mathbf{X}}(\xi)}F[D_{\xi}\varphi^{\tau}_{\mathbf{X}}[\zeta]]\mathrm{d}\tau, \  \int_{s}^{t}S_{t-\tau}D_{\varphi^{\tau}_{\mathbf{X}}(\xi)}G[D_{\xi}\varphi^{\tau}_{\mathbf{X}}[\zeta]]\circ\mathrm{d}\mathbf{X}_{\tau}\in \mathcal{B}_{\alpha+\epsilon} .
		\end{split}
	\end{align}
	Let us fix $t>0$. Then this simple observation gives the continuity of the map
	\begin{align}\label{COMPACTEM}
		\begin{split}
			& D_{\xi}\varphi^{t}_{\mathbf{X}}\colon \mathcal{B}_{\alpha}\rightarrow \mathcal{B}_{\alpha+\epsilon}, \quad \zeta \mapsto D_{\xi}\varphi^{t}_{\mathbf{X}}[\zeta] .
		\end{split}
	\end{align}
	In particular, if the embedding $\operatorname{id} \colon \mathcal{B}_{\alpha+\epsilon}\rightarrow \mathcal{B}_{\alpha}$ is compact, the linear map 
	\begin{align*}
		D_{\xi}\varphi^{t}_{\mathbf{X}} \colon \mathcal{B}_{\alpha}\rightarrow \mathcal{B}_{\alpha}, \quad  \quad \zeta \mapsto D_{\xi}\varphi^{t}_{\mathbf{X}}[\zeta]
	\end{align*}
	is compact. This observation will be important when we apply the multiplicative ergodic theorem.
\end{remark}

We are now ready to formulate our main result about integrability of the linearized equation.

\begin{theorem}\label{thm:integrabiliy_linearization}
	Let $\mathbf{X} = (X,\mathbb{X})$ be the $\gamma$-H\"older rough path lift of a Gaussian process, $\frac{1}{3} < \gamma \leq \frac{1}{2}$, defined on an abstract Wiener space $(\mathcal{W},\mathcal{H},\mu)$ for which Condition \eqref{variation} and Assumption \ref{Cameron-Martin} hold. Assume that the conditions of Proposition \ref{linear boundd} are satisfied. Let $\xi$ be a random variable in $\mathcal{B}_{\alpha}$ with the property that
	\begin{align}\label{eqn:integrab_xi}
		|\xi(\omega)|_{\alpha} \in \bigcap_{p \geq 1} \mathcal{L}^p(\mathcal{W}).
	\end{align}
	Then it holds that
	\begin{align*}
		\sup_{t \in [0,T]} \log^{+}\big(\Vert D_{\xi} \varphi^t_{\mathbf{X}} \Vert_{\mathcal{L}(\mathcal{B}_{\alpha},\mathcal{B}_{\alpha})}\big)\in \bigcap_{p \geq 1} \mathcal{L}^{p}(\mathcal{W}).
	\end{align*}
\end{theorem}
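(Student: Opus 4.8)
The plan is to combine the pathwise a priori bound for the linearization from Proposition \ref{linear boundd} with the integrability results of Theorem \ref{thm:integrability_RPDE}. By \eqref{bounde_derivative} and the definition \eqref{YHNMNMMAZ} of $N$, one has $\sup_{t \in [0,T]}\log^{+}\big(\Vert D_{\xi}\varphi^t_{\mathbf{X}}\Vert_{\mathcal{L}(\mathcal{B}_{\alpha},\mathcal{B}_{\alpha})}\big)\lesssim 1 + M\, S(0,T,\xi,\mathbf{X})$, so it suffices to show that $S(0,T,\xi,\mathbf{X}) \in \bigcap_{p\geq 1}\mathcal{L}^p(\mathcal{W})$. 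Recalling \eqref{eqn:def_S}, $S(0,T,\xi,\mathbf{X})$ is a product of $\log(\varrho_{\gamma}(\mathbf{X},[0,T]))$ with a term of the form $\big[\sup_{\tau}P_1(|\varphi^{\tau}_{\mathbf{X}}(\xi)|_{\alpha}) + \varrho_{\gamma}^3(\mathbf{X},[0,T])\big(1 + \Vert(\varphi^{.}_{\mathbf{X}}(\xi),G(\varphi^{.}_{\mathbf{X}}(\xi)))\Vert^2_{\mathcal{D}^{\gamma}_{\mathbf{X},\alpha}([0,T])}\big)\big]^{1/\min\{1-\bar{\sigma},\gamma-\eta\}} + 1$. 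Since all $\mathcal{L}^p$-spaces are stable under finite products via Hölder's inequality, it is enough to establish that each factor lies in $\bigcap_{p\geq 1}\mathcal{L}^p(\mathcal{W})$.

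The key steps, in order: (1) $\log(\varrho_{\gamma}(\mathbf{X},[0,T]))$ has all moments because $\varrho_{\gamma}(\mathbf{X},[0,T]) = 1 + \Vert X\Vert_{\gamma,[0,T]} + \Vert\mathbb{X}\Vert_{2\gamma,[0,T]}$ has Gaussian-type tails by Fernique's theorem / the integrability of Gaussian rough path norms (as already invoked in the proof of Theorem \ref{thm:integrability_RPDE}(iii)); in particular $\varrho_{\gamma}^3(\mathbf{X},[0,T]) \in \bigcap_{p\geq 1}\mathcal{L}^p$. (2) The term $\Vert(\varphi^{.}_{\mathbf{X}}(\xi),G(\varphi^{.}_{\mathbf{X}}(\xi)))\Vert_{\mathcal{D}^{\gamma}_{\mathbf{X},\alpha}([0,T])}$ is exactly the object bounded in \eqref{integrable bound_2}; using that bound together with $|\xi(\omega)|_{\alpha} \in \bigcap_{p}\mathcal{L}^p$ (assumption \eqref{eqn:integrab_xi}), the tail bound \eqref{tail} on $N([0,T],\eta_1,\chi,\mathbf{X})$ — which forces $\exp(N\tilde M_\epsilon)$ and $N\exp(N\tilde M_\epsilon)$ to have all moments since $2(\gamma'-\eta_1)>1$ — and the Gaussian integrability of $P(\Vert X\Vert_{\gamma},\Vert\mathbb{X}\Vert_{\gamma})$, one concludes $\Vert(\varphi^{.}_{\mathbf{X}}(\xi),G(\varphi^{.}_{\mathbf{X}}(\xi)))\Vert_{\mathcal{D}^{\gamma}_{\mathbf{X},\alpha}([0,T])} \in \bigcap_{p}\mathcal{L}^p$; hence so is its square. (3) For the term $\sup_{\tau\in[0,T]}P_1(|\varphi^{\tau}_{\mathbf{X}}(\xi)|_{\alpha})$: since $P_1$ is a polynomial and $\sup_{\tau\in[0,T]}|\varphi^{\tau}_{\mathbf{X}}(\xi)|_{\alpha} \leq \Vert(\varphi^{.}_{\mathbf{X}}(\xi),G(\varphi^{.}_{\mathbf{X}}(\xi)))\Vert_{\mathcal{D}^{\gamma}_{\mathbf{X},\alpha}([0,T])}$, step (2) already gives all moments of this quantity. (4) Raising the bracketed sum to the fixed power $1/\min\{1-\bar{\sigma},\gamma-\eta\}$ preserves membership in $\bigcap_p \mathcal{L}^p$, and finally Hölder's inequality on the product with $\log(\varrho_{\gamma}(\mathbf{X},[0,T]))$ closes the argument.

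I expect the main obstacle to be purely bookkeeping rather than conceptual: one must carefully track that \eqref{bounde_derivative} is a \emph{pathwise} bound valid $\mu$-a.s. and that every random quantity appearing in $S(0,T,\xi,\mathbf{X})$ — in particular the solution norm $\Vert(\varphi^{.}_{\mathbf{X}}(\xi),G(\varphi^{.}_{\mathbf{X}}(\xi)))\Vert_{\mathcal{D}^{\gamma}_{\mathbf{X},\alpha}([0,T])}$ with \emph{random} initial datum $\xi$ — inherits full integrability, which requires feeding the random $\xi$ into Theorem \ref{thm:integrability_RPDE}(iii) and using \eqref{eqn:integrab_xi} together with the independence-free Hölder argument. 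A minor subtlety is that $N$ depends on $\mathbf{X}$ only (not on $\xi$), so the exponential-in-$N$ factor and the $|\xi|_{\alpha}$ factor can be separated by Hölder without circularity. No delicate new estimate is needed; the proof is a synthesis of the a priori bound \eqref{bounde_derivative}, the integrability toolbox of Section \ref{sec:integrable_bound}, and Fernique's theorem.
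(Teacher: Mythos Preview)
Your proposal is correct and follows essentially the same approach as the paper's proof: both reduce via \eqref{bounde_derivative} to the integrability of $S(0,T,\xi,\mathbf{X})$, then feed the pathwise bound \eqref{integrable bound_2} with random initial datum $\xi$ into the integrability machinery of Theorem \ref{thm:integrability_RPDE}(iii) (tail bound \eqref{tail} on $N$, Gaussian integrability of the rough path norms, and H\"older to separate the $|\xi|_\alpha$ factor). Your write-up is in fact more explicit than the paper's about the H\"older separation and the fact that $N$ depends only on $\mathbf{X}$, but the skeleton is identical.
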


\begin{proof}
	Let $t \in [0,T]$. From the bound \eqref{bounde_derivative} in Proposition \ref{linear boundd},
	\begin{align}\label{HHHBBB}
		\begin{split}
			&\log^{+}\big(\Vert D_{\xi} \varphi^t_{\mathbf{X}} \Vert_{\mathcal{L}(\mathcal{B}_{\alpha},\mathcal{B}_{\alpha})}\big)\leq \log(M)+ M\log(1+\Vert X\Vert_{\gamma,[0,T]})\\
			&\quad \times \big[\sup_{\tau\in [0,T]}P_1(\vert \varphi^{\tau}_{\mathbf{\omega}}(\xi) \vert_{\alpha})+\varrho_{\gamma}^3(\mathbf{X}(\omega),[0,T])\big(1+\Vert(\varphi^{.}_{\omega}(\xi),G(\varphi^{.}_{\omega}(\xi)))\Vert_{\mathcal{D}_{\mathbf{X}(\omega),{\alpha}}^{\gamma}([0,T])}^2\big)\big]^{\frac{1}{\min{1-\sigma,\gamma-\eta}}}.
		\end{split}
	\end{align}
	In addition, from the bound \eqref{integrable bound_2} in Theorem \ref{thm:integrability_RPDE},
	\begin{align*}
		&\Vert(\varphi^{.}_{\omega}(\xi),G(\varphi^{.}_{\omega}(\xi)))\Vert_{\mathcal{D}_{\mathbf{X},{\alpha}}^{\gamma}([0,T])}\\&\quad \leq \tilde{M}N([0,T],\eta_1,\chi,\mathbf{X}(\omega))(1+\Vert X(\omega)\Vert_{\gamma,[0,T]})\bigg[\exp\big{(}N([0,T],\eta_1,\chi,\mathbf{X}(\omega))\tilde{M}_\epsilon\big{)}|\xi|_{\alpha} \\
		&\qquad +\frac{\exp\big{(}N([0,T],\eta_1,\chi,\mathbf{X}(\omega))\tilde{M}_\epsilon+\tilde{M}_\epsilon\big{)}-1}{2M_\epsilon-1}P(\Vert X(\omega)\Vert_{\gamma,[0,T},\Vert\mathbb{X}(\omega)\Vert_{\gamma,[0,T]})\bigg].
	\end{align*}
	Consequently, from \eqref{integrability_2} and \eqref{eqn:integrab_xi},
	\begin{align*}
		\sup_{t \in [0,T]} \log^{+}\big(\Vert  D_{\xi} \varphi^t_{\mathbf{X}} \Vert_{\mathcal{L}(\mathcal{B}_{\alpha},\mathcal{B}_{\alpha})}\big)\in\mathcal{L}^{p}(\Omega)
	\end{align*}
	for every $p \geq 1$.
\end{proof}

We will need some further technical estimates we are going to proof now. Before stating the next proposition, we need an auxiliary lemma which is a slight generalization of the estimate \eqref{BBBBBB}.
\begin{lemma}\label{parmeter}
	Assume $\mathbf{X}=(X,\mathbb{X})\in\mathscr{C}^{\gamma}$ and  $Z:[s,t]\rightarrow \mathcal{B}_{\alpha}$ be a path  such that $(\delta Z)_{\tau,\nu}=Z^{\prime}_{\tau}\circ (\delta X)_{\tau,\nu}+Z^{\#}_{\tau,\nu}$. We say $(Z,Z^\prime)\in \mathcal{D}_{\mathbf{X},\alpha-\eta}^{\gamma_{1},\gamma_2}([s,t])$ if
	\begin{align*}
		\Vert (Z,Z^{\prime})\Vert_{\mathcal{D}_{\mathbf{X},\alpha-\eta}^{\gamma_{1},\gamma_2}([s,t])} &\coloneqq \sup_{\tau\in [s,t]}\vert Z_{\tau}\vert_{{\alpha}-\eta} + \max \left\{ \sup_{\tau\in [s,t]}\vert Z^{\prime}_{\tau}\vert_{{\alpha}-\eta-\gamma}^{(n)},\sup_{\substack{\tau,\nu\in [s,t],\\ \tau<\nu}}\frac{\vert (\delta Z^{\prime})_{\tau,\nu}\vert_{{\alpha}-\eta-2\gamma}^{(n)}}{(\nu-\tau)^{\gamma_1}}\right\}\\ &\quad + \max\left\{ \sup_{\substack{\tau,\nu\in [s,t],\\ \tau<\nu}}\frac{\vert Z^{\#}_{\tau,\nu}\vert_{{\alpha}-\eta-\gamma}}{(\nu-\tau)^{\gamma_2}},\sup_{\substack{\tau,\nu\in [s,t],\\ \tau<\nu}}\frac{\vert Z^{\#}_{\tau,\nu}\vert_{{\alpha}-\eta-2\gamma}}{(\nu-\tau)^{2\gamma_2}}\right\}<\infty.
	\end{align*}
	Assume $\gamma_1+2\gamma>1$, $2\gamma_2+\gamma>1$ and $0\leq \eta<\min\lbrace 2\gamma_2-\gamma,\gamma_1,\gamma,\frac{\gamma(2\gamma_2-\gamma)}{\gamma_2}\rbrace$. Then the linear map
	\begin{align*}
		(\mathcal{D}_{\mathbf{X},\alpha-\eta}^{\gamma_{1},\gamma_2}([s,t]))^n &\to \mathcal{D}_{\mathbf{X},{\alpha}}^{\gamma}([s,t]) \\
		(Z,Z^{\prime}) &\mapsto \big( \int_{s}^{.}S_{.-\tau}Z_{\tau}\circ\mathrm{d}\mathbf{X}_{\tau},Z\big),
	\end{align*}
	is well defined. Similar to \eqref{SEW}, the integral is defined by
	\begin{align}\label{SEW_1}
		\int_{s}^{u}S_{u-\tau}Z_{\tau}\circ\mathrm{d}\mathbf{X}_{\tau} \coloneqq \lim_{\substack{|\pi|\rightarrow 0,\\ \pi=\lbrace \tau_0=s,\tau_{1},...\tau_{m}=u \rbrace}}\sum_{0\leq j<m}\big{[}S_{u-\tau_j}Z_{\tau_j}\circ (\delta X)_{\tau_j,\tau_{j+1}}+S_{u-\tau_j}Z^{\prime}_{\tau_j}\circ\mathbb{X}_{\tau_j,\tau_{j+1}}\big{]}.
	\end{align}
	In addition, if $t-s<1$ then for $i\in\lbrace 0,1,2\rbrace$, and $\mu_1=\min\lbrace \gamma_1-\eta,\gamma-\eta,2\gamma_2-\gamma-\eta\rbrace$
	\begin{align}\label{BBBBasasBB1}
		\begin{split}
			&\left|\int_{s}^{t}S_{t-\tau}Z_{\tau}\circ\mathrm{d}\mathbf{X}_{\tau}-S_{t-s}Z_{s}\circ (\delta X)_{s,u}-S_{t-s}Z_{s}^{\prime}\circ \mathbb{X}_{s,u}\right|_{\alpha-i\gamma}\leq C_{\gamma_1,\gamma_2,\eta}\varrho_{\gamma}(\mathbf{X},[s,t]) (t-s)^{\gamma i+\mu_1} \Vert (Z,Z^\prime)\Vert_{\mathcal{D}_{\mathbf{X},\alpha-\eta}^{\gamma_{1},\gamma_2}([s,t])}
		\end{split}
	\end{align}
	and
	\begin{align}\label{BBBBasasBB}
		\begin{split}
			&\left\| \big( \int_{s}^{.}S_{.-\tau}Z_{\tau}\circ\mathrm{d}\mathbf{X}_{\tau},Z\big) \right\|_{\mathcal{D}_{\mathbf{X},{\alpha}}^{\gamma}([s,t])}\\&\quad\leq C_{\gamma_1,\gamma_2,\eta}\bigg(\vert Z_{s}\vert_{\alpha-\eta}^{(n)}\vert + \varrho_{\gamma}(\mathbf{X},[s,t])\vert Z_{s}^{\prime}\vert_{\alpha-\eta-\gamma}^{(n\times n)}+ \varrho_{\gamma}(\mathbf{X},[s,t])(t-s)^{\mu_2}\Vert (Z,Z^{\prime})\Vert_{(\mathcal{D}_{\mathbf{X},{\alpha-\eta}}^{\gamma}(I))^n}\bigg),
		\end{split}
	\end{align}
	where $\mu_2=\min\lbrace \gamma_1-\eta,\gamma-\eta,2\gamma_2-\gamma-\eta,\frac{2\gamma_2\gamma-\eta\gamma_2-\gamma^2}{\gamma },\frac{\min\lbrace \gamma,\gamma_2,\gamma_1\rbrace(\gamma-\eta)}{\gamma }\rbrace$.
\end{lemma}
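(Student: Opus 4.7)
The plan is to adapt the proof of \cite[Theorem 4.5]{GHT21} with careful bookkeeping of the two distinct Hölder exponents $\gamma_1, \gamma_2$ governing $Z'$ and $Z^{\#}$ and of the shifted base regularity $\alpha - \eta$. First I introduce the local two-parameter approximation
\[
\Xi^t_{u,v} \coloneqq S_{t-u} Z_u \circ (\delta X)_{u,v} + S_{t-u} Z'_u \circ \mathbb{X}_{u,v}, \qquad s \leq u \leq v \leq t,
\]
and compute the triple defect $\Xi^t_{u,w} - \Xi^t_{u,v} - \Xi^t_{v,w}$ for $u \leq v \leq w$. Exploiting Chen's relation together with the decomposition $(\delta Z)_{u,v} = Z'_u \circ (\delta X)_{u,v} + Z^{\#}_{u,v}$, this defect splits into three ``semigroup-difference'' contributions of the form $(S_{t-u} - S_{t-v})$ applied respectively to $Z_u \circ (\delta X)_{v,w}$, $Z'_u \circ (\delta X)_{u,v} \otimes (\delta X)_{v,w}$ and $Z'_u \circ \mathbb{X}_{v,w}$, plus two ``genuinely rough'' contributions $-S_{t-v} Z^{\#}_{u,v} \circ (\delta X)_{v,w}$ and $S_{t-v} (\delta Z')_{u,v} \circ \mathbb{X}_{v,w}$.

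Each of these terms I would bound in $\mathcal{B}_{\alpha - i\gamma}$ by a super-additive quantity in $(w-u)$. For the three semigroup-difference terms I invoke \eqref{SEMI_II} with parameters $\sigma_1, \sigma_2 \in [0,1)$ chosen so that $\sigma_1 - \sigma_2$ realises the spatial shift between $Z_u$ (or $Z'_u$) and $\mathcal{B}_{\alpha - i\gamma}$, producing a factor $(t-v)^{-\sigma_1}(v-u)^{\sigma_2}$ multiplied by the appropriate power of $\varrho_{\gamma}(\mathbf{X},[s,t])$ and the relevant controlled-path seminorm. For the remaining two terms, \eqref{SEM_I} supplies the singular factor $(t-v)^{-\eta - i\gamma}$, against which the $\mathcal{B}_{\alpha - \eta - \gamma}$ and $\mathcal{B}_{\alpha - \eta - 2\gamma}$ bounds on $Z^{\#}$ and $\delta Z'$ are interpolated to achieve strict super-additivity. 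The conditions $\gamma_1 + 2\gamma > 1$, $2\gamma_2 + \gamma > 1$ together with the compound hypothesis $\eta < \min\{2\gamma_2 - \gamma, \gamma_1, \gamma, \gamma(2\gamma_2 - \gamma)/\gamma_2\}$ are exactly what force every exponent of $(w-u)$ in the defect to be strictly greater than $1 + i\gamma$. A semigroup-aware Sewing lemma as in \cite{GHT21} then delivers both the existence of the limit \eqref{SEW_1} and the pointwise bound \eqref{BBBBasasBB1} with $\mu_1 = \min\{\gamma_1 - \eta, \gamma - \eta, 2\gamma_2 - \gamma - \eta\}$, which is the worst super-additivity gap across the three classes of terms.

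For the full controlled-path estimate \eqref{BBBBasasBB} I would identify the Gubinelli derivative of $\int_s^{\cdot} S_{\cdot - \tau} Z_\tau \circ \mathrm{d}\mathbf{X}_\tau$ as $Z$ itself and write its remainder as the sum of the local approximation error, bounded by \eqref{BBBBasasBB1}, and the explicit boundary piece $S_{t-s} Z'_s \circ \mathbb{X}_{s,\cdot} - (I - S_{t-s}) Z_s \circ (\delta X)_{s,\cdot}$. This boundary piece produces precisely the $\vert Z_s \vert_{\alpha - \eta}$ and $\varrho_{\gamma}(\mathbf{X},[s,t]) \vert Z'_s \vert_{\alpha - \eta - \gamma}$ contributions in \eqref{BBBBasasBB} after applying \eqref{SEM_I}. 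Lifting each seminorm of the integral from its natural space $\mathcal{B}_{\alpha - \eta - \ell\gamma}$ into $\mathcal{B}_{\alpha}$ or $\mathcal{B}_{\alpha - \gamma}$ costs an additional $(t-s)$ power, and the minimum of these gains yields the exponent $\mu_2$; the extra fractions $(2\gamma_2\gamma - \eta\gamma_2 - \gamma^2)/\gamma$ and $\min\{\gamma,\gamma_2,\gamma_1\}(\gamma-\eta)/\gamma$ arise precisely from interpolating between the two $Z^{\#}$-bounds at the step where one must lift the remainder by more than $\gamma$.

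The \emph{principal obstacle} is the exponent bookkeeping. Because $Z^{\#}$ satisfies two estimates in two different spaces, in each of the five defect terms one must simultaneously choose an interpolation between these estimates and a semigroup exponent $\sigma_1$ so that every negative power of $(t-v)$ remains integrable while the net positive power of $(w-u)$ strictly exceeds $1 + i\gamma$. The hypothesis on $\eta$ is calibrated exactly so that such a consistent choice exists; once it is pinned down, the remaining steps are routine applications of \eqref{SEM_I} and \eqref{SEMI_II}.
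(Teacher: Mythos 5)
Your strategy mirrors the paper's: introduce the local approximant, compute the three--point defect, bound each term via \eqref{SEMI_II} and interpolation, and conclude by a semigroup--compatible Sewing lemma. However, there is a genuine algebraic slip in the defect computation that prevents your route from recovering the stated estimate \eqref{BBBBasasBB1}.

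You expand the increment \emph{forward} — writing $Z_v = Z_u + Z'_u\circ(\delta X)_{u,v} + Z^{\#}_{u,v}$ and $Z'_v = Z'_u + (\delta Z')_{u,v}$ — and thus arrive at \emph{five} terms, one of which is $\bigl(S_{t-v}-S_{t-u}\bigr)\bigl(Z'_u\circ(\delta X)_{u,v}\otimes(\delta X)_{v,w}\bigr)$. The paper instead expands \emph{backward}, writing $Z_\tau = Z_\upsilon - Z'_\tau\circ(\delta X)_{\tau,\upsilon} - Z^{\#}_{\tau,\upsilon}$ inside the surviving $S_{u-\tau}Z_\tau\circ(\delta X)_{\upsilon,\nu}$ contribution, and similarly for $Z'_\tau$. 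This is not cosmetic: in the backward expansion the Gubinelli piece $S_{u-\tau}Z'_\tau\circ(\delta X)_{\tau,\upsilon}\otimes(\delta X)_{\upsilon,\nu}$ carries the \emph{same} semigroup prefactor $S_{u-\tau}$ as the Chen cross term $-S_{u-\tau}Z'_\tau\circ(\delta X)_{\tau,\upsilon}\otimes(\delta X)_{\upsilon,\nu}$, so they cancel exactly and the defect reduces to precisely four terms, each linear in the rough path. In your forward version the analogous cancellation would require $S_{t-v} = S_{t-u}$, which fails, and the residual you call the second ``semigroup-difference'' contribution survives.

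The consequence is more than bookkeeping. That residual term is bounded, after \eqref{SEMI_II}, by a factor $\Vert X\Vert_{\gamma}^{2}\,(t-v)^{-\sigma_1}(v-u)^{\sigma_2+\gamma}(w-v)^{\gamma}$, so it is \emph{quadratic} in the rough path. Your Sewing argument then yields a bound of the form $\varrho_{\gamma}^{2}(\mathbf{X},[s,t])\,(t-s)^{i\gamma+\mu_1}\Vert(Z,Z')\Vert$ rather than the \emph{linear} $\varrho_{\gamma}(\mathbf{X},[s,t])$ asserted in \eqref{BBBBasasBB1}, and this quadratic factor then propagates into \eqref{BBBBasasBB} as well. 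While this weaker bound would still be polynomial in the Gaussian rough path and hence adequate for the integrability applications in Sections 2--3, it does not prove the lemma as stated. To close the gap, replace the forward increment expansion by the backward one at the point where the raw defect $\Xi^{\tau,\upsilon}_{s,u}+\Xi^{\upsilon,\nu}_{s,u}-\Xi^{\tau,\nu}_{s,u}$ is simplified; then the product-of-increments term disappears and every remaining contribution is degree one in $\mathbf{X}$, matching the constant $\varrho_{\gamma}(\mathbf{X},[s,t])$ in the lemma.

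Your treatment of the remaining four terms and of the controlled-path norm \eqref{BBBBasasBB} — identifying the Gubinelli derivative as $Z$, splitting off the explicit boundary piece, and interpolating the two $Z^{\#}$ estimates to obtain the exponent $\mu_2$ — is consistent with the paper's argument once the defect is corrected.
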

\begin{proof}
	The proof is standard. For $\tau<\nu$ with $\tau,\nu\in [s,u]$, set 
	\begin{align*}
		\Xi^{\tau,\nu}_{s,u} \coloneqq S_{u-\tau}Z_{\tau}\circ (\delta X)_{\tau,\nu}+S_{u-\tau}Z_{\tau}^{\prime}\circ \mathbb{X}_{\tau,\nu}.
	\end{align*} 
	Then for $\tau<\upsilon<\nu$,
	\begin{align*}
		\Xi^{\tau,\upsilon}_{s,u}+\Xi^{\upsilon,\nu}_{s,u}-\Xi^{\tau,\nu}_{s,u} &= S_{u-\tau}Z^{\#}_{\tau,\upsilon}\circ(\delta X)_{\upsilon,\nu}+(S_{u-\upsilon}-S_{u-\tau})Z_{\upsilon}\circ (\delta X)_{\upsilon,\nu}\\
		&\quad+ (S_{u-\upsilon}-S_{u-\tau})Z_{\upsilon}^{\prime}\circ \mathbb{X}_{\upsilon,\nu}+S_{u-\tau}(\delta Y^{\prime})_{\tau,\upsilon}\circ \mathbb{X}_{\upsilon,\nu}.
	\end{align*}
	Set $\tau^{n}_{m} \coloneqq s +\frac{n}{2^m}(u-s)$, where $0\leq n<2^m-1$. Also define $\tilde\Xi^{n,m}_{s,u} \coloneqq \Xi^{\tau^{n}_{m},\tau^{n+1}_{m}}_{s,u}$. Then
	\begin{align*}
		&\tilde\Xi^{2n,m+1}_{s,u}+\tilde\Xi^{2n+1,m+1}_{s,u}-\tilde\Xi^{n,m}_{s,u} \\
		=\ &S_{u-\tau^{2n}_{m+1}}Z^{\#}_{\tau^{2n}_{m+1},\tau^{2n+1}_{m+1}}\circ(\delta X)_{\tau^{2n+1}_{m+1},\tau^{2n+2}_{m+1}} + (S_{u-\tau^{2n+1}_{m+1}}-S_{u-\tau^{2n}_{m+1}})Z_{\tau^{2n+1}_{m+1}}\circ(\delta X)_{\tau^{2n+1}_{m+1},\tau^{2n+2}_{m+1}} \\
		&\quad +(S_{u-\tau^{2n+1}_{m+1}}-S_{u-\tau^{2n}_{m+1}})Z_{\tau^{2n+1}_{m+1}}^{\prime}\circ\mathbb{X}_{\tau^{2n+1}_{m+1},\tau^{2n+2}_{m+1}} + S_{u-\tau^{2n}_{m+1}}(\delta Z^{\prime})_{\tau^{2n}_{m+1},\tau^{2n+1}_{m+1}}\circ \mathbb{X}_{\tau^{2n+1}_{m+1},\tau^{2n+2}_{m+1}}.
	\end{align*}
	Also,
	\begin{align*}
		&\left|\int_{s}^{u}S_{u-\tau}Z_{\tau}\circ\mathrm{d}\mathbf{X}_{\tau}-S_{u-s}Z_{s}\circ (\delta X)_{s,u}-S_{u-s}Z_{s}^{\prime}\circ \mathbb{X}_{s,u}\right|_{\alpha-i\gamma} \\
		\leq\ &\sum_{m\geq 0}\sum_{0\leq n<2^{m}} \left| S_{u-\tau^{2n}_{m+1}}\left(Z^{\#}_{\tau^{2n}_{m+1},\tau^{2n+1}_{m+1}}\circ(\delta X)_{\tau^{2n+1}_{m+1},\tau^{2n+2}_{m+1}}\right)\right|_{\alpha-i\gamma} \\
		&\quad + \left| (S_{u-\tau^{2n+1}_{m+1}}-S_{u-\tau^{2n}_{m+1}})\left(Z_{\tau^{2n+1}_{m+1}}\circ(\delta X)_{\tau^{2n+1}_{m+1},\tau^{2n+2}_{m+1}}\right)\right|_{\alpha-i\gamma} \\
		&\quad +\left| (S_{u-\tau^{2n+1}_{m+1}}-S_{u-\tau^{2n}_{m+1}})\left(Z_{\tau^{2n+1}_{m+1}}^{\prime}\circ\mathbb{X}_{\tau^{2n+1}_{m+1},\tau^{2n+2}_{m+1}}\right)\right|_{\alpha-i\gamma} \\
		&\quad +\left| S_{u-\tau^{2n}_{m+1}}\left((\delta Z^{\prime})_{\tau^{2n}_{m+1},\tau^{2n+1}_{m+1}}\circ \mathbb{X}_{\tau^{2n+1}_{m+1},\tau^{2n+2}_{m+1}}\right)\right|_{\alpha-i\gamma}.
	\end{align*}
	We only focus on
	\begin{align*}
		\sum_{m\geq 0}\sum_{0\leq n<2^{m}}\left\vert S_{u-\tau^{2n}_{m+1}}\left((\delta Z^{\prime})_{\tau^{2n}_{m+1},\tau^{2n+1}_{m+1}}\circ \mathbb{X}_{\tau^{2n+1}_{m+1},\tau^{2n+2}_{m+1}}\right)\right\vert_{\alpha-i\gamma},
	\end{align*} 
	the other terms can be treated similarly. Since
	\begin{align*}
		&\sum_{m\geq 0}\sum_{0\leq n<2^{m}}\left\vert S_{u-\tau^{2n}_{m+1}}\left((\delta Z^{\prime})_{\tau^{n}_{m},\tau^{2n+1}_{m+1}}\circ \mathbb{X}_{\tau^{2n+1}_{m+1},\tau^{2n+2}_{m+1}}\right)\right\vert_{\alpha-i\gamma} \\
		\leq\ &\Vert Z\Vert_{\mathcal{D}_{\mathbf{X},\alpha-\eta}^{\gamma_{1},\gamma_2}([s,t])}\sum_{m\geq 1}\sum_{0\leq n<2^{m}}(u-\tau^{n}_{m})^{\gamma(i-2)-\eta}(\frac{1}{2^m})^{\gamma_{1}+2\gamma} \\
		\lesssim\ &(u-s)^{\gamma i+\gamma_1-\eta}\Vert Z\Vert_{\mathcal{D}_{\mathbf{X},\alpha-\eta}^{\gamma_{1},\gamma_2}([s,t])},
	\end{align*}
	the claimed bound follows. To prove the \eqref{BBBBasasBB}, first note that $(Z,Z^\prime)\in \mathcal{D}_{\mathbf{X},\alpha-\eta}^{\gamma_{1},\gamma_2}([s,t])$ and $t-s\leq 1$. From the interpolation property and  decomposition $(\delta Z)_{u,v}=Z^{\prime}_{u}\circ (\delta X)_{u,v}+Z^{\#}_{u,v}$, for $[u,v]\subseteq [s,t]$
	\begin{itemize}
		\item [(i)] $\vert Z^{\#}_{u,v}\vert_{\alpha-2\gamma}\lesssim \vert Z^{\#}_{u,v}\vert_{\alpha-\eta-\gamma}^{\frac{\eta}{\gamma}}\vert Z^{\#}_{u,v}\vert_{\alpha-\eta-2\gamma}^{\frac{\gamma-\eta}{\gamma}}\lesssim \Vert Z\Vert_{\mathcal{D}_{\mathbf{X},\alpha-\eta}^{\gamma_{1},\gamma_2}([s,t])}(u-v)^{\frac{2\gamma_2\gamma-\eta\gamma_2}{\gamma }}$.
		\item [(ii)] $\vert Z^{\prime}_{u,v}\vert_{\alpha-2\gamma}^{(n)}\lesssim  \big(\vert Z^{\prime}_{u,v}\vert_{\alpha-\eta-\gamma}^{(n)}\big)^{\frac{\eta}{\gamma}}\big(\vert Z^{\prime}_{u,v}\vert_{\alpha-\eta-2\gamma}^{(n)}\big)^{\frac{\gamma-\eta}{\gamma}}\lesssim \Vert Z\Vert_{\mathcal{D}_{\mathbf{X},\alpha-\eta}^{\gamma_{1},\gamma_2}([s,t])}(u-v)^{\frac{\gamma_1(\gamma-\eta)}{\gamma }}.$
		\item [(iii)] $\vert Z_{u,v}\vert_{\alpha-\gamma}\lesssim \vert Z_{u,v}\vert_{\alpha-\eta}^{\frac{\eta}{\gamma}}\vert Z_{u,v}\vert_{\alpha-\eta-\gamma}^{\frac{\gamma-\eta}{\gamma}}\lesssim \Vert Z\Vert_{\mathcal{D}_{\mathbf{X},\alpha-\eta}^{\gamma_{1},\gamma_2}([s,t])}^{\frac{\eta}{\gamma}}\vert Z_{u,v}\vert_{\alpha-\eta-\gamma}^{\frac{\gamma-\eta}{\gamma}} $
		\item [(iv)] $\vert Z_{u,v}\vert_{\alpha-\gamma-\eta}\leq (1+\Vert X\Vert_{\gamma,[s,t]})\Vert Z\Vert_{\mathcal{D}_{\mathbf{X},\alpha-\eta}^{\gamma_{1},\gamma_2}([s,t])}(v-u)^{\min\lbrace \gamma,\gamma_2\rbrace}$
	\end{itemize}
	From item (ii), 
	\begin{align}\label{MKLOPA}
		\sup_{\tau\in [s,t]}\vert Z^{\prime}_{\tau}\vert_{{\alpha}-2\gamma}^{(n)}-\vert Z^{\prime}_{s}\vert_{{\alpha}-2\gamma}^{(n)}\lesssim  \Vert Z\Vert_{\mathcal{D}_{\mathbf{X},\alpha-\eta}^{\gamma_{1},\gamma_2}([s,t])}(t-s)^{{\frac{\gamma_1(\gamma-\eta)}{\gamma }}}.
	\end{align}	
	Remember, $(\delta Z)_{u,v}=Z^{\prime}_{u}\circ (\delta X)_{u,v}+Z^{\#}_{u,v}$, $\frac{2\gamma_2\gamma-\eta\gamma_2}{\gamma }>\gamma$ and $t-s\leq 1$. So, from item (i), item (ii), and
	\eqref{MKLOPA}, for a constant $C$
	\begin{align}\label{MKL98}
		\begin{split}
			&\sup_{\substack{\tau,\nu\in [s,t],\\ \tau<\nu}}\frac{\vert (\delta Z)_{\tau,\nu}\vert_{{\alpha}-2\gamma}}{(\nu-\tau)^{\gamma}}\\&\quad\leq \vert Z^{\prime}_{s}\vert_{{\alpha}-2\gamma}^{(n)}\Vert X\Vert_{\gamma,[s,t]}+C\varrho_{\gamma}(\mathbf{X},[s,t]) \Vert Z\Vert_{\mathcal{D}_{\mathbf{X},\alpha-\eta}^{\gamma_{1},\gamma_2}([s,t])}(t-s)^{\min\lbrace{\frac{\gamma_1(\gamma-\eta)}{\gamma },\frac{2\gamma_2\gamma-\eta\gamma_2-\gamma^2}{\gamma }\rbrace}}
		\end{split}
	\end{align}
	Also, $\vert Z_{u,v}\vert_{\alpha-\gamma}\lesssim \vert Z_{u,v}\vert_{\alpha-\eta}^{\frac{\eta}{\gamma}}\vert Z_{u,v}\vert_{\alpha-\gamma-\eta}^{\frac{\gamma-\eta}{\gamma}}$. Consequently from item (iv)
	\begin{align}\label{A4521}
		\sup_{\tau\in [s,t]}\vert Z_{\tau}\vert_{{\alpha}-\gamma}-\vert Z_{s}\vert_{{\alpha}-\gamma}\lesssim (1+\Vert X\Vert_{\gamma,[s,t]})^{\frac{\gamma-\eta}{\gamma}} \Vert Z\Vert_{\mathcal{D}_{\mathbf{X},\alpha-\eta}^{\gamma_{1},\gamma_2}([s,t])}(t-s)^{{\frac{\min\lbrace \gamma,\gamma_2\rbrace(\gamma-\eta)}{\gamma }}}.
	\end{align}
	Also $\vert Z_{s}\vert_{{\alpha}-\gamma}\lesssim \vert Z_{s}\vert_{{\alpha}-\eta}$ and $\vert Z^{\prime}_{s}\vert_{{\alpha}-2\gamma}^{(n)}\lesssim \vert Z^{\prime}_{s}\vert_{{\alpha}-\eta-\gamma}^{(n)}$. The inequality \eqref{BBBBasasBB} follows from \eqref{BBBBasasBB1} ,\eqref{MKL98} and \eqref{A4521}. Indeed, since the rest of the proof is similar to the proof of \cite [Corollary 4.6]{GHT21}, we omit the details.
\end{proof}

\begin{proposition}\label{MMMKKKKKKTTTT}
	In addition to our assumptions in Proposition \eqref{linear boundd}, assume for $0<r\leq 1$ and $\xi,\tilde{\xi}\in\mathcal{B}_{\alpha}$ that
	\begin{align}\label{key}
		\begin{split}
			\Vert D_{\xi}F-D_{\tilde{\xi}}F\Vert_{\mathcal{L}(\mathcal{B}_\alpha,\mathcal{B}_{\alpha-\sigma})} &\leq P_2(\vert\xi\vert_{\alpha},\vert\tilde{\xi}\vert_{\alpha})\vert \xi-\tilde{\xi}\vert_{\alpha}^r \quad \text{and}\\
			\Vert  D^{3}_{\xi}G-D^{3}_{\tilde{\xi}}G\Vert_{\mathcal{L}(\mathcal{B}_\alpha,\mathcal{B}_{\alpha-2\gamma-\eta})} &\leq Q_2(\vert\xi\vert_{\alpha},\vert\tilde{\xi}\vert_{\alpha})\vert \xi-\tilde{\xi}\vert_{\alpha}^r,
		\end{split}
	\end{align}
	where $P_2,Q_2$ are two polynomials. Then for every $0<\epsilon<1$, there exists a constant $E_{\epsilon}$ such that 
	\begin{align}\label{bounde_derivative_3}
		\begin{split}
			&\big\Vert \big(D_{\xi}\varphi^{.}_{\mathbf{X}}[\zeta]-D_{\tilde{\xi}}\varphi^{.}_{\mathbf{X}}[\zeta],D_{\varphi^{.}_{\mathbf{X}}(\xi)}G[D_{\xi}\varphi^{\tau}_{\mathbf{X}}[\zeta]]-D_{\varphi^{.}_{\mathbf{X}}(\tilde\xi)}G[D_{\tilde\xi}\varphi^{\tau}_{\mathbf{X}}[\zeta]]\big)\big\Vert_{\mathcal{D}_{\mathbf{X},{\alpha}}^{\gamma}([0,T])}\\
			\leq\ &E_{\epsilon}\vert\zeta\vert_{\alpha}\max\lbrace \vert\xi-\tilde{\xi}\vert_{\alpha}, \vert\xi-\tilde{\xi}\vert_{\alpha}^{1-\kappa}, \vert\xi-\tilde{\xi}\vert_{\alpha}^{\frac{r}{2}}\rbrace\\
			&\quad \times \exp\bigg(E_{\epsilon}\big[R_1\big(\sup_{0\leq \rho \leq 1}\Vert(\varphi^{.}_{\mathbf{X}}(\rho\xi+(1-\rho)\tilde{\xi}),G(\varphi^{.}_{\mathbf{X}}(\rho\xi+(1-\rho)\tilde{\xi})))\Vert_{\mathcal{D}_{\mathbf{X},{\alpha}}^{\gamma}([0,T])}\big)+R_{2}(\Vert X\Vert_{\gamma,[0,T]},\Vert \mathbb{X}\Vert_{2\gamma,[0,T]})\big]\bigg).
		\end{split}
	\end{align}
	where $R_1$ and $R_2$ are two increasing polynomials and $\eta$ satisfies the inequality 
	\begin{align*}
		\max \left\{ \frac{\eta}{\gamma},\frac{1}{\gamma}-2 \right\} <\kappa<1.
	\end{align*}
\end{proposition}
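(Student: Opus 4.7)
\emph{Proof plan.} The approach mirrors that of Proposition~\ref{linear boundd}, the key additional task being the treatment of a ``source term'' that appears when one subtracts the linearizations around $\xi$ and $\tilde\xi$. Set $\Delta_t\coloneqq D_{\xi}\varphi^{t}_{\mathbf{X}}[\zeta]-D_{\tilde{\xi}}\varphi^{t}_{\mathbf{X}}[\zeta]$ and telescope each integrand in \eqref{LLL} via
\begin{align*}
&D_{\varphi^\tau_{\mathbf{X}}(\xi)}H[D_\xi\varphi^\tau_{\mathbf{X}}[\zeta]]-D_{\varphi^\tau_{\mathbf{X}}(\tilde\xi)}H[D_{\tilde\xi}\varphi^\tau_{\mathbf{X}}[\zeta]]\\
&\quad = D_{\varphi^\tau_{\mathbf{X}}(\xi)}H[\Delta_\tau]+(D_{\varphi^\tau_{\mathbf{X}}(\xi)}H-D_{\varphi^\tau_{\mathbf{X}}(\tilde\xi)}H)[D_{\tilde\xi}\varphi^\tau_{\mathbf{X}}[\zeta]]
\end{align*}
for $H\in\{F,G\}$. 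Then $\Delta$ satisfies a linear rough PDE with the same coefficients as the linearization around $\xi$ plus an inhomogeneous source $R_t$ built from the second summands. Consequently $\|(\Delta,\ldots)\|_{\mathcal{D}^{\gamma}_{\mathbf{X},\alpha}([u,v])}$ can be controlled by combining the linear estimate \eqref{LINEAR} applied to the homogeneous part of the equation with a controlled-path bound on $R$, and iterating over a greedy partition of $[0,T]$.

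The core of the proof is the short-interval bound on $\|R\|_{\mathcal{D}^{\gamma}_{\mathbf{X},\alpha}([u,v])}$. The drift contribution is direct: use \eqref{key} for $DF$, Corollary~\ref{CC_DDE} for $|\varphi^{\tau}_{\mathbf{X}}(\xi)-\varphi^{\tau}_{\mathbf{X}}(\tilde\xi)|_{\alpha}$, Proposition~\ref{linear boundd} for $|D_{\tilde\xi}\varphi^{\tau}_{\mathbf{X}}[\zeta]|_{\alpha}$, and conclude as in \eqref{Drift_Estimate11}. For the rough part one has to lift $\tau\mapsto (D_{\varphi^{\tau}_{\mathbf{X}}(\xi)}G-D_{\varphi^{\tau}_{\mathbf{X}}(\tilde\xi)}G)[D_{\tilde\xi}\varphi^{\tau}_{\mathbf{X}}[\zeta]]$ to a controlled path whose Gubinelli derivative involves $D^{2}G$-differences and whose remainder involves both $D^{2}G$- and $D^{3}G$-differences: the former are Lipschitz by boundedness of $D^{3}G$, the latter H\"older of rate $r$ by \eqref{key}. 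Because the time regularity of this source is in general below $\gamma$, the standard estimate \eqref{BBBBBB} is not applicable, and one must invoke the generalized Lemma~\ref{parmeter} with parameters $\gamma_{1},\gamma_{2}<\gamma$ chosen to match the available regularity of the source.

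The three powers $1$, $1-\kappa$, $r/2$ in \eqref{bounde_derivative_3} arise from interpolation. The exponent $1$ is the direct Lipschitz rate from Corollary~\ref{CC_DDE}. The exponent $1-\kappa$ appears when the controlled-path norm of $\varphi^{.}_{\mathbf{X}}(\xi)-\varphi^{.}_{\mathbf{X}}(\tilde\xi)$ is interpolated between the sup-norm in $\mathcal{B}_{\alpha}$ (a full power of $|\xi-\tilde\xi|_{\alpha}$) and the weaker norms in which this difference is only a priori bounded; the constraint $\kappa>\max\{\eta/\gamma,\,1/\gamma-2\}$ is exactly what guarantees that the resulting interpolated exponents meet the hypotheses $\gamma_{1}+2\gamma>1$ and $2\gamma_{2}+\gamma>1$ of Lemma~\ref{parmeter}. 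The exponent $r/2$ is produced by interpolating the $D^{3}G$-difference bound of rate $r$ against a trivial bound so as to obtain the correct H\"older-in-time regularity of the remainder piece. Once the short-interval estimate is established, the argument is closed by iterating it over the partition defined in \eqref{BNVRT}, adapted so that its mesh $\tilde\tau$ controls both flows simultaneously; the number of intervals $N$ enters the global bound through the factor $\exp(N\log(2M_\epsilon))$ as in Proposition~\ref{linear boundd}, which is exactly the exponential factor in \eqref{bounde_derivative_3}. The principal obstacle is the lifting step: accurately computing, in each of the spaces $\mathcal{B}_{\alpha-\eta-j\gamma}$ for $j=0,1,2$, the Gubinelli derivative and remainder of the $G$-source and verifying that the resulting interpolation exponents align with the admissibility hypotheses of Lemma~\ref{parmeter}.
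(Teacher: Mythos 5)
Your plan follows the paper's proof essentially step by step: telescope the difference of linearizations into a linear term plus a source, lift the source to a controlled path, invoke the generalized sewing estimate of Lemma~\ref{parmeter} to account for the source's reduced time regularity, interpolate to produce the exponents $1$, $1-\kappa$, $r/2$, and close the argument by iterating over a greedy partition as in Proposition~\ref{linear boundd}. Two small corrections to your account: the paper takes $(\gamma_1,\gamma_2)=(\kappa\gamma,\gamma)$ in Lemma~\ref{parmeter}, so only the Gubinelli-derivative index drops below $\gamma$ (the remainder index stays at $\gamma$, with the cost shifted into the $\Vert L_1\Vert^{r/2}$ constant via the interpolation $\vert\cdot\vert_{\alpha-\eta-\gamma}\lesssim\vert\cdot\vert_{\alpha-\eta}^{1/2}\vert\cdot\vert_{\alpha-\eta-2\gamma}^{1/2}$), and the $1-\kappa$ power arises from interpolating the H\"older-in-time seminorm of the source's Gubinelli derivative between a $C^{\gamma}$ bound carrying no $\Vert L_1\Vert$ factor and a sup bound carrying one, rather than from interpolating the controlled-path norm of $\varphi^{\cdot}_{\mathbf{X}}(\xi)-\varphi^{\cdot}_{\mathbf{X}}(\tilde\xi)$ across spaces as you suggest.
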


\begin{proof}
	To simplify the notation and since the Gubinelli derivatives are clear, we will write $\Vert Y\Vert_{\mathcal{D}_{\mathbf{X},{\alpha}}^{\gamma}([0,T])}$ instead of $\Vert (Y,Y^{\prime})\Vert_{\mathcal{D}_{\mathbf{X},{\alpha}}^{\gamma}([0,T])}$ during the proof. For $s<t$,
	\begin{align}\label{-2-2-2}
		\begin{split}
			&D_{\xi}\varphi^{t}_{\mathbf{X}}[\zeta]-D_{\tilde{\xi}}\varphi^{t}_{\mathbf{X}}[\zeta]\\
			=\ & S_{t-s}(D_{\xi}\varphi^{s}_{\mathbf{X}}[\zeta]-D_{\tilde\xi}\varphi^{s}_{\mathbf{X}}[\zeta]) + \int_{s}^{t}S_{t-\tau}\big(D_{\varphi^{\tau}_{\mathbf{X}}(\xi)}F[D_{\xi}\varphi^{\tau}_{\mathbf{X}}[\zeta]]-D_{\varphi^{\tau}_{\mathbf{X}}(\tilde\xi)}F[D_{\tilde\xi}\varphi^{\tau}_{\mathbf{X}}[\zeta]]\big) \, \mathrm{d}\tau\\
			&\qquad+ \int_{s}^{t}S_{t-\tau}\big(D_{\varphi^{\tau}_{\mathbf{X}}(\xi)}G[D_{\xi}\varphi^{\tau}_{\mathbf{X}}[\zeta]]-D_{\varphi^{\tau}_{\mathbf{X}}(\tilde\xi)}G[D_{\tilde\xi}\varphi^{\tau}_{\mathbf{X}}[\zeta]]\big)\circ\mathrm{d}\mathbf{X}_{\tau}.
		\end{split}
	\end{align}
	Set $L_1(\xi,\tilde{\xi},\tau) \coloneqq \varphi^{\tau}_{\mathbf{X}}(\xi)-\varphi^{\tau}_{\mathbf{X}}(\tilde\xi)$ and $L_2(\xi,\tilde{\xi},\zeta,\tau) \coloneqq D_{\xi}\varphi^{\tau}_{\mathbf{X}}[\zeta]-D_{\tilde\xi}\varphi^{\tau}_{\mathbf{X}}[\zeta]$. Then
	\begin{align}\label{-1-1-1}
		\begin{split}
			(L_2(\xi,\tilde{\xi},\zeta,\tau))^{\prime} &= D_{\varphi^{\tau}_{\mathbf{X}}(\xi)}G[D_{\xi}\varphi^{\tau}_{\mathbf{X}}[\zeta]]-D_{\varphi^{\tau}_{\mathbf{X}}(\tilde\xi)}G[D_{\tilde\xi}\varphi^{\tau}_{\mathbf{X}}[\zeta]]\\
			&= \int_{0}^{1}D^{2}_{\theta\varphi^{\tau}_{\mathbf{X}}(\xi)+(1-\theta)\varphi^{\tau}_{\mathbf{X}}(\tilde\xi)}G\big[L_{1}(\xi,\tilde{\xi},\tau),D_{\xi}\varphi^{\tau}_{\mathbf{X}}[\zeta]\big] \, \mathrm{d}\theta + D_{\varphi^{\tau}_{\mathbf{X}}(\tilde\xi)}G\big[L_2(\xi,\tilde{\xi},\zeta,\tau)\big].
		\end{split}
	\end{align}
	Also from \eqref{DDEER},
	\begin{align}\label{derivative_difference}
		\begin{split}
			&(D_{\varphi^{\tau}_{\mathbf{X}}(\xi)}G[D_{\xi}\varphi^{\tau}_{\mathbf{X}}[\zeta]]-D_{\varphi^{\tau}_{\mathbf{X}}(\tilde\xi)}G[D_{\tilde\xi}\varphi^{\tau}_{\mathbf{X}}[\zeta]])^{\prime}\circ (\delta X)_{\tau,\nu}\\
			=\ &\int_{0}^{1}D^{2}_{\theta\varphi^{\tau}_{\mathbf{X}}(\xi)+(1-\theta)\varphi^{\tau}_{\mathbf{X}}(\tilde\xi)}G\big[L_{1}(\xi,\tilde{\xi},\tau),D_{\varphi^{\tau}_{\mathbf{X}}(\xi)}G[D_{\xi}\varphi^{\tau}_{\mathbf{X}}[\zeta]]\circ (\delta X)_{\tau,\nu}\big]\mathrm{d}\theta\\
			&\qquad+D_{\varphi^{\tau}_{\mathbf{X}}(\tilde\xi)}G\left[\int_{0}^{1}D^{2}_{\theta\varphi^{\tau}_{\mathbf{X}}(\xi)+(1-\theta)\varphi^{\tau}_{\mathbf{X}}(\tilde\xi)}G[L_{1}(\xi,\tilde{\xi},\tau),D_{\xi}\varphi^{\tau}_{\mathbf{X}}[\zeta]\circ(\delta X)_{\tau,\nu}] \,\mathrm{d}\theta\right]\\
			&\myquad[3]+D_{\varphi^{\tau}_{\mathbf{X}}(\tilde\xi)}G\big[D_{\varphi^{\tau}_{\mathbf{X}}(\tilde\xi)}G[L_2(\xi,\tilde{\xi},\zeta,\tau)]\circ(\delta X)_{\tau,\nu}\big]\\
			&\myquad[4] +\int_{0}^{1}D^{3}_{\theta\varphi^{\tau}_{\mathbf{X}}(\xi)+(1-\theta)\varphi^{\tau}_{\mathbf{X}}(\tilde\xi)}G\big[L_{1}(\xi,\tilde{\xi},\tau),G(\varphi^{\tau}_{\mathbf{X}}(\xi))\circ(\delta X)_{\tau,\nu},D_\xi\varphi^{\tau}_{\mathbf{X}}[\zeta]\big]\,\mathrm{d}\theta\\
			&\myquad[5]+D^{2}_{\varphi^{\tau}_{\mathbf{X}}(\tilde\xi)} G\left[ \int_{0}^{1}D_{\theta\varphi^{\tau}_{\mathbf{X}}(\xi)+(1-\theta)\varphi^{\tau}_{\mathbf{X}}(\tilde\xi)}G[L_{1}(\xi,\tilde{\xi},\tau)]\circ (\delta X)_{\tau,\nu} \, \mathrm{d}\theta,D_{\xi}\varphi^{\tau}_{\mathbf{X}}[\zeta]\right]\\ 
			&\myquad[6]+D^{2}_{\varphi^{\tau}_{\mathbf{X}}(\tilde\xi)}G\big[G(\varphi^{\tau}_{\mathbf{X}}(\tilde\xi))\circ (\delta X)_{\tau,\nu},L_2(\xi,\tilde{\xi},\zeta,\tau)\big],
		\end{split}
	\end{align}
	and
	\begin{align}\label{remiander_difference}
		\begin{split}
			&\big[D_{\varphi^{.}_{\mathbf{X}}(\xi)}G[D_{\xi}\varphi^{.}_{\mathbf{X}}[\zeta]]-D_{\varphi^{.}_{\mathbf{X}}(\tilde\xi)}G[D_{\tilde\xi}\varphi^{.}_{\mathbf{X}}[\zeta]]\big]^{\#}_{\tau,\nu}\\
			=\ &\int_{0}^{1}D^{2}_{\theta\varphi^{\tau}_{\mathbf{X}}(\xi)+(1-\theta)\varphi^{\tau}_{\mathbf{X}}(\tilde\xi)}G\big[L_{1}(\xi,\tilde{\xi},\tau),[D_{\xi}^{.}\varphi_{\mathbf{X}}[\zeta]]^{\#}_{\tau,\nu}\big]\mathrm{d}\theta+D_{\varphi^{\tau}_{\mathbf{X}}(\tilde\xi)}G[[L_2(\xi,\tilde{\xi},\zeta,.)]^{\#}_{\tau,\nu}]\\
			&\quad + \int_{0}^{1} D^{3}_{\theta \varphi^{\nu}_{\mathbf{X}}(\tilde\xi)+(1-\theta)\varphi^{\tau}_{\mathbf{X}}(\tilde\xi)}G\big[L_{1}(\xi,\tilde{\xi},\tau),[\varphi^{.}_{\mathbf{X}}(\xi)]^{\#}_{\tau,\nu},D_{\xi}\varphi^{\tau}_{\mathbf{X}}[\zeta]\big]\mathrm{d}\theta+D^{2}_{\varphi^{\tau}_{\mathbf{X}}(\tilde\xi)}G\big[[L_{1}(\xi,\tilde{\xi},.)]^{\#}_{\tau,\nu},D_{\xi}\varphi^{\tau}_{\mathbf{X}}[\zeta]\big]\\
			&\quad + D^{2}_{\varphi^{\tau}_{\mathbf{X}}(\tilde\xi)}G\big[[\varphi^{.}_{\mathbf{X}}(\tilde\xi)]^{\#}_{\tau,\nu},L_2(\xi,\tilde{\xi},\zeta,\tau)\big]\\
			&\quad + \int_{0}^{1}(1-\theta)\big(D^{3}_{\theta \varphi^{\nu}_{\mathbf{X}}(\xi)+(1-\theta)\varphi^{\tau}_{\mathbf{X}}(\xi)}G-D^{3}_{\theta \varphi^{\nu}_{\mathbf{X}}(\tilde\xi)+(1-\theta)\varphi^{\tau}_{\mathbf{X}}(\tilde\xi)}G\big)[(\delta\varphi^{.}_{\mathbf{X}}(\xi))_{\tau,\nu},(\delta\varphi^{.}_{\mathbf{X}}(\xi))_{\tau,\nu},D_{\xi}\varphi^{\tau}_{\mathbf{X}}[\zeta]]\, \mathrm{d}\theta\\
			&\quad + \int_{0}^{1}(1-\theta)D^{3}_{\theta \varphi^{\nu}_{\mathbf{X}}(\tilde\xi)+(1-\theta)\varphi^{\tau}_{\mathbf{X}}(\tilde\xi)}G[(\delta L_{1}(\xi,\tilde{\xi},.))_{\tau,\nu},(\delta\varphi^{.}_{\mathbf{X}}(\xi))_{\tau,\nu},D_{\xi}\varphi^{\tau}_{\mathbf{X}}[\zeta]]\, \mathrm{d}\theta\\
			&\quad + \int_{0}^{1}(1-\theta)D^{3}_{\theta \varphi^{\nu}_{\mathbf{X}}(\tilde\xi)+(1-\theta)\varphi^{\tau}_{\mathbf{X}}(\tilde\xi)}G[(\delta\varphi^{.}_{\mathbf{X}}(\tilde\xi))_{\tau,\nu},(\delta L_{1}(\xi,\tilde{\xi},.))_{\tau,\nu},D_{\xi}\varphi^{\tau}_{\mathbf{X}}[\zeta]]\, \mathrm{d}\theta\\
			&\quad + \int_{0}^{1}(1-\theta)D^{3}_{\theta \varphi^{\nu}_{\mathbf{X}}(\tilde\xi)+(1-\theta)\varphi^{\tau}_{\mathbf{X}}(\tilde\xi)}G[(\delta\varphi^{.}_{\mathbf{X}}(\tilde\xi))_{\tau,\nu},(\delta\varphi^{.}_{\mathbf{X}}(\tilde\xi))_{\tau,\nu},L_{2}(\xi,\tilde{\xi},\zeta,\tau)] \, \mathrm{d}\theta\\
			&\quad + \int_{0}^{1}\big(D^{2}_{\theta \varphi^{\nu}_{\mathbf{X}}(\xi)+(1-\theta)\varphi^{\tau}_{\mathbf{X}}(\xi)} G-D^{2}_{\theta \varphi^{\nu}_{\mathbf{X}}(\tilde\xi)+(1-\theta)\varphi^{\tau}_{\mathbf{X}}(\tilde\xi)} G \big)[(\delta\varphi^{.}_{\mathbf{X}}(\xi))_{\tau,\nu},(\delta D_{\xi}\varphi^{.}_{\mathbf{X}}[\zeta])_{\tau,\nu}] \, \mathrm{d}\theta\\
			&\quad + \int_{0}^{1}D^{2}_{\theta \varphi^{\nu}_{\mathbf{X}}(\tilde\xi)+(1-\theta)\varphi^{\tau}_{\mathbf{X}}(\tilde\xi)} G[(\delta L_{1}(\xi,\tilde{\xi},.))_{\tau,\nu},(\delta D_{\xi}\varphi^{.}_{\mathbf{X}}[\zeta])_{\tau,\nu}]\, \mathrm{d}\theta\\
			&\quad + \int_{0}^{1}D^{2}_{\theta \varphi^{\nu}_{\mathbf{X}}(\tilde\xi)+(1-\theta)\varphi^{\tau}_{\mathbf{X}}(\tilde\xi)} G[(\delta\varphi^{.}_{\mathbf{X}}(\tilde\xi))_{\tau,\nu},(\delta L_{2}(\xi,\tilde{\xi},\zeta,.))_{\tau,\nu}]\,\mathrm{d}\theta.
		\end{split}
	\end{align}
	From \eqref{derivative_difference}, it is straightforward to check that
	\begin{align}\label{000}
		\begin{split}
			&\sup_{\tau\in [s,t]}\vert (D_{\varphi^{\tau}_{\mathbf{X}}(\xi)}G[D_{\xi}\varphi^{\tau}_{\mathbf{X}}[\zeta]]-D_{\varphi^{\tau}_{\mathbf{X}}(\tilde\xi)}G[D_{\tilde\xi}\varphi^{\tau}_{\mathbf{X}}[\zeta]])^{\prime}\vert_{\alpha-\eta-\gamma}^{(n)}\\
			\lesssim\ &\Vert L_{1}(\xi,\tilde{\xi},.)\Vert_{\mathcal{D}_{\mathbf{X},{\alpha}}^{\gamma}([s,t])}\Vert D_{\xi}\varphi^{.}_{\mathbf{X}}[\zeta]\Vert_{\mathcal{D}_{\mathbf{X},{\alpha}}^{\gamma}([s,t])}+\Vert L_2(\xi,\tilde{\xi},\zeta,.)\Vert_{\mathcal{D}_{\mathbf{X},{\alpha}}^{\gamma}([s,t])}.
		\end{split}
	\end{align}
	Next, we want to find a bound for the $C^{\gamma}([s,t];\mathcal{B}_{{\alpha}-\eta-2\gamma})$-norm of the term $(D_{\varphi^{.}_{\mathbf{X}}(\xi)}G[D_{\xi}\varphi^{.}_{\mathbf{X}}[\zeta]]-D_{\varphi^{.}_{\mathbf{X}}(\tilde\xi)}G[D_{\tilde\xi}\varphi^{.}_{\mathbf{X}}[\zeta]])^{\prime}$. We do this by estimating all terms on the right hand side of \eqref{derivative_difference} separately. From \eqref{NMMMMMLY}, it is straightforward to check that for all terms but
	\begin{align}\label{IV444}
		IV(\tau)\circ(\delta X)_{\tau,\nu} &\coloneqq \int_{0}^{1}D^{3}_{\theta\varphi^{\tau}_{\mathbf{X}}(\xi)+(1-\theta)\varphi^{\tau}_{\mathbf{X}}(\tilde\xi)}G\big[L_{1}(\xi,\tilde{\xi},\tau),G(\varphi^{\tau}_{\mathbf{X}}(\xi))\circ(\delta X)_{\tau,\nu},D_\xi\varphi^{\tau}_{\mathbf{X}}[\zeta]\big] \, \mathrm{d}\theta,
	\end{align}
	we can bound their $C^{\gamma}([s,t];\mathcal{B}_{{\alpha}-\eta-2\gamma})$-norm up to a constant by 
	\begin{align}
		\begin{split}\label{111}
			&(1+\Vert X\Vert_{\gamma,[s,t]})\big(1+\Vert \varphi^{.}_{\mathbf{X}}(\xi)\Vert_{\mathcal{D}_{\mathbf{X},{\alpha}}^{\gamma}([s,t])}+\Vert \varphi^{.}_{\mathbf{X}}(\tilde\xi) \Vert_{\mathcal{D}_{\mathbf{X},{\alpha}}^{\gamma}([s,t])}\big) \Vert L_{1}(\xi,\tilde{\xi},.)\Vert_{\mathcal{D}_{\mathbf{X},{\alpha}}^{\gamma}([s,t])}\Vert D_{\xi}\varphi^{.}_{\mathbf{X}}[\zeta]\Vert_{\mathcal{D}_{\mathbf{X},{\alpha}}^{\gamma}([s,t])}\\
			&\quad + (1+\Vert X\Vert_{\gamma,[s,t]})\big(1+\Vert \varphi^{.}_{\mathbf{X}}(\tilde\xi) \Vert_{\mathcal{D}_{\mathbf{X},{\alpha}}^{\gamma}([0,T])}\big)\Vert L_2(\xi,\tilde{\xi},\zeta,.)\Vert_{\mathcal{D}_{\mathbf{X},{\alpha}}^{\gamma}([s,t])}.
		\end{split}
	\end{align}
	To estimate $IV(\cdot)$ in \eqref{IV444} in the $ C^{\gamma}([s,t];\mathcal{B}_{{\alpha}-\eta-2\gamma})$-norm, we first note that 
	\begin{align*}
		IV(\tau)\circ(\delta X)_{\tau,\nu} &= \int_{0}^{1}D^{3}_{\theta\varphi^{\tau}_{\mathbf{X}}(\xi)+(1-\theta)\varphi^{\tau}_{\mathbf{X}}(\tilde\xi)}G\big[L_{1}(\xi,\tilde{\xi},\tau),G(\varphi^{\tau}_{\mathbf{X}}(\xi))\circ(\delta X)_{\tau,\nu},D_\xi\varphi^{\tau}_{\mathbf{X}}[\zeta]\big] \, \mathrm{d}\theta \\
		&= D^{2}_{\varphi^{\tau}_{\mathbf{X}}(\xi)}G\big[G(\varphi^{\tau}_{\mathbf{X}}(\xi))\circ(\delta X)_{\tau,\nu},D_\xi\varphi^{\tau}_{\mathbf{X}}[\zeta]\big]-D^{2}_{\varphi^{\tau}_{\mathbf{X}}(\tilde\xi)}G\big[G(\varphi^{\tau}_{\mathbf{X}}(\xi))\circ(\delta X)_{\tau,\nu},D_\xi\varphi^{\tau}_{\mathbf{X}}[\zeta]\big].
	\end{align*}
	Therefore,
	\begin{align*}
		\sup_{\substack{\tau,\nu\in [s,t],\\ \tau<\nu}}\frac{\vert (\delta IV)_{\tau,\nu}\vert_{\alpha-\eta-2\gamma}^{(n)}}{(\nu-\tau)^{\gamma}} &\lesssim (1+\Vert X\Vert_{\gamma,[s,t]}) (1+\Vert \varphi^{.}_{\mathbf{X}}(\xi)\Vert_{\mathcal{D}_{\mathbf{X},{\alpha}}^{\gamma}([s,t])})\Vert D_\xi\varphi^{.}_{\mathbf{X}}[\zeta]\Vert_{\mathcal{D}_{\mathbf{X},{\alpha}}^{\gamma}([0,T])}\\
		&\quad + (1+\Vert X\Vert_{\gamma,[s,t]})(1+\Vert \varphi^{.}_{\mathbf{X}}(\tilde\xi)\Vert_{\mathcal{D}_{\mathbf{X},{\alpha}}^{\gamma}([0,T])})\Vert D_{\tilde{\xi}}\varphi^{.}_{\mathbf{X}}[\zeta]\Vert_{\mathcal{D}_{\mathbf{X},{\alpha}}^{\gamma}([s,t])}.
	\end{align*}
	Furthermore,
	\begin{align}\label{222}
		&\sup_{\substack{\tau,\nu\in [s,t],\\ \tau<\nu}}\vert (\delta IV)_{\tau,\nu}\vert_{\alpha-\eta-2\gamma}^{(n)}\lesssim \Vert L_{1}(\xi,\tilde{\xi},.)\Vert_{\mathcal{D}_{\mathbf{X},{\alpha}}^{\gamma}([0,T])}\Vert D_{\xi}\varphi^{.}_{\mathbf{X}}[\zeta]\Vert_{\mathcal{D}_{\mathbf{X},{\alpha}}^{\gamma}([s,t])}.
	\end{align}
	Assume $\gamma_1=\kappa\gamma$ where we choose $0<\kappa <1$ such that $(\gamma_1,\gamma_2)=(\kappa\gamma,\gamma)$ satisfies the assumptions in Lemma \ref{parmeter}. It follows that
	\begin{align}\label{333}
		\max\lbrace \frac{\eta}{\gamma},\frac{1}{\gamma}-2\rbrace<\kappa<1.
	\end{align}
	Consequently, from a simple interpolation, for every $0< \kappa<1$,
	\begin{align}\label{444}
		\begin{split}
			&\sup_{\substack{\tau,\nu\in [s,t] \\ \tau<\nu}} \frac{\vert (\delta IV)_{\tau,\nu}\vert_{\alpha-\eta-2\gamma}^{(n)}}{(\nu-\tau)^{\kappa\gamma}} \\
			\lesssim\ &(1+\Vert X\Vert_{\gamma,[s,t]})^{\kappa} \big[(1+\Vert \varphi^{.}_{\mathbf{X}}(\xi)\Vert_{\mathcal{D}_{\mathbf{X},{\alpha}}^{\gamma}([s,t])})^{\kappa}\Vert D_\xi\varphi^{.}_{\mathbf{X}}[\zeta]\Vert_{\mathcal{D}_{\mathbf{X},{\alpha}}^{\gamma}([0,T])}\Vert L_{1}(\xi,\tilde{\xi},.)\Vert_{\mathcal{D}_{\mathbf{X},{\alpha}}^{\gamma}([s,t])}^{1-\kappa}\\
			&\qquad + (1+\Vert \varphi^{.}_{\mathbf{X}}(\tilde\xi)\Vert_{\mathcal{D}_{\mathbf{X},{\alpha}}^{\gamma}([s,t])})^{\kappa}\Vert D_{\tilde{\xi}}\varphi^{.}_{\mathbf{X}}[\zeta]\Vert_{\mathcal{D}_{\mathbf{X},{\alpha}}^{\gamma}([0,T])}^{\kappa}  \Vert D_{\xi}\varphi^{.}_{\mathbf{X}}[\zeta]\Vert_{\mathcal{D}_{\mathbf{X},{\alpha}}^{\gamma}([0,Ts,t])}^{1-\kappa} \Vert L_{1}(\xi,\tilde{\xi},.)\Vert_{\mathcal{D}_{\mathbf{X},{\alpha}}^{\gamma}([s,t])}^{1-\kappa}\big].
		\end{split}
	\end{align}
	Therefore, from \eqref{000},\eqref{111},\eqref{222},\eqref{333},\eqref{444} and also \eqref{bounde_derivative_2}, for 
	\begin{align*}
		(L_2(\xi,\tilde{\xi},\zeta,.))^{\prime\prime}=(D_{\varphi^{\tau}_{\mathbf{X}}(\xi)}G[D_{\xi}\varphi^{\tau}_{\mathbf{X}}[\zeta]]-D_{\varphi^{\tau}_{\mathbf{X}}(\tilde\xi)}G[D_{\tilde\xi}\varphi^{\tau}_{\mathbf{X}}[\zeta]])^{\prime},
	\end{align*}
	we have
	\begin{align}\label{I_I_I}	
		\begin{split}
			&\max\left\{ \sup_{\substack{\tau,\nu\in [s,t],\\ \tau<\nu}}\frac{\vert \big(\delta (L_2(\xi,\tilde{\xi},\zeta,.))^{\prime\prime}\big)_{\tau,\nu}\vert_{\alpha-\eta-2\gamma}}{(\nu-\tau)^{\kappa\gamma}},\sup_{\tau\in [s,t]}\vert  (L_2(\xi,\tilde{\xi},\zeta,\tau))^{\prime\prime}\vert_{\alpha-\eta-\gamma} \right\} \\
			\lesssim\ &(1+\Vert X\Vert_{\gamma,[s,t]})\big(1+\Vert \varphi^{.}_{\mathbf{X}}(\xi)\Vert_{\mathcal{D}_{\mathbf{X},{\alpha}}^{\gamma}([s,t])}+\Vert \varphi^{.}_{\mathbf{X}}(\tilde\xi) \Vert_{\mathcal{D}_{\mathbf{X},{\alpha}}^{\gamma}([s,t])}\big)\\
			&\quad \times \max\lbrace \Vert L_{1}(\xi,\tilde{\xi},.)\Vert_{\mathcal{D}_{\mathbf{X},{\alpha}}^{\gamma}([s,t])}, \Vert L_{1}(\xi,\tilde{\xi},.)\Vert_{\mathcal{D}_{\mathbf{X},{\alpha}}^{\gamma}([s,t])}^{1-\kappa}\rbrace \\
			&\myquad[3] \times \max\lbrace \Vert D_{\tilde{\xi}}\varphi^{.}_{\mathbf{X}}[\zeta]\Vert_{\mathcal{D}_{\mathbf{X},{\alpha}}^{\gamma}([s,t])}^{\kappa}  \Vert D_{\xi}\varphi^{.}_{\mathbf{X}}[\zeta]\Vert_{\mathcal{D}_{\mathbf{X},{\alpha}}^{\gamma}([s,t])}^{1-\kappa},\Vert D_{\xi}\varphi^{.}_{\mathbf{X}}[\zeta]\Vert_{\mathcal{D}_{\mathbf{X},{\alpha}}^{\gamma}([s,t])}\rbrace\\
			&\myquad[4] +(1+\Vert X\Vert_{\gamma,[s,t]})\big(1+\Vert \varphi^{.}_{\mathbf{X}}(\tilde\xi) \Vert_{\mathcal{D}_{\mathbf{X},{\alpha}}^{\gamma}([0,T])}\big)\Vert L_2(\xi,\tilde{\xi},\zeta,.)\Vert_{\mathcal{D}_{\mathbf{X},{\alpha}}^{\gamma}([s,t])}.
		\end{split}
	\end{align}
	The next step is to give an estimate for
	\begin{align*}
		\big\Vert\big[D_{\varphi^{.}_{\mathbf{X}}(\xi)}G[D_{\xi}\varphi^{.}_{\mathbf{X}}[\zeta]]-D_{\varphi^{.}_{\mathbf{X}}(\tilde\xi)}G[D_{\tilde\xi}\varphi^{.}_{\mathbf{X}}[\zeta]]\big]^{\#}\big\Vert_{\mathcal{E}^{\gamma,2\gamma}_{{\alpha}-\eta;[s,t]}}.
	\end{align*}
	As before, we will estimate all terms on the right hand side of \eqref{remiander_difference} separately. Using \eqref{NMMMMMLY}, all terms but
	\begin{align*}
		(\tilde{IV})_{\tau,\nu} \coloneqq  \int_{0}^{1}(1-\theta)\big(D^{3}_{\theta \varphi^{\nu}_{\mathbf{X}}(\xi)+(1-\theta)\varphi^{\tau}_{\mathbf{X}}(\xi)}G-D^{3}_{\theta \varphi^{\nu}_{\mathbf{X}}(\tilde\xi)+(1-\theta)\varphi^{\tau}_{\mathbf{X}}(\tilde\xi)}G\big)[(\delta\varphi^{.}_{\mathbf{X}}(\xi))_{\tau,\nu},(\delta\varphi^{.}_{\mathbf{X}}(\xi))_{\tau,\nu},D_{\xi}\varphi^{\tau}_{\mathbf{X}}[\zeta]]\mathrm{d}\theta
	\end{align*}
	can be bounded by a constant times
	\begin{align}\label{555}
		\begin{split}
			&(1+\Vert X\Vert_{\gamma,[s,t]})^2\Vert L_{1}(\xi,\tilde{\xi},.)\Vert_{\mathcal{D}_{\mathbf{X},{\alpha}}^{\gamma}([s,t])}(\Vert \varphi^{.}_{\mathbf{X}}(\xi)\Vert_{\mathcal{D}_{\mathbf{X},{\alpha}}^{\gamma}([s,t])}+\Vert \varphi^{.}_{\mathbf{X}}(\tilde\xi)\Vert_{\mathcal{D}_{\mathbf{X},{\alpha}}^{\gamma}([s,t])})\Vert D_{\xi}\varphi^{\tau}_{\mathbf{X}}[\zeta]\Vert_{\mathcal{D}_{\mathbf{X},{\alpha}}^{\gamma}([s,t])}\\
			&\quad+  (1+\Vert X\Vert_{\gamma,[s,t]})^2 \Vert L_2(\xi,\tilde{\xi},\zeta,.)\Vert_{\mathcal{D}_{\mathbf{X},{\alpha}}^{\gamma}([s,t])}(1+\Vert \varphi^{.}_{\mathbf{X}}(\tilde\xi)\Vert_{\mathcal{D}_{\mathbf{X},{\alpha}}^{\gamma}([s,t])})(1+\Vert \varphi^{.}_{\mathbf{X}}(
			\xi)\Vert_{\mathcal{D}_{\mathbf{X},{\alpha}}^{\gamma}([s,t])}).
		\end{split}
	\end{align}
	For the remaining term, we have the estimate
	\begin{align}\label{666}
		\begin{split}
			& \sup_{\substack{\tau,\nu\in [s,t],\\ \tau<\nu}}\Vert (\delta\tilde{IV})_{\tau,\nu}\Vert_{\alpha-\eta}\lesssim \Vert \varphi^{.}_{\mathbf{X}}(
			\xi)\Vert_{\mathcal{D}_{\mathbf{X},{\alpha}}^{\gamma}([s,t])}^2
			\Vert D_{\xi}\varphi^{.}_{\mathbf{X}}[\zeta]\Vert_{\mathcal{D}_{\mathbf{X},{\alpha}}^{\gamma}([s,t])}.
		\end{split}
	\end{align}
	From \eqref{key} and \eqref{NMMMMMLY}, for a polynomial ${Q}_3$,
	\begin{align}\label{777}
		\begin{split}
			\sup_{\substack{\tau,\nu\in [s,t],\\ \tau<\nu}}\frac{\Vert (\delta\tilde{IV})_{\tau,\nu}\Vert_{\alpha-\eta-2\gamma}}{(\nu-\tau)^{2\gamma}} &\lesssim  (1+\Vert X\Vert_{\gamma,[s,t]})^2{Q}_3(\Vert \varphi^{.}_{\mathbf{X}}(
			\xi)\Vert_{\mathcal{D}_{\mathbf{X},{\alpha}}^{\gamma}([s,t])},\Vert \varphi^{.}_{\mathbf{X}}(
			\tilde\xi)\Vert_{\mathcal{D}_{\mathbf{X},{\alpha}}^{\gamma}([s,t])})\\
			&\quad \times \Vert L_1(\xi,\tilde{\xi},.)\Vert_{\mathcal{D}_{\mathbf{X},{\alpha}}^{\gamma}([0,T])}^r  \Vert D_{\xi}\varphi^{.}_{\mathbf{X}}[\zeta]\Vert_{\mathcal{D}_{\mathbf{X},{\alpha}}^{\gamma}([s,t])}.
		\end{split}
	\end{align}
	From the interpolation property $\vert x\vert_{\alpha-\eta-\gamma}\lesssim \vert x\vert_{\alpha-\eta}^{\frac{1}{2}}\vert x\vert_{\alpha-\eta-2\gamma}^{\frac{1}{2}}$, \eqref{key}, \eqref{666} and \eqref{777}, for a polynomial $Q_4$, 
	\begin{align}\label{888}
		\begin{split}
			\Vert\tilde{IV}\Vert_{\mathcal{E}^{\gamma,2\gamma}_{{\alpha}-\eta;[s,t]}} &\lesssim (1+\Vert X\Vert_{\gamma,[s,t]})^2{Q}_4(\Vert \varphi^{.}_{\mathbf{X}}(\xi)\Vert_{\mathcal{D}_{\mathbf{X},{\alpha}}^{\gamma}([s,t])},\Vert \varphi^{.}_{\mathbf{X}}(
			\tilde\xi)\Vert_{\mathcal{D}_{\mathbf{X},{\alpha}}^{\gamma}([s,t])})\\
			&\quad \times \max\lbrace \Vert L_1(\xi,\tilde{\xi},.)\Vert_{\mathcal{D}_{\mathbf{X},{\alpha}}^{\gamma}([0,T])}^\frac{r}{2},\Vert L_1(\xi,\tilde{\xi},.)\Vert_{\mathcal{D}_{\mathbf{X},{\alpha}}^{\gamma}([0,T])}^r\rbrace  \Vert D_{\xi}\varphi^{.}_{\mathbf{X}}[\zeta]\Vert_{\mathcal{D}_{\mathbf{X},{\alpha}}^{\gamma}([s,t])}.
		\end{split}
	\end{align}
	To summarize, \eqref{555} and \eqref{888} show that there is a polynomial $Q_5$ such that
	\begin{align}\label{II_II_II}
		\begin{split}
			&\Vert(D_{\varphi^{.}_{\mathbf{X}}(\xi)}G[D_{\xi}\varphi^{.}_{\mathbf{X}}[\zeta]]-D_{\varphi^{.}_{\mathbf{X}}(\tilde\xi)}G[D_{\tilde\xi}\varphi^{.}_{\mathbf{X}}[\zeta]])^{\#}\Vert_{\mathcal{E}^{\gamma,2\gamma}_{{\alpha}-\eta;[s,t]}}\\
			\lesssim\ &(1+\Vert X\Vert_{\gamma,[s,t]})^2\max\lbrace \Vert L_1(\xi,\tilde{\xi},.)\Vert_{\mathcal{D}_{\mathbf{X},{\alpha}}^{\gamma}([0,T])}^\frac{r}{2},\Vert L_1(\xi,\tilde{\xi},.)\Vert_{\mathcal{D}_{\mathbf{X},{\alpha}}^{\gamma}([0,T])}\rbrace\\
			&\quad \times {Q}_5(\Vert \varphi^{.}_{\mathbf{X}}(\xi)\Vert_{\mathcal{D}_{\mathbf{X},{\alpha}}^{\gamma}([s,t])},\Vert \varphi^{.}_{\mathbf{X}}(\tilde\xi)\Vert_{\mathcal{D}_{\mathbf{X},{\alpha}}^{\gamma}([s,t])})\Vert D_{\xi}\varphi^{\tau}_{\mathbf{X}}[\zeta]\Vert_{\mathcal{D}_{\mathbf{X},{\alpha}}^{\gamma}([s,t])}\\
			&\myquad[3] + (1+\Vert X\Vert_{\gamma,[s,t]})^2(1+\Vert \varphi^{.}_{\mathbf{X}}(\tilde\xi)\Vert_{\mathcal{D}_{\mathbf{X},{\alpha}}^{\gamma}([s,t])})(1+\Vert \varphi^{.}_{\mathbf{X}}(
			\xi)\Vert_{\mathcal{D}_{\mathbf{X},{\alpha}}^{\gamma}([s,t])})\Vert L_2(\xi,\tilde{\xi},\zeta,.)\Vert_{\mathcal{D}_{\mathbf{X},{\alpha}}^{\gamma}([s,t])}.
		\end{split}
	\end{align}
	Choosing $\kappa$ such that \eqref{333} holds, from \eqref{-1-1-1},\eqref{I_I_I}, \eqref{II_II_II} and \eqref{BBBBasasBB}, we see that for $t-s<1$
	\begin{align}\label{ROUGH_ESTIMATE}
		\begin{split}
			&\left\| \int_{s}^{.}S_{.-\tau}\big(D_{\varphi^{\tau}_{\mathbf{X}}(\xi)}G[D_{\xi}\varphi^{\tau}_{\mathbf{X}}[\zeta]] - D_{\varphi^{\tau}_{\mathbf{X}}(\tilde\xi)} G[D_{\tilde\xi}\varphi^{\tau}_{\mathbf{X}}[\zeta]]\big)\circ\mathrm{d}\mathbf{X}_{\tau} \right\|_{\mathcal{D}_{\mathbf{X},{\alpha}}^{\gamma}([s,t])} \\
			\lesssim\  &\varrho_{\gamma}(\mathbf{X},[s,t])\vert L_{1}(\xi,\tilde{\xi},s)\vert_{\alpha}\vert D_{\xi}\varphi^{\tau}_{\mathbf{X}}[\zeta]\vert_{\alpha} + \vert L_2(\xi,\tilde{\xi},\zeta,s)\vert_{\alpha} \big)+ (t-s)^{\kappa(\gamma-\eta)}A(s,t,\xi,\tilde{\xi},\zeta,\mathbf{X}) \\
			&\quad + (t-s)^{\kappa(\gamma-\eta)}B(s,t,\xi,\tilde{\xi},\zeta,\mathbf{X})\Vert L_2(\xi,\tilde{\xi},\zeta,.)\Vert_{\mathcal{D}_{\mathbf{X},{\alpha}}^{\gamma}([s,t])},
		\end{split}
	\end{align}
	where for $L_{r,\kappa}(C) \coloneqq \max\lbrace C^{\frac{r}{2}},C,C^{1-\kappa}\rbrace$, 
	\begin{align}\label{AA11}
		\begin{split}
			A(s,t,\xi,\tilde{\xi},\zeta,\mathbf{X}) &\coloneqq \varrho_{\gamma}^3(\mathbf{X},[s,t])L_{r,\kappa}(\Vert L_1(\xi,\tilde{\xi},.)\Vert_{\mathcal{D}_{\mathbf{X},{\alpha}}^{\gamma}([s,t])})\\
			&\quad \times \max\left\{ \Vert D_{\tilde{\xi}}\varphi^{.}_{\mathbf{X}}[\zeta]\Vert_{\mathcal{D}_{\mathbf{X},{\alpha}}^{\gamma}([s,t])}^{\kappa}  \Vert D_{\xi}\varphi^{.}_{\mathbf{X}}[\zeta]\Vert_{\mathcal{D}_{\mathbf{X},{\alpha}}^{\gamma}([s,t])}^{1-\kappa},\Vert D_{\xi}\varphi^{.}_{\mathbf{X}}[\zeta]\Vert_{\mathcal{D}_{\mathbf{X},{\alpha}}^{\gamma}([s,t])} \right\} \\
			&\quad \times {Q}_5(\Vert \varphi^{.}_{\mathbf{X}}(
			\xi)\Vert_{\mathcal{D}_{\mathbf{X},{\alpha}}^{\gamma}([s,t])},\Vert \varphi^{.}_{\mathbf{X}}(
			\tilde\xi)\Vert_{\mathcal{D}_{\mathbf{X},{\alpha}}^{\gamma}([s,t])})
		\end{split}
	\end{align}
	and
	\begin{align}\label{BB22}
		B(s,t,\xi,\tilde{\xi},\zeta,\mathbf{X}) \coloneqq (1+\Vert \varphi^{.}_{\mathbf{X}}(\tilde\xi)\Vert_{\mathcal{D}_{\mathbf{X},{\alpha}}^{\gamma}([s,t])})(1+\Vert \varphi^{.}_{\mathbf{X}}(
		\xi)\Vert_{\mathcal{D}_{\mathbf{X},{\alpha}}^{\gamma}([s,t])}).
	\end{align}
	Similar to \eqref{Drift_Estimate}, from \eqref{poly} and \eqref{key}, for all $\tau,\nu\in [0,T] $ with $\nu-\tau<1$,
	\begin{align}\label{Drift_Estimate22}
		\begin{split}
			&\left| \int_{\tau}^{\nu}S_{\nu-x}(D_{\varphi^{x}_{\mathbf{X}}(\xi)} F[D_{\xi}\varphi^{x}_{\mathbf{X}}[\zeta]]-D_{\varphi^{x}_{\mathbf{X}}(\xi)} F[D_{\tilde\xi}\varphi^{x}_{\mathbf{X}}[\zeta]] )\, \mathrm{d}x \right|_{\alpha-i\gamma} \\
			\lesssim\ &\Big[\sup_{x\in[\tau,\nu]}P_{2}(\vert \varphi^{.}_{\mathbf{X}}(\xi)\vert_{\alpha},\vert \varphi^{.}_{\mathbf{X}}(\tilde\xi)\vert_{\alpha})\vert L_1(\xi,\tilde{\xi},.)\vert_{\alpha}^r\sup_{x\in[\tau,\nu]}\vert D_{\xi}\varphi^{x}_{\mathbf{X}}[\zeta]\vert_{\alpha} \\
			&\quad + \sup_{x\in [\tau,\nu]}P_{1}(\vert \varphi^{x}_{\mathbf{X}}(\tilde\xi)\vert_{\alpha})\sup_{x\in[\tau,\nu]}\vert L_2(\xi,\tilde{\xi},\zeta,.)\vert_{\alpha}\Big](\nu-\tau)^{\min\lbrace 1,1-\sigma+i\gamma\rbrace}.
		\end{split}
	\end{align}
	We are now ready to derive the claimed bounds. First, note that we can assume that all the polynomials that appear in our estimates are increasing. For $t-s<1$, from \eqref{poly}, \eqref{key}, \eqref{-2-2-2}, \eqref{ROUGH_ESTIMATE} and \eqref{Drift_Estimate22}
	\begin{align}\label{CC33}
		\begin{split}
			&\Vert L_2(\xi,\tilde{\xi},\zeta,.)\Vert_{\mathcal{D}_{\mathbf{X},{\alpha}}^{\gamma}([s,t])}\lesssim \varrho_{\gamma}(\mathbf{X},[s,t])\big(\vert L_{1}(\xi,\tilde{\xi},s)\vert_{\alpha}\vert D_{\xi}\varphi^{\tau}_{\mathbf{X}}[\zeta]\vert_{\alpha}+\vert L_2(\xi,\tilde{\xi},\zeta,s)\vert_{\alpha}\big)\\ 
			&\quad + (t-s)^{1-\bar{\sigma}}\bigg[\Vert L_{1}(\xi,\tilde{\xi},.)\Vert_{\mathcal{D}_{\mathbf{X},{\alpha}}^{\gamma}([s,t])}^r \Vert D_{{\xi}}\varphi^{.}_{\mathbf{X}}[\zeta]\Vert_{\mathcal{D}_{\mathbf{X},{\alpha}}^{\gamma}([s,t])}P_2(\Vert\varphi^{.}_{\mathbf{X}}(\xi)\Vert_{\mathcal{D}_{\mathbf{X},{\alpha}}^{\gamma}([s,t])},\Vert\varphi^{.}_{\mathbf{X}}(\tilde\xi)\Vert_{\mathcal{D}_{\mathbf{X},{\alpha}}^{\gamma}([s,t])})\\
			&\qquad +P_1(\Vert\varphi^{.}_{\mathbf{X}}(\tilde\xi)\Vert_{\mathcal{D}_{\mathbf{X},{\alpha}}^{\gamma}([s,t])})\Vert L_2(\xi,\tilde{\xi},\zeta,.)\Vert_{\mathcal{D}_{\mathbf{X},{\alpha}}^{\gamma}([s,t])}\bigg] \\
			&\quad + (t-s)^{\kappa(\gamma-\eta)}\bigg[A(s,t,\xi,\tilde{\xi},\zeta,\mathbf{X})+B(s,t,\xi,\tilde{\xi},\zeta,\mathbf{X})\Vert L_2(\xi,\tilde{\xi},\zeta,.)\Vert_{\mathcal{D}_{\mathbf{X},{\alpha}}^{\gamma}([s,t])}\bigg].
		\end{split}
	\end{align}  
	Set $\mu_1=\min\lbrace 1-\bar{\sigma},\kappa(\gamma-\eta)\rbrace$ and
	\begin{align*}
		&C(s,t,\xi,\tilde{\xi},\zeta,\mathbf{X}) \coloneqq\\&\max\big\lbrace \Vert L_{1}(\xi,\tilde{\xi},.)\Vert_{\mathcal{D}_{\mathbf{X},{\alpha}}^{\gamma}([s,t])}^r \Vert D_{{\xi}}\varphi^{.}_{\mathbf{X}}[\zeta]\Vert_{\mathcal{D}_{\mathbf{X},{\alpha}}^{\gamma}([s,t])}P_2(\Vert\varphi^{.}_{\mathbf{X}}(\xi)\Vert_{\mathcal{D}_{\mathbf{X},{\alpha}}^{\gamma}([s,t])},\Vert\varphi^{.}_{\mathbf{X}}(\tilde\xi)\Vert_{\mathcal{D}_{\mathbf{X},{\alpha}}^{\gamma}([s,t])}),\\&\qquad \varrho_{\gamma}(\mathbf{X},[0,T])\Vert L_{1}(\xi,\tilde{\xi},.)\Vert_{\mathcal{D}_{\mathbf{X},{\alpha}}^{\gamma}([s,t])} \Vert D_{{\xi}}\varphi^{.}_{\mathbf{X}}[\zeta]\Vert_{\mathcal{D}_{\mathbf{X},{\alpha}}^{\gamma}([s,t])}\big\rbrace.
	\end{align*}
	From \eqref{CC33} for a constant $D>1$ 
	\begin{align}\label{ppplll}
		\begin{split}
			&\Vert L_2(\xi,\tilde{\xi},\zeta,.)\Vert_{\mathcal{D}_{\mathbf{X},{\alpha}}^{\gamma}([s,t])}\leq  D\bigg[  \varrho_{\gamma}(\mathbf{X},[0,T])\vert L_2(\xi,\tilde{\xi},\zeta,s)\vert_{\alpha}+ A(s,t,\xi,\tilde{\xi},\zeta,\mathbf{X})+C(s,t,\xi,\tilde{\xi},\zeta,\mathbf{X})\\ &\quad +(t-s)^{\mu_1}\big[P_1(\Vert\varphi^{.}_{\mathbf{X}}(\tilde\xi)\Vert_{\mathcal{D}_{\mathbf{X},{\alpha}}^{\gamma}([s,t])})+B(s,t,\xi,\tilde{\xi},\zeta,\mathbf{X})\big] \Vert L_2(\xi,\tilde{\xi},\zeta,.)\Vert_{\mathcal{D}_{\mathbf{X},{\alpha}}^{\gamma}([s,t])}\bigg] \\
			&\qquad \leq D\bigg[\varrho_{\gamma}(\mathbf{X},[0,T])\vert L_2(\xi,\tilde{\xi},\zeta,s)\vert_{\alpha}+ A(0,T,\xi,\tilde{\xi},\zeta,\mathbf{X})+C(0,T,\xi,\tilde{\xi},\zeta,\mathbf{X})\\ &\myquad[3] +(t-s)^{\mu_1}\big[P_1(\Vert\varphi^{.}_{\mathbf{X}}(\tilde\xi)\Vert_{\mathcal{D}_{\mathbf{X},{\alpha}}^{\gamma}([0,T])})+B(0,T,\xi,\tilde{\xi},\zeta,\mathbf{X})\big] \Vert L_2(\xi,\tilde{\xi},\zeta,.)\Vert_{\mathcal{D}_{\mathbf{X},{\alpha}}^{\gamma}([s,t])}\bigg].
		\end{split}
	\end{align}
	To extend this estimate to large time intervals, we proceed as in Proposition \ref{linear boundd}. First we fix $0<\epsilon<1$ and set $\tau_0 \coloneqq 0$. We define
	\begin{align}\label{Step_2}
		{\tilde{\tau}}^{\mu_1} \coloneqq \frac{1-\epsilon}{D\big(P_1(\Vert\varphi^{.}_{\mathbf{X}}(\tilde\xi)\Vert_{\mathcal{D}_{\mathbf{X},{\alpha}}^{\gamma}([0,T])})+B(0,T,\xi,\tilde{\xi},\zeta,\mathbf{X})\big)}
	\end{align}
	and $\tau_{n+1} \coloneqq \tau_n+\tilde{\tau}$. Then from \eqref{ppplll} for every $n\geq 0$ such that $\tau_n\leq T$,
	\begin{align*}
		\Vert L_2(\xi,\tilde{\xi},\zeta,.)\Vert_{\mathcal{D}_{\mathbf{X},{\alpha}}^{\gamma}([\tau_n,\tau_{n+1}])}\leq \frac{D}{\epsilon}\bigg[ \varrho_{\gamma}(\mathbf{X},[0,T])\vert L_2(\xi,\tilde{\xi},\zeta,\tau_n)\vert_{\alpha}+ A(0,T,\xi,\tilde{\xi},\zeta,\mathbf{X})+C(0,T,\xi,\tilde{\xi},\zeta,\mathbf{X})\bigg].
	\end{align*}
	This yields, in particular, for $D_{\epsilon}=\frac{D}{\epsilon}$ 
	\begin{align*}
		\vert L_2(\xi,\tilde{\xi},\zeta,\tau_{n+1})\vert_{\alpha}\leq D_{\epsilon}\bigg[ \varrho_{\gamma}(\mathbf{X},[0,T])\vert L_2(\xi,\tilde{\xi},\zeta,\tau_n)\vert_{\alpha}+ A(0,T,\xi,\tilde{\xi},\zeta,\mathbf{X})+C(0,T,\xi,\tilde{\xi},\zeta,\mathbf{X})\bigg].
	\end{align*} 
	Furthermore, we can conclude that
	\begin{align*}
		&\sup_{\substack{n\geq 0,\\ \tau_n\leq T}}\Vert L_2(\xi,\tilde{\xi},\zeta,.)\Vert_{\mathcal{D}_{\mathbf{X},{\alpha}}^{\gamma}([\tau_n,\tau_{n+1}])}
		\leq\ \big(D_{\epsilon} \varrho_{\gamma}(\mathbf{X},[0,T])\big)^{N+1}\vert L_2(\xi,\tilde{\xi},\zeta,0)\vert_{\alpha}\\&+\frac{1}{D_{\epsilon} \varrho_{\gamma}(\mathbf{X},[0,T])-1}[\big(D_{\epsilon} \varrho_{\gamma}(\mathbf{X},[0,T])\big)^{N+1}-1][A(0,T,\xi,\tilde{\xi},\zeta,\mathbf{X})+C(0,T,\xi,\tilde{\xi},\zeta,\mathbf{X})],
	\end{align*}
	where $N=N:=\lfloor \frac{T}{\tilde{\tau}}\rfloor +1$. As in \eqref{total_norm_1}, we obtain
	\begin{align}\label{FINMNM}
		\Vert L_2(\xi,\tilde{\xi},\zeta,.)\Vert_{\mathcal{D}_{\mathbf{X},{\alpha}}^{\gamma}([0,T])}\lesssim (1+\Vert X\Vert_{\gamma,[0,T]})\sum_{1\leq i\leq N-1}\Vert L_2(\xi,\tilde{\xi},\zeta,.)\Vert_{\mathcal{D}_{\mathbf{X},{\alpha}}^{\gamma}([\tau_n,\tau_{n+1}])}.
	\end{align}
	Note that $L_2(\xi,\tilde{\xi},\zeta,0)=0$. Finally from \eqref{bounde_derivative}, \eqref{bounde_derivative_2}, \eqref{AA11}, \eqref{BB22},\eqref{Step_2} and \eqref{FINMNM} for a constant  $E_{\epsilon}>0$ and two increasing  polynomials $R_1,R_2$ 
	\begin{align*}
		&\Vert L_2(\xi,\tilde{\xi},\zeta,.)\Vert_{\mathcal{D}_{\mathbf{X},{\alpha}}^{\gamma}([0,T])} \\
		\leq\ &E_{\epsilon}\max\lbrace \vert\xi-\tilde{\xi}\vert_{\alpha},\lbrace \vert\xi-\tilde{\xi}\vert_{\alpha}^{1-\kappa},\lbrace \vert\xi-\tilde{\xi}\vert_{\alpha}^{\frac{r}{2}}\rbrace\vert\zeta\vert_{\alpha}\\
		&\quad \times \exp\big(E_{\epsilon}\big[R_1(\sup_{0\leq \rho\leq 1}\Vert\varphi^{.}_{\mathbf{X}}(\rho\xi+(1-\rho)\tilde{\xi})\Vert_{\mathcal{D}_{\mathbf{X},{\alpha}}^{\gamma}([0,T])})+R_{2}(\Vert X\Vert_{\gamma,[0,T]},\Vert \mathbb{X}\Vert_{2\gamma,[0,T]})\big]\big)
	\end{align*}

\end{proof}

\section{Lyapunovexponents, invariant manifolds, and stability}
In this part of our manuscript, we show how the obtained estimates from the former sections can be applied to deduce the existence of a Lyapunov spectrum that contains information about the long-time behaviour of the solution to a stochastic partial differential equation. Furthermore, we will prove the existence of invariant manifolds. As a corollary, we will able to prove path-wise exponential stability in a neighborhood of stationary points provided all Lyapunov exponents are negative. \smallskip

We first recall some basic definitions in this field.
\begin{definition}
	Let $(\Omega,\mathcal{F},\P)$  be a probability space and $\mathbb{T}$ be either $\mathbb{Z}$ or $\mathbb{R}$. Assume that there exists a family of measurable maps $\lbrace\theta_t\rbrace_{t\in\mathbb{T}}$ on $\Omega$ such that
	\begin{itemize}
		\item[(i)] $\theta_{0}= \operatorname{id}$,
		\item [(ii)]for every $t,s\in \mathbb{T}$: $\theta_{t+s}=\theta_{t}\circ\theta_{s}$,
		\item [(iii)] if $\mathbb{T}=\mathbb{R}$, then $(t,\omega)\rightarrow\theta_{t}\omega$ is $\mathcal{B}(\mathbb{R})\otimes\mathcal{F}/\mathcal{F}$- measurable ,
		\item [(iv)]for every $t\in\mathbb{T}$: $\P\theta_t=\P$.
	\end{itemize}
	We then call $(\Omega,\mathcal{F},\lbrace\theta_t\rbrace_{t\in\mathbb{T}},\P)$ an \emph{invertible measure-preserving dynamical system}. $(\Omega,\mathcal{F},\lbrace\theta_t\rbrace_{t\in\mathbb{T}},\P)$ in addition is called \emph{ergodic} if for every $t\in\mathbb{T}$,  $\theta_t$ is an ergodic map.
\end{definition}
Another basic definition is that of a cocycle.
\begin{definition}
	Let $\mathcal{X}$ be a separable Banach space and $(\Omega,\mathcal{F},\lbrace\theta_t\rbrace_{t\in\mathbb{T}},\P)$ an invertible measure-preserving dynamical system. Assume $\mathbb{T}^{+}$ be the non negative part of the $\mathbb{T}$. A map
	\begin{align*}
		\phi \colon \mathbb{T}^{+}\times\Omega\times\mathcal{X}\rightarrow \mathcal{X}
	\end{align*}
	that is jointly measurable and satisfies the \emph{cocycle property} 
	\begin{align*}
		\forall s,t\in \mathbb{T}^{+}, s<t: \ \phi(s+t,\omega,x)=\phi(s,\theta_{t}\omega,\phi(t,\omega,x))
	\end{align*}
	is called \emph{measurable cocycle}. This map is a \emph{$C^{k}$-cocycle} if for every fixed $(s,\omega)\in\mathbb{T}^{+}\times\Omega$, $\phi(s,\omega,.):\mathcal{X}\rightarrow\mathcal{X}$ is a $C^{k}$-map. If the same map is linear, we call it a \emph{linear cocycle}.
\end{definition}
We now recall definitions and results from \cite{BRS17} where the relation between rough path theory and random dynamical systems was studied systematically. 

\begin{definition}
	Assume that $(\Omega,\mathcal{F},\lbrace\theta_t\rbrace_{t\in\mathbb{T}},\P)$ is an invertible measure-preserving dynamical system. Let $p\geq 1$ and $N \in \N$ with $p-1 < N \leq p$. A process $\mathbf{X} \colon \mathbb{R}\times\Omega\rightarrow T^{N}(\mathbb{R}^n)$ is called a $p$-variation \emph{geometric rough path cocycle} if for all $\omega \in \Omega$ and  every $s,t \in \mathbb{T}$ with $s\leq t$,
	\begin{itemize}
		\item [(i)] $\mathbf{X}(\omega) \in \mathscr{C}_{0}^{0,p-\text{var}}(\mathbb{R},T^{N}(\mathbb{R}^n))$, i.e. $\mathbf{X}(\omega)$ is a geometric $p$-variation rough path,
		\item [(ii)] $\mathbf{X}_{s+t}(\omega)=\mathbf{X}_{s}(\omega)\otimes\mathbf{X}_{t}(\theta_s\omega)$. In other words, $\mathbf{X}_{s,s+t}(\omega)=\mathbf{X}_{t}(\theta_s\omega)$.
	\end{itemize}
	If $\mathbf{X}$ satisfies the second item, then we say that it enjoys the cocycle property.
\end{definition}
Following results from \cite{BRS17} allow us to interpret the solutions of our equation as a random dynamical systems.
\begin{theorem}
	Assume that $X \colon \mathbb{R}\rightarrow\mathbb{R}^n$ is a continuous, centered Gaussian process such that all components are independent and the distribution of the process $(X_{t+t_{0}}-X_{t_{0}})_{t\in\mathbb{R}}$ does not depend on $t_{0}$. Also, assume that its covariance function has finite 2-dimensional $\varrho$-variation (cf. \cite{FV10} for definition) on every square $[s,t]^2$ for some $\varrho\in [1,2)$. Let $\overline{\mathbf{X}}$ be the natural lift of $X$ in the sense of \cite{FV10} with sample paths in $\mathscr{C}_{0}^{0,p-\text{var}}(\mathbb{R},T^{N}(\mathbb{R}^n))$ for some $p>2\varrho$ and $p-1 < N \leq p$. Then there exists an invertible measure-preserving dynamical system $(\Omega,\mathcal{F},\lbrace\theta_t\rbrace_{t\in\mathbb{T}},\P)$ and a $\mathscr{C}_{0}^{0,p-\text{var}}(\mathbb{R},T^{N}(\mathbb{R}^n))$-valued random variable $\mathbf{X}$ with a same law as $\overline{\mathbf{X}}$ that enjoys the cocycle property.
\end{theorem}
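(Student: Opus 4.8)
The plan is to realize the random dynamical system on the canonical path space and to exploit the deterministic, essentially local nature of the Gaussian rough path lift from \cite{FV10}. First I would take $\Omega := C_{0}(\mathbb{R},\mathbb{R}^{n})$, the space of continuous paths vanishing at the origin, equipped with the topology of uniform convergence on compacts and its Borel $\sigma$-algebra $\mathcal{F}$, and let $\P$ be the law of the centred increment process $t\mapsto X_{t}-X_{0}$. On $\Omega$ I define the shift $(\theta_{t}\omega)(\cdot):=\omega(t+\cdot)-\omega(t)$ and check the four axioms of an invertible measure-preserving dynamical system directly: $\theta_{0}=\operatorname{id}$ and $\theta_{t+s}=\theta_{t}\circ\theta_{s}$ are immediate and give invertibility since $\mathbb{T}=\mathbb{R}$; joint measurability of $(t,\omega)\mapsto\theta_{t}\omega$ follows from separability of $\Omega$ together with the continuity of $(t,\omega)\mapsto\omega(t+\cdot)-\omega(t)$; and the invariance $\P\circ\theta_{t}^{-1}=\P$ is precisely the hypothesis that the law of $(X_{t+t_{0}}-X_{t_{0}})_{t\in\mathbb{R}}$ is independent of $t_{0}$. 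Ergodicity is not claimed, so nothing further is needed here.

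Next I would transport the rough path lift onto $\Omega$. Because the covariance of $X$ has finite two-dimensional $\varrho$-variation with $\varrho\in[1,2)$, \cite{FV10} ensures that the canonical lifts $S_{N}(\omega^{m})$ of the dyadic piecewise-linear approximations $\omega^{m}$ converge, for $\P$-a.e.\ $\omega$ and uniformly on compacts in the $p$-variation rough path metric with $p>2\varrho$, to a limit which I call $\mathbf{X}(\omega)\in\mathscr{C}_{0}^{0,p-\text{var}}(\mathbb{R},T^{N}(\mathbb{R}^{n}))$; on the exceptional set I set $\mathbf{X}(\omega)$ equal to the trivial rough path. The map $\omega\mapsto\mathbf{X}(\omega)$ is then $\mathcal{F}$-measurable as an a.e.\ pointwise limit of continuous maps, and since the coordinate process on $\Omega$ has the law of $X-X_{0}$, the random variable $\mathbf{X}$ has the same law as $\overline{\mathbf{X}}$, so condition (i) in the definition of a geometric rough path cocycle holds.

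The substantive part is the cocycle identity $\mathbf{X}_{s,s+t}(\omega)=\mathbf{X}_{t}(\theta_{s}\omega)$. At the level of the approximations this is algebraic: on a dyadic grid containing $s$, the path $(\theta_{s}\omega)^{m}$ restricted to $[0,t]$ is the $s$-translate of $\omega^{m}$ restricted to $[s,s+t]$, and the multiplicativity of the signature under concatenation (Chen's relation) gives $S_{N}(\omega^{m})_{s,s+t}=S_{N}((\theta_{s}\omega)^{m})_{0,t}$. Passing to the limit with the continuity of $\otimes$ and of the rough path metric yields the identity on the set where both limits exist. The main obstacle I anticipate is purely measure-theoretic: one must exhibit a single $\theta$-invariant set $\widehat{\Omega}\in\mathcal{F}$ of full measure on which the lift is defined and the cocycle identity holds for \emph{all} $s\le t$ at once. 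I would obtain this by a Fubini argument --- the set $\{(s,\omega):S_{N}((\theta_{s}\omega)^{m})\text{ converges}\}$ has full product measure by measure-preservation, hence for a.e.\ $\omega$ convergence holds for Lebesgue-a.e.\ $s$ --- and then upgrade ``a.e.\ $s$'' to ``all $s$'' using the continuity in the shift parameter of the already-defined lift, after which $\widehat{\Omega}$ is built by intersecting over $s\in\mathbb{R}$ the thus-obtained full-measure set and its $\theta$-translates. Restricting $\Omega$ to $\widehat{\Omega}$ is harmless, and on $\widehat{\Omega}$ all requirements hold for every $\omega$. This entire scheme parallels \cite[Lemma 5.4]{CLL13} (reproduced as \eqref{ALM} above) and the construction carried out in \cite{BRS17}, to which I would defer for the routine verifications of measurability and of the dense-grid limiting arguments.
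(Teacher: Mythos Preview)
Your proposal is correct and matches the paper's approach: the paper does not give an independent proof of this theorem but simply quotes it from \cite{BRS17}, and what you have sketched is precisely the construction carried out there (canonical path space, shift maps, lift via piecewise-linear approximations, Chen's relation for the cocycle identity, and the Fubini/continuity argument to obtain a $\theta$-invariant full-measure set). Your final sentence already defers to \cite{BRS17} for the remaining details, which is exactly what the paper does.
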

We accept the following assumption in the rest of this section.
\begin{assumption}\label{ASSUU}
	\begin{itemize}
		\item [(i)] $(\Omega,\mathcal{F},\lbrace\theta_t\rbrace_{t\in\mathbb{T}},\P)$ is an invertible measure-preserving dynamical system.
		\item [(ii)] $(\Omega,\mathcal{F},\lbrace\theta_t\rbrace_{t\in\mathbb{T}},\P)$ is ergodic.
		\item [(iii)]  For an abstract Wiener space $(\mathcal{W},\mathcal{H},\mu)$, we assume that $X$ is a Gaussian process defined on it that can be enhanced to a weakly $\gamma$-H\"older \emph{geometric rough path} $\mathbf{X}=(X,\mathbb{X})$, $\frac{1}{3} < \gamma \leq \frac{1}{2}$ which is also a $\frac{1}{\gamma}$-variation geometric rough path cocycle.
		\item [(iv)] We accept Assumption \ref{Cameron-Martin}.
		\item [(v)] We accept the assumptions that we made in  Proposition \ref{linear boundd} and Proposition \ref{MMMKKKKKKTTTT}.
	\end{itemize}
\end{assumption}
Remember that we used $\varphi^{t}_{\mathbf{X}(\omega)}(\xi)$ to denote the solutions to our SPDE. For the sake of simplicity, we naively use $\varphi^{t}_{\omega}(\xi)$ from now on. By this notation, we can now easily prove 
\begin{proposition}
	The solution map 
	\begin{align*}
		\varphi \colon [0,\infty)\times\Omega\times \mathcal{B}_{\alpha} &\rightarrow \mathcal{B}_{\alpha} \\
		(t,\omega,\xi) &\mapsto \varphi^{t}_{\omega}(\xi)
	\end{align*}
	is a $C^{1}$-cocycle on $\mathcal{B}_{\alpha}$
\end{proposition}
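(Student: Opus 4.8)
The plan is to verify the two defining properties of a $C^1$-cocycle directly: joint measurability together with the cocycle identity $\varphi^{s+t}_{\omega}(\xi) = \varphi^{s}_{\theta_t\omega}(\varphi^{t}_{\omega}(\xi))$, and then the $C^1$-regularity of $\xi \mapsto \varphi^{t}_{\omega}(\xi)$ for fixed $(t,\omega)$. The last point has in fact already been established: in Section~\ref{sec:integrable_bound} and the beginning of Section~\ref{sec:integrable_bound} (the linearization part) we showed via the implicit function theorem for Banach spaces that $\xi \mapsto W(\xi)$ is $C^1$, hence so is $\xi \mapsto \varphi^{t}_{\mathbf{X}}(\xi) = W(\xi)_t + \xi$, and that this extends to all $t$ by gluing solutions on successive short intervals using the non-explosion bound \eqref{integrable bound}. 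So for the proposition it remains to address measurability and the cocycle identity.

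First I would treat measurability. Joint measurability of $(t,\omega,\xi) \mapsto \varphi^{t}_{\omega}(\xi)$ follows from continuity in $t$ and $\xi$ (the solution lives in $\mathcal{D}_{\mathbf{X},\alpha}^{\gamma}$ and in particular in $C([0,T];\mathcal{B}_\alpha)$, and depends continuously on the initial datum by Corollary~\ref{CC_DDE}) combined with measurability in $\omega$. The latter comes from the construction of the solution as a limit of Picard-type iterates / the fixed point of the contraction $\mathcal{F}$, each step of which is built from the rough integral \eqref{SEW}, which is itself a limit of Riemann-type sums that are measurable functions of $\mathbf{X}(\omega) = (X(\omega),\mathbb{X}(\omega))$; since $\omega \mapsto \mathbf{X}(\omega)$ is measurable into $\mathscr{C}^{\gamma}$ by Assumption~\ref{ASSUU}(iii), the pointwise limit is measurable. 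A Carathéodory-type argument (separately continuous in $(t,\xi)$, measurable in $\omega$, on separable spaces) then gives joint measurability.

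Next, the cocycle identity. The key input is the cocycle property of the driving noise, $\mathbf{X}_{s,s+t}(\omega) = \mathbf{X}_{t}(\theta_s\omega)$, from Assumption~\ref{ASSUU}(iii). Fix $\omega$, $t \geq 0$ and set $\xi' \coloneqq \varphi^{t}_{\omega}(\xi)$. On the one hand, $r \mapsto \varphi^{t+r}_{\omega}(\xi)$ restricted to $r \in [0,s]$ solves the mild equation \eqref{MILD} started at time $t$ from $\xi'$, driven by the shifted rough path $\mathbf{X}_{t,t+\cdot}(\omega)$. On the other hand, by the shift relation this shifted rough path equals $\mathbf{X}_{\cdot}(\theta_t\omega)$, so $r \mapsto \varphi^{r}_{\theta_t\omega}(\xi')$ solves the same mild equation with the same initial datum and the same driver. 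By the uniqueness statement for mild solutions under Assumption~\ref{existence} (quoted from \cite{HN22}), the two coincide, which is exactly $\varphi^{s+t}_{\omega}(\xi) = \varphi^{s}_{\theta_t\omega}(\varphi^{t}_{\omega}(\xi))$; together with $\varphi^{0}_{\omega} = \operatorname{id}$ this is the cocycle property.

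The main obstacle, and the part needing the most care, is making the measurability argument rigorous in the infinite-dimensional rough-path setting: one must check that the rough integral operator, the semigroup convolutions, and the fixed-point map $\mathcal{F}$ all depend measurably on $\omega$ through $\mathbf{X}(\omega)$ in the relevant Banach-space topologies, and that passing to the fixed point preserves measurability (e.g.\ because the fixed point is the a.s.\ limit of measurable iterates on the invariant set where the bounds of Theorem~\ref{thm:integrability_RPDE} hold). The cocycle identity and the $C^1$-property are comparatively routine given the uniqueness of solutions and the implicit-function-theorem argument already carried out.
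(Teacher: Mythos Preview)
Your proposal is correct and follows the same underlying approach as the paper: the cocycle identity comes from the rough path cocycle property $\mathbf{X}_{s,s+t}(\omega)=\mathbf{X}_t(\theta_s\omega)$ in Assumption~\ref{ASSUU}(iii) together with uniqueness of mild solutions, and the $C^1$-regularity in $\xi$ was already established via the implicit function theorem argument preceding \eqref{LLLL}. The paper's own proof is a single sentence (``a direct consequence of our assumptions on $\mathbf{X}$ and the pathwise nature of the solution concept''), so you have simply spelled out what that sentence is pointing to; the measurability discussion you add is appropriate extra care but not something the paper treats explicitly.
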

\begin{proof}
	This is a direct consequence of our assumptions on $\mathbf{X}$ and the pathwise nature of the solution concept.
\end{proof}
We now define a random variable that serves as a random fixed point.
\begin{definition}\label{def:stationary_point}
	A random point $Y \colon \Omega\rightarrow \mathcal{B}_{\alpha}$ is called a \emph{stationary point} if 
	\begin{itemize}
		\item[(i)] $Y$ is a measurable map and
		\item [(ii)] for every $t>0$ and $\omega\in\Omega$, $\varphi^{t}_{\omega}(Y_{\omega}) = Y_{\theta_{t}\omega}$.
	\end{itemize}
\end{definition}
\begin{remark}
	Let $Y$ be a stationary point. Then one can easily check that the linearized map $\psi^{t}_{\omega}(\zeta) \coloneqq D_{Y_{\omega}}\varphi^{t}_{\omega}[\zeta]$ is a linear cocycle.  
\end{remark}
Before stating our first result, we need an auxiliary Lemma.
\begin{lemma}\label{NMMMAA}
	Let $(Y_{\omega})_{\omega\in\Omega}$ be a stationary point for $\varphi$ such that
	\begin{align}\label{iioo1}
		\vert Y_{\omega}\vert_{\alpha}\in\cap_{p\geq 1}\mathcal{L}^{p}(\Omega).
	\end{align} 
	Let us to fix $t_0>0$. Then we have 
	\begin{align*}
		\sup_{0\leq t_1\leq t_{0}}\log^{+}\big(\Vert \psi^{t_0-t_1}_{\theta_{t_1}\omega}\Vert_{\mathcal{L}(\mathcal{B}_{\alpha},\mathcal{B}_{\alpha})}\big)\in \mathcal{L}^{1}(\Omega).
	\end{align*}
\end{lemma}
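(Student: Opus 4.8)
The goal is to control $\sup_{0\le t_1\le t_0}\log^+\big(\Vert \psi^{t_0-t_1}_{\theta_{t_1}\omega}\Vert_{\mathcal{L}(\mathcal{B}_\alpha,\mathcal{B}_\alpha)}\big)$ in $\mathcal{L}^1(\Omega)$. The central observation is that $\psi^{t_0-t_1}_{\theta_{t_1}\omega}[\zeta]=D_{Y_{\theta_{t_1}\omega}}\varphi^{t_0-t_1}_{\theta_{t_1}\omega}[\zeta]$ is precisely the linearization of the RPDE started at the stationary point $Y_{\theta_{t_1}\omega}$ and driven by the shifted rough path $\mathbf{X}(\theta_{t_1}\omega)$. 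Therefore Proposition \ref{linear boundd} applies with $\xi = Y_{\theta_{t_1}\omega}$ and on the interval $[0,t_0-t_1]\subset[0,t_0]$: we obtain
\begin{align*}
	\log^+\big(\Vert \psi^{t_0-t_1}_{\theta_{t_1}\omega}\Vert_{\mathcal{L}(\mathcal{B}_\alpha,\mathcal{B}_\alpha)}\big)\le \log(M)+M\,S\big(0,t_0-t_1,Y_{\theta_{t_1}\omega},\mathbf{X}(\theta_{t_1}\omega)\big),
\end{align*}
so it suffices to bound $\sup_{0\le t_1\le t_0}S(0,t_0-t_1,Y_{\theta_{t_1}\omega},\mathbf{X}(\theta_{t_1}\omega))$ in $\mathcal{L}^1$.

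\textbf{Key steps.} First I would use the cocycle property $\varphi^{\tau}_{\theta_{t_1}\omega}(Y_{\theta_{t_1}\omega})=Y_{\theta_{t_1+\tau}\omega}$ to rewrite the quantities entering $S$ as quantities over the fixed window $[0,t_0]$ for the single rough path $\mathbf{X}(\omega)$: the orbit $\{\varphi^{\tau}_{\theta_{t_1}\omega}(Y_{\theta_{t_1}\omega})\}_{\tau\in[0,t_0-t_1]}$ is a sub-orbit of $\{Y_{\theta_s\omega}\}_{s\in[0,t_0]}$, and $\varrho_\gamma(\mathbf{X}(\theta_{t_1}\omega),[0,t_0-t_1])\le\varrho_\gamma(\mathbf{X}(\omega),[0,t_0])$ by superadditivity of the seminorms together with the cocycle identity $\mathbf{X}_{s,s+\tau}(\omega)=\mathbf{X}_\tau(\theta_s\omega)$. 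Likewise $\Vert(\varphi^{.}_{\theta_{t_1}\omega}(Y_{\theta_{t_1}\omega}),G(\cdot))\Vert_{\mathcal D^\gamma_{\mathbf{X}(\theta_{t_1}\omega),\alpha}([0,t_0-t_1])}$ is dominated by the full-window norm $\Vert(\varphi^{.}_\omega(Y_\omega),G(\varphi^{.}_\omega(Y_\omega)))\Vert_{\mathcal D^\gamma_{\mathbf{X}(\omega),\alpha}([0,t_0])}$, again using the decomposition of the remainder term $Z^{\#}$ under concatenation as in \eqref{total_norm}. Second, for the $P_1$-term, $\sup_{\tau\in[0,t_0-t_1]}P_1(|\varphi^{\tau}_{\theta_{t_1}\omega}(Y_{\theta_{t_1}\omega})|_\alpha)\le \sup_{s\in[0,t_0]}P_1(|Y_{\theta_s\omega}|_\alpha)$, and since $Y$ is stationary, $|Y_{\theta_s\omega}|_\alpha=|\varphi^{s}_\omega(Y_\omega)|_\alpha\le \sup_{\tau\in[0,t_0]}|\varphi^{\tau}_\omega(Y_\omega)|_\alpha$, which is controlled by Theorem \ref{thm:integrability_RPDE}(i). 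Thus, after taking the supremum over $t_1$, all of these are bounded by a fixed polynomial expression in $\varrho_\gamma(\mathbf{X}(\omega),[0,t_0])$, $\sup_{\tau\in[0,t_0]}P_1(|\varphi^{\tau}_\omega(Y_\omega)|_\alpha)$ and $\Vert(\varphi^{.}_\omega(Y_\omega),G(\varphi^{.}_\omega(Y_\omega)))\Vert_{\mathcal D^\gamma_{\mathbf{X}(\omega),\alpha}([0,t_0])}$, all raised to a finite power and multiplied by $\log(\varrho_\gamma(\mathbf{X}(\omega),[0,t_0]))$.

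\textbf{Concluding the integrability.} It remains to check these three random variables (hence any fixed power of them) lie in $\cap_{p\ge1}\mathcal{L}^p(\Omega)$. The hypothesis \eqref{iioo1} gives $|Y_\omega|_\alpha\in\cap_p\mathcal{L}^p$, which is exactly the integrability condition \eqref{eqn:integrab_xi} needed to invoke Theorem \ref{thm:integrability_RPDE}(iii): this yields $\Vert(\varphi^{.}_\omega(Y_\omega),G(\varphi^{.}_\omega(Y_\omega)))\Vert_{\mathcal D^\gamma_{\mathbf{X}(\omega),\alpha}([0,t_0])}\in\cap_p\mathcal{L}^p$, and in particular $\sup_{\tau\in[0,t_0]}|\varphi^{\tau}_\omega(Y_\omega)|_\alpha\in\cap_p\mathcal{L}^p$, whence the polynomial $P_1$ evaluated there is also in $\cap_p\mathcal{L}^p$. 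The Gaussian rough path norm $\varrho_\gamma(\mathbf{X}(\omega),[0,t_0])$ has Gaussian-type tails (Fernique / Borell, as used already in Proposition \ref{NAZ}), so $\log(\varrho_\gamma(\mathbf{X}(\omega),[0,t_0]))$ and any power of $\varrho_\gamma$ are in $\cap_p\mathcal{L}^p$. A product of finitely many $\cap_p\mathcal{L}^p$ random variables is again in $\cap_p\mathcal{L}^p$ by Hölder's inequality, so in particular the bound lies in $\mathcal{L}^1(\Omega)$, which is all that is claimed. The main obstacle — and the only genuinely delicate point — is the bookkeeping in Step one: verifying carefully that every ingredient of $S(0,t_0-t_1,Y_{\theta_{t_1}\omega},\mathbf{X}(\theta_{t_1}\omega))$ is monotone under restriction of the time window and translation by $\theta_{t_1}$, so that the supremum over $t_1\in[0,t_0]$ is genuinely dominated by a single $t_1$-independent random variable built from the orbit and the rough path on $[0,t_0]$; the stationarity of $Y$ is what makes this work, since it turns the moving initial condition $Y_{\theta_{t_1}\omega}$ into a point on one fixed orbit.
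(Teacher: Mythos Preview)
Your proposal is correct and follows essentially the same approach as the paper: apply the a priori bound \eqref{bounde_derivative} from Proposition~\ref{linear boundd} to $\psi^{t_0-t_1}_{\theta_{t_1}\omega}=D_{Y_{\theta_{t_1}\omega}}\varphi^{t_0-t_1}_{\theta_{t_1}\omega}$, then use stationarity of $Y$ together with the rough-path cocycle identity to dominate all the $t_1$-dependent ingredients by quantities built from the single orbit $\varphi^{\cdot}_{\omega}(Y_\omega)$ on $[0,t_0]$, and conclude via Theorem~\ref{thm:integrability_RPDE}. The paper's proof is terser --- it reduces the matter directly to showing $\sup_{t_1\in[0,t_0]}|Y_{\theta_{t_1}\omega}|_\alpha\in\bigcap_{p}\mathcal{L}^p$ and then invokes \eqref{NMMMM} --- but your more explicit bookkeeping of each term in $S$ is exactly the content underlying that reduction.
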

\begin{proof}
	By definition $\psi^{t_0-t_1}_{\theta_{t_1}\omega}=D_{Y_{\theta_1\omega}}\varphi^{t_0-t_1}_{\mathbf{X}(\omega)}$.	From our the bound \eqref{bounde_derivative} in Proposition \ref{linear boundd},
	\begin{align}\label{HHHBBB1}
		\begin{split}
			&\sup_{t_1\in[0,t_0]}\log^{+}\big(\Vert D_{Y_{\theta_{t_0-t}(\omega)}} \varphi^t_{\mathbf{X(\omega)}} \Vert_{\mathcal{L}(\mathcal{B}_{\alpha},\mathcal{B}_{\alpha})}\big)\leq \log(M)+ M\log(1+\Vert X(\omega)\Vert_{\gamma,[0,t_0]})\\
			&\quad \times \sup_{t_1\in[0,t_0]}\bigg[\sup_{\tau\in [0,t_0]}P_1(\vert \varphi^{\tau}_{\mathbf{\omega}}(Y_{\theta_{t_0-t_1}\omega}) \vert_{\alpha})\\&\quad +\varrho_{\gamma}^3(\mathbf{X}(\omega),[0,t_0])\big(1+\Vert(\varphi^{.}_{\mathbf{X}(\omega)}(Y_{\theta_{t_0-t_1}\omega}),G(\varphi^{.}_{\omega}(Y_{\theta_{t_0-t_1}\omega})))\Vert_{\mathcal{D}_{\mathbf{X}(\omega),{\alpha}}^{\gamma}([0,t_0])}^2\big)\bigg]^{\frac{1}{\min{1-\bar{\sigma},\gamma-\eta}}}.
		\end{split}
	\end{align}
	So, from \ref{HHHBBB1} and Theorem \ref{thm:integrability_RPDE}, it is enough to show
	\begin{align*}
		\sup_{t_1\in[0,t_0]}\vert Y_{\theta_{t_1}\omega}\vert_{\alpha}\in\cap_{p\geq 1}\mathcal{L}^{p}(\Omega).
	\end{align*}
	From the definition of stationary point
	\begin{align}\label{NMMMM}
		\sup_{t_1\in[0,t_0]}\vert Y_{\theta_{t_1}\omega}\vert_{\alpha}\leq \Vert(\varphi^{.}_{\mathbf{X}(\omega)}(Y_{\omega}),G(\varphi^{.}_{\omega}(Y_{\omega})))\Vert_{\mathcal{D}_{\mathbf{X}(\omega),{\alpha}}^{\gamma}([0,t_0])}.
	\end{align}
	Therefore, our claim follows from \eqref{NMMMM}, our assumption and Theorem \ref{thm:integrability_RPDE}.
\end{proof}
Now, we are ready to state our first result in this section, which is a consequence of the multiplicative ergodic theorem on the Banach spaces and our careful estimations in the previous section for the Gaussian drivers.

\begin{theorem}\label{MET}
	Assume that $(Y_{\omega})_{\omega\in\Omega}$ be a stationary point for $\varphi$ such that
	\begin{align}\label{iioo}
		\vert Y_{\omega}\vert_{\alpha}\in\cap_{p\geq 1}\mathcal{L}^{p}(\Omega).
	\end{align}
	Set $\psi_{\omega}^{t}:=D_{Y_{\omega}}\varphi^{t}_{\omega}:\mathcal{B}_{\alpha}\rightarrow \mathcal{B}_{\alpha}$. Assume for some $T_0>0$ that $\psi_{\omega}^{T_0} \coloneqq D_{Y_{\omega}}\varphi^{T_0}_{\omega} \colon \mathcal{B}_{\alpha} \rightarrow \mathcal{B}_{\alpha}$ is a compact operator. For every $\lambda>0$, set
	\begin{align*}
		F_{\lambda}(\omega) \coloneqq \lbrace \xi\in\mathcal{B}_{\alpha}: \ \ \limsup_{t\rightarrow\infty}\frac{1}{t}\log\vert \psi^{t}_{\omega}(\xi)\vert_{\alpha}\leq \lambda\rbrace.
	\end{align*}
	Then on a set of full measure $\tilde{\Omega}$, invariant under $(\theta_t)_{t\in\mathbb{R}}$, there are numbers 
	\begin{align*}
		\lambda_{1}>\lambda_{2}>...\in [-\infty,\infty),
	\end{align*}
	the \emph{Lyapunov exponents}, that are either finite or satisfy $\lim_{n\rightarrow\infty}\lambda_{n}=-\infty$, and finite dimenional subspaces $ H^{i}(\omega)\subset\mathcal{B}_\alpha$, $i \in \N$, such that the following properties hold: 
	\begin{itemize}
		\item[(i)] (Invariance.)\ \ $\psi^{t}_{\omega}(H^{i}_{\omega})= H^i_{\theta_t \omega}$ for every $t \geq 0$.
		\item[(ii)] (Splitting.)\ \ $F_{\lambda_{1}}(\omega)=\mathcal{B}_{\alpha}$ and $H_{\omega}^i \oplus F_{\lambda_{i+1}}(\omega) = F_{\lambda_i}(\omega)$ for every $i$. In particular,
		\begin{align*}
			\mathcal{B}_\alpha = \bigoplus_{1\leq j\leq i}H^j_{\omega}\oplus   F_{\lambda_{i+1}}(\omega)
		\end{align*}
		for every $i$.
		\item[(iii)] ('Fast' growing subspace.)\  For each $ h\in H^{j}_{\omega} $,
		\begin{align*}
			&\lim_{t\rightarrow\infty}\frac{1}{t}\log\vert \psi^{t}_{\omega}(h)\vert_{\alpha} = \lambda_{j}\\
			& 		\lim_{t\rightarrow\infty}\frac{1}{t}\log\vert (\psi^{t}_{\theta_{-t}\omega})^{-1}(h)\vert_{\alpha} =-\lambda_{j}.
		\end{align*}
	\end{itemize}
\end{theorem}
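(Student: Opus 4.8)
\textbf{Proof strategy for Theorem \ref{MET}.}
The plan is to verify that the linear cocycle $\psi^{\bullet}_{\bullet}$ generated by differentiating $\varphi$ at the stationary point $Y$ satisfies the hypotheses of the Multiplicative Ergodic Theorem for Banach spaces (in the form due to \cite{BRS17}, going back to Lian--Lu and Gonz\'alez-Tokman--Quas), and then read off the conclusions (i)--(iii) as the standard MET output. Thus the proof is almost entirely a matter of checking measurability, the cocycle property, the compactness/quasi-compactness requirement, and the crucial $\log^{+}$-integrability of the generator; the dynamical content is then imported wholesale.

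First I would record the structural facts. Since $(\Omega,\mathcal F,\{\theta_t\},\P)$ is an invertible ergodic measure-preserving system (Assumption \ref{ASSUU}(i),(ii)) and $\mathbf{X}$ is a rough path cocycle (Assumption \ref{ASSUU}(iii)), the Remark following Definition \ref{def:stationary_point} already tells us that $\psi^{t}_{\omega}(\zeta)=D_{Y_{\omega}}\varphi^{t}_{\omega}[\zeta]$ is a linear cocycle over $(\theta_t)$; joint measurability in $(t,\omega,\zeta)$ follows from the pathwise construction of $\varphi$ and the $C^1$-cocycle property established in the preceding Proposition. Next, by Remark \ref{COMPACTEM_12}, for fixed $t>0$ the operator $\psi^t_\omega$ maps $\mathcal B_\alpha$ continuously into $\mathcal B_{\alpha+\epsilon}$ for some $\epsilon>0$; the hypothesis that $\psi^{T_0}_\omega$ is compact on $\mathcal B_\alpha$ (which, as that remark notes, holds automatically whenever the embedding $\mathcal B_{\alpha+\epsilon}\hookrightarrow\mathcal B_\alpha$ is compact) then gives the quasi-compactness input the Banach-space MET needs; by the cocycle property compactness propagates to all $t\ge T_0$, and one works along the discrete orbit of time step $T_0$ to build the filtration, then interpolates to continuous time using Lemma \ref{NMMMAA}.

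The heart of the verification is the integrability condition
\begin{align*}
	\sup_{0\le t\le T_0}\log^{+}\!\big(\Vert\psi^{t}_{\omega}\Vert_{\mathcal L(\mathcal B_\alpha,\mathcal B_\alpha)}\big)\in\mathcal L^{1}(\Omega),
	\qquad
	\sup_{0\le t_1\le T_0}\log^{+}\!\big(\Vert\psi^{T_0-t_1}_{\theta_{t_1}\omega}\Vert_{\mathcal L(\mathcal B_\alpha,\mathcal B_\alpha)}\big)\in\mathcal L^{1}(\Omega),
\end{align*}
the second of which is needed so that the MET can be run both forward and backward in time (this is exactly the content of Lemma \ref{NMMMAA}, and it is why we assumed $|Y_\omega|_\alpha\in\bigcap_{p\ge1}\mathcal L^p(\Omega)$). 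The first is Theorem \ref{thm:integrabiliy_linearization} applied to $\xi=Y_\omega$: indeed \eqref{iioo} supplies hypothesis \eqref{eqn:integrab_xi}, and the conditions of Proposition \ref{linear boundd} are part of Assumption \ref{ASSUU}(v), so $\sup_{t\in[0,T_0]}\log^{+}(\Vert D_{\xi}\varphi^t_{\mathbf X}\Vert_{\mathcal L(\mathcal B_\alpha,\mathcal B_\alpha)})\in\bigcap_{p\ge1}\mathcal L^p(\Omega)$, in particular it is in $\mathcal L^1$. I would also note that these integrability bounds are what guarantee the exponents $\lambda_i$ are well defined in $[-\infty,\infty)$ and, together with the compactness, that each "fast" space $H^i(\omega)$ is finite-dimensional with $\lambda_i\downarrow-\infty$ (or the list terminates).

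With measurability, the cocycle property, quasi-compactness, and $\log^+$-integrability in hand, I would invoke the Banach-space MET of \cite[Theorem ...]{BRS17}: it produces the $(\theta_t)$-invariant full-measure set $\tilde\Omega$, the deterministic (by ergodicity) sequence $\lambda_1>\lambda_2>\cdots$, the Oseledets filtration $F_{\lambda_i}(\omega)$ with $F_{\lambda_1}(\omega)=\mathcal B_\alpha$, and the finite-dimensional equivariant complements $H^i(\omega)$ with $H^i_\omega\oplus F_{\lambda_{i+1}}(\omega)=F_{\lambda_i}(\omega)$ and $\psi^t_\omega(H^i_\omega)=H^i_{\theta_t\omega}$; the growth rates in (iii), including the backward statement $\lim_{t\to\infty}\frac1t\log|(\psi^t_{\theta_{-t}\omega})^{-1}h|_\alpha=-\lambda_j$ on the finite-dimensional pieces, are part of the same theorem once the two-sided integrability is available. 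The only genuine work beyond bookkeeping is ensuring the compactness hypothesis is used correctly — the MET on Banach spaces does not require each $\psi^t_\omega$ to be compact, only asymptotic quasi-compactness of the cocycle — so I expect the main obstacle to be the careful passage from the discrete-time (step $T_0$) Oseledets splitting to the continuous-time statement, making sure the exceptional set can be chosen $(\theta_t)_{t\in\mathbb R}$-invariant; this is handled by intersecting $\theta_t$-translates and using Lemma \ref{NMMMAA} to control the interpolating maps $\psi^{T_0-t_1}_{\theta_{t_1}\omega}$ uniformly over $t_1\in[0,T_0]$.
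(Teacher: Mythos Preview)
Your proposal is correct and follows essentially the same route as the paper: verify the cocycle property, the eventual compactness, and the two $\log^+$-integrability conditions (via Theorem \ref{thm:integrabiliy_linearization} and Lemma \ref{NMMMAA}), then invoke a Banach-space MET for the discrete-time cocycle and pass to continuous time using the uniform bounds on the interpolating maps. The one correction is bibliographic: the MET you need is not in \cite{BRS17} (that paper concerns rough path cocycles, not an Oseledets theorem); the paper invokes \cite[Theorem 1.21]{GVR23} for the discrete MET and \cite[Theorem 3.3, Lemma 3.4]{LL10} for the discrete-to-continuous passage.
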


\begin{proof}
	The goal is to apply the Multiplicative Ergodic Theorem for Banach spaces stated in \cite[Theorem 1.21]{GVR23}. We first fix a time step $t_{0}>0$. By assumption, after sufficiently many iterations of $\psi^{t_0}_{\omega}$, the operator becomes compact. 
	From Theorem \ref{thm:integrabiliy_linearization}, it follows that
	\begin{align}\label{V1}
		\sup_{0\leq t_1\leq t_{0}}	\log^{+}\big(\Vert \psi^{t_1}_{\omega}\Vert_{\mathcal{L}(\mathcal{B}_{\alpha},\mathcal{B}_{\alpha})}\big)\in\mathcal{L}^{1}(\Omega).
	\end{align}
	In addition, from Lemma \eqref{NMMMAA}
	\begin{align}\label{V2}
		\sup_{0\leq t_1\leq t_{0}}\log^{+}\big(\Vert \psi^{t_0-t_1}_{\theta_{t_1}\omega}\Vert_{\mathcal{L}(\mathcal{B}_{\alpha},\mathcal{B}_{\alpha})}\big)\in \mathcal{L}^{1}(\Omega).
	\end{align}
	Now our claim for the discrete cocycle $(\psi^{nt_0}_{\omega})_{(n,\omega)\in \mathbb{N}\times \Omega}$ follows from \cite[Theorem 1.21]{GVR23}. 
	Our claim is also valid for the continuous time version, which can be obtained from the discrete version of the multiplicative ergodic theorem in \cite[Theorem 1.21]{GVR23} with \eqref{V1} and \eqref{V2}. We omit the details, but the reader can refer to \cite[Theorem 3.3, Lemma 3.4]{LL10}, where the authors obtained the continuous version of the multiplicative ergodic theorem from the discrete case \footnote{We can not refer to \cite{LL10} directly since their assumptions for the discrete version are restrictive. However, obtaining the continuous version from the discrete one has a standard argument, which is discussed in this paper.} by imposing same assumption as \eqref{V1} and \eqref{V2}.
\end{proof}

\begin{remark}
	We already mentioned in the introduction that the only two articles that discuss invariant manifolds for RPDEs are \cite{KN23} and \cite{YLZ23}. In both papers, the authors use the Lyapunov-Perron method that does not rely on the MET. Both articles impose the assumptions that $F(0) = DF(0) = 0$ and $G(0) = DG(0) = D^2G(0) = 0$. The condition $F(0) = G(0) = 0$ assures that $0$ is a stationary point in the sense of Definition \ref{def:stationary_point}. The assumption that also the derivatives of $F$ and $G$ have $0$ as a fixed point implies that the Lyapunov spectrum that can be deduced when applying the MET to the linearized cocycle around $0$ is just the spectrum of the operator $A$. In contrast, for our method, it is not necessary that the Lyapunov spectrum is explicitly given.
\end{remark}

\subsection{Invariant manifolds}
We are now ready to derive the existence of invariant manifolds (stable, unstable, and center) around the stationary point. We first start with the result for stable manifolds. 
\begin{theorem}\label{stable_manifold}
	Assume the same setting as in Theorem \ref{MET} and set $\lambda_{j_0} \coloneqq \sup\lbrace \lambda_{j}:\lambda_j<0 \rbrace$. We fix an arbitrary time step $t_{0}>0$. For $ 0 < \upsilon < -\lambda_{j} $, we can find a family of immersed submanifolds $S^{\upsilon}_{loc}(\omega)$ of $\mathcal{B}_{\alpha}$ and a set of full measure $\tilde{\Omega}$ of $\Omega$ such that
	\begin{itemize}
		\item[(i)] There are random variables $ \rho_{1}^{\upsilon}(\omega), \rho_{2}^{\upsilon}(\omega)$, which are positive and finite on $\tilde{\Omega}$, and
		\begin{align}\label{eqn:rho_temp}
			\liminf_{p \to \infty} \frac{1}{p} \log \rho_i^{\upsilon}(\theta_{pt_0} \omega) \geq 0, \quad i = 1,2
		\end{align}
		such that
		\begin{align}\label{invari}
			\begin{split}
				\big{\lbrace} \xi \in \mathcal{B}_{\alpha}\, :\, \sup_{n\geq 0}\exp(nt_0\upsilon)\vert\varphi^{nt_0}_{\omega}(\xi)-Y_{\theta_{nt_0}\omega}\vert_{\alpha} &<\rho_{1}^{\upsilon}(\omega)\big{\rbrace}\subseteq S^{\upsilon}_{loc}(\omega)\\&\subseteq	\big{\lbrace}  \xi \in \mathcal{B}_{\alpha}\, :\, \sup_{n\geq 0}\exp(nt_0\upsilon)\vert\varphi^{nt_0}_{\omega}\xi)-Y_{\theta_{nt_0}\omega}\vert_{\alpha}<\rho_{2}^{\upsilon}({\omega})\big{\rbrace}.
			\end{split}
		\end{align}
		\item[(ii)] For $ S^{\upsilon}_{loc}(\omega)$, 
		\begin{align*}
			T_{Y_{\omega}}S^{\upsilon}_{loc}(\omega) = F_{\lambda_{j_0}}(\omega).
		\end{align*}
		\item[(iii)] For $ n\geq N(\omega) $,
		\begin{align*}
			\varphi^{nt_0}_{\omega}(S^{\upsilon}_{loc}(\omega))\subseteq S^{\upsilon}_{loc}(\theta_{nt_0}\omega).
		\end{align*}
		\item[(iv)]For $ 0<\upsilon_{1}\leq\upsilon_{2}< - \lambda_{j_{0}} $,
		\begin{align*}
			S^{\upsilon_{2}}_{loc}(\omega)\subseteq S^{\upsilon_{1}}_{loc}(\omega).
		\end{align*}
		Also for $n\geq N(\omega) $,
		\begin{align*}
			\varphi^{nt_0}_{\omega}(S^{\upsilon_{1}}_{loc}(\omega))\subseteq S^{\upsilon_{2}}_{loc}(\theta_{nt_0}(\omega))
		\end{align*}
		and consequently for $ \xi\in S^{\upsilon}_{loc}(\omega) $,
		\begin{align}\label{eqn:contr_char}
			\limsup_{n\rightarrow\infty}\frac{1}{n}\log\vert\varphi^{nt_0}_{\omega}(\xi)-Y_{\theta_{nt_0}\omega}\vert_{\alpha}\leq  t_0\lambda_{j_{0}}.
		\end{align}
		\item[(v)] 
		\begin{align*}
			\limsup_{n\rightarrow\infty}\frac{1}{n}\log\bigg{[}\sup\bigg{\lbrace}\frac{\vert\varphi^{nt_0}_{\omega}(\xi)-\varphi^{nt_0}_{\omega}(\tilde{\xi}_{\omega})\vert_{\alpha} }{\vert \xi-\tilde{\xi}\vert_{\alpha}},\ \ \xi \neq\tilde{\xi},\  \xi,\tilde{\xi}\in S^{\upsilon}_{loc}(\omega) \bigg{\rbrace}\bigg{]}\leq t_0\lambda_{j_{0}}.
		\end{align*}
	\end{itemize}
\end{theorem}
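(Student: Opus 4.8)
\textbf{Proof proposal for Theorem \ref{stable_manifold}.}

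The plan is to construct $S^{\upsilon}_{loc}(\omega)$ as the graph of a Lipschitz map over (a neighborhood of the origin in) the ``stable'' subspace $F_{\lambda_{j_0}}(\omega)$, using a discrete-time Lyapunov--Perron fixed point argument carried out fiberwise along the orbit $(\theta_{nt_0}\omega)_{n\ge 0}$. First I would set up the local picture: write $\phi^{n}_{\omega} \coloneqq \varphi^{nt_0}_{\omega}(\cdot + Y_{\omega}) - Y_{\theta_{nt_0}\omega}$ for the time-$t_0$ map recentered at the stationary point, so that $\phi^{n}_{\omega}(0)=0$ and $D_0\phi^{n}_{\omega}=\psi^{nt_0}_{\omega}$. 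By the MET (Theorem \ref{MET}) the linear cocycle $\psi^{nt_0}_{\omega}$ admits the $\theta_{t_0}$-invariant splitting $\mathcal{B}_\alpha = E^{<}(\omega)\oplus E^{\ge}(\omega)$ where $E^{<}(\omega) \coloneqq F_{\lambda_{j_0}}(\omega)$ collects all Lyapunov exponents $<0$ (in fact $\le \lambda_{j_0}<0$) and $E^{\ge}(\omega)$ is the finite-dimensional complement spanned by the non-negative Oseledets spaces; on $E^{<}$ one has $\|\psi^{nt_0}_{\omega}|_{E^{<}(\omega)}\| \lesssim_\omega e^{n t_0(\lambda_{j_0}+\delta)}$ and on $E^{\ge}$ the inverse contracts, $\|(\psi^{nt_0}_{\omega}|_{E^{\ge}})^{-1}\|\lesssim_\omega e^{-n t_0(\lambda_{j_0+1}-\delta)}$, for arbitrarily small $\delta>0$, with tempered random constants (this temperedness is exactly what feeds \eqref{eqn:rho_temp}). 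The nonlinear remainder $N^{n}_{\omega}(v) \coloneqq \phi^{n}_{\omega}(v) - \psi^{nt_0}_{\omega}v$ has a Lipschitz constant on a ball $B(0,r)$ that I would control using Corollary \ref{CC_DDE} and, crucially, Proposition \ref{MMMKKKKKKTTTT}: the latter gives $\|D_\xi\varphi - D_{\tilde\xi}\varphi\| \lesssim |\xi-\tilde\xi|^{\text{power}}\exp(\cdots)$, so $N^{n}_{\omega}$ is (Hölder-)Lipschitz with a small constant on a sufficiently small random ball $B(0,\rho(\theta_{nt_0}\omega))$, where $\rho$ is tempered by Theorem \ref{thm:integrability_RPDE} and Theorem \ref{thm:integrabiliy_linearization} applied to the stationary point (whose norm is integrable by \eqref{iioo} and Lemma \ref{NMMMAA}).

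Next I would run the Lyapunov--Perron scheme. Fix $\upsilon$ with $0<\upsilon<-\lambda_{j_0}$ and work in the Banach space $\Gamma_\omega$ of sequences $(v_n)_{n\ge0}$ with $v_n\in\mathcal B_\alpha$ and $\sup_n e^{n t_0\upsilon}|v_n|_\alpha<\infty$, weighted so as to beat the stable rate. A point $\xi = \eta + h$ (with $\eta\in E^{<}(\omega)$ small, $h\in E^{\ge}(\omega)$) lies on the local stable set iff the orbit $v_n = \phi^{n}_{\omega}(\xi-Y_\omega)$ decays at rate $\upsilon$; by the standard variation-of-constants/telescoping identity this is equivalent to $v$ being a fixed point of the operator
\begin{align*}
	(\mathcal T_{\omega,\eta} v)_n = \psi^{nt_0}_{\omega}\eta + \sum_{k=1}^{n}\psi^{(n-k)t_0}_{\theta_{kt_0}\omega}\big(P^{<}_{\theta_{kt_0}\omega}N^{k-1}_{\omega}(v_{k-1})\big) - \sum_{k=n+1}^{\infty}\big(\psi^{(k-n)t_0}_{\theta_{nt_0}\omega}|_{E^{\ge}}\big)^{-1}P^{\ge}_{\theta_{kt_0}\omega}N^{k-1}_{\omega}(v_{k-1}),
\end{align*}
where $P^{<},P^{\ge}$ are the Oseledets projections. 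The two exponential estimates above together with the small Lipschitz constant of $N$ make $\mathcal T_{\omega,\eta}$ a contraction on a small ball of $\Gamma_\omega$ (here one must be careful that the projections $P^{<}_{\theta_{kt_0}\omega},P^{\ge}_{\theta_{kt_0}\omega}$ grow only temperedly in $k$, which is part of the MET output), yielding a unique fixed point $v(\omega,\eta)$ depending Lipschitz-continuously (and, using the $C^1$-cocycle property plus Proposition \ref{MMMKKKKKKTTTT}, $C^1$) on $\eta$. Define the graph map $h(\omega,\eta) \coloneqq \eta + v_0(\omega,\eta)$, which is an immersion of a neighborhood of $0$ in $E^{<}(\omega)$, and set $S^{\upsilon}_{loc}(\omega) \coloneqq Y_\omega + \{h(\omega,\eta): |\eta|_\alpha < \rho_1^{\upsilon}(\omega)\}$ for a suitable tempered $\rho_1^{\upsilon}$. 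Item (i) — the sandwich between two sublevel sets — comes from the a priori estimate $|v_n|_\alpha \le C_\omega e^{-nt_0\upsilon}|\eta|_\alpha$ together with the reverse inclusion that any $\xi$ whose forward orbit decays at rate $\upsilon$ must have $\eta$-component small and must coincide with the graph point (uniqueness of the fixed point). Item (ii), $T_{Y_\omega}S^{\upsilon}_{loc}(\omega)=F_{\lambda_{j_0}}(\omega)$, follows by differentiating the graph map at $\eta=0$: $D_0v(\omega,\cdot)=0$ because $N$ is at least $C^1$ with vanishing derivative at $0$, so $D_0 h(\omega,\cdot)=\mathrm{id}_{E^{<}(\omega)}$.

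For the remaining items I would argue as follows. Item (iii), forward invariance for $n\ge N(\omega)$, holds because $\varphi^{nt_0}_{\omega}$ maps an orbit decaying at rate $\upsilon$ from $\omega$ to one decaying at rate $\upsilon$ from $\theta_{nt_0}\omega$; the random threshold $N(\omega)$ appears because the target must land inside the (random, tempered) domain $\rho_1^{\upsilon}(\theta_{nt_0}\omega)$, and temperedness \eqref{eqn:rho_temp} guarantees this eventually. Item (iv), monotonicity $S^{\upsilon_2}_{loc}\subseteq S^{\upsilon_1}_{loc}$ for $\upsilon_1\le\upsilon_2$, is immediate from the characterization in (i) since a faster decay rate is a stronger condition; and \eqref{eqn:contr_char} follows by combining the forward-invariance statement of (iv) (pushing into every $S^{\upsilon_2}_{loc}$ with $\upsilon_2<-\lambda_{j_0}$) with the sublevel-set bound, then letting $\upsilon_2\uparrow -\lambda_{j_0}$ and using $\frac1n\log\rho_2^{\upsilon_2}(\theta_{nt_0}\omega)\to 0$. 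Item (v), the exponential contraction of the flow restricted to the manifold, I would obtain from Corollary \ref{CC_DDE}: for $\xi,\tilde\xi\in S^{\upsilon}_{loc}(\omega)$ write $\varphi^{nt_0}_{\omega}(\xi)-\varphi^{nt_0}_{\omega}(\tilde\xi)=\int_0^1 D_{\rho\xi+(1-\rho)\tilde\xi}\varphi^{nt_0}_{\omega}[\xi-\tilde\xi]\,\mathrm d\rho$, bound the integrand by the cocycle of linearizations along interpolating points, and use that these interpolating points themselves lie in a slightly larger stable set so that their linearized cocycles contract at rate $t_0\lambda_{j_0}+o(1)$; the tempered prefactors from $\tilde S$ in \eqref{SSssSS} wash out after dividing by $n$ and taking $\limsup$. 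The main obstacle is the careful bookkeeping of the \emph{tempered} random constants — Lipschitz radii of $N^{n}_{\omega}$, norms of the Oseledets projections $P^{<}_{\theta_{nt_0}\omega}$, and the constants $\rho_i^{\upsilon}$ — so that the Lyapunov--Perron operator is genuinely a contraction on a non-degenerate random ball and so that \eqref{eqn:rho_temp} holds; this is precisely where the integrable a priori bounds of Section \ref{sec:integrable_bound} (Theorems \ref{thm:integrability_RPDE} and \ref{thm:integrabiliy_linearization}) and the difference estimate of Proposition \ref{MMMKKKKKKTTTT} do the essential work, since without $\mathcal L^p$-integrability one could not even assert that these random constants are finite and tempered.
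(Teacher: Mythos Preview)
Your approach is correct and mirrors the paper's strategy. The only difference is packaging: rather than carrying out the Lyapunov--Perron fixed point construction by hand as you outline, the paper invokes an abstract stable manifold theorem \cite[Theorem 2.10]{GVR23} whose hypothesis is precisely a Lipschitz-type bound on the nonlinear remainder $H_{\omega}(\xi) \coloneqq \varphi^{t_0}_{\omega}(Y_{\omega}+\xi)-\varphi^{t_0}_{\omega}(Y_{\omega})-\psi^{t_0}_{\omega}(\xi)$ with a random prefactor $f(\omega)$ satisfying $\log^{+}f\in\mathcal L^{1}(\Omega)$. The verification of that hypothesis is exactly what you identify: the estimate \eqref{bounde_derivative_3} from Proposition \ref{MMMKKKKKKTTTT} gives $|H_{\omega}(\xi)-H_{\omega}(\tilde\xi)|_{\alpha}\lesssim |\xi-\tilde\xi|_{\alpha}\cdot(\text{H\"older in }|\xi|,|\tilde\xi|)\cdot\exp(E_\epsilon R_1(\cdots))$, then the integrable bound \eqref{integrable bound_2} from Theorem \ref{thm:integrability_RPDE} is plugged into $R_1$ (after splitting $R_1(AB+C)\le R_1^{(1)}(A)+R_1^{(2)}(B)+R_1^{(3)}(C)$ for increasing polynomials) to isolate a factor $f(\omega)$ with $\log^{+}f\in\mathcal L^{1}$, and Birkhoff's ergodic theorem yields $\frac{1}{n}\log^{+}f(\theta_{nt_0}\omega)\to 0$, i.e.\ temperedness. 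So the substantive content --- Proposition \ref{MMMKKKKKKTTTT} for the remainder, Theorem \ref{thm:integrability_RPDE} for integrability, Birkhoff for temperedness --- is identical to what you propose; the paper just outsources the graph-transform/fixed-point machinery (your operator $\mathcal T_{\omega,\eta}$, the tempered projection bounds, items (i)--(v)) to the cited abstract result rather than reproving it.
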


\begin{proof}
	We aim to apply \cite{GVR23}[Theorem 2.10]. To do this, we need to check that \cite{GVR23}[Equation (2.5)] holds. Set $H_{\omega}(\xi) \coloneqq \varphi^{t_0}_{\omega}(Y_{\omega}+\xi)-\varphi^{t_0}_{\omega}(Y_{\omega})-\psi^{t_0}_{\omega}(\xi)$. From \eqref{bounde_derivative_3},
	\begin{align}\label{IOPL}
		\begin{split}
			\vert&H_{\omega}(\xi)-H_{\omega}(\tilde\xi)\vert_{\alpha}\leq\int_{0}^{1}\vert\big(D_{Y_{\omega}+\theta\xi+(1-\theta)\tilde{\xi}}\varphi^{t_0}_{\omega}-D_{Y_{\omega}}\varphi^{t_0}_{\omega}\big)[\xi-\tilde{\xi}]\vert_{\alpha} \, \mathrm{d}\theta\\
			&\quad\leq 2 E_{\epsilon}|\xi-\tilde{\xi}|_{\alpha}\max\lbrace \vert\xi\vert_{\alpha}+\vert\tilde{\xi}\vert_{\alpha},\vert\xi\vert_{\alpha}^{1-\kappa}+\vert\tilde{\xi}\vert_{\alpha}^{1-\kappa},\vert\xi\vert_{\alpha}^{\frac{r}{2}}+\vert\tilde{\xi}\vert_{\alpha}^{\frac{r}{2}}\rbrace\\
			&\qquad \times \exp\bigg(E_{\epsilon}R_1\big(\sup_{0\leq \rho,\theta \leq 1}\big\Vert\big(\varphi^{.}_{\omega}(Y_{\omega}+\rho(\theta\xi+(1-\theta)\tilde{\xi})),G\big(\varphi^{.}_{\omega}(Y_{\omega}+\rho(\theta\xi+(1-\theta)\tilde{\xi}))\big)\big)\big\Vert_{{\mathcal{D}_{\mathbf{X}(\omega),{\alpha}}^{\gamma}([0,t_0])}}\big)\bigg)\\
			&\myquad[3] \times \exp\big(E_{\epsilon}R_{2}(\Vert X(\omega)\Vert_{\gamma,[0,t_0]},\Vert \mathbb{X}(\omega)\Vert_{2\gamma,[0,t_0]})\big).
		\end{split}
	\end{align}
	
	
	
	Also from \eqref{integrable bound_2}
	\begin{align}\label{UOPL}
		\begin{split}
			&\sup_{0\leq \rho,\theta \leq 1}\big\Vert\big(\varphi^{.}_{\omega}(Y_{\omega}+\rho(\theta\xi+(1-\theta)\tilde{\xi})),G\big(\varphi^{.}_{\omega}(Y_{\omega}+\rho(\theta\xi+(1-\theta)\tilde{\xi}))\big)\big)\big\Vert_{\mathcal{D}_{\mathbf{X}(\omega),{\alpha}}^{\gamma}([0,t_0])}  \\
			&\quad\leq\tilde{M}N([0,t_0],\eta_1,\chi,\mathbf{X}(\omega))(1+\Vert X(\omega)\Vert_{\gamma,[0,t_0]})\bigg[\exp\big{(}N([0,t_0],\eta_1,\chi,\mathbf{X}(\omega))\tilde{M}_\epsilon\big{)} \big(|Y_{\omega}|_{\alpha}+|\xi|_{\alpha}+|\tilde{\xi}|_{\alpha}\big) \\
			&\qquad + \frac{\exp\big{(}N([0,t_0],\eta_1,\chi,\mathbf{X}(\omega))\tilde{M}_\epsilon+\tilde{M}_\epsilon\big{)}-1}{2M_\epsilon-1}P(\Vert X(\omega)\Vert_{\gamma,[0,t_0]},\Vert\mathbb{X}(\omega)\Vert_{\gamma,[0,t_0]})\bigg]\\&\myquad[3]= T_1(\omega,t_0)(|\xi|_{\alpha}+|\tilde{\xi}|_{\alpha})+T_{2}(\omega,Y_{\omega},t_0).
		\end{split}
	\end{align}
	with obvious definition of $T_1$ and $T_2$. Since $R_1$ is a polynomial, we can find increasing polynomials $R_{1}^{(1)},R_{1}^{(2)}$ and $R_{1}^{(3)}$ such that for every $A,B,C>0$: $R_1(AB+C)\leq R_{1}^{(1)}(A)+R_{1}^{(2)}(B)+R_{1}^{(3)}(C)$. Therefore, applying $R_1$ to \eqref{UOPL} leads to
	\begin{align*}
		&R_1\big(\sup_{0\leq \rho,\theta \leq 1}\big\Vert\big(\varphi^{.}_{\omega}(Y_{\omega}+\rho(\theta\xi+(1-\theta)\tilde{\xi})),G\big(\varphi^{.}_{\omega}(Y_{\omega}+\rho(\theta\xi+(1-\theta)\tilde{\xi}))\big)\big)\big\Vert_{{\mathcal{D}_{\mathbf{X}(\omega),{\alpha}}^{\gamma}([0,t_0])}}\big)\\&\quad\leq  R_{1}^{(1)}(T_1(\omega,t_0))+R_{1}^{(2)}(|\xi|_{\alpha}+|\tilde{\xi}|_{\alpha})+R_{1}^{(3)}(T_{2}(\omega,Y_{\omega},t_0)).
	\end{align*}
	If we plug in the later inequality in \eqref{IOPL}, we obtain
	\begin{align*}
		&\vert H_{\omega}(\xi)-H_{\omega}(\tilde\xi)\vert_{\alpha}\leq 2 E_{\epsilon}|\xi-\tilde{\xi}|_{\alpha}\max\lbrace \vert\xi\vert_{\alpha}+\vert\tilde{\xi}\vert_{\alpha},\vert\xi\vert_{\alpha}^{1-\kappa}+\vert\tilde{\xi}\vert_{\alpha}^{1-\kappa},\vert\xi\vert_{\alpha}^{\frac{r}{2}}+\vert\tilde{\xi}\vert_{\alpha}^{\frac{r}{2}}\rbrace\times\\&\quad \exp\big(E_{\epsilon}[R_{1}^{(1)}(T_1(\omega,t_0))+R_{1}^{(3)}(T_{2}(\omega,Y_{\omega},t_0))+R_{2}(\Vert X(\omega)\Vert_{\gamma,[0,t_0]},\Vert \mathbb{X}(\omega)\Vert_{2\gamma,[0,t_0]})]\big)\exp\big(R_{1}^{(2)}(\vert\xi\vert_{\alpha}+\vert\tilde{\xi}\vert_{\alpha})\big).
	\end{align*}
	From Theorem \ref{thm:integrability_RPDE}, for 
	\begin{align*}
		f(\omega) \coloneqq \exp\big(E_{\epsilon}[R_{1}^{(1)}(T_1(\omega,t_0))+R_{1}^{(3)}(T_{2}(\omega,Y_{\omega},t_0))+R_{2}(\Vert X(\omega)\Vert_{\gamma,[0,t_0]},\Vert \mathbb{X}(\omega)\Vert_{2\gamma,[0,t_0]})]\big),
	\end{align*}
	we have
	\begin{align*}
		\log^{+}f(\omega) = E_{\epsilon}[R_{1}^{(1)}(T_1(\omega,t_0))+R_{1}^{(3)}(T_{2}(\omega,Y_{\omega},t_0))+R_{2}(\Vert X(\omega)\Vert_{\gamma,[0,t_0]},\Vert \mathbb{X}(\omega)\Vert_{2\gamma,[0,t_0]})]\in\mathcal{L}^1(\Omega).
	\end{align*}
	Consequently, from Birkhoff's ergodic theorem on a set of full measure,
	\begin{align*}
		\lim_{n\rightarrow\infty} \frac{1}{n} \log^{+}f(\theta_{nt_0}\omega)=0.
	\end{align*}
	Thus we can apply \cite{GVR23}[Theorem 2.10] which yields the claim.
\end{proof}

\begin{remark}\label{RRFFCC}
	One may wonder whether a continuous time version of Theorem \ref{stable_manifold} may also be deduced. In fact, we can derive a slightly weaker result for continuous time by arguing as follows: Assume $0\leq\tilde{t}\leq t_{0}$ and $\xi\in S^{\upsilon}_{loc}(\omega)$. Then
	\begin{align*}
		&\varphi^{nt_0+\tilde{t}}_{\omega}(\xi)-\varphi^{nt_0+\tilde{t}}_{\omega}(Y_{\omega})=\varphi^{\tilde{t}}_{\theta_{nt_0}\omega}(\varphi^{nt_0}_{\omega}(\xi))-\varphi^{\tilde{t}}_{\theta_{nt_0}\omega}(\varphi^{nt_0}_{\omega}(Y_{\omega})).
	\end{align*} 
	From Corollary \ref{CC_DDE},
	\begin{align}\label{continous_part}
		\begin{split}
			&\vert \varphi^{nt_0+\tilde{t}}_{\omega}(\xi)-\varphi^{nt_0+\tilde{t}}_{\omega}(Y_{\omega})\vert_{\alpha}\leq  M\vert \varphi^{nt_0}_{\omega}(\xi)-Y_{\theta_{nt_0}\omega}\vert_{\alpha}\exp\big(M \tilde{S}(0,\tilde{t},\varphi^{nt_0}_{\omega}(\xi),Y_{\theta_{nt_0}\omega},\mathbf{X}(\theta_{nt_0}\omega))\big)
		\end{split}
	\end{align}
	where $\tilde{S}$ is defined in \eqref{SSssSS}. Note that for every $0\leq \rho\leq 1$  
	\begin{align*}
		&\vert\rho(\varphi^{nt_0}_{\omega}(\xi),Y_{\theta_{nt_0}\omega})\vert_{\alpha}=\vert\rho\varphi^{nt_0}_{\omega}(\xi)+(1-\rho)Y_{\theta_{nt_0}\omega}\vert_{\alpha}\leq \vert Y_{\theta_{nt_0\omega}}\vert_{\alpha}+\vert \varphi^{nt_0}_{\omega}(\xi)-Y_{\theta_{nt_0}\omega}\vert_{\alpha}.
	\end{align*}
	Therefore, for every $p \geq 1$,
	\begin{align*}
		\limsup_{n\rightarrow\infty}\frac{1}{n}\sup_{0\leq\rho\leq 1}\vert\rho(\varphi^{nt_0}_{\omega}(\xi),Y_{\theta_{nt_0}\omega})\vert_{\alpha}^{p}\leq 0.
	\end{align*}
	Consequently, from \eqref{integrable bound} and Birkhoff's ergodic theorem on a set of full measure
	\begin{align}\label{BRIKH}
		\limsup_{n\rightarrow\infty}\sup_{0\leq \tilde{t}\leq t_0}\frac{1}{n}\tilde{S}(0,\tilde{t},\varphi^{nt_0}_{\omega}(\xi),Y_{\theta_{nt_0}\omega},\mathbf{X}(\theta_{nt_0}\omega))\leq 0.
	\end{align}
	From this observation, we conclude if $t>0$, with $t=mt_{0}+\tilde{t}$ where $0\leq \tilde{t}<t_0$ and $\upsilon_1<\upsilon$,
	\begin{align}\label{CXZAQWE}
		\begin{split}
			&\sup_{n\geq 0}\exp(nt_0\upsilon_1)\vert\varphi^{nt_0}_{\omega}(\varphi^{t}_{\omega}(\xi))-Y_{\theta_{nt_0+t}\omega}\vert_{\alpha}\leq M[\sup_{k\geq 0}\exp(kt_0\upsilon)\vert\varphi^{kt_0}_{\omega}(\xi)-Y_{\theta_{kt_0}\omega}\vert_{\alpha}]\\
			&\qquad \times \sup_{n\geq 0}\big[\exp(-(m+n)t_{0}\upsilon+nt_{0}\upsilon_{1})\exp\big(M \tilde{S}(0,\tilde{t},\varphi^{(m+n)t_0}_{\omega}(\xi),Y_{\theta_{(m+n)t_0}\omega},\mathbf{X}(\theta_{(m+n)t_0}\omega))\big)\big].
		\end{split}
	\end{align}
	Since $\upsilon_{1}<\upsilon$, from \eqref{BRIKH} and \eqref{invari}, we conclude that if $t\geq t(\omega)$, then
	\begin{align*}
		\varphi^{t}_{\omega}(S^{\upsilon_{1}}_{loc}(\omega))\subseteq S^{\upsilon}_{loc}(\theta_{t}\omega).
	\end{align*}
\end{remark}

We are now ready to formulate the unstable manifold theorem.
\begin{theorem}\label{unstable}
	Assume the same setting as in Theorem \ref{MET} and that $\lambda_{1}>0$. Set $\lambda_{i_0} \coloneqq \inf\lbrace \lambda_{i}\, :\, \lambda_{i}>0\rbrace$. We fix an arbitrary time step $t_{0}>0$. For $ 0 < \upsilon < \lambda_{i_0} $, we can find a family of immersed submanifolds $U^{\upsilon}_{loc}(\omega) $ of $\mathcal{B}_{\alpha}$ and a set of full measure $\tilde{\Omega}$ of $\Omega$ with the following properties:
	\begin{itemize}
		\item[(i)] There are random variables $ \tilde{\rho}_{1}^{\upsilon}(\omega), \tilde{\rho}_{2}^{\upsilon}(\omega)$, which are positive and finite on $\tilde{\Omega}$, and
		\begin{align*}
			\liminf_{p \to \infty} \frac{1}{p} \log \tilde{\rho}_i^{\upsilon}(\theta_{-t_0 p} \omega) \geq 0, \quad i = 1,2
		\end{align*}
		such that
		\begin{align*}
			&\bigg{\lbrace} \xi_{\omega} \in \mathcal{B}_{\alpha}\, :\, \exists \lbrace \xi_{\theta_{-nt_{0}}\omega}\rbrace_{n\geq 1} \text{ s.t. } \varphi^{mt_{0}}_{\theta_{-nt_{0}}\omega}(\xi_{\theta_{-nt_{0}}\omega}) = \xi_{\theta_{(m-n)t_{0}}\omega} \text{ for all } 0 \leq m \leq n \text{ and }\\ 
			&\quad \sup_{n\geq 0}\exp(nt_0\upsilon)\vert \xi_{\theta_{-nt_{0}}\omega} - Y_{\theta_{-nt_{0}}\omega} \vert_{\alpha} <\tilde{\rho}_{1}^{\upsilon}(\omega)\bigg{\rbrace} \subseteq U^{\upsilon}_{loc}(\omega) \subseteq \bigg{\lbrace}  \xi_{\omega} \in \mathcal{B}_{\alpha}\, :\, \exists \lbrace \xi_{\theta_{-nt_{0}}\omega}\rbrace_{n\geq 1} \text{ s.t. } \\
			&\qquad \varphi^{mt_{0}}_{\theta_{-nt_{0}}\omega}(\xi_{\theta_{-nt_{0}}\omega}) = \xi_{\theta_{(m-n)t_{0}}\omega} \text{ for all } 0 \leq m \leq n \text{ and } \sup_{n\geq 0} \exp(nt_0\upsilon)\vert \xi_{\theta_{-nt_{0}}\omega} - Y_{\theta_{-nt_{0}}\omega} \vert_{\alpha} <\tilde{\rho}_{2}^{\upsilon}(\omega)\bigg{\rbrace}.
		\end{align*}
		\item[(ii)] For $ U^{\upsilon}_{loc}(\omega) $, 
		\begin{align*}
			T_{Y_{\omega}}U^{\upsilon}_{loc}(\omega) = {{\oplus}}_{i:\lambda_{i}>0}H^{i}_{\omega},
		\end{align*}
		\item[(iii)] For $ n\geq N(\omega) $,
		\begin{align*}
			U^{\upsilon}_{loc}(\omega)\subseteq \varphi^{nt_0}_{\theta_{-nt_0}\omega}(U^{\upsilon}_{loc}(\theta_{-nt_0}\omega).
		\end{align*}
		\item[(iv)]For $ 0<\upsilon_{1}\leq\upsilon_{2}<  \lambda_{i_{0}} $,
		\begin{align*}
			U^{\upsilon_{2}}_{loc}(\omega)\subseteq U^{\upsilon_{1}}_{loc}(\omega).
		\end{align*}
		Also for $n\geq N(\omega) $,
		\begin{align*}
			U^{\upsilon_{1}}_{loc}(\omega)\subseteq \varphi^{nt_0}_{\theta_{-nt_0}\omega}(U^{\upsilon_{2}}_{loc}(\theta_{-nt_0}\omega)
		\end{align*}
		and, consequently, for $ \xi\in U^{\upsilon}_{loc}(\omega) $,
		\begin{align}\label{eqn:contr_char_1}
			\limsup_{n\rightarrow\infty}\frac{1}{n}\log\vert\xi_{\theta_{-nt_0}\omega}-Y_{\theta_{-nt_0}\omega}\vert_{\alpha}\leq  -t_0\lambda_{i_{0}}.
		\end{align}
		\item[(v)] 
		\begin{align*}
			\limsup_{n\rightarrow\infty}\frac{1}{n}\log\bigg{[}\sup\bigg{\lbrace}\frac{\vert \xi_{\theta_{-nt_0}\omega\vert_{\alpha}}-\tilde\xi_{\theta_{-nt_0}\omega\vert_{\alpha}} }{\vert \xi-\tilde{\xi}\vert_{\alpha}},\ \ \xi_\omega \neq\tilde{\xi}_{\omega},\  \xi_{\omega},\tilde{\xi}_{\omega}\in U^{\upsilon}_{loc}(\omega) \bigg{\rbrace}\bigg{]}\leq -t_0\lambda_{i_{0}}.
		\end{align*}
	\end{itemize}
\end{theorem}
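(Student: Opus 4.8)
The plan is to run the same scheme as in the proof of Theorem \ref{stable_manifold}, but now invoking the unstable manifold theorem of \cite{GVR23} in place of the stable one. By the remark following Definition \ref{def:stationary_point} the linearized map $\psi^{t_0}_{\omega}=D_{Y_{\omega}}\varphi^{t_0}_{\omega}$ is a linear cocycle, it is compact by assumption, and it satisfies the integrability conditions \eqref{V1}--\eqref{V2} established in the proof of Theorem \ref{MET} (via Theorem \ref{thm:integrabiliy_linearization} and Lemma \ref{NMMMAA}). Hence Theorem \ref{MET} applies, and the hypothesis $\lambda_{1}>0$ guarantees that $\oplus_{i:\lambda_{i}>0}H^{i}_{\omega}$ is a nontrivial finite-dimensional ``fast growing'' subspace; this is the space that will show up as $T_{Y_{\omega}}U^{\upsilon}_{loc}(\omega)$. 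A first point I would stress is that \eqref{SPDE_EQU} is parabolic, so $\varphi$ is in general not invertible in backward time; consequently the unstable manifold cannot be obtained simply by applying Theorem \ref{stable_manifold} to an inverse cocycle. Instead one works with coherent sequences of preimages $\{\xi_{\theta_{-nt_{0}}\omega}\}_{n\ge 0}$ under $\varphi^{t_0}$, which is exactly the framework in which the abstract unstable manifold theorem of \cite{GVR23} is stated.

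Next I would verify the single analytic input that the abstract theorem requires, namely a $C^{1}$-type smallness bound (the analogue of the hypothesis checked in the stable case) on the nonlinear remainder $H_{\omega}(\xi)\coloneqq \varphi^{t_0}_{\omega}(Y_{\omega}+\xi)-\varphi^{t_0}_{\omega}(Y_{\omega})-\psi^{t_0}_{\omega}(\xi)$. This is precisely the estimate \eqref{IOPL} already worked out in the proof of Theorem \ref{stable_manifold}: combining \eqref{bounde_derivative_3} of Proposition \ref{MMMKKKKKKTTTT} with the a priori bound \eqref{integrable bound_2} of Theorem \ref{thm:integrability_RPDE}, and restricting $\xi,\tilde\xi$ to a small ball so that the polynomial factor $\exp(R_{1}^{(2)}(\vert\xi\vert_{\alpha}+\vert\tilde\xi\vert_{\alpha}))$ is absorbed into a random constant, one gets $\vert H_{\omega}(\xi)-H_{\omega}(\tilde\xi)\vert_{\alpha}\lesssim \vert\xi-\tilde\xi\vert_{\alpha}\,\delta(\xi,\tilde\xi)\,f(\omega)$, where $\delta(\xi,\tilde\xi)=\max\{\vert\xi\vert_{\alpha}+\vert\tilde\xi\vert_{\alpha},\vert\xi\vert_{\alpha}^{1-\kappa}+\vert\tilde\xi\vert_{\alpha}^{1-\kappa},\vert\xi\vert_{\alpha}^{r/2}+\vert\tilde\xi\vert_{\alpha}^{r/2}\}\to 0$ as $\xi,\tilde\xi\to 0$, and $f$ is the same random variable displayed in the proof of Theorem \ref{stable_manifold}, for which $\log^{+}f\in\mathcal{L}^{1}(\Omega)$ by Theorem \ref{thm:integrability_RPDE}.

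The only genuine difference from the stable case is the direction in which the random constants have to grow subexponentially: the unstable manifold theorem of \cite{GVR23} demands $\tfrac1n\log^{+}f(\theta_{-nt_{0}}\omega)\to 0$ along the \emph{backward} orbit, together with $\limsup_{n}\tfrac1n\log^{+}\vert Y_{\theta_{-nt_{0}}\omega}\vert_{\alpha}\le 0$. Since $(\Omega,\mathcal{F},(\theta_t)_{t\in\mathbb{R}},\P)$ is invertible and ergodic, $\theta_{-t_{0}}$ is again an ergodic measure-preserving transformation, so Birkhoff's ergodic theorem applied to $\theta_{-t_{0}}$ and to the integrable functions $\log^{+}f$ and $\log^{+}\vert Y_{\cdot}\vert_{\alpha}$ (the latter integrable by \eqref{iioo} and Theorem \ref{thm:integrability_RPDE}) yields both limits on a set of full measure invariant under $(\theta_t)_{t\in\mathbb{R}}$. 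Feeding these into \cite{GVR23} then produces the family of immersed submanifolds $U^{\upsilon}_{loc}(\omega)$ and all of items (i)--(v): the positive finite random radii $\tilde\rho_{i}^{\upsilon}$ with $\liminf_{p}\frac1p\log\tilde\rho_{i}^{\upsilon}(\theta_{-t_{0}p}\omega)\ge 0$, the two-sided inclusion characterizing $U^{\upsilon}_{loc}(\omega)$ through backward-coherent sequences, the tangent space identity $T_{Y_{\omega}}U^{\upsilon}_{loc}(\omega)=\oplus_{i:\lambda_{i}>0}H^{i}_{\omega}$, the nesting and forward-pushing relations, and the exponential rates \eqref{eqn:contr_char_1} and (v), which come from the spectral gap at $\lambda_{i_{0}}$.

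The hard part will not be any PDE estimate --- that is entirely inherited from Theorem \ref{stable_manifold} and Proposition \ref{MMMKKKKKKTTTT} --- but rather the bookkeeping needed to match the somewhat unusual shape of the remainder bound \eqref{bounde_derivative_3} (the $\max\{\,\cdot\,\}$ prefactor and the polynomial-in-norm exponential $\exp(R_{1}^{(2)}(\vert\xi\vert_{\alpha}+\vert\tilde\xi\vert_{\alpha}))$) to the precise regularity hypotheses of the abstract unstable manifold theorem in \cite{GVR23}, and the verification that the coherent-sequence description of $U^{\upsilon}_{loc}(\omega)$ is measurable in $\omega$. Once these are settled, the statement follows by a direct application of the cited result.
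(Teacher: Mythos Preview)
Your proposal is correct and follows the same approach as the paper: the paper's proof is a single line invoking \cite[Theorem 2.17]{GVR23}, relying on the fact that all the required hypotheses (the remainder estimate on $H_\omega$, the integrability of $\log^{+}f$, and the backward subexponential growth via Birkhoff applied to $\theta_{-t_0}$) were already established in the proof of Theorem~\ref{stable_manifold}. You have simply made those verifications explicit, which is appropriate.
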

\begin{proof}
	Follows by applying \cite[Theorem 2.17]{GVR23}.
\end{proof}
\begin{remark}
	As above, we aim to deduce a continuous time result. Assume that $\xi_\omega\in U^{\upsilon}_{loc}(\omega)$ and let $\lbrace \xi_{\theta_{-nt_{0}}\omega}\rbrace_{n\geq 1}$ be the corresponding sequence in item (i) of Theorem \eqref{unstable}. We can extend this sequence to all negatives times be defining
	\begin{align*}
		\text{for} \ \  nt_0\leq t<(n+1)t_0: \ \ \ \xi_{\theta_{-t}\omega}:=\varphi^{(n+1)t_0-t}_{\theta_{-(n+1)t_0}\omega}(\xi_{\theta_{-(n+1)t_0}}). 
	\end{align*}
	Note that, from \eqref{bounde_derivative_2}
	\begin{align*}
		\vert \xi_{\theta_{-t}\omega} - Y_{\theta_{-t}\omega} \vert_{\alpha} &= \vert\varphi^{(n+1)t_0-t}_{\theta_{-(n+1)t_0}\omega}(\xi_{\theta_{-(n+1)t_0}\omega})-\varphi^{(n+1)t_0-t}_{\theta_{-(n+1)t_0}\omega}(Y_{\theta_{-(n+1)t_0}\omega})\vert_{\alpha}\\
		&\leq M_{\epsilon} \vert \xi_{\theta_{-(n+1)t_0}\omega}-Y_{\theta_{-(n+1)t_0}\omega}\vert_{\alpha} \\
		&\quad \times \exp\big(M_\epsilon \sup_{0\leq\tilde{t}\leq t_0}\tilde{S}(0,\tilde{t},\xi_{\theta_{-(n+1)t_0}\omega},Y_{\theta_{-(n+1)t_0}\omega},\mathbf{X}(\theta_{\theta_{-(n+1)t_0}}\omega))\big).
	\end{align*}
	Similar to \eqref{BRIKH},
	\begin{align*}
		\limsup_{n\rightarrow\infty}\sup_{0\leq \tilde{t}\leq t_0}\frac{1}{n}\tilde{S}(0,\tilde{t},\xi_{\theta_{-(n+1)t_0}\omega},Y_{\theta_{-(n+1)t_0}\omega},\mathbf{X}(\theta_{\theta_{-(n+1)t_0}}\omega))\leq 0.
	\end{align*}
	Therefore, by a similar calculation as in \eqref{CXZAQWE}, if $\upsilon_{1}<\upsilon$ and $t\geq t(\omega)$,
	\begin{align*}
		U^{\upsilon_1}_{loc}(\omega)\subseteq \varphi^{t}_{\theta_{-t}\omega}(U^{\upsilon}_{loc}(\theta_{-t}\omega).
	\end{align*}
\end{remark}

Finally, we state our result about the existence of center manifolds. 
\begin{theorem}\label{thm:center_manifolds}
	Assume the same setting as in Theorem \ref{MET} and suppose, for some $j_0\geq1$, $\lambda_{j_0}=0$. If $j_0=1$, then we set $\lambda_{-1}=\infty$. We fix an arbitrary time step $t_{0}>0$ and $0<\nu<\min\lbrace \lambda_{j_0-1},-\lambda_{j_0+1}\rbrace$. Then, there exists a continuous cocycle 
	\begin{align*}
		\tilde{\varphi} \colon \mathbb{Z}t_{0} \times \Omega\times \mathcal{B}_{\alpha}\rightarrow\mathcal{B}_\alpha,
	\end{align*}
	$\Z t_0 = \{ zt_0\, :\, z \in \Z\}$, i.e. $\tilde\varphi^{(m+n)t_0}_{\omega}(\xi)=\tilde{\varphi}^{mt_0}_{\theta_{nt_0}\omega}\circ\tilde{\varphi}^{nt_0}_{\omega}(\xi)$, and a positive random variable $\rho^c:\Omega\rightarrow(0,\infty)$ such that
	\begin{align*}
		\liminf_{n\rightarrow\pm\infty}\frac{1}{n}\log\rho^{c}(\theta_{nt_0}\omega)\geq 0,
	\end{align*}
	such that if $\vert \xi-Y_\omega\vert_\alpha\leq \rho^c(\omega)$, then $\tilde\varphi^{t_0}_{\omega}(Y_\omega+\xi)=\varphi^{t_0}_{\omega}(Y_\omega+\xi)$. Also there exists a map 
	\begin{align*}
		h^{c}_{\omega} \colon H^{j_0}_{\omega}\rightarrow \mathcal{M}^{c,\nu}_{\omega}\subset \mathcal{B}_{\alpha},
	\end{align*}
	such that 
	\begin{itemize}
		\item[(i)] $h^{c}_{\omega}$ is a homeomorphism, Lipschitz continuous and differentiable at zero.
		\item[(ii)] $\mathcal{M}^{c,\nu}_{\omega}$ is a topological Banach manifold modeled on $H^{j_0}$.
		\item[(iii)] $\mathcal{M}^{c,\nu}_{\omega}$ is $\tilde\varphi$-invariant, i.e. for every $n \in \mathbb{N}_0$, $\tilde\varphi^{nt_0}_{\omega}(\mathcal{M}^{c,\nu}_{\omega})\subset\mathcal{M}^{c,\nu}_{\theta_{nt_0}\omega}$.
	\end{itemize}
	Moreover, for every $\xi\in \mathcal{M}^{c,\nu}_{\theta_{nt_0}\omega}$,
	\begin{align*}
		\sup_{j\in\mathbb{Z}}\exp(-\nu\vert j\vert)\vert\tilde{\varphi}^{nt_0}_{\omega}(\xi)-Y_{\theta_{nt_0}\omega}\vert_\alpha<\infty .
	\end{align*}
\end{theorem}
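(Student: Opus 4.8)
The plan is to obtain the center manifold as a direct application of the abstract center manifold theorem for random dynamical systems on Banach spaces, e.g.\ \cite[Theorem 2.24]{GVR23} (or the analogous statement there), in the same spirit as the proofs of Theorem \ref{stable_manifold} and Theorem \ref{unstable}. The abstract theorem requires, in addition to the MET decomposition obtained in Theorem \ref{MET} with a gap around the exponent $\lambda_{j_0}=0$, a \emph{modified cocycle} $\tilde\varphi$ that agrees with $\varphi$ near the stationary point $Y_\omega$ and whose nonlinear part has a Lipschitz constant that grows subexponentially along the orbit $\{\theta_{nt_0}\omega\}$. So the first step is to set $H_\omega(\xi) \coloneqq \varphi^{t_0}_\omega(Y_\omega+\xi) - \varphi^{t_0}_\omega(Y_\omega) - \psi^{t_0}_\omega(\xi)$, exactly as in the proof of Theorem \ref{stable_manifold}, and record that by \eqref{bounde_derivative_3} together with \eqref{integrable bound_2} the estimate
\begin{align*}
	\vert H_\omega(\xi) - H_\omega(\tilde\xi)\vert_\alpha \leq g(\omega)\, \vert \xi-\tilde\xi\vert_\alpha \max\{ \ldots \}\, \exp\big(R_1^{(2)}(\vert\xi\vert_\alpha+\vert\tilde\xi\vert_\alpha)\big)
\end{align*}
holds with $\log^+ g \in \mathcal L^1(\Omega)$, hence $\frac1n\log^+g(\theta_{nt_0}\omega)\to 0$ almost surely by Birkhoff. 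This is the verification of the hypothesis \cite[Eq.\ (2.5)]{GVR23} (or its center-manifold analogue), and it is literally the computation already carried out in the proof of Theorem \ref{stable_manifold}.

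Second, I would construct the cut-off. Pick a smooth bump $\beta\colon\mathcal B_\alpha\to[0,1]$ with $\beta\equiv 1$ on $B(0,1/2)$ and $\beta\equiv 0$ outside $B(0,1)$, and set, for a positive random radius $\rho^c(\omega)$ to be chosen, $\tilde H_\omega(\xi) \coloneqq \beta(\xi/\rho^c(\omega)) H_\omega(\xi)$, and $\tilde\varphi^{t_0}_\omega(Y_\omega+\xi) \coloneqq \psi^{t_0}_\omega(\xi) + \tilde H_\omega(\xi) + \varphi^{t_0}_\omega(Y_\omega)$, extended to a cocycle over $\Z t_0$ by the cocycle property. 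Because $H_\omega(0)=0$ and $DH_\omega(0)=0$, choosing $\rho^c(\omega)$ small enough — quantitatively, $\rho^c(\omega)\lesssim \big(g(\omega)\exp(R_1^{(2)}(2\rho^c(\omega)))\big)^{-1}$, which can be solved with $\rho^c$ having $\liminf \frac1n\log\rho^c(\theta_{nt_0}\omega)\ge 0$ — makes the global Lipschitz constant of $\tilde H_\omega$ as small as the abstract theorem demands, still with subexponential growth along the orbit. On $\{\vert\xi-Y_\omega\vert_\alpha\le \rho^c(\omega)/2\}$ one has $\tilde\varphi^{t_0}_\omega=\varphi^{t_0}_\omega$, giving the local-coincidence statement. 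Third, apply \cite[Theorem 2.24]{GVR23} with the splitting $\mathcal B_\alpha = \big(\bigoplus_{j<j_0}H^j_\omega\big)\oplus H^{j_0}_\omega \oplus F_{\lambda_{j_0+1}}(\omega)$ from Theorem \ref{MET} and the gap condition $0<\nu<\min\{\lambda_{j_0-1},-\lambda_{j_0+1}\}$; this produces the Lipschitz homeomorphism $h^c_\omega\colon H^{j_0}_\omega\to\mathcal M^{c,\nu}_\omega$, differentiable at $0$ with $T_{Y_\omega}\mathcal M^{c,\nu}_\omega=H^{j_0}_\omega$, the $\tilde\varphi$-invariance $\tilde\varphi^{nt_0}_\omega(\mathcal M^{c,\nu}_\omega)\subset\mathcal M^{c,\nu}_{\theta_{nt_0}\omega}$, and the two-sided subexponential growth bound $\sup_{j\in\Z}e^{-\nu|j|}\vert\tilde\varphi^{jt_0}_\omega(\xi)-Y_{\theta_{jt_0}\omega}\vert_\alpha<\infty$ for $\xi\in\mathcal M^{c,\nu}_\omega$; the modeled-on-$H^{j_0}$ Banach manifold structure is immediate from $h^c_\omega$ being a homeomorphism onto its image.

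The main obstacle, and the only place real work beyond citing is needed, is the two-sided nature of the center manifold: unlike the stable case, the abstract theorem needs control of $\tilde\varphi$ and of the Lipschitz data along the orbit $\theta_{nt_0}\omega$ for \emph{both} $n\to+\infty$ and $n\to-\infty$. For $n\to+\infty$ this is exactly the Birkhoff argument used in Theorem \ref{stable_manifold}; for $n\to-\infty$ one uses that $(\Omega,\mathcal F,\{\theta_t\},\P)$ is invertible and ergodic (Assumption \ref{ASSUU}(i),(ii)), so Birkhoff applies to $\theta_{-t_0}$ as well, and the integrability in Theorem \ref{thm:integrability_RPDE}, Theorem \ref{thm:integrabiliy_linearization} and Lemma \ref{NMMMAA} — which are stated on fixed intervals $[0,T]$ but are invariant under $\theta$ in distribution — yields $\frac1{|n|}\log^+ g(\theta_{nt_0}\omega)\to 0$ as $n\to-\infty$ too. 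Care is also required in choosing $\rho^c(\omega)$ simultaneously measurable, positive, and tempered in both time directions; this is handled by the standard construction of tempered random variables dominating a given integrable one (cf.\ the constructions of $\rho^\upsilon_i$ and $\tilde\rho^\upsilon_i$ in Theorems \ref{stable_manifold} and \ref{unstable}). Once these orbit-wise subexponential bounds are in place, the remaining conclusions are a direct translation of \cite[Theorem 2.24]{GVR23}, so I would keep the written proof short: establish the Lipschitz estimate via \eqref{bounde_derivative_3} and \eqref{integrable bound_2}, define the cut-off cocycle, verify the two-sided tempered-Lipschitz hypothesis with Birkhoff in both time directions, and invoke the abstract theorem.
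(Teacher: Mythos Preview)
Your proposal is correct and follows essentially the same route as the paper: both reduce the statement to an abstract center manifold theorem from the authors' prior work (the paper cites \cite[Theorem 2.14]{GVR23a}, you invoke the analogous result in \cite{GVR23}), after verifying its hypotheses via the Lipschitz estimate on $H_\omega$ already established in the proof of Theorem \ref{stable_manifold}. The paper's proof is literally a one-line citation, so your write-up is more detailed---in particular your explicit discussion of the two-sided Birkhoff argument and the tempered cut-off radius spells out what the abstract theorem needs but the paper leaves implicit; one small caution is that a \emph{smooth} bump $\beta$ on a general Banach space $\mathcal B_\alpha$ need not exist, so in the cut-off step you should either use a Lipschitz bump (which always exists) or defer the cut-off construction to the cited abstract theorem, as the paper does.
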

\begin{proof}
	Cf. \cite[Theorem 2.14]{GVR23a}.
\end{proof}

\subsection{Stability}\label{sec:stability}
It is natural to expect an exponential decay of the solution in a neighborhood of stationary points when the first Lyapunov exponent is negative. In this part, we give an affirmative answer to this question. First, we prove a result about the exponential decay around the stationary point when the first Lyapunov exponent is negative.

\begin{lemma}\label{stabilty_n}
	Assume $\lambda_1<0$ in Theorem \ref{stable_manifold}. Let $t_0>$ be the arbitrary time step which we fixed in Theorem \ref{stable_manifold}. Then for every $0<\upsilon<-\lambda_1$, then there exists a random variable $R^{\upsilon}:\Omega\rightarrow (0,\infty)$ such that $\liminf_{t\rightarrow\infty}\frac{1}{p}\log R^{\upsilon}(\theta_{t}\omega)\geq 0$ and
	\begin{align}\label{neighborhood_1}
		\lbrace\xi\in\mathcal{B}_\alpha:\ \vert\xi-Y_{\omega}\vert\leq R^{\upsilon}(\omega) \rbrace\subset S^{\upsilon}_{loc}(\omega).
	\end{align}
\end{lemma}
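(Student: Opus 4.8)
The plan is to deduce the statement from the characterization of $S^{\upsilon}_{loc}(\omega)$ given in part (i) of Theorem \ref{stable_manifold}, namely the inclusion
\[
\big\lbrace \xi \in \mathcal{B}_{\alpha}\, :\, \sup_{n\geq 0}\exp(nt_0\upsilon)\vert\varphi^{nt_0}_{\omega}(\xi)-Y_{\theta_{nt_0}\omega}\vert_{\alpha} <\rho_{1}^{\upsilon}(\omega)\big\rbrace\subseteq S^{\upsilon}_{loc}(\omega),
\]
where $\rho_1^{\upsilon}$ is positive and finite on $\tilde\Omega$ with $\liminf_{p\to\infty}\frac1p\log\rho_1^{\upsilon}(\theta_{pt_0}\omega)\geq 0$. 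So it suffices to produce a random radius $R^{\upsilon}(\omega)>0$ with the stated asymptotic growth property such that $\vert\xi-Y_\omega\vert_\alpha\leq R^{\upsilon}(\omega)$ forces the quantity $\sup_{n\geq 0}\exp(nt_0\upsilon)\vert\varphi^{nt_0}_{\omega}(\xi)-Y_{\theta_{nt_0}\omega}\vert_{\alpha}$ to be strictly smaller than $\rho_1^{\upsilon}(\omega)$.

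First I would control $\vert\varphi^{nt_0}_{\omega}(\xi)-Y_{\theta_{nt_0}\omega}\vert_\alpha = \vert\varphi^{nt_0}_{\omega}(\xi)-\varphi^{nt_0}_{\omega}(Y_{\omega})\vert_\alpha$ using the cocycle property together with the mean value / integral representation $\varphi^{t}_{\mathbf{X}}(\xi)-\varphi^{t}_{\mathbf{X}}(Y_\omega)=\int_0^1 D_{\theta\xi+(1-\theta)Y_\omega}\varphi^{t}_{\mathbf{X}}[\xi-Y_\omega]\,\mathrm{d}\theta$ and Corollary \ref{CC_DDE}, giving $\vert\varphi^{nt_0}_{\omega}(\xi)-Y_{\theta_{nt_0}\omega}\vert_\alpha\leq M\vert\xi-Y_\omega\vert_\alpha\exp\big(M\tilde S(0,nt_0,\xi,Y_\omega,\mathbf{X}(\omega))\big)$. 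This bound is not yet usable because $\tilde S$ grows with $n$; the remedy is the MET. Since $\lambda_1<0$, by Theorem \ref{MET}(ii) we have $F_{\lambda_1}(\omega)=\mathcal{B}_\alpha$, so for every $\zeta\in\mathcal{B}_\alpha$, $\limsup_{t\to\infty}\frac1t\log\vert\psi^t_\omega(\zeta)\vert_\alpha\leq\lambda_1<0$; by compactness of $\psi^{T_0}_\omega$ and the uniformity built into the MET one gets, for any $0<\upsilon<-\lambda_1$, a tempered random variable $C_\upsilon(\omega)$ with $\Vert\psi^{nt_0}_\omega\Vert_{\mathcal{L}(\mathcal{B}_\alpha,\mathcal{B}_\alpha)}\leq C_\upsilon(\omega)\exp(-nt_0\upsilon')$ for some $\upsilon<\upsilon'<-\lambda_1$. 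However, here $\psi$ is the linearization at the stationary point, whereas in the integral representation one linearizes along the segment $\rho\xi+(1-\rho)Y_\omega$; to close the gap one uses Proposition \ref{MMMKKKKKKTTTT} (the estimate \eqref{bounde_derivative_3}) to bound $\Vert D_{\rho\xi+(1-\rho)Y_\omega}\varphi^{nt_0}_{\omega}-\psi^{nt_0}_\omega\Vert$ by $\vert\xi-Y_\omega\vert_\alpha$ times an exponential of a polynomial in the data which, after Birkhoff's ergodic theorem (as already applied inside the proof of Theorem \ref{stable_manifold}), has subexponential growth in $n$ along $\theta_{nt_0}$.

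Concretely, I would argue as follows: combining the two previous points, on $\tilde\Omega$ there is a random variable $K_\upsilon(\omega)>0$, which is tempered with respect to $(\theta_{nt_0})$ in the sense $\limsup_{n\to\infty}\frac1n\log K_\upsilon(\theta_{nt_0}\omega)\leq 0$ (and, replacing $\upsilon$ by a slightly larger $\upsilon_2<-\lambda_1$, also $\liminf_{n}\frac1n\log K_\upsilon(\theta_{nt_0}\omega)\geq 0$ after taking reciprocals of a temperedness bound), such that
\[
\sup_{n\geq 0}\exp(nt_0\upsilon)\vert\varphi^{nt_0}_{\omega}(\xi)-Y_{\theta_{nt_0}\omega}\vert_\alpha\leq K_\upsilon(\omega)\,\vert\xi-Y_\omega\vert_\alpha
\]
whenever $\vert\xi-Y_\omega\vert_\alpha\leq 1$, say. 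Then set
\[
R^{\upsilon}(\omega)\coloneqq \min\Big\{1,\ \tfrac{1}{2}\,\frac{\rho_1^{\upsilon}(\omega)}{K_\upsilon(\omega)}\Big\}.
\]
This $R^{\upsilon}$ is positive and finite on $\tilde\Omega$, and $\liminf_{p\to\infty}\frac1p\log R^{\upsilon}(\theta_{pt_0}\omega)\geq \min\{0,\ \liminf_p\frac1p\log\rho_1^{\upsilon}(\theta_{pt_0}\omega)-\limsup_p\frac1p\log K_\upsilon(\theta_{pt_0}\omega)\}\geq 0$; the claimed continuous-time version $\liminf_{t\to\infty}\frac1p\log R^{\upsilon}(\theta_t\omega)\ge0$ follows by interpolating over $t\in[nt_0,(n+1)t_0)$ exactly as in Remark \ref{RRFFCC}, using \eqref{bounde_derivative_2} and Birkhoff to absorb the fractional time step. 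Finally, if $\vert\xi-Y_\omega\vert_\alpha\leq R^{\upsilon}(\omega)$ then $\sup_{n\geq 0}\exp(nt_0\upsilon)\vert\varphi^{nt_0}_{\omega}(\xi)-Y_{\theta_{nt_0}\omega}\vert_\alpha\leq K_\upsilon(\omega)R^{\upsilon}(\omega)\leq \tfrac12\rho_1^{\upsilon}(\omega)<\rho_1^{\upsilon}(\omega)$, so $\xi\in S^{\upsilon}_{loc}(\omega)$ by Theorem \ref{stable_manifold}(i), which is \eqref{neighborhood_1}.

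The main obstacle is the step of converting the MET growth bound for $\psi^{nt_0}_\omega$ (linearization exactly at $Y_\omega$) into a uniform, tempered contraction estimate for $\varphi^{nt_0}_\omega(\xi)-\varphi^{nt_0}_\omega(Y_\omega)$ valid for all $\xi$ in a (random) ball around $Y_\omega$: one must simultaneously (a) extract a genuine exponential decay rate $\upsilon'>\upsilon$ with a tempered prefactor from the MET for the Banach-space cocycle, which is where the compactness of $\psi^{T_0}_\omega$ and the precise form of \cite[Theorem 1.21]{GVR23} enter, and (b) show the error between linearizing at $Y_\omega$ and linearizing along the segment is small uniformly in $n$ after the Birkhoff normalization, which is exactly the content of Proposition \ref{MMMKKKKKKTTTT} and was already handled in spirit in the proof of Theorem \ref{stable_manifold} (the function $f(\omega)$ there). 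Everything else is bookkeeping with temperedness and the two-sided inclusion defining $S^{\upsilon}_{loc}$.
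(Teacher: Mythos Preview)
Your approach is different from the paper's and contains a genuine gap. The paper's proof is almost immediate: it goes back to the explicit construction of $S^{\upsilon}_{loc}(\omega)$ inside \cite[Theorem~2.10]{GVR23}, where $S^{\upsilon}_{loc}(\omega)=\{Y_\omega+\Pi^0(\Gamma(v)):\ |v|_\alpha\le R^{\upsilon}(\omega)\}$ with $\Gamma$ the fixed point of a Lyapunov--Perron map defined on a ball in $F_{\lambda_{j_0}}(\omega)$. Since here $\lambda_{j_0}=\lambda_1<0$ implies $F_{\lambda_{j_0}}(\omega)=\mathcal{B}_\alpha$, one has $\Pi^0(\Gamma(v))=v$ identically, so $S^{\upsilon}_{loc}(\omega)$ \emph{equals} the ball $\{Y_\omega+v:\ |v|_\alpha\le R^{\upsilon}(\omega)\}$, which is strictly stronger than the inclusion you aim for. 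No new contraction estimate is needed.

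The gap in your argument is the claimed existence of a tempered $K_\upsilon(\omega)$ with
\[
\sup_{n\ge 0}e^{nt_0\upsilon}\,|\varphi^{nt_0}_\omega(\xi)-Y_{\theta_{nt_0}\omega}|_\alpha\le K_\upsilon(\omega)\,|\xi-Y_\omega|_\alpha\quad\text{for all }|\xi-Y_\omega|_\alpha\le 1.
\]
You justify this by applying Proposition~\ref{MMMKKKKKKTTTT} on the interval $[0,nt_0]$ and asserting that the resulting factor $\exp\big(E_\epsilon[R_1(\cdot)+R_2(\|X\|_{\gamma,[0,nt_0]},\|\mathbb{X}\|_{2\gamma,[0,nt_0]})]\big)$ is subexponential in $n$ ``after Birkhoff''. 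That is not correct: $R_1,R_2$ are polynomials evaluated at rough path and solution norms over $[0,nt_0]$, quantities that themselves grow at least polynomially in $n$; exponentiating a polynomial of $n$ yields super-exponential, not subexponential, growth. Birkhoff's theorem (as used in the proof of Theorem~\ref{stable_manifold}) controls $\frac{1}{n}\log^+ f(\theta_{nt_0}\omega)$ for a \emph{fixed one-step} functional $f$, not norms over growing time windows. The right way to exploit the one-step estimate is precisely the inductive/fixed-point argument inside \cite[Theorem~2.10]{GVR23} --- which is exactly what the paper invokes, and which already produces $S^{\upsilon}_{loc}(\omega)$ as a full ball when $\lambda_1<0$. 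Re-deriving a nonlinear contraction from scratch is both harder and, as written, circular: the size of the admissible ball must shrink with $\omega$ in a way that only the fixed-point construction pins down, so the uniform radius $1$ cannot work.
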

\begin{proof}
	The claim follows from a slight modification of \cite[Theorem 2.10]{GVR23}. We simplify our notations during the proof and adapt them to the current paper. Recall that $\lambda_{j_0} \coloneqq \sup\lbrace \lambda_{j} \, :\, \lambda_j<0\}$. From \cite[Equation 2.14]{GVR23}, 
	\begin{align*}
		S^{\upsilon}_{loc}(\omega) = \left\lbrace Y_\omega+\Pi^{0}\big(\Gamma(v)\big)\, :\,  \vert v\vert\leq R^{\upsilon}(\omega)\right\rbrace.
	\end{align*}
	Here $\Pi^{0}:\prod_{j\geqslant 0}\mathcal{B}_{\alpha}\rightarrow \mathcal{B}_\alpha$ is the projection in the first component and
	\begin{align*}
		\Gamma \colon F_{\lambda_{j_0}}(\omega)\cap \lbrace v\in F_{\lambda_{j_0}}(\omega): \vert v\vert_{\alpha}\leq R^{\upsilon}(\omega)\rbrace\rightarrow \prod_{j\geqslant 0}\mathcal{B}_{\alpha}
	\end{align*}
	is defined in \cite[Lemma 2.7]{GVR23}. In fact, $\Gamma$ is a fixed point of the map $I$, cf. \cite[Lemma 2.6]{GVR23}, i.e. $I\big(v,\Gamma(v)\big)=\Gamma(v)$. Since $\lambda_{j_0}=\lambda_1$, we conclude $F_{\lambda_{j_0}}(\omega)=\mathcal{B}_\alpha$ and in the last formula in \cite[page 122]{GVR23}, we have $\Pi^{0}\big(\Gamma(v)\big)=v$. Therefore,
	\begin{align*}
		S^{\upsilon}_{loc}(\omega) = \left\lbrace Y_\omega+v \, :\,  \vert v\vert\leq R^{\upsilon}(\omega)\right\rbrace.
	\end{align*}
	This yields the proof.
\end{proof}
\begin{corollary}\label{stability++}
	For every $0<\upsilon_1<\upsilon<-\lambda_1$, if $|\xi - Y_{\omega}| \leq R^{\upsilon}(\omega)$,
	\begin{align*}
		\sup_{t\geq 0}\exp(\upsilon_1t)\vert\varphi^{t}_{\omega}(\xi)-Y_{\theta_{t}\omega}\vert_{\alpha}<0.
	\end{align*}
	In particular, the solution $\varphi$ is exponentially stable around the stationary point $Y$.
\end{corollary}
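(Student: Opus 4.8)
The plan is to deduce the corollary from Lemma~\ref{stabilty_n}, Theorem~\ref{stable_manifold}(iv), and the continuous-time refinement discussed in Remark~\ref{RRFFCC}. The starting point is that, by Lemma~\ref{stabilty_n}, the condition $|\xi - Y_\omega|_\alpha \leq R^\upsilon(\omega)$ guarantees $\xi \in S^\upsilon_{loc}(\omega)$. Fix now $0 < \upsilon_1 < \upsilon < -\lambda_1$ and choose an auxiliary $\upsilon'$ with $\upsilon_1 < \upsilon' < \upsilon$. The idea is to combine the discrete-time decay built into the very definition of $S^\upsilon_{loc}(\omega)$ via \eqref{invari} — namely $\sup_{n\geq0}\exp(nt_0\upsilon)|\varphi^{nt_0}_\omega(\xi)-Y_{\theta_{nt_0}\omega}|_\alpha < \rho_2^\upsilon(\omega) < \infty$ — with the interpolation estimate \eqref{continous_part} from Corollary~\ref{CC_DDE} and the Birkhoff control \eqref{BRIKH} to fill in the times between the grid points $nt_0$.

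First I would write an arbitrary $t \geq 0$ as $t = nt_0 + \tilde t$ with $n \in \mathbb{N}_0$ and $0 \leq \tilde t < t_0$, and use the cocycle property to write
\begin{align*}
	\varphi^{t}_{\omega}(\xi)-Y_{\theta_{t}\omega} = \varphi^{\tilde t}_{\theta_{nt_0}\omega}\big(\varphi^{nt_0}_{\omega}(\xi)\big) - \varphi^{\tilde t}_{\theta_{nt_0}\omega}\big(Y_{\theta_{nt_0}\omega}\big).
\end{align*}
Applying Corollary~\ref{CC_DDE} exactly as in \eqref{continous_part} gives
\begin{align*}
	|\varphi^{t}_{\omega}(\xi)-Y_{\theta_{t}\omega}|_\alpha \leq M\,|\varphi^{nt_0}_{\omega}(\xi)-Y_{\theta_{nt_0}\omega}|_\alpha\,\exp\big(M\,\tilde S(0,\tilde t,\varphi^{nt_0}_{\omega}(\xi),Y_{\theta_{nt_0}\omega},\mathbf{X}(\theta_{nt_0}\omega))\big).
\end{align*}
By \eqref{invari} the first factor is bounded by $\rho_2^\upsilon(\omega)\exp(-nt_0\upsilon)$, so that
\begin{align*}
	\exp(\upsilon_1 t)\,|\varphi^{t}_{\omega}(\xi)-Y_{\theta_{t}\omega}|_\alpha \leq M\rho_2^\upsilon(\omega)\exp\big(\upsilon_1 t_0\big)\exp\big(-(\upsilon-\upsilon_1)nt_0\big)\exp\big(M\sup_{0\le \tilde t\le t_0}\tilde S(0,\tilde t,\varphi^{nt_0}_\omega(\xi),Y_{\theta_{nt_0}\omega},\mathbf{X}(\theta_{nt_0}\omega))\big).
\end{align*}
By \eqref{BRIKH}, on a set of full measure the quantity $\sup_{0\le\tilde t\le t_0}\tilde S(0,\tilde t,\varphi^{nt_0}_\omega(\xi),Y_{\theta_{nt_0}\omega},\mathbf{X}(\theta_{nt_0}\omega))$ grows subexponentially in $n$, hence is eventually dominated by $\frac{\upsilon-\upsilon_1}{2M}\,nt_0$; the finitely many remaining terms contribute a finite constant. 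This shows the supremum over $t\geq 0$ of $\exp(\upsilon_1 t)|\varphi^{t}_{\omega}(\xi)-Y_{\theta_t\omega}|_\alpha$ is finite, which is the claim (the ``$<0$'' in the statement should of course read ``$<\infty$'').

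I expect the only genuinely delicate point to be the passage from grid times to continuous time: one must be careful that the ``error exponent'' $\tilde S$ is evaluated along the shifted noise $\mathbf{X}(\theta_{nt_0}\omega)$ and at the moving base point $\varphi^{nt_0}_\omega(\xi)$, so that Birkhoff's theorem applies to the stationary sequence $\omega\mapsto\sup_{0\le\tilde t\le t_0}\tilde S(0,\tilde t,\varphi^{nt_0}_\omega(\xi),Y_{\theta_{nt_0}\omega},\mathbf{X}(\theta_{nt_0}\omega))$ — this is precisely the content of \eqref{BRIKH}, whose proof uses that $\sup_{0\le\rho\le1}|\rho(\varphi^{nt_0}_\omega(\xi),Y_{\theta_{nt_0}\omega})|_\alpha$ grows subexponentially together with the integrability bound \eqref{integrable bound}. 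Everything else is bookkeeping with the exponential rates. The final sentence, that $\varphi$ is exponentially stable around $Y$, is then immediate since $\upsilon_1>0$ can be taken arbitrarily in $(0,-\lambda_1)$ and $R^\upsilon(\omega)>0$.
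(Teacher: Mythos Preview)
Your proof is correct and follows exactly the approach the paper intends: the paper's proof reads simply ``Follows from Remark~\ref{RRFFCC} and Lemma~\ref{stabilty_n}'', and you have spelled out precisely that argument --- using Lemma~\ref{stabilty_n} to place $\xi$ in $S^\upsilon_{loc}(\omega)$, then invoking the discrete bound \eqref{invari} together with the interpolation \eqref{continous_part} and the Birkhoff control \eqref{BRIKH} from Remark~\ref{RRFFCC} to pass to continuous time. You also correctly spotted the typo ``$<0$'' for ``$<\infty$''. (Two minor cosmetic points: the auxiliary $\upsilon'$ you introduce is never used and can be dropped, and the growth of $\tilde S$ guaranteed by \eqref{BRIKH} is sublinear rather than subexponential --- but your conclusion from it is the right one.)
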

\begin{proof}
	Follows from Remark \ref{RRFFCC} and Lemma \ref{stabilty_n}. 
\end{proof}

We are now ready to formulate our main result of this section.
\begin{theorem}\label{thm:exp_stability}
	Assume in Equation \eqref{SPDE_EQU} that $F(0)=G(0)=0$ and that for some $\lambda<0$,
	\begin{align}\label{deterministic_semigroup}
		\limsup_{t\rightarrow\infty}\frac{1}{t}\log\Vert S_{t}\Vert_{\mathcal{L}(\mathcal{B}_{\alpha},\mathcal{B}_{\alpha})}\leq\lambda.
	\end{align}
	In addition, accept the Assumption \ref{ASSUU}. Then zero is a stationary point for our equation, and there exists an $\epsilon_0>0$  such that for $\theta\in \lbrace 0,\gamma,2\gamma\rbrace$, if 
	\begin{align}\label{epsilon_0}
		\max\lbrace	\Vert D_{0}F\Vert_{\mathcal{L}(\mathcal{B}_{\alpha-\sigma},\mathcal{B}_\alpha)}, \Vert D_{0}G\Vert_{\mathcal{L}(\mathcal{B}_{\alpha-\eta},\mathcal{B}_{\alpha-\theta-\eta}^{n})}\rbrace\leq \epsilon_0,
	\end{align}
	the first Lyapounov exponent is negative. In particular, the system is exponentially is stable around a random neighborhood of zero.
\end{theorem}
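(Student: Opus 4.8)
The plan is to deduce the statement from the general stable manifold/stability machinery developed above, specialised to the stationary point $Y_\omega \equiv 0$. First I would verify that $0$ is indeed a stationary point: since $F(0)=G(0)=0$, the constant path $Z_t\equiv 0$ solves \eqref{SPDE_EQU} with initial value $0$, hence $\varphi^t_\omega(0)=0$ for all $t$ and $\omega$, which is Definition \ref{def:stationary_point}. Moreover the integrability condition \eqref{iioo} holds trivially because $|0|_\alpha=0\in\bigcap_{p\ge1}\mathcal{L}^p(\Omega)$. Thus Theorem \ref{MET} applies and produces a Lyapunov spectrum $\lambda_1>\lambda_2>\dots$ for the linearised cocycle $\psi^t_\omega=D_0\varphi^t_\omega$, which satisfies \eqref{LLLL} with $\xi=0$, i.e. it is the mild solution of the linear equation $\mathrm dV_t = AV_t\,\mathrm dt + D_0F[V_t]\,\mathrm dt + D_0G[V_t]\circ\mathrm d\mathbf X_t$. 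Note also that the compactness hypothesis needed in Theorem \ref{MET} is supplied by Remark \ref{COMPACTEM_12} whenever the embedding $\mathcal{B}_{\alpha+\epsilon}\hookrightarrow\mathcal{B}_\alpha$ is compact; I would either add this as a standing hypothesis or note it is part of Assumption \ref{ASSUU}.

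Next I would bound $\lambda_1$ from above. The key point is that $\lambda_1 = \lim_{t\to\infty}\frac1t\log^+\|\psi^t_\omega\|_{\mathcal{L}(\mathcal{B}_\alpha)}$ (a consequence of the MET together with ergodicity), so it suffices to produce a deterministic-in-the-limit upper bound for $\frac1t\log\|\psi^t_\omega\|$. Writing the mild formulation of \eqref{LLLL} around $0$ and using $D_0F, D_0G$ in place of the full nonlinearities, one gets for $t-s\le 1$, via \eqref{Drift_Estimate}-type estimates for the drift and \eqref{BBBBBB} for the rough integral, an inequality of the shape
\begin{align*}
	\|(\psi^\cdot_\omega[\zeta],\,\ldots)\|_{\mathcal{D}^\gamma_{\mathbf X,\alpha}([s,t])}
	\le K\Big[\,\|S_{t-s}\|_{\mathcal{L}(\mathcal{B}_\alpha)}|\psi^s_\omega[\zeta]|_\alpha
	+ \big((t-s)^{1-\bar\sigma}\|D_0F\| + (t-s)^{\gamma-\eta}\varrho_\gamma(\mathbf X,[s,t])\|D_0G\|^{\le3}\big)\|(\psi^\cdot_\omega[\zeta],\ldots)\|\Big],
\end{align*}
where $K$ is a universal constant. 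Choosing $\epsilon_0$ small enough (depending on $K$, $\bar\sigma$, $\gamma$, $\eta$) and the greedy time-step $\tilde\tau$ small enough, the self-referential term can be absorbed on intervals controlled by the greedy points of $W_{\mathbf X,\gamma,\eta_1}$, exactly as in the proof of Proposition \ref{linear boundd}. Iterating over $N([s,t],\eta_1,\chi,\mathbf X)$ such intervals then yields
$\|\psi^t_\omega\|_{\mathcal{L}(\mathcal{B}_\alpha)} \le C\,\|S_{t}\|_{\mathcal{L}(\mathcal{B}_\alpha)}^{?}\cdot(\text{polynomial in }\varrho_\gamma)\cdot\exp\!\big(N([0,t],\eta_1,\chi,\mathbf X)\,c(\epsilon_0)\big)$, where $c(\epsilon_0)\to0$ as $\epsilon_0\to0$.

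The final step is the asymptotic analysis. Dividing by $t$ and letting $t\to\infty$: the term $\frac1t\log\|S_t\|$ contributes at most $\lambda<0$ by \eqref{deterministic_semigroup}; the polynomial-in-$\varrho_\gamma$ factor contributes $0$ since $\varrho_\gamma(\mathbf X,[0,t])$ grows subexponentially (indeed $\frac1t\log\varrho_\gamma(\mathbf X,[0,t])\to0$ a.s., which follows from the cocycle property of $\mathbf X$ and Birkhoff applied to the stationary increments, or directly from the $\mathcal{L}^p$ bounds on $\varrho_\gamma$ over unit intervals); and $\frac1t N([0,t],\eta_1,\chi,\mathbf X)$ converges a.s. to a finite constant $\bar N$ by Birkhoff's ergodic theorem applied to the subadditive/additive structure of $N$ (using $N([0,t+1])\le N([0,t])+N([t,t+1])+1$ and the integrability of $N$ over unit intervals from \eqref{tail}). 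Hence
\[
	\lambda_1 \le \lambda + c(\epsilon_0)\,\bar N.
\]
Since $\bar N$ does not depend on $\epsilon_0$ and $c(\epsilon_0)\to0$, choosing $\epsilon_0$ small enough forces $\lambda_1<0$. Once $\lambda_1<0$, Corollary \ref{stability++} (via Lemma \ref{stabilty_n} and Remark \ref{RRFFCC}) gives the existence of a random neighbourhood of $0$ on which $\varphi$ is exponentially stable, completing the proof.

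The main obstacle I anticipate is the second step: isolating the dependence of the per-step growth constant on $\|D_0F\|$ and $\|D_0G\|$ so that it genuinely vanishes as $\epsilon_0\to0$, \emph{uniformly} in the (random) rough path data. In Proposition \ref{linear boundd} the analogous constant $M$ is universal but the factor multiplying the self-referential norm is $(t-s)^{\gamma-\eta}\varrho_\gamma^3(\mathbf X)$, not small; here we instead want the linear-growth structure to let us trade a short time-step (or, on greedy intervals, a small $\chi$) against $\epsilon_0$, so that the number of greedy steps $\bar N$ stays fixed while $c(\epsilon_0)\to0$. Making this bookkeeping precise — in particular ensuring that shrinking $\epsilon_0$ does not force the greedy partition to refine — is the delicate point; it works because the $\|D_0G\|$-term carries the same $(t-s)^{\gamma-\eta}\varrho_\gamma(\mathbf X)$ weight as the greedy control, so a single factor $\epsilon_0$ can be pulled out in front of the whole self-referential contribution without altering $\chi$.
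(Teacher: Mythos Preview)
Your plan diverges from the paper's argument, and the divergence hides a genuine gap. The step that does not go through is the claimed pathwise bound
\[
\|\psi^t_\omega\|_{\mathcal{L}(\mathcal{B}_\alpha)} \le C\,\|S_{t}\|_{\mathcal{L}(\mathcal{B}_\alpha)}\cdot(\text{poly in }\varrho_\gamma)\cdot\exp\!\big(N([0,t],\eta_1,\chi,\mathbf X)\,c(\epsilon_0)\big).
\]
Iterating over greedy intervals $[\tau_n,\tau_{n+1}]$ gives, at best, a recursion of the form $|\psi^{\tau_{n+1}}|_\alpha \le \big(\|S_{\tau_{n+1}-\tau_n}\|_{\mathcal L(\mathcal B_\alpha)} + c(\epsilon_0)\big)|\psi^{\tau_n}|_\alpha$, and the product $\prod_n \|S_{\tau_{n+1}-\tau_n}\|$ is \emph{not} controlled by $\|S_t\|$: submultiplicativity goes the wrong way, and for a general analytic semigroup satisfying \eqref{SEM_I} each short-time factor $\|S_{\tau_{n+1}-\tau_n}\|$ is of order one (possibly $>1$), so $\frac1t\log\prod_n\|S_{\tau_{n+1}-\tau_n}\|$ tends to a nonnegative constant times $\bar N$, not to $\lambda$. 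Your own ``$?$'' on the exponent signals this. The obstacle you identify (whether $\chi$ depends on $\epsilon_0$) is real but secondary; the structural issue is that a greedy-interval iteration cannot see the long-time decay \eqref{deterministic_semigroup}, which only manifests over macroscopic times.

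The paper avoids this entirely by working at a single fixed time. It uses the Furstenberg--Kesten representation $\lambda_1=\inf_{t>0}\frac1t\,\mathbb E\big[\log\|\psi^t_\omega\|_{\mathcal L(\mathcal B_\alpha)}\big]$, picks $t_0$ large enough that $\frac{1}{t_0}\log\|S_{t_0}\|<0$, and then observes that $\psi^{t_0}_\omega\to S_{t_0}$ in $\mathcal L(\mathcal B_\alpha)$ as $\max\{\|D_0F\|,\|D_0G\|\}\to 0$, by continuous dependence of the (linear) mild solution on its coefficients. Dominated convergence --- the dominating function coming precisely from the integrable bound \eqref{integrable bound_2} of Theorem~\ref{thm:integrability_RPDE}, which is uniform in $\epsilon_0$ since the coefficients are linear --- then gives $\frac{1}{t_0}\mathbb E[\log\|\psi^{t_0}_\omega\|\vee(-N)]\to\frac{1}{t_0}\log\|S_{t_0}\|\vee(-N)$, whence $\lambda_1<0$ for $\epsilon_0$ small. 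The final stability statement follows from Corollary~\ref{stability++} exactly as you say. Your verification that $0$ is stationary and that \eqref{iioo} holds is correct and matches the paper.
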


\begin{proof}
	Recall that for $\xi=0$,
	\begin{align}\label{BVXZZ}
		\varphi^{t}_{\omega}(0)=\int_{0}^{t}S_{t-\tau}F(\varphi^{\tau}_{\omega}(0)) \, \mathrm{d}\tau+\int_{0}^{t}S_{t-\tau}G(\varphi^{\tau}_{\omega}(0))\circ\mathrm{d}\mathbf{X}_\tau(\omega)
	\end{align}
	where
	\begin{align*}
		&\int_{0}^{t}S_{t-\tau}G(\varphi^{\tau}_{\omega}(0))\circ\mathrm{d}\mathbf{X}_\tau(\omega)\\
		=\ &\lim_{\substack{|\pi|\rightarrow 0,\\ \pi=\lbrace 0 = \tau_0 < \tau_{1} < \ldots < \tau_{m}=t \rbrace}}\sum_{0\leq j<m}\big{[}S_{t-\tau_j}\varphi^{\tau_j}_{\omega}(0)\circ (\delta X)_{\tau_j,\tau_{j+1}}(\omega)+S_{t-\tau_j}D_{\varphi^{\tau_j}_{\omega}(0)}G[G(\varphi^{\tau_j}_{\omega}(0))]\circ\mathbb{X}_{\tau_j,\tau_{j+1}}\big{]}.
	\end{align*}
	Therefore $\varphi^{t}_{\omega}(0)=0$ solves \eqref{BVXZZ}, thus $0$ is indeed a fixed point by uniqueness of the equation. From \cite[3.3.2 Theorem]{Arn98},
	\begin{align}\label{FR_1}
		\lambda_1 = \inf_{t \geq 0} \frac{1}{t} \int_{\Omega}\log \Vert \psi^{t}_{\omega}\Vert_{\mathcal{L}(\mathcal{B}_{\alpha},\mathcal{B}_{\alpha})}\ \mathbb{P}(\mathrm{d}\omega) \leq  \frac{1}{t_0}\int_{\Omega}\log \Vert \psi^{t_0}_{\omega}\Vert_{\mathcal{L}(\mathcal{B}_{\alpha},\mathcal{B}_{\alpha})}\ \mathbb{P}(\mathrm{d}\omega)
	\end{align}
	for every $t_0 > 0$ where
	\begin{align*}
		\psi^{t}_{\omega}(\zeta) = S_{t}\zeta+\int_{0}^{t}S_{t-\tau}D_{0}F[\psi^{\tau}_{\omega}(\zeta)] \, \mathrm{d}\tau+\int_{0}^{t}S_{t-\tau}D_{0}G[\psi^{\tau}_{\omega}(\zeta)]\circ\mathrm{d}\mathbf{X}_\tau(\omega).
	\end{align*}
	From \eqref{deterministic_semigroup}, we can choose $t_{0}>0$ large enough such that $\frac{1}{t_0}\log\Vert S_{t_0}\Vert_{\mathcal{L}(\mathcal{B}_{\alpha},\mathcal{B}_{\alpha})}<0$. From 
	the fact that $\Vert \psi^{t_0}_{\omega}\Vert_{\mathcal{L}(\mathcal{B}_{\alpha},\mathcal{B}_{\alpha})}$ depends continuously on $\Vert D_{0}F\Vert_{\mathcal{L}(\mathcal{B}_{\alpha-\sigma},\mathcal{B}_\alpha)}$ and $\Vert D_{0}G\Vert_{\mathcal{L}(\mathcal{B}_{\alpha-\eta},\mathcal{B}_{\alpha-\theta-\eta}^{n})}$, we can conclude that 
	\begin{align}
		\frac{1}{t} \log\Vert \psi^{t_0}_{\omega}\Vert_{\mathcal{L}(\mathcal{B}_\alpha),\mathcal{B}_\alpha}\rightarrow\frac{1}{t_0}\log\Vert S_{t_0}\Vert_{\mathcal{L}(\mathcal{B}_\alpha,\mathcal{B}_\alpha))}\ \ a.s 
	\end{align}
	as $\max\lbrace\Vert D_{0}F\Vert_{\mathcal{L}(\mathcal{B}_{\alpha-\sigma},\mathcal{B}_\alpha)},\Vert D_{0}G\Vert_{\mathcal{L}(\mathcal{B}_{\alpha-\eta},\mathcal{B}_{\alpha-\theta-\eta}^{n})}\rbrace\rightarrow 0$. Let us to fix $N\geq 1$. Since $D_{0}F$ and $D_{0}G$ are linear, from the uniform bound that is provided in \eqref{integrable bound_2} and the dominated convergence theorem,
	\begin{align}\label{first_Lyapounov_exponent}
		\lambda_1\leq\int_{\Omega}\frac{1}{t_0}\log \Vert \psi^{t_0}_{\omega}\Vert_{\mathcal{L}(\mathcal{B}_{\alpha},\mathcal{B}_{\alpha})}\vee (-N)\ \mathbb{P}(\mathrm{d}\omega)\longrightarrow \frac{1}{t_0}\log\Vert S_{t_0}\Vert_{\mathcal{L}(\mathcal{B}_\alpha,\mathcal{B}_\alpha))}\vee (-N),
	\end{align}
	as $\max\lbrace\Vert D_{0}F\Vert_{\mathcal{L}(\mathcal{B}_{\alpha-\sigma},\mathcal{B}_\alpha)},\Vert D_{0}G\Vert_{\mathcal{L}(\mathcal{B}_{\alpha-\eta},\mathcal{B}_{\alpha-\theta-\eta}^{n})}\rbrace\rightarrow 0$. Therefore, for small  $\epsilon_0>0$, when \eqref{epsilon_0} holds and $N,t_0$ are selected to be large, from \eqref{deterministic_semigroup} and \eqref{first_Lyapounov_exponent} 
	\begin{align*}
		\lambda_1\leq\int_{\Omega}\frac{1}{t_0}\log \Vert \psi^{t_0}_{\omega}\Vert_{\mathcal{L}(\mathcal{B}_{\alpha},\mathcal{B}_{\alpha})} \mathbb{P}(\mathrm{d}\omega)<0 .   
	\end{align*}
	Our claim now follows from Corollary \ref{stability++}.
\end{proof}

\section{Applications}
In this part, we will illustrate our results by giving several examples where they can be applied. Before doing this, we will recall some background about Sobolev spaces that we will need in the sequel. Remember that one important condition in the Multiplicative Ergodic Theorem was the compactness of the operator. In Remark \ref{COMPACTEM}, we mentioned that if the embedding 
\begin{align*}
	\operatorname{id} \colon \mathcal{B}_{\alpha+\epsilon}\rightarrow \mathcal{B}_{\alpha}
\end{align*}
is compact for some $0< \epsilon<\min\lbrace 1-\sigma ,\gamma-\eta\rbrace$, the compactness of $\psi^{T}_{\omega}$ follows. In the following subsection, we will therefore recall some compact embedding theorems as well. 

\subsection{Sobolev spaces and compact embeddings}
Note that all results stated in this section can be found in \cite{HT07}. \smallskip

For $s\in\mathbb{R}$, set $w_{s}(\xi) \coloneqq (1+\Vert \xi\Vert^2)^{\frac{s}{2}}$ and
\begin{align*}
	H^{s}(\mathbb{R}^d) \coloneqq \lbrace f\in\mathscr{S}^{\prime}(\mathbb{R}^d):w_s\mathcal{F}f\in \mathcal{L}^{2}(\mathbb{R}^d)\rbrace,
\end{align*}
where $\mathscr{S}^{\prime}(\mathbb{R}^d)$ denotes the space of tempered distributions and 
\begin{align*}
	(\mathcal{F}f)(\xi) \coloneqq (2\pi)^{\frac{-d}{2}}\int_{\mathbb{R}^d}\exp(ix \circ \xi)f(x) \, \mathrm{d}x
\end{align*} 
is the Fourier transform of $f$. Note that $H^{s}(\mathbb{R}^d)$ is a Hilbert space with inner product 
\begin{align*}
	\big{\langle}f,g\big{\rangle}_{H^{s}(\mathbb{R}^d)} \coloneqq \int_{\mathbb{R}^d}w_{s}(\xi)\mathcal{F}f(\xi)\circ\overline{w_{s}(\xi)\mathcal{F}g(\xi)} \, \mathrm{d}\xi.
\end{align*}
By $W^{k}_{2}$, we denote the classical Sobolev spaces. For $s=k+\sigma$ with $k\in\mathbb{N}\cup\lbrace 0\rbrace$ and $0<\sigma<1$, we say that $f\in W^{s}_{2}(\mathbb{R}^d)$ if $f\in W^{k}_{2}(\mathbb{R}^d)$ and
\begin{align}\label{nono}
	\Vert f\Vert_{W^{s}_{2}(\mathbb{R}^d)} \coloneqq \Vert f\Vert_{W^{k}_{2}(\mathbb{R}^d)} + \left( \sum_{\vert\alpha\vert=k} \iint_{\mathbb{R}^{2d}}\frac{\Vert D^{\alpha}f(x)-D^{\alpha}f(y)\Vert^2}{\Vert x-y\Vert^{d+2\sigma}} \, \mathrm{d}x \, \mathrm{d}y\right)^{\frac{1}{2}}.
\end{align}
It can be proven that $W^{s}_{2}(\mathbb{R}^d)=H^{s}(\mathbb{R}^d)$ and that the two norms $\Vert \cdot \Vert_{W^{s}_{2}(\mathbb{R}^d)}$ and $\Vert \cdot\Vert_{H^{s}(\mathbb{R}^d)}$ are equivalent. For an arbitrary bounded $C^{\infty}$-domain $\mathcal{D}\subset\mathbb{R}^d$, we define 
\begin{align}\label{A4}
	W^{s}_{2}(\mathcal{D}) \coloneqq \lbrace f\in\mathcal{L}^{2}(\mathcal{D}):\exists g\in W^{s}_{2}(\mathbb{R}^d)\ \  \text{such that}\ \  g|_{\mathcal{D}}=f\rbrace
\end{align}
and equip that space with the norm
\begin{align}\label{A5}
	\Vert f\Vert_{W^{s}_{2}(\mathcal{D})} \coloneqq \inf\lbrace\Vert g\Vert_{W^{s}_{2}(\mathbb{R}^d)}\, :\, g\in W^{s}_{2}(\mathbb{R}^d)\  \text{and}\  g|_{\mathcal{D}}=f\rbrace.
\end{align}
Similar to \eqref{nono}, for $s=k+\sigma$ with $k\in\mathbb{N}\cup\lbrace 0\rbrace$ and $0<\sigma<1$, $\Vert \cdot \Vert_{W^{s}_{2}(\mathcal{D})}$ is equivalent to the norm 
\begin{align*}
	\Vert f\Vert_{*,W^{s}_{2}(\mathcal{D})} \coloneqq \Vert f\Vert_{W^{k}_{2}(\mathcal{D})} + \left( \sum_{\vert\alpha\vert=k}\iint_{\mathcal{D}\times \mathcal{D}}\frac{\Vert D^{\alpha}f(x)-D^{\alpha}f(y)\Vert^2}{\Vert x-y\Vert^{d+2\sigma}} \, \mathrm{d} x \, \mathrm{d}y \right)^{\frac{1}{2}}.
\end{align*}

Following Lemma is a classical result.
\begin{lemma}
	Let $\mathcal{D}$ be a bounded $C^{\infty}$-domain in $\mathbb{R}^d$. For $0<\theta<1$, let $X_{\theta} = [X_0, X_1]_\theta$ be the complex interpolation space for given Banach spaces $X_{0}$ and $X_1$ with $X_{0}\subset X_1$. Let $0<s<t$. Then we have the following:
	\begin{itemize}
		\item [(i)] $[W^{t}_{2}(\mathcal{D}),W^{s}_{2}(\mathcal{D})]_{\theta}=W^{(1-\theta)s+\theta t}_{2}(\mathcal{D})$.
		\item [(ii)] The embedding 
		\begin{align*}
			\operatorname{id} \colon W^{s}_{2}(\mathcal{D})\rightarrow W^{t}_{2}(\mathcal{D})
		\end{align*}
		is compact.
	\end{itemize}
\end{lemma}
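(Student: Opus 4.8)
Both assertions are classical facts about Sobolev spaces on bounded smooth domains, and I would prove them by transferring the corresponding statements on $\mathbb{R}^d$ (respectively on a torus) to $\mathcal{D}$ via a linear extension operator that is bounded \emph{simultaneously} on the whole scale $W^r_2$ with $r$ in the relevant range. Such an operator exists precisely because $\mathcal{D}$ is a bounded $C^{\infty}$-domain (Stein's extension operator, or a universal extension à la Rychkov): there are bounded linear maps $E\colon W^r_2(\mathcal{D})\to W^r_2(\mathbb{R}^d)$ and the restriction map $R\colon W^r_2(\mathbb{R}^d)\to W^r_2(\mathcal{D})$, with $R\circ E=\operatorname{id}$ for all $r$ in a neighbourhood of $[0,t]$. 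Thus each $W^r_2(\mathcal{D})$ is, uniformly in $r$, a retract of $W^r_2(\mathbb{R}^d)$; this is the single structural input used for both parts.

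\emph{Part (i).} First I would prove the identity on $\mathbb{R}^d$. By the definition recalled in the text, the Fourier transform $\mathcal{F}$ is an isometric isomorphism of $H^r(\mathbb{R}^d)$ onto the weighted space $\mathcal{L}^2(\mathbb{R}^d;w_r^2)$. Complex interpolation of weighted $\mathcal{L}^2$-spaces (the Stein--Weiss/Calderón theorem) gives $[\mathcal{L}^2(w_t^2),\mathcal{L}^2(w_s^2)]_\theta=\mathcal{L}^2(w_t^{2(1-\theta)}w_s^{2\theta})=\mathcal{L}^2(w_{(1-\theta)t+\theta s}^2)$, since $w_t^{1-\theta}w_s^{\theta}=w_{(1-\theta)t+\theta s}$. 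Transporting back by $\mathcal{F}^{-1}$ yields $[H^t(\mathbb{R}^d),H^s(\mathbb{R}^d)]_\theta=H^{(1-\theta)s+\theta t}(\mathbb{R}^d)$. Then the retraction argument: complex interpolation commutes with a retraction, so $[W^t_2(\mathcal{D}),W^s_2(\mathcal{D})]_\theta=R\big([H^t(\mathbb{R}^d),H^s(\mathbb{R}^d)]_\theta\big)$, and using $E$ to see that the image is \emph{all} of $W^{(1-\theta)s+\theta t}_2(\mathcal{D})$ with equivalent norms, together with the identification $W^r_2(\mathbb{R}^d)=H^r(\mathbb{R}^d)$ already recorded in the excerpt, one concludes $[W^t_2(\mathcal{D}),W^s_2(\mathcal{D})]_\theta=W^{(1-\theta)s+\theta t}_2(\mathcal{D})$.

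\emph{Part (ii).} Since $s<t$ one has $W^t_2(\mathcal{D})\subset W^s_2(\mathcal{D})$, so the map in (ii) is to be read as the inclusion $\operatorname{id}\colon W^t_2(\mathcal{D})\hookrightarrow W^s_2(\mathcal{D})$ (the only way it is a well-defined operator), and the claim is that it is compact. The plan is to reduce to a torus. Fix a cube $Q\supset\overline{\mathcal{D}}$, identify $Q$ with a flat torus $\mathbb{T}^d$, and pick $\phi\in C_c^{\infty}(Q)$ with $\phi\equiv 1$ on a neighbourhood of $\overline{\mathcal{D}}$. Then $\widetilde{E}\colon W^t_2(\mathcal{D})\to H^t(\mathbb{T}^d)$, $\widetilde{E}f:=\phi\cdot(Ef)$ (extended periodically), is bounded, the restriction map $\widetilde{R}\colon H^s(\mathbb{T}^d)\to W^s_2(\mathcal{D})$ is bounded, and the inclusion factors as $\operatorname{id}_{\mathcal{D}}=\widetilde{R}\circ\iota_{\mathbb{T}^d}\circ\widetilde{E}$ with $\iota_{\mathbb{T}^d}\colon H^t(\mathbb{T}^d)\hookrightarrow H^s(\mathbb{T}^d)$. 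The operator $\iota_{\mathbb{T}^d}$ is compact: in the Fourier-series representation it is the Fourier multiplier with symbol $(1+|k|^2)^{(s-t)/2}\to 0$ as $|k|\to\infty$, hence the operator-norm limit of its finite-rank Fourier truncations. Therefore $\operatorname{id}_{\mathcal{D}}$ is compact, being a composition bounded $\circ$ compact $\circ$ bounded. (An alternative route staying on $\mathbb{R}^d$ is to treat $s=0$ by the Fréchet--Kolmogorov criterion in $\mathcal{L}^2(\mathcal{D})$ using the Gagliardo seminorm, then bootstrap via (i); the torus reduction is cleaner.) Neither part is deep: the only genuine obstacle is verifying the existence of a single extension operator $E$ working for all the exponents in play — which is exactly where the smoothness of $\partial\mathcal{D}$ is used — and the bookkeeping making the retraction argument in (i) and the factorisation in (ii) precise; the remaining ingredients (interpolation of weighted $\mathcal{L}^2$-spaces; compactness of a Fourier multiplier with vanishing symbol on the torus) are standard.
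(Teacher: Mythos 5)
The paper does not prove this lemma at all: it is introduced with ``the following Lemma is a classical result'' and the whole subsection is prefaced by a blanket citation to the book of Haroske--Triebel, so there is no in-paper argument to compare against. Your proposal supplies the standard textbook proof and it is correct: the uniform Stein/Rychkov extension operator makes each $W^r_2(\mathcal{D})$ a retract of $H^r(\mathbb{R}^d)$; on $\mathbb{R}^d$ the interpolation identity follows from the Fourier isometry onto weighted $\mathcal{L}^2$ plus Stein--Weiss, and the retraction/coretraction principle transfers it to $\mathcal{D}$; compactness follows by factoring the inclusion through a torus, where the embedding is a Fourier multiplier with vanishing symbol and hence a norm limit of finite-rank truncations. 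You were also right to flag that part (ii) as literally stated (from $W^s_2$ into $W^t_2$ with $s<t$) is not a well-defined map and must be read as the inclusion $W^t_2(\mathcal{D})\hookrightarrow W^s_2(\mathcal{D})$; the paper's indices in (i) suffer from the same $\theta\leftrightarrow 1-\theta$ relabelling, which also leaks into your own write-up (you derive the exponent $(1-\theta)t+\theta s$ on the weighted $\mathcal{L}^2$ side but then state $(1-\theta)s+\theta t$ in the conclusion) --- this is purely a convention mismatch, not a mathematical gap.
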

A natural extension of $H^{s}(\mathbb{R}^d)$ are the Bessel potential spaces. For $1<p<\infty$, set
\begin{align*}
	H^{s}_p(\mathbb{R}^d):=\lbrace f\in\mathscr{S}^{\prime}(\mathbb{R}^d) \, :\, \mathcal{F}^{-1}(w_s\mathcal{F}f)\in \mathcal{L}^{p}(\mathbb{R}^d)\rbrace.
\end{align*}
For a domain $\mathcal{D}$ of $\mathbb{R}^d$, the space $H^{s}_p(\mathcal{D})$ is defined similar to $W^{s}_{2}(\mathcal{D})$ as in \eqref{A4} and \eqref{A5}.

\begin{lemma}\label{CCPP}
	For $s\in\mathbb{N}_0$, $H^{s}_p(\mathcal{D})$ is equal to the classical Sobolev space $W^{s}_{p}(\mathcal{D})$ with an equivalent norm. Moreover, for $0<s<t$,
	\begin{align*}
		[H^{s}_p(\mathcal{D}),H^{t}_p(\mathcal{D})]_{\theta}=H^{(1-\theta)s+\theta t}_p(\mathcal{D}).
	\end{align*}
	If $s>0$, the embedding
	\begin{align*}
		\operatorname{id} \colon H^{s}_p(\mathcal{D})\rightarrow \mathcal{L}^{p}(\mathcal{D})
	\end{align*}
	is compact.
\end{lemma}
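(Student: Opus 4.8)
The statement has three parts, and each is a classical fact about Bessel potential spaces (all contained in \cite{HT07}); the plan is to reduce everything to the corresponding statement on $\mathbb{R}^d$ and then transfer to $\mathcal{D}$ by means of a linear extension operator, paralleling the $p=2$ case treated in the previous lemma.

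First, for $s\in\mathbb{N}_0$ I would show $H^s_p(\mathbb{R}^d)=W^s_p(\mathbb{R}^d)$ with equivalent norms. On the Fourier side this reduces to the fact that, for each multi-index $\beta$ with $|\beta|\le s$, the symbol $\xi\mapsto \xi^{\beta}(1+\|\xi\|^2)^{-s/2}$ is an $L^p$-multiplier, and conversely that $(1+\|\xi\|^2)^{s/2}\big(1+\sum_{|\beta|\le s}|\xi^{\beta}|^2\big)^{-1/2}$ is one; both follow from the Mikhlin--H\"ormander multiplier theorem (which uses $1<p<\infty$). To pass to $\mathcal{D}$ I would invoke a bounded linear extension operator $E$ — Stein's extension operator, which for a bounded $C^\infty$-domain maps $W^s_p(\mathcal{D})$ into $W^s_p(\mathbb{R}^d)$ and $H^{\sigma}_p(\mathcal{D})$ into $H^{\sigma}_p(\mathbb{R}^d)$ boundedly for all $\sigma\ge 0$ in the relevant range — together with the restriction operator $R$ (bounded between the corresponding spaces by the very definition \eqref{A4}--\eqref{A5}) satisfying $R\circ E=\mathrm{id}$.

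For the interpolation identity, the key tool on $\mathbb{R}^d$ is the lift $J^{\sigma}:=(I-\Delta)^{\sigma/2}$ (Fourier multiplication by $w_{\sigma}$), which is an isomorphism $H^{\mu}_p(\mathbb{R}^d)\to H^{\mu-\sigma}_p(\mathbb{R}^d)$ for all real $\mu,\sigma$; in particular $H^{\sigma}_p(\mathbb{R}^d)=J^{-\sigma}L^p(\mathbb{R}^d)$. Using the analytic family $\{J^{z}\}_{z\in\mathbb{C}}$ — whose purely imaginary powers are uniformly bounded on $L^p$ by the Mikhlin theorem — the Calder\'on complex interpolation construction yields $[H^{s}_p(\mathbb{R}^d),H^{t}_p(\mathbb{R}^d)]_{\theta}=H^{(1-\theta)s+\theta t}_p(\mathbb{R}^d)$. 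On $\mathcal{D}$, the pair $(E,R)$ from the previous step is a coretraction--retraction pair that is admissible simultaneously for the whole scale $H^{\sigma}_p$, so the general principle that complex interpolation commutes with retractions gives $[H^{s}_p(\mathcal{D}),H^{t}_p(\mathcal{D})]_{\theta}=H^{(1-\theta)s+\theta t}_p(\mathcal{D})$.

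Finally, for the compact embedding when $s>0$: choose an integer $m>s$, so that $\theta:=s/m\in(0,1)$ and, by the previous part, $H^{s}_p(\mathcal{D})=[L^p(\mathcal{D}),H^{m}_p(\mathcal{D})]_{\theta}=[L^p(\mathcal{D}),W^m_p(\mathcal{D})]_{\theta}$. The embedding $\mathrm{id}\colon W^m_p(\mathcal{D})\to L^p(\mathcal{D})$ is compact by the Rellich--Kondrachov theorem (here $\mathcal{D}$ bounded and smooth is used), while $\mathrm{id}\colon L^p(\mathcal{D})\to L^p(\mathcal{D})$ is trivially bounded; I would then apply the classical one-sided interpolation theorem for compact operators (if $T$ is bounded from $A_0$ to $B$ and compact from $A_1$ to $B$, then $T$ is compact from $[A_0,A_1]_{\theta}$ to $B$ for $0<\theta<1$) to conclude that $\mathrm{id}\colon H^{s}_p(\mathcal{D})\to L^p(\mathcal{D})$ is compact. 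I expect the only real bookkeeping subtlety to be ensuring that a \emph{single} extension operator serves the entire scale $H^{\sigma}_p$ with $\sigma\in\{0,s,t,m\}$ so that it is a genuine admissible coretraction for the couples appearing above; Stein's (or Rychkov's universal) extension operator supplies exactly this, so no genuine obstacle remains — everything else is either a direct multiplier estimate or a citation to a classical result.
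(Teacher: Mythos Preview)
The paper does not prove this lemma at all: the subsection on Sobolev spaces opens with the blanket statement that ``all results stated in this section can be found in \cite{HT07}'', and Lemma~\ref{CCPP} is simply stated as a classical fact without any argument. Your sketch is therefore far more detailed than anything the paper supplies, and the route you outline (Mikhlin multipliers for $H^s_p=W^s_p$ at integer $s$, the lift $J^\sigma=(I-\Delta)^{\sigma/2}$ together with a retraction--coretraction pair $(E,R)$ for the interpolation identity, and Rellich--Kondrachov plus interpolation of compactness for the embedding) is exactly the standard one found in Triebel's and Haroske--Triebel's monographs.

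One small caution on the last step: the ``one-sided'' compactness theorem for \emph{complex} interpolation that you invoke (bounded from $A_0$, compact from $A_1$, fixed target $B$) is a genuine theorem but not as elementary as the other ingredients; the general statement is due to Cwikel (1992), whereas the classical Lions--Peetre result covers real interpolation. If you want to avoid this subtlety altogether, an easy bypass is to factor the embedding as $H^s_p(\mathcal{D})\hookrightarrow B^{s'}_{p,p}(\mathcal{D})\hookrightarrow L^p(\mathcal{D})$ for any $0<s'<s$: the first arrow is a continuous embedding between Bessel potential and Besov scales, and the second is compact by the real-interpolation version $(L^p,W^m_p)_{s'/m,\,p}=B^{s'}_{p,p}$ together with Lions--Peetre. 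Either way the conclusion stands, and nothing in your plan is wrong.
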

For $1<p<\infty$ and $s=k+\sigma$ with $k\in\mathbb{N}\cup\lbrace 0\rbrace$ and $0<\sigma<1$, we say $f\in B^{s}_{p,p}(\mathbb{R}^d)$ if $f\in W^{k}_{p}(\mathbb{R}^d)$ and
\begin{align}\label{nono_p}
	\Vert f\Vert_{B^{s}_{p,p}(\mathbb{R}^d)} \coloneqq  \Vert f\Vert_{W^{k}_{p}(\mathbb{R}^d)} + \left( \sum_{\vert\alpha\vert=k}\iint_{\mathbb{R}^{2d}}\frac{\Vert D^{\alpha}f(x)-D^{\alpha}f(y)\Vert^p}{\Vert x-y\Vert^{d+p\sigma}} \, \mathrm{d}x \, \mathrm{d}y\right)^{\frac{1}{p}}
\end{align}
is finite. For a smooth domain $\mathcal{D}$ of $\mathbb{R}^d$, the spaces $B^{s}_{p,p}(\mathcal{D})$ are defined similar to $W^{s}_{2}(\mathcal{D})$ as in in \eqref{A4} and \eqref{A5}. The respective norms are equivalent to
\begin{align*}
	\Vert f\Vert_{*,B^{s}_{p,p}(\mathcal{D})} \coloneqq \Vert f\Vert_{W^{k}_{p}(\mathcal{D})} + \left(\sum_{\vert\alpha\vert=k}\iint_{\mathcal{D}\times \mathcal{D}}\frac{\Vert D^{\alpha}f(x)-D^{\alpha}f(y)\Vert^p}{\Vert x-y\Vert^{d+p\sigma}} \, \mathrm{d}x \, \mathrm{d}y\right)^{\frac{1}{p}}.
\end{align*}
In fact, $B^{s}_{p,p}(\mathbb{R}^d)$ is a generalization of $W^{s}_{2}$ for an arbitrary $p$. It is known that $H^{s}_{2}(\mathbb{R}^d) = W^{s}_{2}(\mathbb{R}^d)$ but if $p\ne 2$, $B^{s}_{p,p}(\mathbb{R}^d)$ and $H^{s}_{p}(\mathbb{R}^d)$ are different spaces. Similar to \ref{CCPP}, for bounded $C^\infty$-domains $\mathcal{D}$,
\begin{align*}
	[B^{t}_{p,p}(\mathcal{D}),B^{s}_{p,p}(\mathcal{D})]_{\theta}=B^{(1-\theta)s+\theta t}_{p,p}(\mathcal{D})
\end{align*}
for $s<t$. Moreover, the embedding
\begin{align*}
	\operatorname{id} \colon B^{s}_{p,p}(\mathcal{D})\rightarrow \mathcal{L}^{p}(\mathcal{D})
\end{align*}
is compact.

\subsection{Examples of rough stochastic partial differential equations}

We are now ready to present several examples. They are mostly taken from \cite{GH19},\cite{GHT21},\cite{YLZ23} and \cite{KN23}, therefore we do not go too much into the detail and refer the reader to these articles for more details.
\begin{example}\label{EX_1}
	The first example is a reaction diffusion equation on the one-dimensional torus. For $l\in\mathbb{R}\setminus\lbrace 0\rbrace$, let $\mathbb{T}=\mathbb{R}/l\mathbb{Z}$. The space $H_{0}^{2\alpha}(\mathbb{T})$ denotes the closure of $C_{c}^{\infty}(\mathbb{T})$ in $H^{2\alpha}_2(\mathbb{T})$. We consider a rough reaction-diffusion equation with periodic boundary conditions, i.e.
	\begin{align*}
		\begin{cases}
			&\mathrm{d}u_t=(\Delta u_t+F(u_t)) \, \mathrm{d}t+g(x)(-\Delta)^{\eta}u_t \, \mathrm{d}\mathbf{B}^{H}_{t},\\
			& u_0\in\mathcal{L}^{2}(\mathbb{T})\ \ \text{and}\ \ \ \int_{\mathbb{T}}u_{0}(x)\mathrm{d}x=0.
		\end{cases}
	\end{align*}
\end{example}
The equation is driven by the rough path lift of a fractional Brownian motion with Hurst parameter $\frac{1}{3}<H\leq \frac{1}{2}$ and $\eta$ is chosen such that $0\leq\eta<2H-\frac{1}{2}$. For $0\leq\sigma<1$, we assume that $F \colon \mathcal{B}_{\alpha}\rightarrow \mathcal{B}_{\alpha-\delta}$ is a locally Lipschitz-continuous map with linear growth. In addition, $g$ is a smooth and bounded function on $\mathbb{T}$. It is known that the operator $\Delta u$ generates an analytic $C_0$-semigroup on $\mathcal{B}=\mathcal{B}_0$ such that the spectrum is given by 
\begin{align*}
	\left\lbrace -(\frac{2\pi k}{l})^2:\  k\in\mathbb{Z}\right\rbrace .
\end{align*}
Assuming $F(u)=0$ implies that $u_0=0$ is a stationary point. From Lemma \ref{CCPP} and Remark \ref{COMPACTEM_12}, the linearized equation is compact, therefore we deduce the existence of invariant manifolds around $0$. 

\begin{example}\label{EX_2}
	The second example is again a reaction diffusion equation one the one-dimensional torus, but with Dirichlet boundary conditions.
	Assume 
	\begin{align*}
		\begin{cases}
			&\mathrm{d}u_t=(\Delta u_t+F(u_t))\mathrm{d}t+g(x)(-\Delta)^{\eta}u_t \, \mathrm{d}\mathbf{B}^{H}_{t},\\
			& u_{t}(0)=u_{t}(l)=0,
			\\& u_{0}(x)\in \mathcal{L}^{p}(\mathbb{T}), \quad 1<p<\infty.
		\end{cases}
	\end{align*}
	The assumptions on the parameters and on $F$ and $g$ are the same as in the previous example. In this case, we set $\mathcal{B} \coloneqq \mathcal{L}^{p}(\mathbb{T})$ and for $0\leq\alpha\leq1$, $\mathcal{B}_{\alpha} \coloneqq B^{2\alpha}_{p,p,0}(\mathbb{T})$ where by $B^{2\alpha}_{p,p,0}(\mathbb{T})$ we mean the closure of $C_{c}^{\infty}(\mathbb{T})$ in $B^{2\alpha}_{p,p}(\mathbb{T})$. Similar results as in  Example \ref{EX_1} can be stated here.
\end{example}

Before presenting the third example, we note that for $\mathbb{T}^d=\mathbb{R}^d/l\mathbb{Z}^d$, the space $H^{s}_{p}(\mathbb{T}^d)$ is an algebra if $ps>d$, i.e.
\begin{align*}
	f,g\in H^{s}_{p}(\mathbb{T}^d): \ \ \ \Vert fg\Vert_{H^{s}_{p}(\mathbb{T}^d)}\leq C\Vert f\Vert_{H^{s}_{p}(\mathbb{T}^d)} \Vert g\Vert_{H^{s}_{p}(\mathbb{T}^d)} .
\end{align*}

\begin{example}\label{EX_3} 
	Here, we consider a generic equation of the form
	\begin{align*}
		\begin{cases}
			&\mathrm{d}u_t=(\Delta u_t+F(u_t)) \, \mathrm{d}t+ G(u_t)\mathrm{d}\mathbf{B}^{H}_{t},\\
			& u_{0}\in H^{k}_{p,0}(\mathbb{T}^d).
		\end{cases}
	\end{align*}
	For $F$, we can choose $F(u_t):=u_{t}P(u_t)$ where $P$ is an analytic bounded function. $G$ can either be a bounded linear function like in the previous examples or $G(u_t)=g(u_t)$ for $g$ being an analytic bounded function. In these cases, if we select $\frac{1}{3}<\gamma\leq H$ such that $p(k-2\gamma)>d$, since  $H^{k-2\gamma}_{p}(\mathbb{T}^d)$ is an algebra, our results can be applied.
	
\end{example}
\begin{remark}\label{remark:random_stationary_point}
	For the case that $\mathbf{B}$ is a Brownian motion and $F$ is bounded, we can expect that a non-trivial stationary point exists. This stationary point is the solution to
	\begin{align*}
		u_{t,\omega}=\int_{-\infty}^{t}S_{t-\tau}F(u_{\tau,\omega}) \, \mathrm{d}\tau + \int_{-\infty}^{t}S_{t-\tau}G(u_{\tau,\omega})\, \d\mathbf{B}_{\tau}.
	\end{align*}
\end{remark}



\bibliographystyle{alpha}
\bibliography{refs}

\end{document}